\newcommand{\R}{{\Bbb R}}
\newcommand{\C}{{\Bbb C}}
\newcommand{\D}{{\Bbb D}}
\newcommand{\IV}{\text{\upshape IV}}
\newcommand{\res}{\text{\upshape Res\,}}
\newcommand{\diag}{\text{\upshape diag\,}}
\newcommand{\re}{\text{\upshape Re\,}}
\newcommand{\im}{\text{\upshape Im\,}}
\newcommand{\sol}{\text{\upshape sol}}
\newtheorem{theorem}{Theorem}[section]
\newtheorem{lemma}[theorem]{Lemma}
\newtheorem{RHproblem}[theorem]{RH problem}
\newtheorem{figuretext}{Figure}
\numberwithin{equation}{section}
\tikzset{middlearrow/.style={
			decoration={markings,
				mark= at position 0.6 with {\arrow{#1}} ,
			},
			postaction={decorate}
		}
	}
\tikzset{->-/.style={decoration={
				markings,
				mark=at position #1 with {\arrow{latex}}},postaction={decorate}}}
\tikzset{-<-/.style={decoration={
				markings,
				mark=at position #1 with {\arrowreversed{latex}}},postaction={decorate}}}
				\tikzset{
	master/.style={
		execute at end picture={
			\coordinate (lower right) at (current bounding box.south east);
			\coordinate (upper left) at (current bounding box.north west);
		}
	},
	slave/.style={
		execute at end picture={
			\pgfresetboundingbox
			\path (upper left) rectangle (lower right);
		}
	}
}
\tikzset{cross/.style={cross out, draw, 
         minimum size=2*(#1-\pgflinewidth), 
         inner sep=0pt, outer sep=0pt}}
\def\Xint#1{\mathchoice
{\XXint\displaystyle\textstyle{#1}}%
{\XXint\textstyle\scriptstyle{#1}}%
{\XXint\scriptstyle\scriptscriptstyle{#1}}%
{\XXint\scriptscriptstyle\scriptscriptstyle{#1}}%
\!\int}
\def\XXint#1#2#3{{\setbox0=\hbox{$#1{#2#3}{\int}$ }
\vcenter{\hbox{$#2#3$ }}\kern-.59\wd0}}
\def\dashint{\Xint-}
\title[Boussinesq's equation for water waves: Sector IV]
{Boussinesq's equation for water waves: \\ the soliton resolution conjecture for Sector IV}
\author{C. Charlier$^{1}$ and J. Lenells$^{2}$}
\address{$^{1}$Centre for Mathematical Sciences, Lund University,
22100 Lund, Sweden. \\
$^{2}$Department of Mathematics, KTH Royal Institute of Technology, \\
10044 Stockholm, Sweden.}
\email{christophe.charlier@math.lu.se}
\email{jlenells@kth.se}
\begin{document}

\begin{abstract}
We consider the Boussinesq equation on the line for a broad class of Schwartz initial data relevant for water waves. In a recent work, we identified ten main sectors describing the asymptotic behavior of the solution, and for each of these sectors we gave an exact expression for the leading asymptotic term in the case when no solitons are present. In this paper, we derive an asymptotic formula in Sector IV, characterized by $\frac{x}{t}\in (\frac{1}{\sqrt{3}},1)$, in the case when solitons are present. In particular, our results provide an exact expression for the soliton-radiation interaction to leading order and a verification of the soliton resolution conjecture for the Boussinesq equation in Sector IV.
\end{abstract}

\maketitle

\noindent
{\small{\sc AMS Subject Classification (2020)}: 35C08, 35G25, 35Q15, 37K40, 76B15.}

\noindent
{\small{\sc Keywords}: Boussinesq equation, long-time asymptotics, Riemann-Hilbert problem.}


\section{Introduction}

The Boussinesq equation \cite{B1872}
\begin{align}\label{boussinesq}
u_{tt} = u_{xx} + (u^2)_{xx} + u_{xxxx},
\end{align}
where $u(x,t)$ is a real-valued function and subscripts denote partial derivatives, models small-amplitude dispersive waves in shallow water propagating in both the right and left directions, see e.g. \cite{J1997}. Equation (\ref{boussinesq}) supports solitons \cite{H1973, B1976, BZ2002} and admits a Lax pair \cite{Z1974}. 
Until recently \cite{CLmain}, the determination of the long-time asymptotics for the solution of \eqref{boussinesq} was still an open problem, see e.g. Deift's list of open problems in \cite{D2008}.

In \cite{CLmain}, we studied the direct and inverse scattering problems for \eqref{boussinesq}. Among other things, we established that for a wide class of initial data relevant for water waves, the solution is unique and exists globally. We identified ten main asymptotic sectors and for each of these sectors, we computed the leading asymptotics of the solution. The proofs of some of these asymptotic formulas were omitted in \cite{CLmain} for conciseness. Moreover, only solitonless solutions were considered.

The purpose of this paper is to derive an asymptotic formula for the solution in the sector $\frac{x}{t} \in (\frac{1}{\sqrt{3}}, 1)$ in the case when solitons are present. This sector was referred to as Sector IV in \cite{CLmain}. In the special case when solitons are absent, our formula reduces to the formula announced in \cite{CLmain}, thus providing a proof of this formula.
In the case when solitons are present, our formula quantifies the asymptotic effect of solitons on the solution in Sector IV. 

The soliton resolution conjecture for an integrable equation expresses the expectation that a solution of the equation with generic initial data eventually evolves into a set of solitons superimposed on a dispersive radiative background. Since $x/t < 1$ in Sector IV and all one-soliton solutions of (\ref{boussinesq}) travel with speeds greater than $1$ \cite{H1973}, the soliton resolution conjecture for (\ref{boussinesq}) suggests that there should be no order $1$ contributions to the asymptotics in Sector IV stemming from solitons. This is indeed what we find: our formula shows that the solution of (\ref{boussinesq}) in Sector IV is $O(t^{-1/2})$ as $t \to \infty$. Our results therefore verify the soliton resolution conjecture for the Boussinesq equation in this sector. We stress that although the solitons themselves are not directly observable in Sector IV for large times, they do have a nontrivial effect on the radiation in Sector IV; we compute the leading order contribution of this effect exactly. 

It is important to note that we restrict ourselves to initial data which contain no high-frequency modes (in a sense made precise in Assumption $(\ref{nounstablemodesassumption})$ below). This assumption is consistent with the derivation of the Boussinesq equation as a model for water waves, which assumes that high-frequency Fourier modes are absent, or at least highly suppressed. In fact, if high-frequency modes are present, the solution will contain components that grow exponentially in time and the soliton resolution conjecture will automatically fail.

Our main result is presented in Section \ref{mainsec}. Its proof relies on a Deift--Zhou \cite{DZ1993} steepest descent analysis of a row-vector Riemann--Hilbert (RH) problem, and this RH problem is stated in Section \ref{RHnsec}. Section \ref{overviewsec} contains a brief overview of the proof. In Sections \ref{nton1sec} and \ref{n1ton2sec}, we carry out two transformations of the RH problem. After introducing a global parametrix in Section \ref{globalparametrixsec}, we perform two further transformations in Sections \ref{n2ton3sec} and \ref{n3ton4sec}. Using two local parametrices constructed in Sections \ref{localparametrixsec1} and \ref{localparametrixsec2}, respectively, we arrive at a small-norm RH problem whose large $t$ behavior is analyzed in Section \ref{n3tonhatsec}. Finally, the long-time asymptotics of the solution $u(x,t)$ of (\ref{boussinesq}) is obtained in Section \ref{usec}.

\subsection{Notation}\label{notationsubsec}
We use the following notation throughout the paper.

\begin{enumerate}[$-$]
\item $C>0$ and $c>0$ denote generic constants that may change within a computation.

\item $[A]_1$, $[A]_2$, and $[A]_3$ denote the first, second, and third columns of a $3 \times 3$ matrix $A$.

\item If $A$ is an $n \times m$ matrix, we define $|A| \ge 0$ by $|A|^2=\Sigma_{i,j}|A_{ij}|^2$. 
For a piecewise smooth contour $\gamma \subset \C$ and $1 \le p \le \infty$, we define $\|A\|_{L^p(\gamma)} := \| |A|\|_{L^p(\gamma)}$.

\item $\D = \{k \in \C \, | \, |k| < 1\}$ denotes the open unit disk and $\partial \D = \{k \in \C \, | \, |k| = 1\}$ denotes the unit circle. 

\item $\mathcal{S}(\R)$ denotes the Schwartz space of rapidly decreasing functions on $\R$.

\item $\mathcal{Q} := \{\kappa_{j}\}_{j=1}^{6}$ and $\hat{\mathcal{Q}} := \mathcal{Q} \cup \{0\}$, where $\{\kappa_{j} = e^{\frac{\pi i(j-1)}{3}}\}_1^6$ are the sixth roots of unity.

\item $\{D_n\}_{n=1}^6$ denote the open subsets of the complex plane shown in Figure \ref{fig: Dn}.

\item $\Gamma = \cup_{j=1}^9 \Gamma_j$ denotes the contour shown and oriented as in Figure \ref{fig: Dn}.

\item $\hat{\Gamma}_{j} = \Gamma_{j} \cup \partial \D$ denotes the union of $\Gamma_j$ and the unit circle. 

\end{enumerate}

\section{Main results}\label{mainsec}

We consider the initial value problem for (\ref{boussinesq}) with initial data
\begin{align}\label{initial data}
u(x,0)=u_{0}(x), \qquad u_{t}(x,0) = u_{1}(x),
\end{align}
where $u_0, u_1 \in \mathcal{S}(\mathbb{R})$ are real-valued functions in the Schwartz class.
We assume that $u_1$ obeys the condition $\int_{\mathbb{R}}u_{1}(x)dx = 0$; this assumption is natural because it ensures that the total mass $\int_{\mathbb{R}}u(x,t)dx$ is conserved in time. A result of \cite{CLmain} states that the solution to the initial value problem (\ref{boussinesq})--(\ref{initial data}) can be expressed in terms of the solution $n$ of a row-vector RH problem. 
The jump matrix of this RH problem is expressed in terms of two scalar reflection coefficients, $r_{1}(k)$ and $r_{2}(k)$, which are defined as follows.

\subsection{Definition of $r_{1}$ and $r_{2}$.} Let $\omega := e^{\frac{2\pi i}{3}}$ and define $\{l_j(k), z_j(k)\}_{j=1}^3$ by
\begin{align}\label{lmexpressions}
& l_{j}(k) = i \frac{\omega^{j}k + (\omega^{j}k)^{-1}}{2\sqrt{3}}, \qquad z_{j}(k) = i \frac{(\omega^{j}k)^{2} + (\omega^{j}k)^{-2}}{4\sqrt{3}}, \qquad k \in \C\setminus \{0\}.
\end{align}
Let $P(k)$ and $\mathsf{U}(x,k)$ be given by
\begin{align*}
P(k) = \begin{pmatrix}
1 & 1 & 1  \\
l_{1}(k) & l_{2}(k) & l_{3}(k) \\
l_{1}(k)^{2} & l_{2}(k)^{2} & l_{3}(k)^{2}
\end{pmatrix}, \quad \mathsf{U}(x,k) = P(k)^{-1} \begin{pmatrix}
0 & 0 & 0 \\
0 & 0 & 0 \\
-\frac{u_{0x}}{4}-\frac{iv_{0}}{4\sqrt{3}} & -\frac{u_{0}}{2} & 0
\end{pmatrix} P(k),
\end{align*} 
where $v_{0}(x) = \int_{-\infty}^{x}u_{1}(x')dx'$. Let $X, X^A, Y, Y^A$ be the unique solutions of the Volterra integral equations
\begin{align*}  
 & X(x,k) = I - \int_x^{\infty} e^{(x-x')\widehat{\mathcal{L}(k)}} (\mathsf{U}X)(x',k) dx',
	\\
 & X^A(x,k) = I + \int_x^{\infty} e^{-(x-x')\widehat{\mathcal{L}(k)}} (\mathsf{U}^T X^A)(x',k) dx',	
 	 \\\nonumber
& Y(x,k)  =  I  +  \int_{-\infty}^x  e^{(x-x')\widehat{\mathcal{L}(k)}} (\mathsf{U}Y)(x',k) dx', 
	\\ \nonumber
& Y^A(x,k)  =  I  -  \int_{-\infty}^x  e^{-(x-x')\widehat{\mathcal{L}(k)}} (\mathsf{U}^T Y^A)(x',k) dx', 
\end{align*}
where $\mathcal{L} = \diag(l_1 , l_2 , l_3)$, $(\cdot)^T$ denotes the transpose operation, and $e^{\hat{\mathcal{L}}}$ acts
on a $3\times 3$ matrix $M$ by $e^{\hat{\mathcal{L}}}M=e^{\mathcal{L}}Me^{-\mathcal{L}}$. Define $s(k)$ and $s^A(k)$ by 
\begin{align*}
& s(k) = I - \int_\R e^{-x \widehat{\mathcal{L}(k)}}(\mathsf{U}X)(x,k) dx, & & s^A(k) = I + \int_\R e^{x \widehat{\mathcal{L}(k)}}(\mathsf{U}^T X^A)(x,k) dx.
\end{align*}
The two spectral functions $\{r_j(k)\}_1^2$ are defined by
\begin{align}\label{r1r2def}
\begin{cases}
r_1(k) = \frac{(s(k))_{12}}{(s(k))_{11}}, & k \in \hat{\Gamma}_{1}\setminus \hat{\mathcal{Q}},
	\\ 
r_2(k) = \frac{(s^A(k))_{12}}{(s^A(k))_{11}}, \quad & k \in \hat{\Gamma}_{4}\setminus \hat{\mathcal{Q}}.
\end{cases}
\end{align}

\subsection{Solitons}
In \cite{CLscatteringsolitons}, we extended the approach of \cite{CLmain} to the case when solitons are present. In this case, in addition to $r_{1}(k)$ and $r_{2}(k)$, the formulation of the RH problem also involves a set of poles $\mathsf{Z}$ and a set of corresponding residue constants $\{c_{k_0}\}_{k_0 \in \mathsf{Z}} \subset \C$. In what follows, we describe how $\mathsf{Z}$ and $\{c_{k_0}\}_{k_0 \in \mathsf{Z}} \subset \C$ are defined.

Solitons are generated by the possible zeros of the functions $s_{11}(k)$ and $s^A_{11}(k)$. 
As shown in \cite{CLmain}, $s_{11}$ and $s^A_{11}$ have analytic extensions to the interiors of $\bar{D}_1 \cup \bar{D}_2$ and $\bar{D}_4 \cup \bar{D}_5$, respectively. We will restrict ourselves to the generic case when $s_{11}$ and $s^A_{11}(k)$ have no zeros on the contour $\Gamma$.
The symmetries $s_{11}(k) = s_{11}(\omega/k)$ and $s_{11}^A(k) = \overline{s_{11}(\bar{k}^{-1})}$ (also established in \cite{CLmain}) then imply that it is enough to consider the zeros of $s_{11}$ in $D_{2}$. 
We decompose the open set $D_{2}$ into three parts: $D_{2} = D_{\mathrm{reg}} \sqcup D_{\mathrm{sing}} \sqcup (D_{2}\cap \R)$, where 
\begin{align*}
& D_{\mathrm{reg}} := D_{2} \cap \big( \{k \,|\, |k|>1, \im k >0\} \cup \{k \,|\, |k|<1, \im k <0\} \big), \\
& D_{\mathrm{sing}} := D_{2} \cap \big( \{k \,|\, |k|>1, \im k <0\} \cup \{k \,|\, |k|<1, \im k >0\} \big),
\end{align*}
and $D_{2}\cap \R = (-1,0) \cup (1,\infty)$.
We denote the right and left parts of $D_{\mathrm{reg}}$ by $D_{\mathrm{reg}}^{R}$ and $D_{\mathrm{reg}}^{L}$, respectively, and similarly for $D_{\mathrm{sing}}$, see Figure \ref{fig: D2 splitting}.

\begin{figure}
\begin{center}
\begin{tikzpicture}[scale=0.7]
\node at (0,0) {};
\draw[line width=0.25 mm, dashed,fill=gray!30] (0,0) --  (210:2.5) arc(210:180:2.5) -- cycle;
\draw[line width=0.25 mm, dashed,fill=gray!30] (0,0) --  (30:4) arc(30:0:4) -- cycle;
\draw[line width=0.25 mm, dashed,fill=white!100] (0,0) --  (30:2.5) arc(30:0:2.5) -- cycle;
\draw[white,line width=0.35 mm] ([shift=(0:4cm)]0,0) arc (0:30:4cm);
\draw[white,line width=0.35 mm] (0,0)--(2.5,0);

\draw[black,line width=0.45 mm] (0,0)--(30:4);
\draw[black,line width=0.45 mm] (0,0)--(90:4);
\draw[black,line width=0.45 mm] (0,0)--(150:4);
\draw[black,line width=0.45 mm] (0,0)--(-30:4);
\draw[black,line width=0.45 mm] (0,0)--(-90:4);
\draw[black,line width=0.45 mm] (0,0)--(-150:4);

\draw[black,line width=0.45 mm] ([shift=(-180:2.5cm)]0,0) arc (-180:180:2.5cm);

\node at (193:1.85) {\footnotesize{$D_{\mathrm{reg}}^{L}$}};
\node at (15:3.3) {\footnotesize{$D_{\mathrm{reg}}^{R}$}};

\draw[fill] (0:2.5) circle (0.1);
\draw[fill] (60:2.5) circle (0.1);
\draw[fill] (120:2.5) circle (0.1);
\draw[fill] (180:2.5) circle (0.1);
\draw[fill] (240:2.5) circle (0.1);
\draw[fill] (300:2.5) circle (0.1);

\end{tikzpicture}
\hspace{1.7cm}
\begin{tikzpicture}[scale=0.7]
\node at (0,0) {};
\draw[line width=0.25 mm, dashed,fill=gray!30] (0,0) --  (180:2.5) arc(180:150:2.5) -- cycle;
\draw[line width=0.25 mm, dashed,fill=gray!30] (0,0) --  (0:4) arc(0:-30:4) -- cycle;
\draw[line width=0.25 mm, dashed,fill=white!100] (0,0) --  (0:2.5) arc(0:-30:2.5) -- cycle;
\draw[white,line width=0.35 mm] ([shift=(-30:4cm)]0,0) arc (-30:0:4cm);
\draw[white,line width=0.35 mm] (0,0)--(2.5,0);

\draw[black,line width=0.45 mm] (0,0)--(30:4);
\draw[black,line width=0.45 mm] (0,0)--(90:4);
\draw[black,line width=0.45 mm] (0,0)--(150:4);
\draw[black,line width=0.45 mm] (0,0)--(-30:4);
\draw[black,line width=0.45 mm] (0,0)--(-90:4);
\draw[black,line width=0.45 mm] (0,0)--(-150:4);

\draw[black,line width=0.45 mm] ([shift=(-180:2.5cm)]0,0) arc (-180:180:2.5cm);

\node at (165:1.8) {\footnotesize{$D_{\mathrm{sing}}^{L}$}};
\node at (-15:3.3) {\footnotesize{$D_{\mathrm{sing}}^{R}$}};

\draw[fill] (0:2.5) circle (0.1);
\draw[fill] (60:2.5) circle (0.1);
\draw[fill] (120:2.5) circle (0.1);
\draw[fill] (180:2.5) circle (0.1);
\draw[fill] (240:2.5) circle (0.1);
\draw[fill] (300:2.5) circle (0.1);

\draw[dashed] (-6.3,-3.8)--(-6.3,3.8);

\end{tikzpicture}
\end{center}
\begin{figuretext}\label{fig: D2 splitting}
The open sets $D_{\mathrm{reg}}^{R}, D_{\mathrm{reg}}^{L}, D_{\mathrm{sing}}^{R}$, and $D_{\mathrm{sing}}^{L}$.
\end{figuretext}
\end{figure}

We showed in \cite{CLscatteringsolitons} that zeros in $D_{\mathrm{sing}}$ give rise to solitons with singularities. We will therefore assume that the zero-set $\mathsf{Z}$ of $s_{11}(k)$ is contained in $D_{\mathrm{reg}} \cup (-1,0) \cup (1,\infty)$. 
We will consider the generic case of a finite number of simple zeros. In the case when the initial data $u_0$ and $u_1$ have compact support, the associated residue constants are then defined by
 \begin{align}\label{ck0compactdef}
& c_{k_{0}} := \begin{cases} 
- \frac{s_{13}(k_0)}{\dot{s}_{11}(k_0)}, & k_{0} \in \mathsf{Z}\setminus \mathbb{R}, \\
- \frac{s_{12}(k_0)}{\dot{s}_{11}(k_0)}, & k_{0} \in \mathsf{Z}\cap \mathbb{R}.
\end{cases}
\end{align}
If $u_0$ and $v_0$ do not have compact support, then the expressions in (\ref{ck0compactdef}) are in general not well-defined and the following more involved definition is required: Define the vector-valued function $w(x,k)$ by
\begin{align}\label{wdef}
w = \begin{pmatrix}
Y_{21}^AX_{32}^A - Y_{31}^AX_{22}^A  \\
Y_{31}^AX_{12}^A - Y_{11}^AX_{32}^A  \\
 Y_{11}^AX_{22}^A - Y_{21}^AX_{12}^A 
\end{pmatrix}.
\end{align}
If $k_{0} \in \mathsf{Z}\setminus \mathbb{R}$, then $c_{k_0}$ is defined as the unique complex constant such that
\begin{subequations}\label{ck0def}
\begin{align}\label{ck0def1}
\frac{w(x,k_{0})}{\dot{s}_{11}(k_0)} = c_{k_0} e^{(l_1(k_0)-l_3(k_0))x} [X(x,k_0)]_1 \qquad \text{for all $x \in \R$}.
\end{align}
If $k_{0} \in \mathsf{Z} \cap \mathbb{R}$, then $c_{k_0}$ is defined as the unique complex constant such that
\begin{align}\label{ck0def2}
\frac{[Y(x,k_0)]_2}{\dot{s}_{22}^A(k_0)} = c_{k_0} e^{(l_1(k_0)-l_2(k_0))x} [X(x,k_0)]_1\qquad \text{for all $x \in \R$}.
\end{align}
\end{subequations}

It turns out that simple zeros of $s_{11}(k)$ in $D_{\mathrm{reg}}^R$ and $D_{\mathrm{reg}}^L$ generate right- and left-moving breather solitons, respectively. 
Similarly, simple zeros of $s_{11}(k)$ in $(1,\infty)$ and $(-1,0)$ generate right- and left-moving solitons, respectively, and it was shown in \cite{CLscatteringsolitons} that a soliton generated by a zero in $(-1,0) \cup (1,\infty)$ is non-singular if and only if the associated residue constant $c_{k_0}$ satisfies $i(\omega^{2}k_{0}^{2} - \omega)c_{k_{0}} \notin (-\infty, 0)$.
This motivates the following assumptions on $\mathsf{Z}$ and $\{c_{k_0}\}_{k_0 \in \mathsf{Z}}$.

\subsection{Assumptions}\label{assumptionssubsec}
As in \cite[Theorem 2.11]{CLscatteringsolitons}, our results will be valid under the following assumptions:
\begin{enumerate}[$(i)$]
\item \label{solitonassumption}
Finite number of non-singular solitons: suppose that $s_{11}(k)$ has a simple zero at each point in $\mathsf{Z}$, where $\mathsf{Z} \subset D_{\mathrm{reg}} \cup (-1,0) \cup (1,\infty)$ is a set of finite cardinality, and that $s_{11}(k)$ has no other zeros in $k \in (\bar{D}_2 \cup \partial \D) \setminus \hat{\mathcal{Q}}$.
If $k_0 \in \mathsf{Z} \setminus \R$, then we also assume that $s_{22}^A(k_0) \neq 0$. If $k_0 \in \mathsf{Z} \cap \R$, then we also assume that $i(\omega^{2}k_{0}^{2} - \omega)c_{k_{0}} \notin (-\infty, 0)$.

\item \label{originassumption}
The spectral functions $s$ and $s^{A}$ have generic behavior near $k=1$ and $k=-1$: we suppose for $k_{\star} =1$ and $k_{\star}=-1$ that
\begin{align*}
& \lim_{k \to k_{\star}} (k-k_{\star}) s(k)_{11} \neq 0, & & \hspace{-0.1cm} \lim_{k \to k_{\star}} (k-k_{\star}) s(k)_{13} \neq 0, & & \hspace{-0.1cm} \lim_{k \to k_{\star}} s(k)_{31} \neq 0, & & \hspace{-0.1cm} \lim_{k \to k_{\star}} s(k)_{33} \neq 0, \\
& \lim_{k \to k_{\star}} (k-k_{\star}) s^{A}(k)_{11} \neq 0, & & \hspace{-0.1cm} \lim_{k \to k_{\star}} (k-k_{\star}) s^{A}(k)_{31} \neq 0, & & \hspace{-0.1cm} \lim_{k \to k_{\star}} s^{A}(k)_{13} \neq 0, & & \hspace{-0.1cm} \lim_{k \to k_{\star}} s^{A}(k)_{33} \neq 0.
\end{align*}

\item \label{nounstablemodesassumption}
No high-frequency modes: we suppose that $r_{1}(k)=0$ for all $k \in [0,i]$, where $[0,i]$ is the vertical segment from $0$ to $i$.
\end{enumerate}

\begin{figure}
\begin{center}
\begin{tikzpicture}[scale=0.9]
\node at (0,0) {};
\draw[black,line width=0.45 mm,->-=0.4,->-=0.85] (0,0)--(30:4);
\draw[black,line width=0.45 mm,->-=0.4,->-=0.85] (0,0)--(90:4);
\draw[black,line width=0.45 mm,->-=0.4,->-=0.85] (0,0)--(150:4);
\draw[black,line width=0.45 mm,->-=0.4,->-=0.85] (0,0)--(-30:4);
\draw[black,line width=0.45 mm,->-=0.4,->-=0.85] (0,0)--(-90:4);
\draw[black,line width=0.45 mm,->-=0.4,->-=0.85] (0,0)--(-150:4);

\draw[black,line width=0.45 mm] ([shift=(-180:2.5cm)]0,0) arc (-180:180:2.5cm);
\draw[black,arrows={-Triangle[length=0.2cm,width=0.18cm]}]
($(3:2.5)$) --  ++(90:0.001);
\draw[black,arrows={-Triangle[length=0.2cm,width=0.18cm]}]
($(57:2.5)$) --  ++(-30:0.001);
\draw[black,arrows={-Triangle[length=0.2cm,width=0.18cm]}]
($(123:2.5)$) --  ++(210:0.001);
\draw[black,arrows={-Triangle[length=0.2cm,width=0.18cm]}]
($(177:2.5)$) --  ++(90:0.001);
\draw[black,arrows={-Triangle[length=0.2cm,width=0.18cm]}]
($(243:2.5)$) --  ++(330:0.001);
\draw[black,arrows={-Triangle[length=0.2cm,width=0.18cm]}]
($(297:2.5)$) --  ++(210:0.001);

\draw[black,line width=0.15 mm] ([shift=(-30:0.55cm)]0,0) arc (-30:30:0.55cm);

\node at (0.8,0) {$\tiny \frac{\pi}{3}$};

\node at (-5:2.77) {\footnotesize $\Gamma_8$};
\node at (60:2.86) {\footnotesize $\Gamma_9$};
\node at (113:2.73) {\footnotesize $\Gamma_7$};
\node at (181:2.83) {\footnotesize $\Gamma_8$};
\node at (234:2.71) {\footnotesize $\Gamma_9$};
\node at (300:2.83) {\footnotesize $\Gamma_7$};

\node at (77:1.45) {\footnotesize $\Gamma_1$};
\node at (160:1.45) {\footnotesize $\Gamma_2$};
\node at (-163:1.45) {\footnotesize $\Gamma_3$};
\node at (-77:1.45) {\footnotesize $\Gamma_4$};
\node at (-42:1.45) {\footnotesize $\Gamma_5$};
\node at (43:1.45) {\footnotesize $\Gamma_6$};

\node at (84:3.3) {\footnotesize $\Gamma_4$};
\node at (155:3.3) {\footnotesize $\Gamma_5$};
\node at (-156:3.3) {\footnotesize $\Gamma_6$};
\node at (-84:3.3) {\footnotesize $\Gamma_1$};
\node at (-35:3.3) {\footnotesize $\Gamma_2$};
\node at (35:3.3) {\footnotesize $\Gamma_3$};

\end{tikzpicture}
\hspace{0.5cm}
\begin{tikzpicture}[scale=0.9]
\node at (0,0) {};
\draw[black,line width=0.45 mm] (0,0)--(30:4);
\draw[black,line width=0.45 mm] (0,0)--(90:4);
\draw[black,line width=0.45 mm] (0,0)--(150:4);
\draw[black,line width=0.45 mm] (0,0)--(-30:4);
\draw[black,line width=0.45 mm] (0,0)--(-90:4);
\draw[black,line width=0.45 mm] (0,0)--(-150:4);

\draw[black,line width=0.45 mm] ([shift=(-180:2.5cm)]0,0) arc (-180:180:2.5cm);
\draw[black,line width=0.15 mm] ([shift=(-30:0.55cm)]0,0) arc (-30:30:0.55cm);

\node at (120:1.6) {\footnotesize{$D_{1}$}};
\node at (-60:3.7) {\footnotesize{$D_{1}$}};

\node at (180:1.6) {\footnotesize{$D_{2}$}};
\node at (0:3.7) {\footnotesize{$D_{2}$}};

\node at (240:1.6) {\footnotesize{$D_{3}$}};
\node at (60:3.7) {\footnotesize{$D_{3}$}};

\node at (-60:1.6) {\footnotesize{$D_{4}$}};
\node at (120:3.7) {\footnotesize{$D_{4}$}};

\node at (0:1.6) {\footnotesize{$D_{5}$}};
\node at (180:3.7) {\footnotesize{$D_{5}$}};

\node at (60:1.6) {\footnotesize{$D_{6}$}};
\node at (-120:3.7) {\footnotesize{$D_{6}$}};

\node at (0.8,0) {$\tiny \frac{\pi}{3}$};

\draw[fill] (0:2.5) circle (0.1);
\draw[fill] (60:2.5) circle (0.1);
\draw[fill] (120:2.5) circle (0.1);
\draw[fill] (180:2.5) circle (0.1);
\draw[fill] (240:2.5) circle (0.1);
\draw[fill] (300:2.5) circle (0.1);

\node at (0:2.9) {\footnotesize{$\kappa_1$}};
\node at (60:2.85) {\footnotesize{$\kappa_2$}};
\node at (120:2.85) {\footnotesize{$\kappa_3$}};
\node at (180:2.9) {\footnotesize{$\kappa_4$}};
\node at (240:2.85) {\footnotesize{$\kappa_5$}};
\node at (300:2.85) {\footnotesize{$\kappa_6$}};

\draw[dashed] (-4.3,-3.8)--(-4.3,3.8);

\end{tikzpicture}
\end{center}
\begin{figuretext}\label{fig: Dn}
The contour $\Gamma = \cup_{j=1}^9 \Gamma_j$ in the complex $k$-plane (left) and the open sets $D_{n}$, $n=1,\ldots,6$, together with the sixth roots of unity $\kappa_j$, $j = 1, \dots, 6$ (right).
\end{figuretext}
\end{figure}

We emphasize that Assumption $(\ref{originassumption})$ is generic. Assumption $(\ref{nounstablemodesassumption})$ ensures that the solution belongs to the physically relevant class of solutions which do not contain exponentially growing high-frequency modes. If Assumptions $(\ref{solitonassumption})$--$(\ref{nounstablemodesassumption})$ hold true, then the solution $u(x,t)$ of the initial value problem \eqref{boussinesq}--\eqref{initial data} exists globally \cite{CLscatteringsolitons}.

\subsection{Statement of main result}
The formulation of our main theorem involves a number of quantities that we now introduce. Let $\zeta := x/t$ and assume that $\zeta \in (\frac{1}{\sqrt{3}}, 1)$. For $k \in \mathbb{C}\setminus\{0\}$ and $1 \leq j<i \leq 3$, define $\Phi_{ij}(\zeta, k)$ by
\begin{align}\label{def of Phi ij}
\Phi_{ij}(\zeta,k) = (l_{i}(k)-l_{j}(k))\zeta + (z_{i}(k)-z_{j}(k)),
\end{align}
where $l_j(k)$ and $z_j(k)$ are as in (\ref{lmexpressions}).
The function $k\mapsto \Phi_{21}(\zeta,k)$ has four saddle points $\{k_{j}=k_{j}(\zeta)\}_{j=1}^{4}$ given by
\begin{subequations}\label{def of kj}
\begin{align}
& k_{2} = \frac{1}{4}\bigg( \zeta - \sqrt{8+\zeta^{2}} - i \sqrt{2}\sqrt{4-\zeta^{2}+\zeta\sqrt{8+\zeta^{2}}} \bigg), \label{def of k2} \\
& k_{4} = \frac{1}{4}\bigg( \zeta + \sqrt{8+\zeta^{2}} - i\sqrt{2}\sqrt{4-\zeta^{2}-\zeta\sqrt{8+\zeta^{2}}} \bigg), \label{def of k4}
\end{align}
\end{subequations} 
$k_{1} = \bar{k}_2$, and $k_{3}=\bar{k}_4$. Note that $|k_{2}|=|k_{4}|=1$, $\arg k_{2} \in (-\frac{3\pi}{4},-\frac{2\pi}{3})$, and $\arg k_{4} \in (-\frac{\pi}{6},0)$. Define also $\delta_j(\zeta, k)$, $j = 1, \dots, 5$, by
\begin{align}\nonumber
& \delta_{1}(\zeta, k) = \exp \bigg\{ \frac{-1}{2\pi i} \int_{i}^{\omega k_{4}} \frac{\ln(1 + r_1(s)r_{2}(s))}{s - k} ds \bigg\},  \; \delta_{4}(\zeta, k) = \exp \bigg\{ \frac{1}{2\pi i} \int_{\omega^{2}k_{2}}^{\omega} \frac{\ln f(s)}{s - k} ds \bigg\},
	 \\\nonumber
& \delta_{2}(\zeta, k) = \exp \bigg\{ \frac{1}{2\pi i} \int_{\omega k_{4}}^{\omega^{2}k_{2}} \frac{\ln(1 + r_1(s)r_{2}(s))}{s - k} ds \bigg\},
\; \delta_{5}(\zeta, k) = \exp \bigg\{ \frac{1}{2\pi i} \int_{\omega^{2}k_{2}}^{\omega} \frac{\ln f(\omega^{2}s)}{s - k} ds \bigg\},
  	\\ \label{IVdeltadef}
&  \delta_{3}(\zeta, k) = \exp \bigg\{ \frac{1}{2\pi i} \int_{\omega k_{4}}^{\omega^{2}k_{2}} \frac{\ln f(s)}{s - k} ds \bigg\},
\end{align}
where the paths follow the unit circle in the counterclockwise direction, the principal branch is used for the logarithms, and 
\begin{align}\label{def of f}
f(k) := 1+r_{1}(k)r_{2}(k) + r_{1}(\tfrac{1}{\omega^{2}k})r_{2}(\tfrac{1}{\omega^{2}k}), \qquad k \in \partial \mathbb{D}.
\end{align}
By \cite[Theorem 2.3 and Lemma 2.13]{CLmain}, $r_{1},r_{2},f$ are well-defined on $\partial \D \setminus \mathcal{Q}$ and the arguments of all logarithms appearing in the above definitions of $\{\delta_j(\zeta, k)\}_{j=1}^{5}$ are $>0$. 
Define 
\begin{align}\nonumber
& \mathcal{P}(k) = \prod_{k_{0}\in \mathsf{Z} \cap D_{\mathrm{reg}}^R} \hspace{-0.1cm} \frac{(k-k_{0})(k-k_{0}^{-1})(k-\omega^{2} \bar{k}_{0})(k-\omega \bar{k}_{0}^{-1})}{(k-\bar{k}_{0})(k-\bar{k}_{0}^{-1})(k-\omega k_{0})(k-\omega^{2} k_{0}^{-1})} \cdot 
\prod_{k_{0}\in \mathsf{Z}\cap (1,\infty)} \hspace{-0.06cm}  \frac{(k-\omega^{2}k_{0})(k-\omega k_{0}^{-1})}{(k-\omega k_{0})(k-\omega^{2}k_{0}^{-1})},
	\\\nonumber
& \mathcal{D}_{1}(k) = \frac{\delta_{1}(\omega k)\delta_{1}(\frac{1}{\omega^{2}k})^{2}}{\delta_{1}(\omega^{2}k)^{2}\delta_{1}(\frac{1}{\omega k})\delta_{1}(\frac{1}{k})} \frac{\delta_{2}(\omega^{2}k)\delta_{2}(\frac{1}{k})^{2}}{\delta_{2}(\omega k)^{2} \delta_{2}(\frac{1}{\omega^{2}k})\delta_{2}(\frac{1}{\omega k})} \frac{\delta_{3}(\omega k) \delta_{3}(\omega^{2} k) \delta_{3}(\frac{1}{\omega k})^{2}}{\delta_{3}(\frac{1}{k})\delta_{3}(\frac{1}{\omega^{2} k})}  
	\\ \nonumber
& \hspace{1.5cm} \times \frac{\delta_{4}(\omega^{2}k)^{2} \delta_{4}(\frac{1}{k})\delta_{4}(\frac{1}{\omega k})}{\delta_{4}(k)\delta_{4}(\omega k)\delta_{4}(\frac{1}{\omega^{2} k})^{2}} \frac{\delta_{5}(\omega k)^{2} \delta_{5}(\frac{1}{\omega k})\delta_{5}(\frac{1}{\omega^{2}k})}{\delta_{5}(k) \delta_{5}(\frac{1}{k})^{2}\delta_{5}(\omega^{2}k)}, 
	\\ \nonumber
& \mathcal{D}_{2}(k) = \frac{\delta_{1}(\omega k)^{2}\delta_{1}(\frac{1}{\omega^{2}k})\delta_{1}(\frac{1}{k})}{\delta_{1}(\omega^{2}k)\delta_{1}(\frac{1}{\omega k})^{2}\delta_{1}(k)} \frac{\delta_{2}(\frac{1}{k})\delta_{2}(\frac{1}{\omega k})}{\delta_{2}(\omega k) \delta_{2}(\frac{1}{\omega^{2}k})^{2}\delta_{2}(\omega^{2} k)} \frac{\delta_{3}(\omega^{2} k)^{2}  \delta_{3}(\frac{1}{\omega k})\delta_{3}(\frac{1}{\omega^{2} k})}{\delta_{3}(\frac{1}{k})^{2}\delta_{3}(\omega k)}  
	\\ \label{mathcalPdef}
& \hspace{1.5cm} \times \frac{\delta_{4}(\omega^{2}k) \delta_{4}(\frac{1}{\omega k})^{2}}{\delta_{4}(\omega k)^{2}\delta_{4}(\frac{1}{\omega^{2} k})\delta_{4}(\frac{1}{k})} \frac{\delta_{5}(\omega k) \delta_{5}(\frac{1}{\omega^{2}k})^{2}\delta_{5}(\omega^{2} k)}{\delta_{5}(\frac{1}{k}) \delta_{5}(\frac{1}{\omega k})}.
\end{align}
Define $\chi_{j}(\zeta,k)$, $j = 1, 2, 3$, and $\tilde{\chi}_{j}(\zeta,k)$, $j = 2,3,4,5$, by
\begin{align}
\chi_{1}(\zeta,k) =&\; - \frac{1}{2\pi i} \int_{i}^{\omega k_{4}}  \ln_{s}(k-s) d\ln(1+r_1(s)r_{2}(s)), \nonumber \\
\chi_{2}(\zeta,k) =&\; \frac{1}{2\pi i} \int_{\omega k_{4}}^{\omega^{2}k_{2}}  \ln_{s}(k-s) d\ln(1+r_1(s)r_{2}(s)), \nonumber \\
\chi_{3}(\zeta,k) =&\; \frac{1}{2\pi i} \int_{\omega k_{4}}^{\omega^{2}k_{2}}  \ln_{s}(k-s) d\ln(f(s)), \nonumber \\
\tilde{\chi}_{2}(\zeta,k) =&\; \frac{1}{2\pi i} \int_{\omega k_{4}}^{\omega^{2}k_{2}}  \tilde{\ln}_{s}(k-s) d\ln(1+r_1(s)r_{2}(s)), \nonumber \\
\tilde{\chi}_{3}(\zeta,k) =&\; \frac{1}{2\pi i} \int_{\omega k_{4}}^{\omega^{2}k_{2}}  \tilde{\ln}_{s}(k-s) d\ln(f(s)), \nonumber \\
\tilde{\chi}_{4}(\zeta,k) =&\;\frac{1}{2\pi i} \dashint_{\omega^{2}k_{2}}^{\omega}  \tilde{\ln}_{s}(k-s) d\ln(f(s)) \nonumber \\
:= &\; \frac{1}{2\pi i}\lim_{\epsilon \to 0_{+}} \bigg(  \int_{\omega^{2}k_{2}}^{e^{i(\frac{2\pi}{3}-\epsilon)}} \tilde{\ln}_{s}(k-s) d\ln(f(s)) - \tilde{\ln}_{\omega}(k-\omega)\ln(f(e^{i(\frac{2\pi}{3}-\epsilon)})) \bigg), \nonumber \\
\tilde{\chi}_{5}(\zeta,k) = &\; \frac{1}{2\pi i} \dashint_{\omega^{2}k_{2}}^{\omega} \tilde{\ln}_{s}(k-s) d\ln(f(\omega^{2}s)) \nonumber \\
:= &\; \frac{1}{2\pi i}\lim_{\epsilon \to 0_{+}} \bigg(  \int_{\omega^{2}k_{2}}^{e^{i(\frac{2\pi}{3}-\epsilon)}} \tilde{\ln}_{s}(k-s) d\ln(f(\omega^{2}s)) - \tilde{\ln}_{\omega}(k-\omega)\ln(f(e^{-i\epsilon})) \bigg), \label{def of chij}
\end{align}
where the paths follow $\partial \mathbb{D}$ in the counterclockwise direction. For $s \in \{e^{i\theta}\, | \, \theta \in [\frac{\pi}{2},\frac{2\pi}{3}]\}$, $k \mapsto \ln_{s}(k-s)=\ln |k-s|+i \arg_{s}(k-s)$ has a cut along $\{e^{i \theta} \, | \,  \theta \in [\frac{\pi}{2},\arg s]\}\cup(i,i\infty)$ and the branch is such that $\arg_{s}(1)=2\pi$. Also, for $s \in \{e^{i\theta} \,|\, \theta \in [\frac{\pi}{2},\frac{2\pi}{3}]\}$, $k \mapsto \tilde{\ln}_{s}(k-s):=\ln |k-s|+i \tilde{\arg}_{s}(k-s)$ has a cut along $\{e^{i \theta} \,|\, \theta \in [\arg s,\pi]\}\cup(-\infty,0)$, and satisfies $\tilde{\arg}_{s}(1)=0$. Regularized integrals are needed in the definitions of $\tilde{\chi}_{4},\tilde{\chi}_{5}$  because $f(1)=f(\omega)=0$ (see \cite[Lemma 2.13 $(i)$]{CLmain}). Define $\{\nu_{j}\}_{j=1}^{4}$, $\hat{\nu}_1$, and $\hat{\nu}_2$ by
\begin{align*}\nonumber
& \nu_1(k) = - \frac{1}{2\pi}\ln(1+r_{1}(\omega k)r_{2}(\omega k)), 
\qquad \nu_2(k) = - \frac{1}{2\pi}\ln(1+r_{1}(\omega^{2} k)r_{2}(\omega^{2} k)),  \\ 
& \nu_3(k) = - \frac{1}{2\pi}\ln(f(\omega k)),  \qquad
\nu_4(k) = - \frac{1}{2\pi}\ln(f(\omega^{2} k)),  \\
& \hat{\nu}_1(k) := \nu_3(k) - \nu_1(k), \qquad \hat{\nu}_2(k) = \nu_2(k) + \nu_3(k) - \nu_4(k).
\end{align*}
The following functions $d_{1,0}$ and $d_{2,0}$ appear in our final result:
\begin{align}
& d_{1,0}(\zeta,t) := e^{-\chi_{1}(\zeta,\omega k_{4}) - \tilde{\chi}_{2}(\zeta,\omega k_{4}) + 2\tilde{\chi}_{3}(\zeta,\omega k_{4}) } \nonumber 	\\ 
& \hspace{1.6cm} \times e^{i(\nu_{2}(k_{2})-2\nu_{4}(k_{2}))\tilde{\ln}_{\omega^{2}k_{2}}(\omega k_{4}-\omega^{2}k_{2})} t^{-i \hat{\nu}_{1}(k_{4})} z_{1,\star}^{-2i\hat{\nu}_{1}(k_{4})} \mathcal{D}_{1}(\omega k_{4}),  \label{def of d10} \\
& d_{2,0}(\zeta,t) := e^{-2\chi_{2}(\zeta,\omega^{2} k_{2}) + \chi_{3}(\zeta,\omega^{2} k_{2}) - \tilde{\chi}_{4}(\zeta,\omega^{2} k_{2}) + 2\tilde{\chi}_{5}(\zeta,\omega^{2} k_{2}) } \nonumber 
	\\
& \hspace{1.6cm} \times e^{i(\nu_{3}(k_{4})-2\nu_{1}(k_{4}))\ln_{\omega k_{4}}(\omega^{2} k_{2}-\omega k_{4})} t^{-i \hat{\nu}_{2}(k_{2})} z_{2,\star}^{-2i\hat{\nu}_{2}(k_{2})} \mathcal{D}_{2}(\omega^{2} k_{2}), \label{def of d20}
\end{align}
where $z_{1,\star}$, $z_{2,\star}$ are given by
\begin{align*}
& z_{1,\star} = z_{1,\star}(\zeta) := \sqrt{2}e^{\frac{\pi i}{4}} \sqrt{\omega \frac{4-3k_{4} \zeta - k_{4}^{3} \zeta}{4k_{4}^{4}}}, \qquad -i\omega k_{4}z_{1,\star}>0, \\
& z_{2,\star} = z_{2,\star}(\zeta) := \sqrt{2}e^{\frac{\pi i}{4}} \sqrt{-\omega^{2} \frac{4-3k_{2} \zeta - k_{2}^{3} \zeta}{4k_{2}^{4}}}, \qquad -i\omega^{2} k_{2}z_{2,\star}>0,
\end{align*}
and the branches for the complex powers $z_{j,\star}^{\alpha} = e^{\alpha \ln z_{j,\star}}$ are fixed by
\begin{align*}
& \ln z_{1,\star} = \ln |z_{1,\star}| + i \arg z_{1,\star} = \ln |z_{1,\star}| + i \big( \tfrac{\pi}{2}-\arg (\omega k_{4}) \big), 
	\\
& \ln z_{2,\star} = \ln |z_{2,\star}| + i \arg z_{2,\star} = \ln |z_{2,\star}| + i \big( \tfrac{\pi}{2}-\arg (\omega^{2} k_2) \big),
\end{align*}
with $ \arg (\omega k_{4}) \in (\tfrac{\pi}{2},\tfrac{2\pi}{3})$ and $\arg (\omega^{2} k_2) \in (\tfrac{\pi}{2},\tfrac{2\pi}{3})$. Define also
\begin{align*}
& q_{3} = |\tilde{r}(\tfrac{1}{k_{4}})|^{\frac{1}{2}}r_{1}(\tfrac{1}{k_{4}}), & & q_{2} = \tilde{r}(\omega^{2}k_{2})^{\frac{1}{2}}r_{1}(\omega^{2}k_{2}), & & q_{5} = |\tilde{r}(\omega k_{2})|^{\frac{1}{2}}r_{1}(\omega k_{2}), & & q_{6} = |\tilde{r}(\tfrac{1}{k_{2}})|^{\frac{1}{2}}r_{1}(\tfrac{1}{k_{2}}),
\end{align*}
where
\begin{align}\label{def of tilde r}
\tilde{r}(k):=\frac{\omega^{2}-k^{2}}{1-\omega^{2}k^{2}}, \qquad k \in \mathbb{C}\setminus \{\omega^{2},-\omega^{2}\}.
\end{align}

We now state our main result. The Gamma function $\Gamma(k)$ appears in the statement, as well as the square roots of $\hat{\nu}_{1}(k_{4})$ and $\hat{\nu}_{2}(k_{2})$. These square roots are well-defined and $\geq 0$ thanks to the inequalities $\hat{\nu}_{1}(k_{4})\geq 0$ and $\hat{\nu}_{2}(k_{2}) \geq 0$, which follow from \cite[Lemma 2.13]{CLmain} and the fact that $\arg k_{2} \in (-\frac{3\pi}{4},-\frac{2\pi}{3})$ and $\arg k_{4} \in (-\frac{\pi}{6},0)$.

\begin{theorem}[Asymptotics in Sector IV]\label{asymptoticsth}
Let $u_0,u_1 \in \mathcal{S}(\R)$ be real-valued and such that $\int_{\mathbb{R}}u_{1} dx =0$. Let $v_{0}(x) = \int_{-\infty}^{x}u_{1}(x')dx'$ and suppose Assumptions $(\ref{solitonassumption})$--$(\ref{nounstablemodesassumption})$ are fulfilled. Let $\mathcal{I}$ be a fixed compact subset of $(\smash{\frac{1}{\sqrt{3}}}, 1)$. Then the global solution $u(x,t)$ of the initial value problem for (\ref{boussinesq}) with initial data $u_0, u_1$ enjoys the following asymptotics as $t \to \infty$:
\begin{align}\label{asymp for u sector V}
& u(x,t) =
 \frac{A_{1}(\zeta)}{\sqrt{t}}   \cos \alpha_{1}(\zeta,t) +\frac{A_{2}(\zeta)}{\sqrt{t}} \cos \alpha_{2}(\zeta,t) + O\bigg(\frac{\ln t}{t}\bigg), 
\end{align}
uniformly for $\zeta:= \frac{x}{t} \in \mathcal{I}$, where 
\begin{align}\nonumber
& A_{1}(\zeta) := \frac{4\sqrt{3}\sqrt{\hat{\nu}_1(k_4)} \im k_4}{-i\omega k_{4} z_{1,\star}|\tilde{r}(\frac{1}{k_{4}})|^{\frac{1}{2}}}\sin(\arg(\omega k_{4})),    
	\\ \nonumber
& A_{2}(\zeta) := \frac{-4\sqrt{3}\sqrt{\hat{\nu}_2(k_2)}|\tilde{r}(\frac{1}{k_{2}})|^{\frac{1}{2}} \im k_2}{-i\omega^{2} k_{2} z_{2,\star}}\sin(\arg(\omega^{2} k_{2})), 
	\\ \nonumber 
& \alpha_{1}(\zeta,t) := \frac{3\pi}{4}+\arg q_{3}+\arg\Gamma(i\hat{\nu}_1(k_4))+\arg d_{1,0} + \arg \tfrac{\mathcal{P}(\omega k_{4})}{\mathcal{P}(\omega^{2} k_{4})} -t\, \im \Phi_{31}(\zeta,\omega k_{4}),
	\\ \nonumber
& \alpha_{2}(\zeta,t) := \frac{3\pi}{4}-\arg (q_{6}-q_{2}q_{5})+\arg\Gamma(i\hat{\nu}_2(k_2))+\arg d_{2,0} + \arg \tfrac{\mathcal{P}(\omega^{2} k_{2})}{\mathcal{P}(\omega k_{2})} -t \, \im \Phi_{32}(\zeta,\omega^{2} k_{2}).
\end{align}
\end{theorem}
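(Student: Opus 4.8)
The plan is to carry out a Deift--Zhou steepest descent analysis of the row-vector RH problem for $n(x,t,k)$ stated in Section \ref{RHnsec}, tracking the soliton data $\mathsf{Z}$ and $\{c_{k_0}\}$ throughout. First I would use the symmetries of the RH problem to reduce attention to a fundamental domain, and locate the relevant saddle points of the phase functions $\Phi_{ij}(\zeta,k)$; the four saddles $\{k_j\}_{j=1}^4$ in \eqref{def of kj}, together with their images under the $k\mapsto \omega k$ and $k \mapsto 1/k$ symmetries, govern the analysis, and on the unit circle the points $\omega k_4$ and $\omega^2 k_2$ are the dominant ones. Since $\zeta \in (\frac{1}{\sqrt{3}},1)$ with $\zeta<1$, all poles in $\mathsf{Z}$ correspond to solitons travelling faster than the observation ray, so the residue conditions are exponentially small along $x = \zeta t$; the plan is to absorb the poles into the global parametrix via the Blaschke-type product $\mathcal{P}(k)$ in \eqref{mathcalPdef}, which is why $\mathcal{P}$ enters the phases $\alpha_1,\alpha_2$ but the solitons contribute no order-$1$ term.

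The sequence of transformations follows the outline in the introduction: (1) an opening/conjugation step (Sections \ref{nton1sec}, \ref{n1ton2sec}) in which the jump is factorized and the scalar functions $\delta_j$ in \eqref{IVdeltadef} are introduced to remove the diagonal part of the jump near the saddles and to handle the non-analytic reflection coefficients, using Assumption \ref{nounstablemodesassumption} to kill the segment $[0,i]$ and Assumption \ref{originassumption} to control the behavior at $k=\pm1$; (2) introduction of the global parametrix (Section \ref{globalparametrixsec}) built from $\mathcal{P}$ and the $\delta$'s, solving the model problem with no jump except the Blaschke factors; (3) two further deformations (Sections \ref{n2ton3sec}, \ref{n3ton4sec}) opening lenses so that the jumps decay exponentially away from the saddles $\omega k_4$ and $\omega^2 k_2$; (4) construction of local parametrices at these two saddles (Sections \ref{localparametrixsec1}, \ref{localparametrixsec2}) in terms of parabolic cylinder / confluent hypergeometric functions — this is the source of the $\Gamma(i\hat\nu_1(k_4))$, $\Gamma(i\hat\nu_2(k_2))$, the $t^{-i\hat\nu}$ powers, and the $z_{j,\star}$ scalings; (5) matching to get a small-norm RH problem (Section \ref{n3tonhatsec}) with error $O(t^{-1/2})$ from the local parametrices and $O(\ln t/t)$ overall; (6) extraction of $u(x,t)$ from the $k\to\infty$ (or $k\to\kappa_j$) expansion of $n$ via the reconstruction formula of \cite{CLmain} (Section \ref{usec}).

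The two oscillatory terms in \eqref{asymp for u sector V} arise as the contributions of the two saddle points: the coefficient $A_j(\zeta)$ comes from the off-diagonal entry of the relevant parabolic-cylinder parametrix evaluated at the saddle, assembled from $q_3$ or $q_6-q_2q_5$, the curvature factor $z_{j,\star}$, and $\operatorname{Im} k_j$; the phase $\alpha_j(\zeta,t)$ collects the stationary phase $-t\operatorname{Im}\Phi_{i1}$, the constant $\frac{3\pi}{4}$ and $\arg\Gamma(i\hat\nu)$ from the parametrix, and the arguments of $d_{j,0}$ and of the ratio of Blaschke products $\mathcal{P}(\omega k_4)/\mathcal{P}(\omega^2 k_4)$ etc. The quantities $d_{1,0},d_{2,0}$ in \eqref{def of d10}--\eqref{def of d20} are exactly the values at the saddles of the product of $\delta_j$'s and $\chi$'s coming from the conjugation step, combined with the cross-interaction terms (e.g. $e^{i(\nu_2(k_2)-2\nu_4(k_2))\tilde\ln_{\omega^2 k_2}(\omega k_4 - \omega^2 k_2)}$) that encode how the two local parametrices see each other's jumps — these are the soliton-free analogue of the radiation self-interaction, already present in the solitonless case of \cite{CLmain}, here modified only by the extra $\mathcal{P}$ factor.

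The main obstacle will be the bookkeeping in the conjugation step: there are three phase differences $\Phi_{21},\Phi_{31},\Phi_{32}$ on a contour with nine rays plus the unit circle and a six-fold symmetry, so getting the signature tables of $\operatorname{Re}\Phi_{ij}$ right on every subregion — and hence choosing the correct factorization and lens directions on each piece of $\Gamma$ and $\partial\D$ — is delicate, as is verifying that the scalar functions $\delta_j$ have the claimed analyticity and boundary behavior (including the regularized integrals needed because $f(1)=f(\omega)=0$). A secondary difficulty is the precise matching of branch choices (the $\ln_s$ versus $\tilde\ln_s$ cuts, the arguments of $z_{j,\star}$, and the various $\arg(\omega k_4)\in(\frac{\pi}{2},\frac{2\pi}{3})$ conventions) so that the $\Gamma$-function arguments and the phases $\alpha_j$ come out with the stated real constants; this is where most of the care, though not the conceptual difficulty, resides. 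Given the close parallel with our earlier sector analyses, I expect the estimates on the small-norm problem and the final reconstruction of $u$ to be routine.
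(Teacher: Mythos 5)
Your plan follows the paper's own route step for step (the chain $n \to n^{(1)} \to \cdots \to \hat n$, the $\delta_j$-conjugation, the $\mathcal{P}$-modified global parametrix, parabolic-cylinder local parametrices at $\omega k_4$ and $\omega^2 k_2$, small-norm estimates, reconstruction via $n_3^{(1)}$), and your identification of where each ingredient of $A_j$ and $\alpha_j$ comes from is accurate.

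There is, however, one genuine error in your treatment of the solitons. You assert that because $\zeta<1$ and all solitons travel with speed greater than $1$, ``the residue conditions are exponentially small along $x=\zeta t$.'' This is false for the right-moving solitons: for $k_0\in\mathsf{Z}$ with $\re k_0>0$, the exponentials $e^{-\theta_{31}(k_0)}$ and $e^{\theta_{32}(\bar k_0)}$ (resp. $e^{-\theta_{21}(k_0)}$ for $k_0\in(1,\infty)$) appearing in (\ref{nresiduesk0})--(\ref{nresiduesk0real}) are exponentially \emph{large} in Sector IV, as one reads off from the signature tables in Figure \ref{II fig: Re Phi 21 31 and 32 for zeta=0.7}; only the left-moving solitons ($\re k_0<0$) have residues that are already small. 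The entire point of putting $\mathcal{P}$ into $\Delta_{33}$ is that its zeros and poles at the points of $\hat{\mathsf{Z}}$ with positive real part swap which entry of $n^{(3)}$ carries the pole and invert the residue coefficient, replacing $c_{k_0}e^{-\theta_{31}(k_0)}$ by $c_{k_0}^{-1}e^{+\theta_{31}(k_0)}$ (Lemma \ref{n3residuelemma}(a)--(b)), which is what makes those residues decay. Correspondingly, $\mathcal{P}$ must contain only the factors associated with $\mathsf{Z}\cap D_{\mathrm{reg}}^R$ and $\mathsf{Z}\cap(1,\infty)$; if one ``absorbed'' all poles into the parametrix, including the left-movers, their already-small residues would be turned into large ones and the small-norm argument would fail. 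Once the residue conditions have been inverted for the right-movers and left untouched for the left-movers, the $n^{(3)}\to n^{(4)}$ step converts them into jumps on small circles that are $O(t^{-N})$, and the rest of your plan goes through as described.
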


The effect of the solitons on the asymptotic behavior (\ref{asymp for u sector V}) of $u(x,t)$ is encoded in the phase shifts $\arg \tfrac{\mathcal{P}(\omega k_{4})}{\mathcal{P}(\omega^{2} k_{4})}$ and $\arg \tfrac{\mathcal{P}(\omega^{2} k_{2})}{\mathcal{P}(\omega k_{2})}$ that appear in the definitions of $\alpha_1$ and $\alpha_2$, respectively. If there are no solitons (i.e., if the set $\mathsf{Z}$ is empty), then these phase shifts are absent, and the formula (\ref{asymp for u sector V}) reduces to the formula for Sector IV given in \cite{CLmain}. Indeed, the only place where the set $\mathsf{Z}$ appears in the above definitions is in the definition (\ref{mathcalPdef}) of $\mathcal{P}(k)$, and if $\mathsf{Z}$ is empty, then $\mathcal{P}(k)$ is identically equal to $1$. Thus, these phase shifts describe the leading order soliton-radiation interaction in Sector IV.
We observe that the products in the definition of $\mathcal{P}$ only run over the zeros in $\mathsf{Z}$ with positive real part; this corresponds to the fact that only right-moving solitons generate a phase shift in Sector IV.

\section{The RH problem for $n$}\label{RHnsec}
We will prove Theorem \ref{asymptoticsth} by analyzing a row-vector RH problem, whose solution is denoted by $n$. In what follows, we recall this RH problem from \cite{CLscatteringsolitons}. Let $\theta_{ij}(x,t,k) = t \, \Phi_{ij}(\zeta,k)$. For $j = 1, \dots, 6$, we write $\Gamma_j = \Gamma_{j'} \cup \Gamma_{j''}$, where $\Gamma_{j'} = \Gamma_j \setminus \D$ and $\Gamma_{j''} := \Gamma_j \setminus \Gamma_{j'}$ with $\Gamma_j$ as in Figure \ref{fig: Dn}. The jump matrix $v(x,t,k)$ is defined for $k \in \Gamma$ by
\begin{align}
& v_{1'} = \begin{pmatrix}
1 & -r_{1}(k)e^{-\theta_{21}} & 0 \\
0 & 1 & 0 \\
0 & 0 & 1
\end{pmatrix}, \; v_{1''} = \begin{pmatrix}
1 & 0 & 0 \\
r_{1}(\frac{1}{k})e^{\theta_{21}} & 1 & 0 \\
0 & 0 & 1
\end{pmatrix}, \; v_{2'} = \begin{pmatrix}
1 & 0 & 0 \\
0 & 1 & -r_{2}(\frac{1}{\omega k})e^{-\theta_{32}} \\
0 & 0 & 1
\end{pmatrix}, \nonumber \\
& v_{2''} = \begin{pmatrix}
1 & 0 & 0 \\
0 & 1 & 0 \\
0 & r_{2}(\omega k)e^{\theta_{32}} & 1
\end{pmatrix}, \;  v_{3'} = \begin{pmatrix}
1 & 0 & 0 \\
0 & 1 & 0 \\
-r_{1}(\omega^{2}k)e^{\theta_{31}} & 0 & 1
\end{pmatrix}, \; v_{3''} = \begin{pmatrix}
1 & 0 & r_{1}(\frac{1}{\omega^{2}k})e^{-\theta_{31}} \\
0 & 1 & 0 \\
0 & 0 & 1
\end{pmatrix}, \nonumber \\
& v_{4'} = \begin{pmatrix}
1 & -r_{2}(\frac{1}{k})e^{-\theta_{21}} & 0 \\
0 & 1 & 0 \\
0 & 0 & 1
\end{pmatrix}, \; v_{4''} = \begin{pmatrix}
1 & 0 & 0 \\
r_{2}(k)e^{\theta_{21}} & 1 & 0 \\
0 & 0 & 1
\end{pmatrix}, \; v_{5'} = \begin{pmatrix}
1 & 0 & 0 \\
0 & 1 & -r_{1}(\omega k)e^{-\theta_{32}} \\
0 & 0 & 1
\end{pmatrix}, \nonumber \\
& v_{5''} = \begin{pmatrix}
1 & 0 & 0 \\
0 & 1 & 0 \\
0 & r_{1}(\frac{1}{\omega k})e^{\theta_{32}} & 1
\end{pmatrix}, \;  v_{6'} = \begin{pmatrix}
1 & 0 & 0 \\
0 & 1 & 0 \\
-r_{2}(\frac{1}{\omega^{2} k})e^{\theta_{31}} & 0 & 1
\end{pmatrix}, \; v_{6''} = \begin{pmatrix}
1 & 0 & r_{2}(\omega^{2}k)e^{-\theta_{31}} \\
0 & 1 & 0 \\
0 & 0 & 1
\end{pmatrix}, \nonumber \\
& v_{7} = \begin{pmatrix}
1 & -r_{1}(k)e^{-\theta_{21}} & r_{2}(\omega^{2}k)e^{-\theta_{31}} \\
-r_{2}(k)e^{\theta_{21}} & 1+r_{1}(k)r_{2}(k) & \big(r_{2}(\frac{1}{\omega k})-r_{2}(k)r_{2}(\omega^{2}k)\big)e^{-\theta_{32}} \\
r_{1}(\omega^{2}k)e^{\theta_{31}} & \big(r_{1}(\frac{1}{\omega k})-r_{1}(k)r_{1}(\omega^{2}k)\big)e^{\theta_{32}} & f(\omega^{2}k)
\end{pmatrix}, \nonumber \\
& v_{8} = \begin{pmatrix}
f(k) & r_{1}(k)e^{-\theta_{21}} & \big(r_{1}(\frac{1}{\omega^{2} k})-r_{1}(k)r_{1}(\omega k)\big)e^{-\theta_{31}} \\
r_{2}(k)e^{\theta_{21}} & 1 & -r_{1}(\omega k) e^{-\theta_{32}} \\
\big( r_{2}(\frac{1}{\omega^{2}k})-r_{2}(\omega k)r_{2}(k) \big)e^{\theta_{31}} & -r_{2}(\omega k) e^{\theta_{32}} & 1+r_{1}(\omega k)r_{2}(\omega k)
\end{pmatrix}, \nonumber \\
& v_{9} = \begin{pmatrix}
1+r_{1}(\omega^{2}k)r_{2}(\omega^{2}k) & \big( r_{2}(\frac{1}{k})-r_{2}(\omega k)r_{2}(\omega^{2} k) \big)e^{-\theta_{21}} & -r_{2}(\omega^{2}k)e^{-\theta_{31}} \\
\big(r_{1}(\frac{1}{k})-r_{1}(\omega k) r_{1}(\omega^{2} k)\big)e^{\theta_{21}} & f(\omega k) & r_{1}(\omega k)e^{-\theta_{32}} \\
-r_{1}(\omega^{2}k)e^{\theta_{31}} & r_{2}(\omega k) e^{\theta_{32}} & 1
\end{pmatrix}, \label{vdef}
\end{align}
where $v_j, v_{j'}, v_{j''}$ are the restrictions of $v$ to $\Gamma_{j}$, $\Gamma_{j'}$, and $\Gamma_{j''}$, respectively. Let $\Gamma_{\star} = \{i\kappa_j\}_{j=1}^6 \cup \{0\}$ be the set of intersection points of $\Gamma$. 
Let $\mathsf{Z}$ be as in Assumption $(\ref{solitonassumption})$ and let
\begin{align*}
\hat{\mathsf{Z}} := \mathsf{Z} \cup \mathsf{Z}^{*} \cup \omega \mathsf{Z} \cup \omega \mathsf{Z}^{*} \cup \omega^{2} \mathsf{Z} \cup \omega^{2} \mathsf{Z}^{*} \cup \mathsf{Z}^{-1} \cup \mathsf{Z}^{-*} \cup \omega \mathsf{Z}^{-1} \cup \omega \mathsf{Z}^{-*} \cup \omega^{2} \mathsf{Z}^{-1} \cup \omega^{2} \mathsf{Z}^{-*},
\end{align*}
with $\mathsf{Z}^{-1}=\{k_{0}^{-1} \,|\, k_{0}\in \mathsf{Z}\}$, $\mathsf{Z}^{*}=\{\bar{k}_{0} \,|\, k_{0}\in \mathsf{Z}\}$, and $\mathsf{Z}^{-*}=\{\bar{k}_{0}^{-1} \,|\, k_{0}\in \mathsf{Z}\}$.

\begin{RHproblem}[RH problem for $n$]\label{RHn}
Find a $1 \times 3$-row-vector valued function $n(x,t,k)$ with the following properties:
\begin{enumerate}[$(a)$]
\item\label{RHnitema} $n(x,t,\cdot) : \C \setminus (\Gamma\cup \hat{\mathsf{Z}}) \to \mathbb{C}^{1 \times 3}$ is analytic.

\item\label{RHnitemb} The limits of $n(x,t,k)$ as $k$ approaches $\Gamma \setminus \Gamma_\star$ from the left and right exist, are continuous on $\Gamma \setminus \Gamma_\star$, and are denoted by $n_+$ and $n_-$, respectively. Furthermore, they are related by
\begin{align}\label{njump}
  n_+(x,t,k) = n_-(x, t, k) v(x, t, k) \qquad \text{for} \quad k \in \Gamma \setminus \Gamma_\star.
\end{align}

\item\label{RHnitemc} $n(x,t,k) = O(1)$ as $k \to k_{\star} \in \Gamma_\star$.

\item\label{RHnitemd} For $k \in \C \setminus ( \Gamma \cup \hat{\mathsf{Z}})$, $n$ obeys the symmetries
\begin{align}\label{nsymm}
n(x,t,k) = n(x,t,\omega k)\mathcal{A}^{-1} = n(x,t,k^{-1}) \mathcal{B},
\end{align}
where $\mathcal{A}$ and $\mathcal{B}$ are the matrices defined by 
\begin{align}\label{def of Acal and Bcal}
\mathcal{A} := \begin{pmatrix}
0 & 0 & 1 \\
1 & 0 & 0 \\
0 & 1 & 0
\end{pmatrix} \qquad \mbox{ and } \qquad \mathcal{B} := \begin{pmatrix}
0 & 1 & 0 \\
1 & 0 & 0 \\
0 & 0 & 1
\end{pmatrix}.
\end{align}

\item\label{RHniteme} $n(x,t,k) = (1,1,1) + O(k^{-1})$ as $k \to \infty$.

\item\label{RHnitemf} 
At each point of $\hat{\mathsf{Z}}$, one entry of $n$ has (at most) a simple pole while two entries are analytic. The following residue conditions hold at the points in $\hat{\mathsf{Z}}$: for each $k_{0}\in \mathsf{Z}\setminus \mathbb{R}$, 
\begin{align}\nonumber
& \underset{k = k_0}{\res} n_3(k) = c_{k_{0}}e^{-\theta_{31}(k_0)} n_1(k_0), & & \underset{k = \omega k_0}{\res} n_2(k) = \omega c_{k_{0}}e^{-\theta_{31}(k_0)} n_3(\omega k_0), 
	\\ \nonumber
& \underset{k = \omega^2 k_0}{\res} n_1(k) = \omega^2 c_{k_{0}}e^{-\theta_{31}(k_0)} n_2(\omega^2 k_0), & & \underset{k = k_0^{-1}}{\res} n_3(k) = - k_0^{-2} c_{k_{0}}e^{-\theta_{31}(k_0)} n_2(k_0^{-1}),	
	\\\nonumber
&  \underset{k = \omega^2k_0^{-1}}{\res} n_1(k) = -\tfrac{\omega^2}{k_0^{2}} c_{k_{0}}e^{-\theta_{31}(k_0)} n_3(\omega^2 k_0^{-1}), & & \underset{k = \omega k_0^{-1}}{\res} n_2(k) 
= -\tfrac{\omega}{k_0^{2}} c_{k_{0}}e^{-\theta_{31}(k_0)} n_1(\omega k_0^{-1}), 
	 \\ \nonumber
& \underset{k = \bar{k}_0}{\res} n_2(k) = d_{k_0} e^{\theta_{32}(\bar{k}_0)} n_3(\bar{k}_0), & & \underset{k = \omega \bar{k}_0}{\res} n_1(k) = \omega d_{k_0} e^{\theta_{32}(\bar{k}_0)} n_2( \omega \bar{k}_0),
	\\ \nonumber
& \underset{k = \omega^2 \bar{k}_0}{\res} n_3(k) = \omega^2 d_{k_0} e^{\theta_{32}(\bar{k}_0)} n_1(\omega^2 \bar{k}_0), & & \underset{k = \bar{k}_0^{-1}}{\res} n_1(k) = - \bar{k}_0^{-2} d_{k_0} e^{\theta_{32}(\bar{k}_0)} n_3(\bar{k}_0^{-1}),	
	\\ \label{nresiduesk0}
&  \underset{k = \omega^2 \bar{k}_0^{-1}}{\res} n_2(k) = -\tfrac{\omega^2}{\bar{k}_0^{2}} d_{k_0} e^{\theta_{32}(\bar{k}_0)} n_1( \omega^2 \bar{k}_0^{-1}), & & \underset{k = \omega \bar{k}_0^{-1}}{\res} n_3(k) = -\tfrac{\omega}{\bar{k}_0^{2}} d_{k_0} e^{\theta_{32}(\bar{k}_0)} n_2(\omega \bar{k}_0^{-1}), 
\end{align}
and, for each $k_{0}\in \mathsf{Z}\cap \mathbb{R}$, 
\begin{align}\nonumber
& \underset{k = k_0}{\res} n_2(k) = c_{k_0} e^{-\theta_{21}(k_0)} n_1(k_0), & & \underset{k = \omega k_0}{\res} n_1(k) = \omega c_{k_0} e^{-\theta_{21}(k_0)} n_3(\omega k_0), 
	\\  \nonumber
& \underset{k = \omega^2 k_0}{\res} n_3(k) = \omega^2 c_{k_0} e^{-\theta_{21}(k_0)} n_2(\omega^2 k_0), & & \underset{k = k_0^{-1}}{\res} n_1(k) = - k_0^{-2} c_{k_0} e^{-\theta_{21}(k_0)} n_2(k_0^{-1}),		
	\\ \label{nresiduesk0real}
&  \underset{k = \omega^2k_0^{-1}}{\res} n_2(k) = -\tfrac{\omega^2}{k_0^{2}} c_{k_0} e^{-\theta_{21}(k_0)} n_3(\omega^2 k_0^{-1}), & & \underset{k = \omega k_0^{-1}}{\res} n_3(k) 
= -\tfrac{\omega}{k_0^{2}} c_{k_0} e^{-\theta_{21}(k_0)} n_1(\omega k_0^{-1}),
\end{align}
where the $(x,t)$-dependence of $n$ and of $\theta_{ij}$ has been omitted for conciseness and the complex constants $d_{k_0}$ are defined by
\begin{align}\label{dk0def}
d_{k_{0}} := \frac{\bar{k}_0^2-1}{\omega^2 (\omega^2 - \bar{k}_0^2)} \bar{c}_{k_0}, \qquad k_{0}\in \mathsf{Z}\setminus \mathbb{R}.
\end{align}
\end{enumerate}
\end{RHproblem}
If $k_0 \in \mathsf{Z}$ is such that $c_{k_0} = 0$, then the pole of $n$ at $k_0$ is removable so $k_0$ can be removed from $\mathsf{Z}$ without affecting RH problem \ref{RHn}. In the rest of this paper, we will therefore assume that $c_{k_0} \neq 0$ for all $k_0 \in \mathsf{Z}$.

\section{Brief overview of the proof}\label{overviewsec}
Under the assumptions of Theorem \ref{asymptoticsth}, RH problem \ref{RHn} has a unique solution $n(x,t,k)$ for each $(x,t) \in \R \times [0,\infty)$, the function
$$n_{3}^{(1)}(x,t) := \lim_{k\to \infty} k (n_{3}(x,t,k) -1)$$ 
is well-defined and smooth for $(x,t) \in \R \times [0,\infty)$, and
\begin{align}\label{recoveruvn}
u(x,t) := -i\sqrt{3}\frac{\partial}{\partial x}n_{3}^{(1)}(x,t)
\end{align}
is a Schwartz class solution of (\ref{boussinesq}) on $\R \times [0,\infty)$ with initial data $u_{0},u_{1}$ \cite{CLscatteringsolitons}.
By using Deift--Zhou steepest descent arguments, we will derive an asymptotic formula for $n_{3}^{(1)}$. Substituting this formula into (\ref{recoveruvn}), we will obtain the formula of Theorem \ref{asymptoticsth}.

The steepest descent analysis of RH problem \ref{RHn} involves the saddle points of the three phase functions $\Phi_{21}$, $\Phi_{31}$, and $\Phi_{32}$ whose signature tables are displayed in Figure \ref{II fig: Re Phi 21 31 and 32 for zeta=0.7}.
The saddle points $\{k_{j}\}_{j=1}^{4}$ of $\Phi_{21}$ are given in (\ref{def of kj}).
Using the relations
\begin{align}\label{relations between the different Phi}
\Phi_{31}(\zeta,k) = - \Phi_{21}(\zeta,\omega^{2}k), \qquad \Phi_{32}(\zeta, k) = \Phi_{21}(\zeta, \omega k),
\end{align}
we see that $\{\omega k_{j}\}_{j=1}^{4}$ are the saddle points of $\Phi_{31}$ and that $\{\omega^{2} k_{j}\}_{j=1}^{4}$ are the saddle points of $\Phi_{32}$. It turns out that the main contributions to the long-time asymptotics of $u(x,t)$ in Sector IV are generated by a global parametrix $\Delta^{-1}$ and from twelve local parametrices near the saddle points $\{k_{j},\omega k_{j},\omega^{2}k_{j}\}_{j=1}^{4}$.

\begin{figure}[h]
\begin{center}
\begin{tikzpicture}[master]
\node at (0,0) {\includegraphics[width=4.5cm]{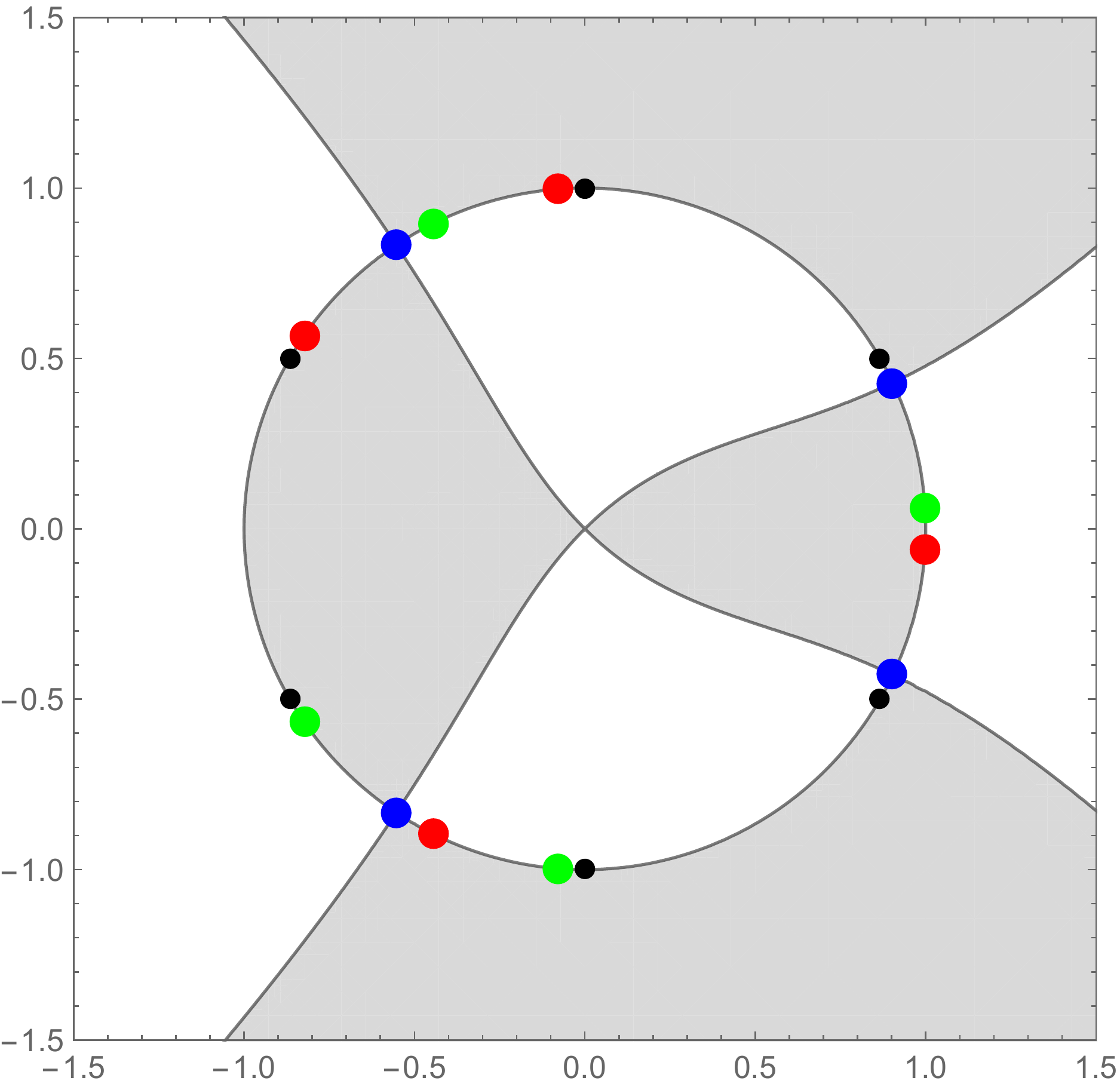}};

\node at (-0.87,1.27) {\tiny $k_1$};
\node at (-0.87,-1.06) {\tiny $k_2$};
\node at (1.55,0.58) {\tiny $k_3$};
\node at (1.55,-0.5) {\tiny $k_4$};

\end{tikzpicture} \hspace{0.1cm} \begin{tikzpicture}[slave]
\node at (0,0) {\includegraphics[width=4.5cm]{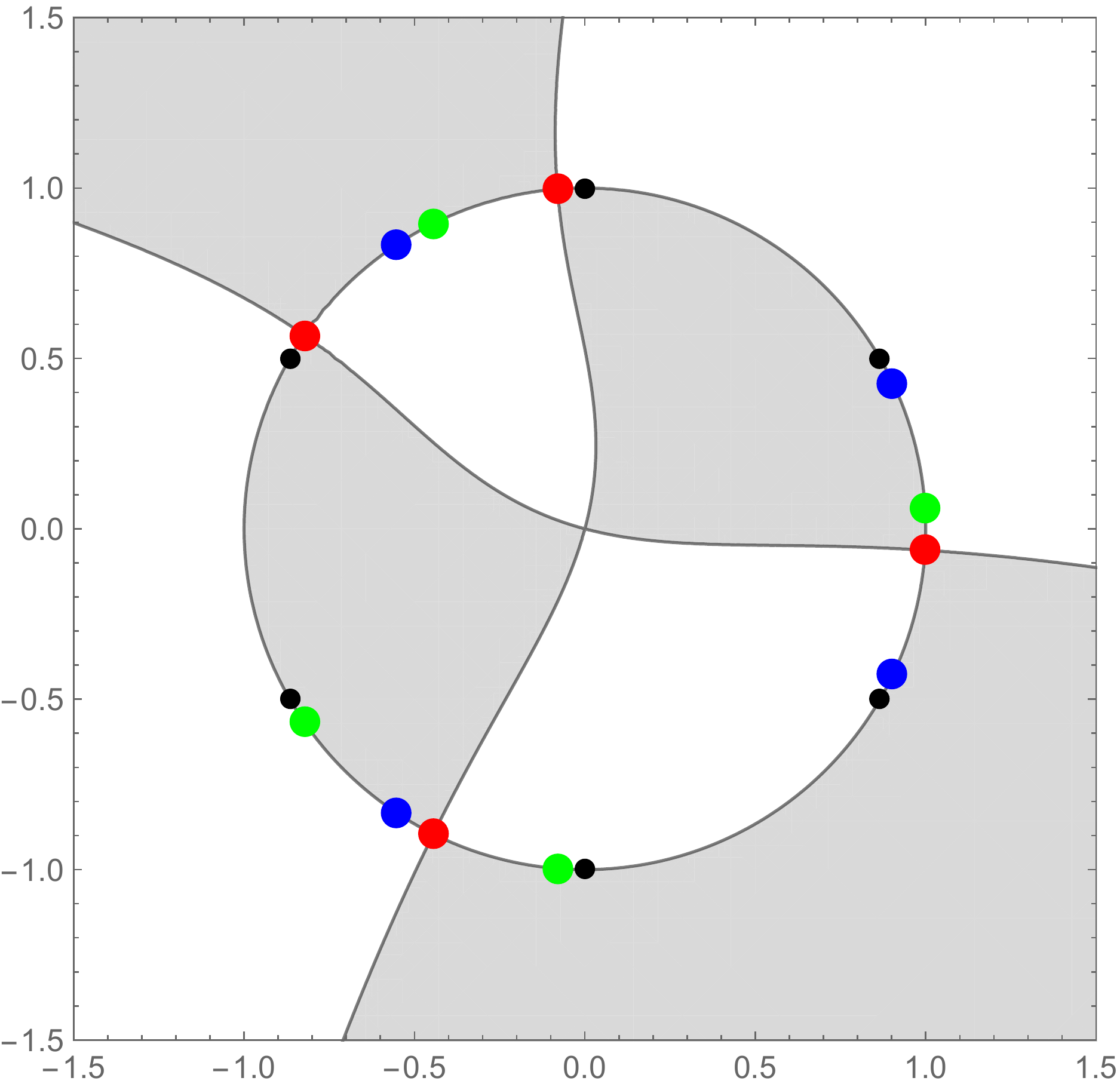}};

\node at (-.37,-1.35) {\tiny $\omega k_1$};
\node at (1.75,-0.15) {\tiny $\omega k_2$};
\node at (-1.15,1.08) {\tiny $\omega k_3$};
\node at (-0.05,1.6) {\tiny $\omega k_4$};
\end{tikzpicture} \hspace{0.1cm} \begin{tikzpicture}[slave]
\node at (0,0) {\includegraphics[width=4.5cm]{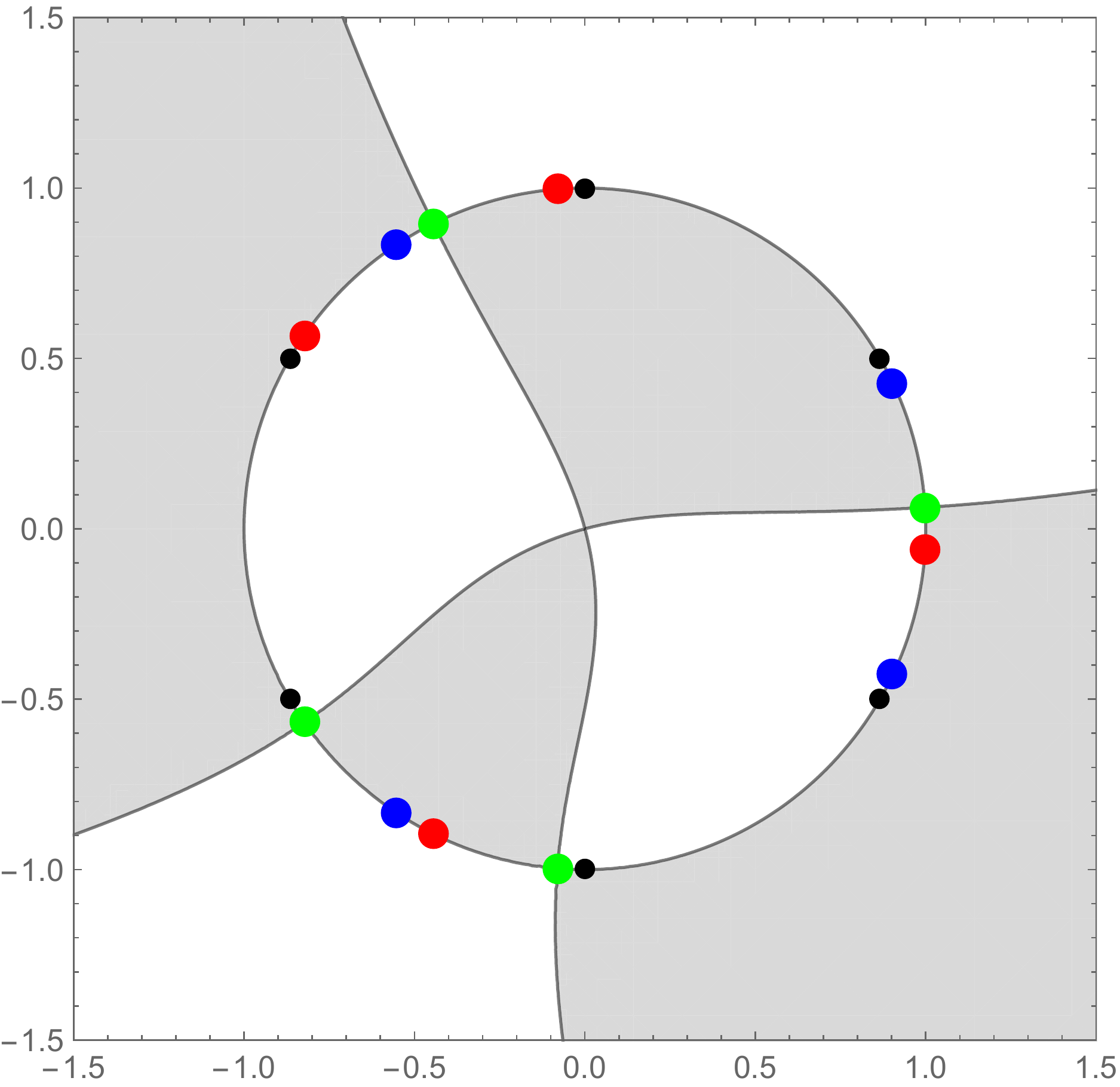}};

\node at (1.8,.3) {\tiny $\omega^2 k_1$};
\node at (-.6,1.5) {\tiny $\omega^2 k_2$};
\node at (-0.05,-1.5) {\tiny $\omega^2 k_3$};
\node at (-1.4,-.7) {\tiny $\omega^2 k_4$};
\end{tikzpicture}
\end{center}
\begin{figuretext}
\label{II fig: Re Phi 21 31 and 32 for zeta=0.7} From left to right: The signature tables for $\Phi_{21}$, $\Phi_{31}$, and $\Phi_{32}$ in Sector \IV. 
In all images, the shaded regions correspond to $\{k \, | \, \re \Phi_{ij}>0\}$ and the white regions to $\{k \, | \, \re \Phi_{ij}<0\}$. The points $k_{1},k_{2}, k_{3}, k_{4}$ are represented in blue, $\omega k_{1},\omega k_{2},\omega k_{3},\omega k_{4}$ in red, and $\omega^{2} k_{1},\omega^{2} k_{2},\omega^{2} k_{3},\omega^{2} k_{4}$ in green. The smaller black dots are the points $i\kappa_j$, $j=1,\ldots,6$.
\end{figuretext}
\end{figure}

Our proof uses a series of transformations $n \to n^{(1)} \to n^{(2)} \to n^{(3)}  \to n^{(4)} \to \hat{n}$ to bring RH problem \ref{RHn} to a small-norm RH problem. These transformations are invertible, implying that the RH problems satisfied by $\{n^{(j)}\}_1^4$ and $\hat{n}$ are equivalent to RH problem \ref{RHn}. The jump contours for the RH problems for $\{n^{(j)}\}_1^4$ and $\hat{n}$ will be denoted by $\{\Gamma^{(j)}\}_1^4$ and $\hat{\Gamma}$ and the associated jump matrices by $\{v^{(j)}\}_1^4$ and $\hat{v}$, respectively.

The jump matrix $v$ in \eqref{vdef} obeys the following symmetries in accordance with (\ref{nsymm}):
\begin{align}\label{vsymm}
v(x,t,k) = \mathcal{A} v(x,t,\omega k)\mathcal{A}^{-1}
 = \mathcal{B} v(x,t, k^{-1})^{-1}\mathcal{B}, \qquad k \in \Gamma.
\end{align}
We will preserve the symmetries (\ref{nsymm}) and (\ref{vsymm}) at each step, so that, for $j = 1, 2,3,4$,
\begin{align}\label{vjsymm}
& v^{(j)}(x,t,k) = \mathcal{A} v^{(j)}(x,t,\omega k)\mathcal{A}^{-1}
 = \mathcal{B} v^{(j)}(x,t,k^{-1})^{-1}\mathcal{B}, & & k \in \Gamma^{(j)},
	\\ \label{njsymm}
& n^{(j)}(x,t, k) = n^{(j)}(x,t,\omega k)\mathcal{A}^{-1}
 = n^{(j)}(x,t, k^{-1}) \mathcal{B}, & & k \in \C \setminus \Gamma^{(j)},
\end{align}
and similarly for $\hat{n}$ and $\hat{v}$. These symmetries imply that we only need to construct the local parametrices near $\omega k_{4}$ and $\omega^{2}k_{2}$, and that we only need to define the transformations $n \to n^{(1)}$, $n^{(j)} \to n^{(j+1)}$, and $n^{(4)}\to \hat{n}$ in the sector $\mathsf{S}:=\{k \in \mathbb{C}|  \arg k \in [\frac{\pi}{3},\frac{2\pi}{3}]\}$. In $\mathbb{C}\setminus \mathsf{S}$, we will define the local parametrices and the transformations using the $\mathcal{A}$- and $\mathcal{B}$-symmetries. 

We know from \cite[Theorem 2.3]{CLmain} that the functions $r_{1}$ and $r_{2}$ satisfy the following properties: $r_1 \in C^\infty(\hat{\Gamma}_{1})$, $r_2 \in C^\infty(\hat{\Gamma}_{4}\setminus \{\omega^{2}, -\omega^{2}\})$, $r_{1}(\kappa_{j})\neq 0$ for $j=1,\ldots,6$, $r_{2}(k)$ has simple poles at $k=\pm \omega^2$, and simple zeros at $k=\pm\omega$, and $r_{1},r_{2}$ are rapidly decreasing as $|k|\to \infty$. Moreover,
\begin{align}
& r_{1}(\tfrac{1}{\omega k}) + r_{2}(\omega k) + r_{1}(\omega^{2} k) r_{2}(\tfrac{1}{k}) = 0, & & k \in \partial \mathbb{D}\setminus \{\omega, -\omega\}, \label{r1r2 relation on the unit circle} \\
& r_{2}(k) = \tilde{r}(k)\overline{r_{1}(\bar{k}^{-1})}, \qquad \tilde{r}(k) :=\frac{\omega^{2}-k^{2}}{1-\omega^{2}k^{2}}, & & k \in \hat{\Gamma}_{4}\setminus \{0, \omega^{2}, -\omega^{2}\}, \label{r1r2 relation with kbar symmetry} \\
& r_{1}(1) = r_{1}(-1) = 1, \qquad r_{2}(1) = r_{2}(-1) = -1. \label{r1r2at0}
\end{align}
These properties will be used repeatedly throughout the proof.

The residue conditions (\ref{nresiduesk0}) involve the exponentials $e^{-\theta_{31}(x,t,k_0)}$ and $e^{\theta_{32}(x,t,\bar{k}_0)}$. We see from Figure \ref{II fig: Re Phi 21 31 and 32 for zeta=0.7} that these exponentials are small as $t \to \infty$ for $k_0 \in \mathsf{Z}\setminus \mathbb{R}$ if $\re k_0 < 0$ (recall that $\mathsf{Z}\setminus \mathbb{R} \subset D_{\mathrm{reg}}$). 
Similarly, the residue conditions (\ref{nresiduesk0real}) involve the exponential $e^{-\theta_{21}(x,t,k_0)}$, and we see from Figure \ref{II fig: Re Phi 21 31 and 32 for zeta=0.7} that this exponential is small as $t \to \infty$ for $k_0 \in \mathsf{Z} \cap \mathbb{R}$ if $\re k_0 < 0$ (recall that $\mathsf{Z}\cap \mathbb{R} \subset (-1,0)\cup (1,\infty)$). 
This suggests that points $k_0$ in $\mathsf{Z}$ with $\re k_0 < 0$ (i.e., points corresponding to left-moving solitons) will not contribute to the asymptotics in Sector IV at any finite order, and this is indeed what we will find. 
On the other hand, for $k_0 \in \mathsf{Z}$ with $\re k_0 > 0$, the above exponentials are large as $t \to \infty$. By including an appropriate factor in the global parametrix, we can flip the signs of the exponents in these exponentials, thereby making them small for large $t$. The factor included in the global parametrix gives rise to the phase shifts $\arg \tfrac{\mathcal{P}(\omega k_{4})}{\mathcal{P}(\omega^{2} k_{4})}$ and $\arg \tfrac{\mathcal{P}(\omega^{2} k_{2})}{\mathcal{P}(\omega k_{2})}$ generated by right-moving solitons seen in Theorem \ref{asymptoticsth}.

\section{The $n \to n^{(1)}$ transformation}\label{nton1sec}

We will open lenses around $\partial \mathbb{D}\setminus \mathcal{Q}$. Define
$$\hat{r}_{j}(k) := \frac{r_{j}(k)}{1+r_{1}(k)r_{2}(k)}, \qquad j=1,2,$$
and let
\begin{align*}
& r_{j,1}(k) := r_{j}(k), \quad r_{j,2}(k) := \hat{r}_{j}(k), \quad r_{j,3}(k) := \frac{r_{j}(k)-r_{j}(\frac{1}{\omega k})r_{j}(\frac{1}{\omega^{2} k})}{1+r_{1}(\frac{1}{\omega k})r_{2}(\frac{1}{\omega k})}, \\
& r_{j,4}(k) := \frac{r_{j}(k)}{f(k)}, \quad r_{j,5}(k) = \frac{r_{j}(k)-r_{j}(\frac{1}{\omega k})r_{j}(\frac{1}{\omega^{2}k})}{f(k)}.
\end{align*}
The domains of definition of these spectral functions are (in general) only subsets of $\Gamma$. To open lenses around $\partial \mathbb{D}$, it is therefore necessary to decompose them into  an analytic part and a small remainder. Let $M>1$ and define open sets $\{U_{1}^{\ell}=U_{1}^{\ell}(\zeta,M)\}_{\ell=1}^{5}$ by (see Figure \ref{IIbis fig: U1 and U2})
\begin{align*}
U_{1}^{1} & = \{k| \re \Phi_{21}>0 \} \cap \big(\{k| \arg k \in (\arg(\omega k_{3}),\pi] \cup [-\pi,-\tfrac{5\pi}{6}), \; M^{-1}<|k|<1 \} \\
& \quad \; \cup \{k| \arg k \in(-\tfrac{\pi}{2},-\tfrac{\pi}{6})\cup (\arg k_{3},\tfrac{\pi}{6}), \; 1 < |k| < M\} \big), \\
U_{1}^{2} & = \{k| \re \Phi_{21}>0 \} \cap \{k|\arg k \in (\tfrac{\pi}{2},\arg(\omega^{2}k_{2})), \; 1 < |k| < M\}, \\
U_{1}^{3} & = \{k| \re \Phi_{21}>0 \} \cap \{k| \arg k \in (\arg k_{1}, \arg (\omega k_{3})), \; M^{-1}<|k|<1\}, \\
U_{1}^{4} & = \{k| \re \Phi_{21}>0 \} \cap \{k| \arg k \in (\arg(\omega k_{2}),\arg k_{3}), \; M^{-1}<|k|<1\}, \\
U_{1}^{5} & = \{k| \re \Phi_{21}>0 \} \cap \{k| \arg k \in (\arg(\omega^{2}k_{2}),\arg k_{1}), \; 1<|k|<M)\}.
\end{align*}
It follows from \cite[Lemma 2.13]{CLmain} that $f(k)=0$ if and only if $k \in \{\pm 1, \pm \omega\}$, and that $1+r_{1}(k)r_{2}(k)>0$ for all $k \in \partial \mathbb{D}$ with $\arg k \in (\pi/3,\pi)\cup (-2\pi/3,0)$. Therefore, for each $\ell \in \{1,2,3\}$, $r_{1,\ell}$ is well-defined for $k \in \partial U_{1}^{\ell} \cap \partial \mathbb{D}$; $r_{1,4}$ is well-defined for $k \in (\partial U_{1}^{4} \cap \partial \mathbb{D})\setminus\{1\}$; and $r_{1,5}$ is well-defined for $k \in (\partial U_{1}^{5} \cap \partial \mathbb{D})\setminus\{\omega\}$. Let $n_{1}\geq 0$ denote the smallest integer such that $(k-1)^{n_{1}}r_{1,4}(k)=O(1)$ as $k \to 1$, $k \in \partial \mathbb{D}$, and let $n_{\omega} \geq 0$ denote the smallest integer such that $(k-\omega)^{n_{\omega}}r_{1,5}(k)=O(1)$ as $k \to \omega$, $k \in \partial \mathbb{D}$.

We now find decompositions of $\{r_{1,\ell}\}_{\ell=1}^{5}$;  the decompositions of $\{r_{2,\ell}\}_{\ell=1}^{5}$ will then be obtained using the symmetry \eqref{r1r2 relation with kbar symmetry}. Let $N \geq 1$ be an integer.

\begin{figure}
\begin{center}
\begin{tikzpicture}[master]
\node at (0,0) {\includegraphics[width=4.5cm]{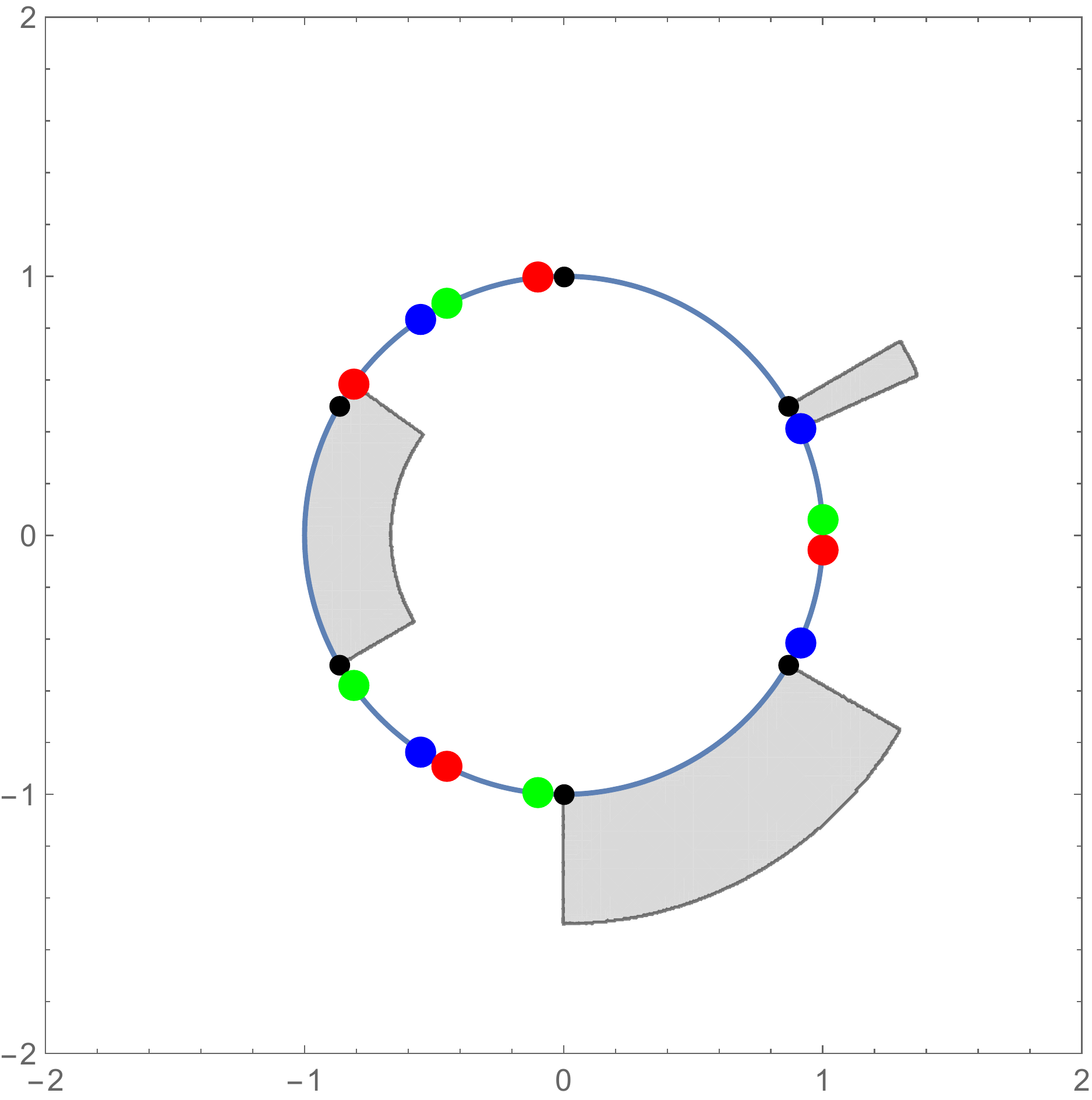}};
\node at (-0.8,0.15) {\tiny $U_{1}^{1}$};
\node at (0.95,-0.95) {\tiny $U_{1}^{1}$};
\node at (1.2,1.5) {\tiny $U_{1}^{1}$};
\draw[black,line width=0.15 mm,->-=1] (1.2,1.35)--(1.2,0.65);
\end{tikzpicture} \hspace{0.5cm} \begin{tikzpicture}[slave]
\node at (0,0) {\includegraphics[width=4.5cm]{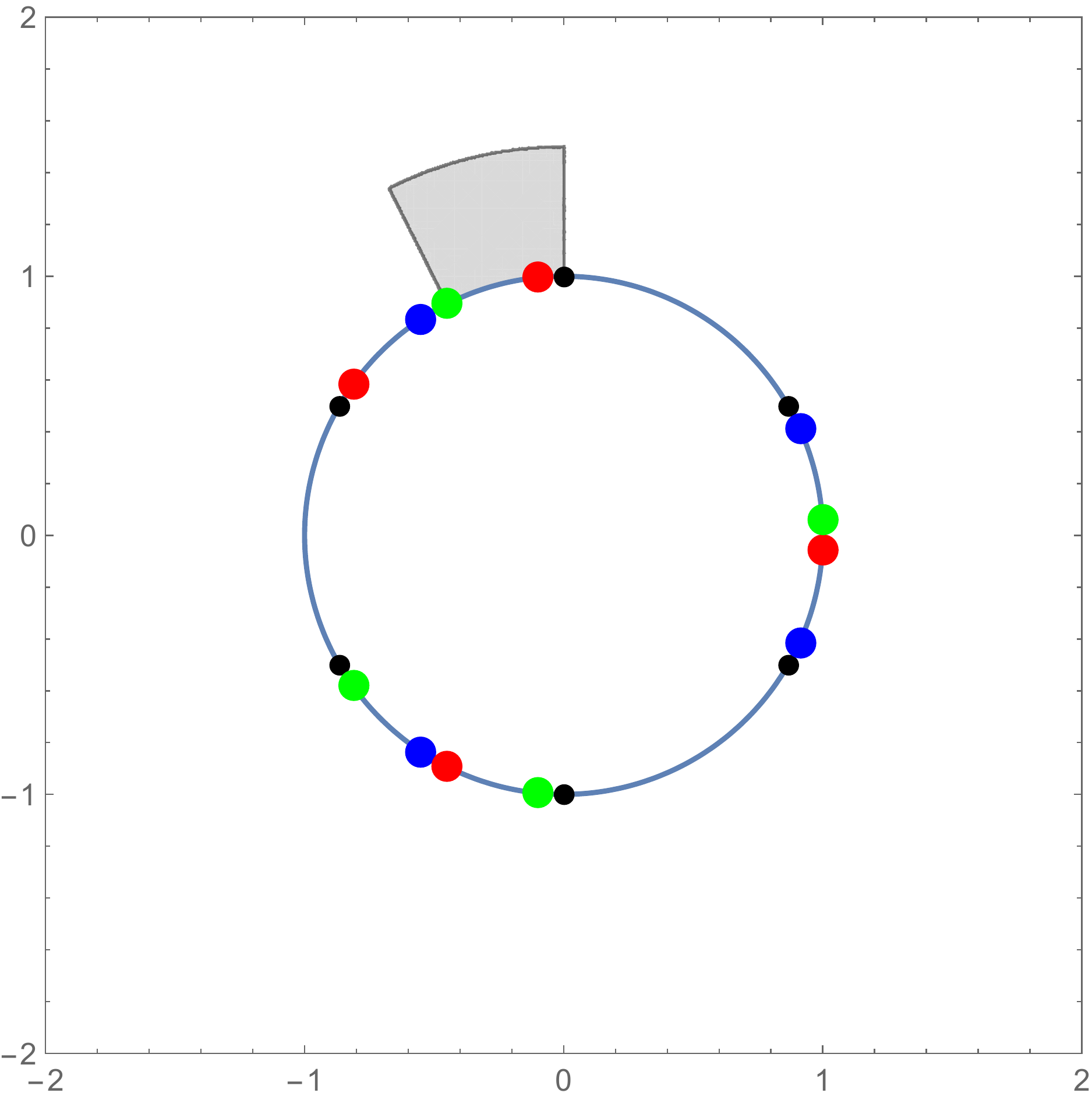}};
\node at (-0.25,1.35) {\tiny $U_{1}^{2}$};
\end{tikzpicture} \\
\begin{tikzpicture}[slave]
\node at (0,0) {\includegraphics[width=4.5cm]{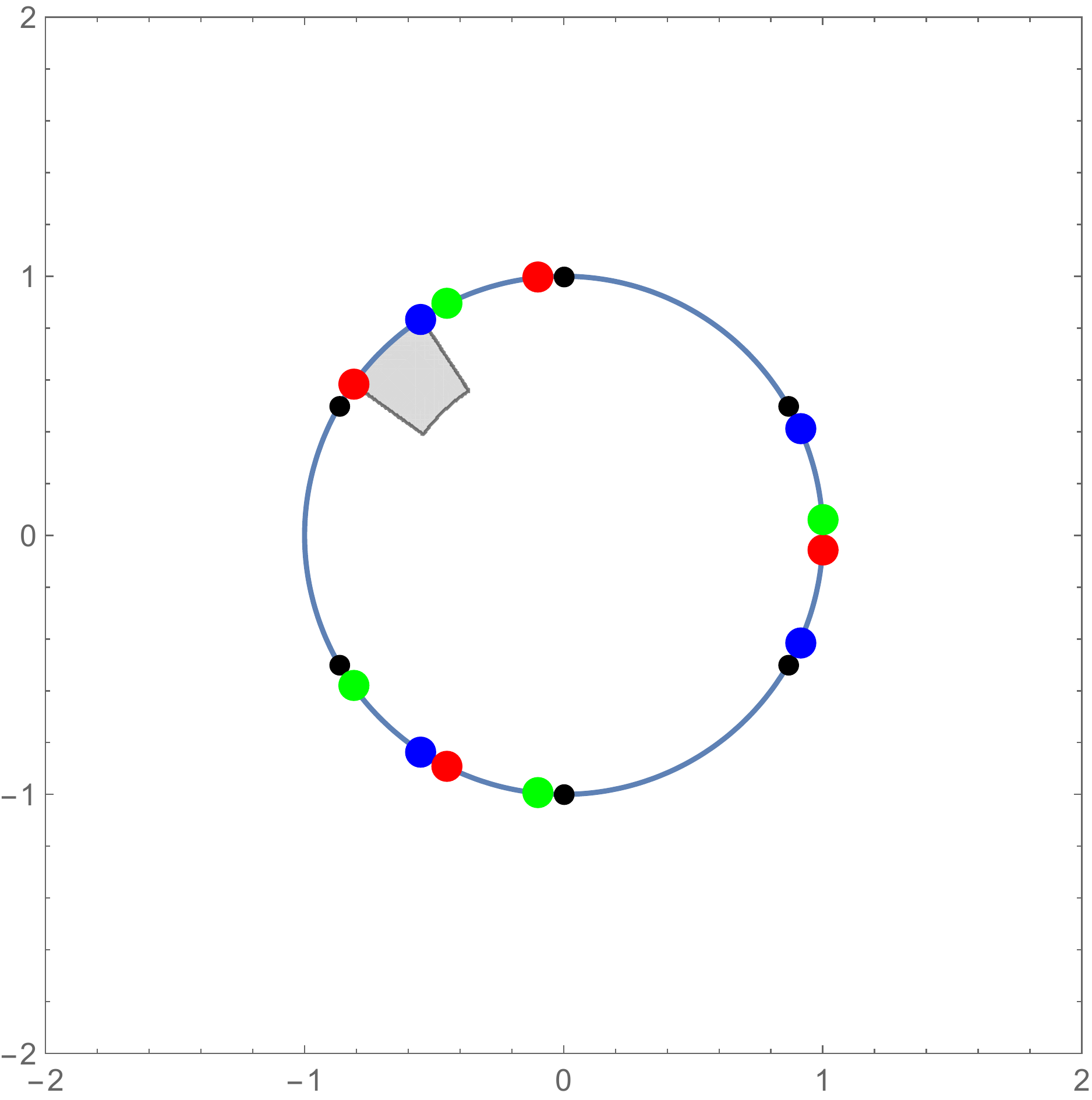}};
\node at (-0.55,0.65) {\tiny $U_{1}^{3}$};
\end{tikzpicture} \hspace{-0.05cm}
\begin{tikzpicture}[slave]
\node at (0,0) {\includegraphics[width=4.5cm]{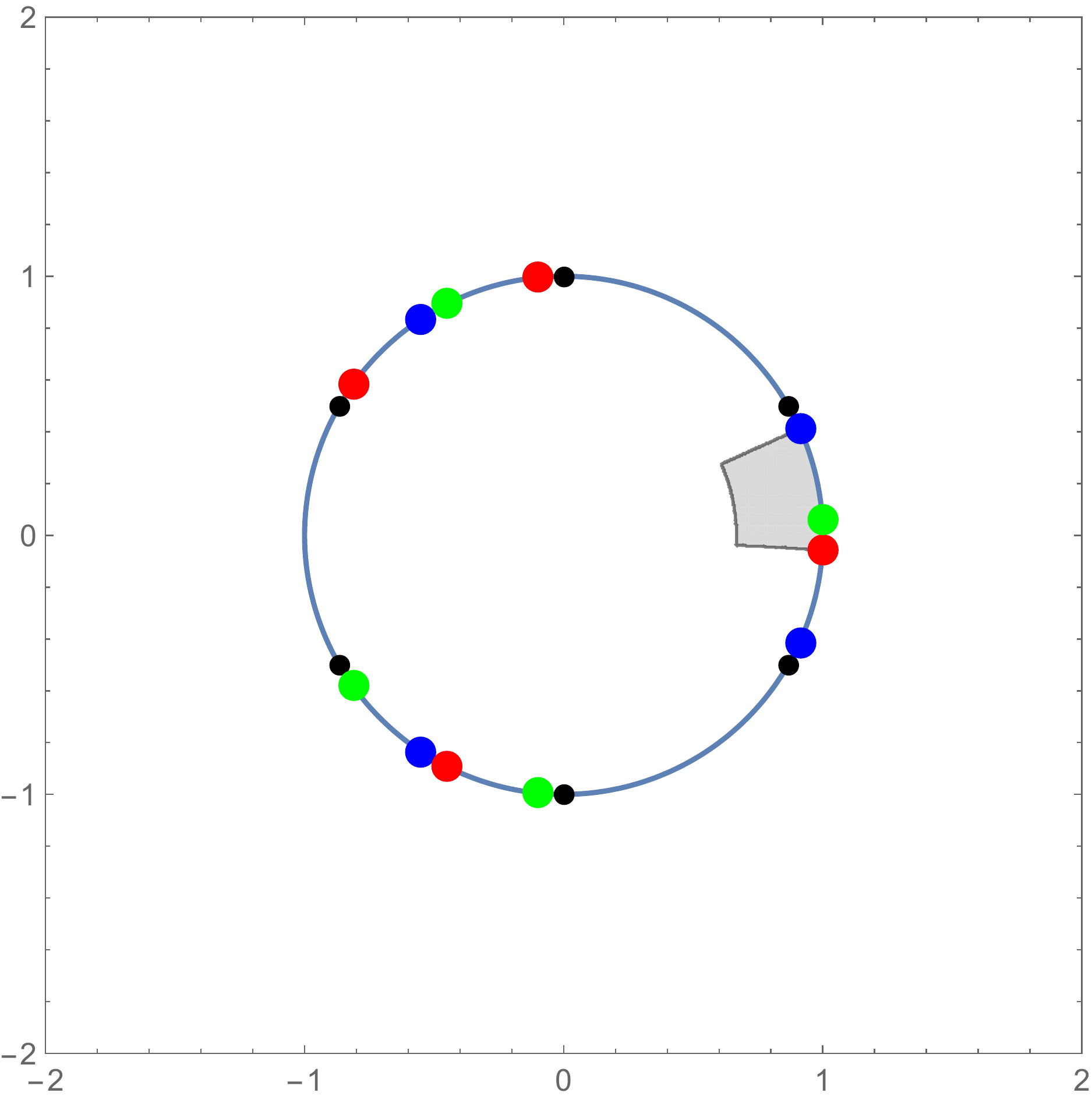}};
\node at (0.95,0.2) {\tiny $U_{1}^{4}$};
\end{tikzpicture} \hspace{-0.05cm}
\begin{tikzpicture}[slave]
\node at (0,0) {\includegraphics[width=4.5cm]{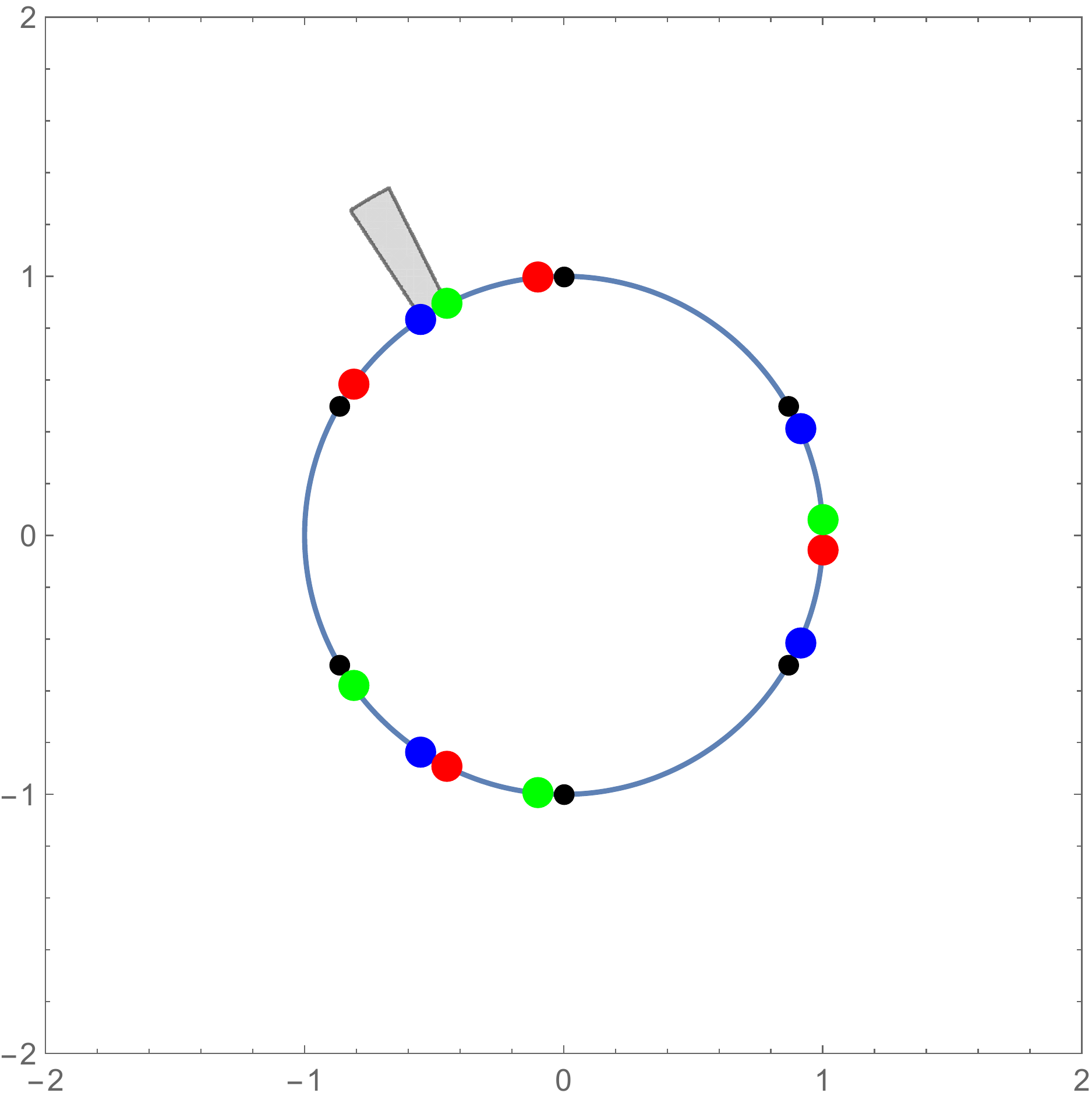}};
\node at (0,1.6) {\tiny $U_{1}^{5}$};
\draw[black,line width=0.15 mm,->-=1] (-0.25,1.5)--(-0.65,1.3);
\end{tikzpicture}
\end{center}
\begin{figuretext}
\label{IIbis fig: U1 and U2} The open subsets $\{U_{1}^{\ell}\}_{\ell=1}^{5}$ of the complex $k$-plane.
\end{figuretext}
\end{figure}

\begin{lemma}[Decomposition lemma]\label{IIbis decompositionlemma}
There exist $M>1$ and decompositions
\begin{align}
& r_{1,\ell}(k) = r_{1,\ell,a}(x, t, k) + r_{1,\ell,r}(x, t, k), & & k \in \partial U_{1}^{\ell} \cap \partial \mathbb{D}, \; \ell =1,\ldots,5, \label{IIbis decomposition lemma analytic + remainder}
\end{align}
such that $\{r_{1,\ell,a},r_{1,\ell,r}\}_{\ell=1}^{5}$ satisfy the following properties:
\begin{enumerate}[$(a)$]
\item 
For each $\zeta \in \mathcal{I}$, $t \geq 1$, and $\ell \in \{1,\dots,5\}$, $r_{1,\ell,a}(x, t, k)$ is defined and continuous for $k \in \bar{U}_1^{\ell}$ and analytic for $k \in U_1^{\ell}$.

\item For each $\zeta \in \mathcal{I}$, $t \geq 1$, and $\ell \in \{1,\dots,5\}$, $r_{1,\ell,a}$ obeys
\begin{align*}
& \Big| r_{1,\ell,a}(x, t, k)-\sum_{j=0}^{N}\frac{r_{1,\ell}^{(j)}(k_{\star})}{j!}(k-k_{\star})^{j}  \Big| \leq C |k-k_{\star}|^{N+1}e^{\frac{t}{4}|\re \Phi_{21}(\zeta,k)|}, \; k \in \bar{U}_{1}^{\ell}, \; k_{\star} \in \mathcal{R}_{\ell},   
\end{align*}
where $\mathcal{R}_{1}=\{\omega k_{3},e^{\pm 5\pi i/6},-1,-i,-\omega,e^{\pm\frac{\pi i}{6}} ,k_{1}\}$, $\mathcal{R}_{2}=\{i,\omega k_{4},\omega^{2}k_{2}\}$, $\mathcal{R}_{3}=\{k_{1},\omega k_{3}\}$, $\mathcal{R}_{4}=\{\omega k_{2},\omega^{2}k_{1},k_{3}\}$, $\mathcal{R}_{5}=\{\omega^{2} k_{2}, k_{1}\}$, and the constant $C$ is independent of $\zeta, t, k$. Moreover, for $\zeta \in \mathcal{I}$ and $t\geq 1$,
\begin{align*}
& \Big| r_{1,4,a}(x, t, k)-\sum_{j=0}^{N+n_{1}}\frac{[(\cdot-1)^{n_{1}}r_{1,4}(\cdot)]^{(j)}(1)}{j!}(k-1)^{j-n_{1}}  \Big| \leq C |k-1|^{N+1}e^{\frac{t}{4}|\re \Phi_{21}(\zeta,k)|}, \\
& \Big| r_{1,5,a}(x, t, k)-\sum_{j=0}^{N+n_{\omega}}\frac{[(\cdot-\omega)^{n_{\omega}}r_{1,5}(\cdot)]^{(j)}(\omega)}{j!}(k-\omega)^{j-n_{\omega}}  \Big| \leq C |k-\omega|^{N+1}e^{\frac{t}{4}|\re \Phi_{21}(\zeta,k)|},
\end{align*}
and these estimates hold for $k \in \bar{U}_{1}^{4}$ and $k \in \bar{U}_{1}^{5}$, respectively.
\item For each $1 \leq p \leq \infty$ and $\ell \in \{1,\dots,5\}$, the $L^p$-norm of $r_{1,\ell,r}(x,t,\cdot)$ on $\partial \bar{U}_{1}^{\ell} \cap \partial \mathbb{D}$ is $O(t^{-N})$ uniformly for $\zeta \in \mathcal{I}$ as $t \to \infty$.
\end{enumerate}
\end{lemma}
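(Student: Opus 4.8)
The plan is to follow the now-standard procedure for decomposing reflection coefficients in Deift--Zhou analyses (as in the analogous lemmas of \cite{CLmain} and \cite{CLscatteringsolitons}), adapting it to the five functions $r_{1,\ell}$ and the five open sets $U_1^\ell$ relevant for Sector IV. First I would record the regularity facts we need: by \cite[Theorem 2.3]{CLmain}, $r_1 \in C^\infty(\hat\Gamma_1)$, $r_2 \in C^\infty(\hat\Gamma_4 \setminus \{\omega^2,-\omega^2\})$, and both decay rapidly at infinity; by \cite[Lemma 2.13]{CLmain}, $1 + r_1 r_2 > 0$ on the relevant arcs of $\partial\D$ and $f$ vanishes only at $\{\pm 1, \pm\omega\}$. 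Consequently each $r_{1,\ell}$ is a smooth function on the closed arc $\partial \bar U_1^\ell \cap \partial\D$ except for the prescribed finite-order poles of $r_{1,4}$ at $1$ and $r_{1,5}$ at $\omega$ (with orders $n_1$, $n_\omega$); after multiplying by $(k-1)^{n_1}$ (resp. $(k-\omega)^{n_\omega}$) we get a genuinely smooth function on a closed arc, so it suffices to treat the smooth case and then divide back.

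The core step is the analytic-approximation construction on each arc. For a fixed $\ell$, parametrize the arc of $\partial\D$ bounding $U_1^\ell$ by $k = e^{i\phi}$, and for each prescribed point $k_\star \in \mathcal{R}_\ell$ subtract the Taylor polynomial $\sum_{j=0}^N \frac{r_{1,\ell}^{(j)}(k_\star)}{j!}(k-k_\star)^j$ so that the remainder vanishes to order $N+1$ at every point of $\mathcal R_\ell$; write $h_\ell(k)$ for this remainder, a $C^\infty$ function on the arc that vanishes to order $N+1$ at each $k_\star \in \mathcal R_\ell$. I would then split $h_\ell$ into its analytic part using a Fourier-type argument: extend $h_\ell$ to a smooth compactly supported function on a neighborhood, take its Fourier transform in the arclength variable, and split the inverse-transform integral at a cutoff frequency depending on $t$ — the low-frequency piece extends analytically into $U_1^\ell$ with the controlled exponential growth $e^{\frac{t}{4}|\re \Phi_{21}|}$ (this uses that $\re\Phi_{21}>0$ throughout $U_1^\ell$ by construction, so the exponent is a genuine bound rather than a decay), while the high-frequency remainder $r_{1,\ell,r}$ has $L^p$ norm $O(t^{-N})$ because $h_\ell \in C^{N+1}$ forces rapid decay of the Fourier coefficients. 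The vanishing-to-order-$(N+1)$ property at the $k_\star$'s, combined with the exponential factor, yields the stated pointwise bound $|r_{1,\ell,a} - (\text{Taylor polynomial})| \le C|k-k_\star|^{N+1}e^{\frac{t}{4}|\re\Phi_{21}(\zeta,k)|}$; defining $r_{1,\ell,a} := h_{\ell,a} + \sum_{j=0}^N \frac{r_{1,\ell}^{(j)}(k_\star)}{j!}(k-k_\star)^j$ on a small subarc near $k_\star$ and patching with a partition of unity across the arc produces the global $r_{1,\ell,a}$ on $\bar U_1^\ell$. For $\ell = 4,5$ one runs the construction for $(k-1)^{n_1}r_{1,4}$ and $(k-\omega)^{n_\omega}r_{1,5}$ and then divides by the polynomial factor, producing the Laurent-type expansions with $(k-1)^{j-n_1}$ and $(k-\omega)^{j-n_\omega}$ stated in part $(b)$.

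The main obstacle — really the only delicate point — is ensuring uniformity in $\zeta$ over the compact set $\mathcal I$: the sets $U_1^\ell$, the boundary points in $\mathcal R_\ell$, and the saddle points $k_1,\dots,k_4$ all move with $\zeta$, so one must check that the smoothness norms of $h_\ell$, the cutoff frequencies, and the geometry of $U_1^\ell$ (in particular a uniform lower bound on the opening angle of the lens, so that the analytic extension has room) are all uniformly controlled for $\zeta \in \mathcal I \Subset (\tfrac1{\sqrt3},1)$. Since $\mathcal I$ is a fixed compact subset bounded away from the endpoints $\tfrac1{\sqrt3}$ and $1$, the saddle points $k_j(\zeta)$ stay on $\partial\D$ in the prescribed argument ranges and remain uniformly separated from each other and from the zeros $\{\pm1,\pm\omega\}$ of $f$ and from the sixth roots of unity; Assumption $(\ref{nounstablemodesassumption})$ ($r_1 \equiv 0$ on $[0,i]$) guarantees the relevant boundary values behave well near $i$. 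These facts, which are established in \cite{CLmain}, make all the constants uniform, and the restriction to $t \ge 1$ absorbs the transition between the $t$-dependent cutoff and the fixed geometry. Once uniformity is in hand, parts $(a)$--$(c)$ follow exactly as in the solitonless case, since the solitons enter nowhere in this lemma.
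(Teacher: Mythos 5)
Your overall architecture (subtract Taylor data at the distinguished points, Fourier-split the smooth remainder at a $t$-dependent cutoff, clear the poles of $r_{1,4}$ and $r_{1,5}$ by multiplying by $(k-1)^{n_1}$, $(k-\omega)^{n_\omega}$, and check uniformity in $\zeta\in\mathcal I$) is the standard Deift--Zhou scheme that the paper invokes, and the paper itself only sketches the argument. However, two steps of your write-up would fail as stated.

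First, the Fourier transform must be taken in the \emph{phase} variable, not in arclength. The key structural input --- and the one fact the paper's proof actually records --- is that $\theta\mapsto -i\Phi_{21}(\zeta,e^{i\theta})=(\zeta-\cos\theta)\sin\theta$ is real-valued and bijective on each connected component of $\partial U_1^\ell\cap\partial\mathbb D$ (the endpoints of these arcs are chosen at saddle points precisely to ensure this). Setting $u=-i\Phi_{21}(\zeta,e^{i\theta})$ and splitting $h_\ell(u)=\int\hat h_\ell(s)e^{isu}\,ds$ at $|s|\le t/4$, the low-frequency piece continues analytically as $\int_{|s|\le t/4}\hat h_\ell(s)e^{s\Phi_{21}(\zeta,k)}\,ds$, which is bounded by $C\,e^{\frac t4|\re\Phi_{21}(\zeta,k)|}$ --- exactly the bound in part $(b)$. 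If instead you split in arclength, the continuation of $e^{is\theta}$ into $U_1^\ell$ grows like $e^{\frac t4\,\mathrm{dist}(k,\partial\mathbb D)}$, which near a saddle point $k_\star$ is of order $e^{ct|k-k_\star|}$ and is \emph{not} dominated by $e^{\frac t4|\re\Phi_{21}|}\sim e^{ct|k-k_\star|^2}$, since $\re\Phi_{21}$ vanishes to second order there. The positivity of $\re\Phi_{21}$ on $U_1^\ell$ alone does not rescue this.

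Second, your patching "with a partition of unity across the arc" cannot produce $r_{1,\ell,a}$: a partition-of-unity combination of analytic pieces is not analytic, so property $(a)$ would be lost. Moreover, subtracting the Taylor polynomial at one $k_\star$ does not make the remainder vanish at the other points of $\mathcal R_\ell$. The correct construction subtracts a \emph{single} rational (Hermite-interpolating) function $R_\ell(k)$, with poles off $\bar U_1^\ell$, whose expansion at every $k_\star\in\mathcal R_\ell$ agrees with that of $r_{1,\ell}$ to order $N$ (and with the Laurent data at $1$, resp.\ $\omega$, for $\ell=4,5$); one then Fourier-splits $h_\ell=r_{1,\ell}-R_\ell$ once per connected component in the phase variable and sets $r_{1,\ell,a}=R_\ell+h_{\ell,a}$. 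With these two corrections your argument, including the uniformity discussion for $\zeta\in\mathcal I$, goes through.
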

\begin{proof}
The function $\theta \mapsto -i \Phi_{21}(\zeta,e^{i\theta})=(\zeta-\cos \theta)\sin \theta$ is real-valued and bijective on each connected component of $\partial U_{1}^{\ell}\cap \partial \mathbb{D}$, $\ell = 1,\ldots,5$. Thus the statement can be proved using similar arguments as in \cite{DZ1993}. Since these arguments are rather standard by now, we omit details. 
\end{proof}

In what follows, shrinking $M>1$ if necessary, we assume that $|k_{0}|>M$ for all $k_{0}\in \mathsf{Z} \cap \big(D_{\mathrm{reg}}^{R}\cup (1,\infty)\big)$ and that $|k_{0}|<M^{-1}$ for all $k_{0}\in \mathsf{Z}\cap \big(D_{\mathrm{reg}}^{L}\cup (-1,0)\big)$.

It is easy to check (using the second equation in \eqref{r1r2 relation with kbar symmetry}) that $\tilde{r}(k) \in \mathbb{R}$ for $k \in \partial \mathbb{D}\setminus \{-\omega^{2},\omega^{2}\}$ and that $\tilde{r}(k)= \tilde{r}(\frac{1}{\omega k})\tilde{r}(\frac{1}{\omega^{2} k})$. Using also the first equation in \eqref{r1r2 relation with kbar symmetry}, we infer that $r_{2,\ell}(k) = \tilde{r}(k)\overline{r_{1,\ell}(\bar{k}^{-1})}$, $k \in \partial \mathbb{D}\cap U_{1}^{\ell}$, $\ell=1,\ldots,5$. Hence, for each $\ell\in \{1,\ldots,5\}$ we let $U_{2}^{\ell}:=\{k|\bar{k}^{-1}\in U_{1}^{\ell}\}$ and define a decomposition $r_{2,\ell}=r_{2,\ell,a}+r_{2,\ell,r}$ by
\begin{align*}
& r_{2,\ell,a}(k) := \tilde{r}(k)\overline{r_{1,\ell,a}(\bar{k}^{-1})}, \quad k \in U_{2}^{\ell},
& & r_{2,\ell,r}(k) := \tilde{r}(k)\overline{r_{1,\ell,r}(\bar{k}^{-1})}, \quad k \in \partial U_{2}^{\ell} \cap \partial \mathbb{D}.
\end{align*}

\begin{figure}[h]
\begin{center}
\begin{tikzpicture}[master,scale=0.9]
\node at (0,0) {};
\draw[black,line width=0.65 mm] (0,0)--(30:7.5);
\draw[black,line width=0.65 mm,->-=0.45,->-=0.91] (0,0)--(90:7);
\draw[black,line width=0.65 mm] (0,0)--(150:7.5);
\draw[dashed,black,line width=0.15 mm] (0,0)--(60:7.8);
\draw[dashed,black,line width=0.15 mm] (0,0)--(120:7.8);

\draw[black,line width=0.65 mm] ([shift=(30:3*1.5cm)]0,0) arc (30:150:3*1.5cm);

\node at (82.5:1.97*1.5) {\small $1''$};

\node at (86.5:6.15) {\small $4'$};

\draw[black,arrows={-Triangle[length=0.27cm,width=0.18cm]}]
($(92:3*1.5)$) --  ++(92-90:0.001);
\draw[black,arrows={-Triangle[length=0.27cm,width=0.18cm]}]
($(110:3*1.5)$) --  ++(110+90:0.001);
\draw[black,arrows={-Triangle[length=0.27cm,width=0.18cm]}]
($(72:3*1.5)$) --  ++(72-90:0.001);
\draw[black,arrows={-Triangle[length=0.24cm,width=0.16cm]}]
($(120.5:3*1.5)$) --  ++(120.5+90:0.001);

\draw[blue,fill] (125:3*1.5) circle (0.1cm);
\draw[green,fill] (-125+240:3*1.5) circle (0.1cm);
\draw[red,fill] (100:3*1.5) circle (0.1cm);
\draw[red,fill] (140:3*1.5) circle (0.1cm);


\node at (97:2.73*1.5) {\small $\omega k_{4}$};
\node at (113:2.7*1.5) {\small $\omega^{2}k_{2}$};
\node at (124:2.7*1.5) {\small $k_{1}$};
\node at (140:2.7*1.5) {\small $\omega k_{3}$};

\node at (93:3.15*1.5) {\small $2$};
\node at (108:3.15*1.5) {\small $5$};
\node at (118:3.15*1.5) {\small $8$};
\node at (71:3.15*1.5) {\small $9_r$};



\end{tikzpicture}
\end{center}
\begin{figuretext}
\label{Gamma0fig}The contour $\Gamma^{(0)}$ (solid), the boundary of $\mathsf{S}$ (dashed), 
and the saddle points $\omega k_{4}$ (red), $\omega^{2}k_{2}$ (green), $k_{1}$ (blue), and $\omega k_{3}$ (red).
\end{figuretext}
\end{figure}

We now define the first transformation $n \to n^{(1)}$, which consists in opening lenses around $\partial \mathbb{D}\setminus \mathcal{Q}$. As mentioned in Section \ref{overviewsec}, we only need to define it explicitly in the sector $\mathsf{S}$. We introduce a new contour $\Gamma^{(0)}$, which coincides as a set with $\Gamma\cap \mathsf{S}$, but is oriented and labeled differently, see Figure \ref{Gamma0fig}. We let $\Gamma_{j}^{(0)}$ denote the subcontour of $\Gamma^{(0)}$ labeled by $j$ in Figure \ref{Gamma0fig}. For example,
\begin{align*}
\Gamma_{8}^{(0)}:= \{e^{i\theta} \,|\, \theta \in (\arg(\omega^{2}k_{2}),\tfrac{2\pi}{3})\}, \qquad \Gamma_{9_{r}}^{(0)}:= \{e^{i\theta} \,|\, \theta \in (\tfrac{\pi}{3},\tfrac{\pi}{2})\}.
\end{align*}
We will open lenses differently on the four parts of $\Gamma^{(0)}$. On $\Gamma_{9_{r}}^{(0)}$, we will use the factorization
\begin{align*}
v_{9_{r}} = v_{9_L}^{(1)} v_{9_r}^{(1)} v_{9_R}^{(1)},
\end{align*}
where
\begin{align} \nonumber
& v_{9_L}^{(1)} = \begin{pmatrix}
1 & 0 & -r_{2,a}(\omega^{2}k)e^{-\theta_{31}} \\
r_{1,a}(\frac{1}{k})e^{\theta_{21}} & 1 & r_{1,a}(\omega k)e^{-\theta_{32}} \\
0 & 0 & 1
\end{pmatrix}, \quad v_{9_R}^{(1)} = \begin{pmatrix}
1 & r_{2,a}(\frac{1}{k})e^{-\theta_{21}} & 0 \\
0 & 1 & 0  \\
-r_{1,a}(\omega^{2}k)e^{\theta_{31}} & r_{2,a}(\omega k)e^{\theta_{32}} & 1
\end{pmatrix}, 
	\\ 
& v_{9_r}^{(1)} = I+\begin{pmatrix}
r_{1,r}(\omega^{2}k)r_{2,r}(\omega^{2}k) & g_{2}(\omega k)e^{-\theta_{21}} & -r_{2,r}(\omega^{2}k)e^{-\theta_{31}} \\
g_{1}(\omega k)e^{\theta_{21}} & g(\omega k) & h_{1}(\omega k)e^{-\theta_{32}} \\
-r_{1,r}(\omega^{2}k)e^{\theta_{31}} & h_{2}(\omega k)e^{\theta_{32}} & 0
\end{pmatrix} \label{v9Lv9Rdef}
\end{align}
and
\begin{align*}
h_{1}(k) = & \; r_{1,r}(k) + r_{1,a}(\tfrac{1}{\omega^{2}k})r_{2,r}(\omega k), \qquad h_{2}(k) =  r_{2,r}(k) + r_{2,a}(\tfrac{1}{\omega^{2}k})r_{1,r}(\omega k), \\
g_{1}(k) = & \; r_{1,r}(\tfrac{1}{\omega^{2}k})-r_{1,r}(\omega k) \big( r_{1,r}(k)+r_{1,a}(\tfrac{1}{\omega^{2}k})r_{2,r}(\omega k) \big), \\
g_{2}(k) = & \; r_{2,r}(\tfrac{1}{\omega^{2}k})-r_{2,r}(\omega k) \big( r_{2,r}(k)+r_{2,a}(\tfrac{1}{\omega^{2}k})r_{1,r}(\omega k) \big), \\
g(k) = & \; r_{1,r}(k)\big(r_{1,r}(\omega k)r_{2,a}(\tfrac{1}{\omega^{2}k})+r_{2,r}(k)\big) \\
& +r_{1,a}(\tfrac{1}{\omega^{2}k})r_{2,r}(\omega k)\big( r_{1,r}(\omega k)r_{2,a}(\tfrac{1}{\omega^{2}k})+r_{2,r}(k) \big) + r_{1,r}(\tfrac{1}{\omega^{2}k})r_{2,r}(\tfrac{1}{\omega^{2}k}).
\end{align*}
On $\Gamma_{2}^{(0)}$, we will use the factorization 
\begin{align}\label{v11v21v31def}
& v_7^{-1} = v_3^{(1)} v_2^{(1)} v_1^{(1)}, \qquad v_{2}^{(1)}  =  \begin{pmatrix}
1+r_{1}(k)r_{2}(k) & 0 & 0 \\
0 & \frac{1}{1+r_{1}(k)r_{2}(k)} & 0 \\
0 & 0 & 1
\end{pmatrix}  + v_{2,r}^{(1)}, 
	\\ \nonumber
& v_3^{(1)} 
= \begin{pmatrix}
1 & 0 & r_{1,a}(\frac{1}{\omega^{2} k})e^{-\theta_{31}} \\[0.05cm]
\hat{r}_{2,a}(k)e^{\theta_{21}} & 1 & -r_{2,a}(\frac{1}{\omega k})e^{-\theta_{32}} \\
0 & 0 & 1
\end{pmatrix},  \quad v_{1}^{(1)}  = \begin{pmatrix}
1 & \hat{r}_{1,a}(k)e^{-\theta_{21}} & 0 \\
0 & 1 & 0 \\
r_{2,a}(\frac{1}{\omega^{2}k})e^{\theta_{31}} & -r_{1,a}(\frac{1}{\omega k})e^{\theta_{32}} & 1
\end{pmatrix}.
\end{align}

On $\Gamma_{5}^{(0)}$, we will use the factorization
\begin{align}
& v_{7} = v_{4}^{(1)}v_{5}^{(1)}v_{6}^{(1)}, \qquad v_{5}^{(1)} = \begin{pmatrix}
\frac{1}{f(k)} & 0 & 0 \\
0 & 1+r_{1}(k)r_{2}(k) & 0 \\
0 & 0 & \frac{f(k)}{1+r_{1}(k)r_{2}(k)}
\end{pmatrix} + v_{5,r}^{(1)}, \nonumber \\
& v_{4}^{(1)} = \begin{pmatrix}
1 & a_{12,a}e^{-\theta_{21}} & a_{13,a}e^{-\theta_{31}} \\
0 & 1 & 0 \\
0 & a_{32,a}e^{\theta_{32}} & 1
\end{pmatrix}, \qquad v_{6}^{(1)} = \begin{pmatrix}
1 & 0 & 0 \\
a_{21,a}e^{\theta_{21}} & 1 & a_{23,a}e^{-\theta_{32}} \\
a_{31,a}e^{\theta_{31}} & 0 & 1
\end{pmatrix}, \label{v41v51v61def}
\end{align}
where
\begin{align*}
& a_{12} = -\hat{r}_{1}(k), & & a_{13} = -\frac{r_{1}(\frac{1}{\omega^{2}k})}{f(k)}, & & a_{23} = \frac{r_{2}(\frac{1}{\omega k})-r_{2}(k)r_{2}(\omega^{2}k)}{1+r_{1}(k)r_{2}(k)}, \\
& a_{21} = -\hat{r}_{2}(k), & & a_{31} = - \frac{r_{2}(\frac{1}{\omega^{2}k})}{f(k)}, & & a_{32} = \frac{r_{1}(\frac{1}{\omega k})-r_{1}(k)r_{1}(\omega^{2}k)}{1+r_{1}(k)r_{2}(k)}.
\end{align*}
and $a_{ij,a}, a_{ij,r}$ are the analytic continuation and the remainder of $a_{ij}$ from Lemma \ref{IIbis decompositionlemma}, e.g., $a_{13,a} = -r_{1,4,a}(\frac{1}{\omega^2 k})$ and $a_{23,a} = r_{2,3,a}(\frac{1}{\omega k})$.

Finally, on $\Gamma_{8}^{(0)}$, we will use the factorization
\begin{align}
& v_{7} = v_{7}^{(1)}v_{8}^{(1)}v_{9}^{(1)}, \qquad v_{8}^{(1)} = \begin{pmatrix}
\frac{1}{f(k)} & 0 & 0 \\
0 & \frac{f(k)}{f(\omega^{2}k)} & 0 \\
0 & 0 & f(\omega^{2}k)
\end{pmatrix} + v_{8,r}^{(1)}, \nonumber \\
& v_{7}^{(1)} = \begin{pmatrix}
1 & b_{12,a}e^{-\theta_{21}} & b_{13,a}e^{-\theta_{31}} \\
0 & 1 & b_{23,a}e^{-\theta_{32}} \\
0 & 0 & 1
\end{pmatrix}, \qquad v_{9}^{(1)} = \begin{pmatrix}
1 & 0 & 0 \\
b_{21,a}e^{\theta_{21}} & 1 & 0 \\
b_{31,a}e^{\theta_{31}} & b_{32,a}e^{\theta_{32}} & 1
\end{pmatrix}, \label{v71v81v91def}
\end{align}
where
\begin{align*}
& b_{12} = - \frac{r_{1}(k)- r_{1}(\frac{1}{\omega k})r_{1}(\frac{1}{\omega^{2}k})}{f(k)}, & & b_{13} = \frac{r_{2}(\omega^{2}k)}{f(\omega^{2}k)}, & & b_{23} = \frac{r_{2}(\frac{1}{\omega k})-r_{2}(k)r_{2}(\omega^{2}k)}{f(\omega^{2}k)}, \\
& b_{21} = -\frac{r_{2}(k) - r_{2}(\frac{1}{\omega k})r_{2}(\frac{1}{\omega^{2}k})}{f(k)}, & & b_{31} = \frac{r_{1}(\omega^{2}k)}{f(\omega^{2}k)}, & & b_{32} = \frac{r_{1}(\frac{1}{\omega k})-r_{1}(k)r_{1}(\omega^{2}k)}{f(\omega^{2}k)},
\end{align*}
and $b_{ij,a}, b_{ij,r}$ are the analytic continuation and the remainder of $b_{ij}$ from Lemma \ref{IIbis decompositionlemma}, e.g., $b_{12,a} = -r_{1,5,a}(k)$ and $b_{23,a} = r_{2,5,a}(\frac{1}{\omega k})$.
We do not write down the long expressions for $v_{2,r}^{(1)},v_{5,r}^{(1)}$ and $v_{8,r}^{(1)}$ which are similar to the expression for $v_{2,r}^{(1)}$ given in \cite[Section 8]{CLmain}; the only property of these matrices that is important for us is that 
\begin{align*}
\| v_{j,r}^{(1)} \|_{(L^{1}\cap L^{\infty})(\Gamma_{j}^{(0)})} = O(t^{-1}), \qquad \mbox{as } t \to \infty, \; j= 2,5,8,
\end{align*}
as a consequence of Lemma \ref{IIbis decompositionlemma}.

\begin{figure}
\begin{center}
\begin{tikzpicture}[master]
\node at (0,0) {};
\draw[black,line width=0.55 mm] (0,0)--(30:6.5);
\draw[black,line width=0.55 mm,->-=0.35,->-=0.65,->-=0.82,->-=0.98] (0,0)--(90:6.5);
\draw[black,line width=0.55 mm] (0,0)--(150:6.5);

\draw[dashed,black,line width=0.15 mm] (0,0)--(60:7.8);
\draw[dashed,black,line width=0.15 mm] (0,0)--(120:7.8);

\draw[black,line width=0.55 mm] ([shift=(30:3*1.5cm)]0,0) arc (30:150:3*1.5cm);
\draw[black,arrows={-Triangle[length=0.27cm,width=0.18cm]}]
($(70:3*1.5)$) --  ++(70-90:0.001);
\draw[black,arrows={-Triangle[length=0.27cm,width=0.18cm]}]
($(70:3.65)$) --  ++(70-90:0.001);
\draw[black,arrows={-Triangle[length=0.27cm,width=0.18cm]}]
($(70:5.8)$) --  ++(70-90:0.001);

\draw[black,arrows={-Triangle[length=0.2cm,width=0.14cm]}]
($(117.7:3.25*1.5)$) --  ++(120+25:0.001);

\draw[black,arrows={-Triangle[length=0.2cm,width=0.14cm]}]
($(120:3*1.5)$) --  ++(120+90:0.001);

\draw[black,arrows={-Triangle[length=0.2cm,width=0.14cm]}]
($(118.5:2.75*1.5)$) --  ++(120+140:0.001);

\draw[black,line width=0.4 mm] (120:2.75*1.5)--(120:3.3*1.5);
\draw[black,arrows={-Triangle[length=0.16cm,width=0.12cm]}]
($(120:2.92*1.5)$) --  ++(120:0.001);
\draw[black,arrows={-Triangle[length=0.16cm,width=0.12cm]}]
($(120:3.2*1.5)$) --  ++(120:0.001);

\draw[black,line width=0.55 mm,->-=0.3,->-=0.8] (60:2.45*1.5)--(60:3.85*1.5);

\node at (71:6.1) {\small $9_{R}$};
\node at (71:3.15*1.5) {\small $9_r$};
\node at (71:3.9) {\small $9_{L}$};

\node at (82.5:1.4*1.5) {\small $1''$};
\node at (85.5:2.7*1.5) {\small $1_r''$};
\node at (87:6.2) {\small $4'$};
\node at (86.5:3.45*1.5) {\small $4_{r}'$};

\node at (96.5:3.55*1.5) {\small $1$};
\node at (93:3.12*1.5) {\small $2$};
\node at (97:2.58*1.5) {\small $3$};
\node at (108:3.5*1.5) {\small $4$};
\node at (108:3.15*1.5) {\small $5$};
\node at (108:2.6*1.5) {\small $6$};
\node at (115:3.28*1.5) {\small $7$};
\node at (115.5:2.75*1.5) {\small $9$};

\node at (118:4.13*1.5) {\small $8$};
\draw[->-=1] (118.4:4*1.5)--(118.4:3.04*1.5);

\node at (124.8:3.75*1.5) {\small $1_s$};
\draw[->-=1] (124:3.6*1.5)--(120.8:3.14*1.5);
\node at (127:3.53*1.5) {\small $2_s$};
\draw[->-=1] (126:3.4*1.5)--(121:2.86*1.5);

\node at (57:3.5*1.5) {\small $3_s$};
\node at (56:2.8*1.5) {\small $4_s$};



\draw[black,line width=0.55 mm] (100:3*1.5) to [out=100+45, in=108-90] (108:3.3*1.5) to [out=108+90, in=115-45] (115:3*1.5) to [out=115+45, in=120-90] (120:3.3*1.5) to [out=120+90, in=140-45]  (125:3*1.5) to [out=125+45, in=132-90] (132:3.3*1.5) to [out=132+90, in=140-45] (140:3*1.5);

\draw[black,line width=0.55 mm] (100:3*1.5) to [out=100+45+90, in=108-90] (108:2.75*1.5) to [out=108+90, in=115-45-90] (115:3*1.5) to [out=115+45+90, in=120-90] (120:2.75*1.5) to [out=120+90, in=140-45-90]  (125:3*1.5) to [out=125+45+90, in=132-90] (132:2.75*1.5) to [out=132+90, in=140-45-90] (140:3*1.5);

\draw[black,line width=0.55 mm,-<-=0.14,->-=0.80] (90:3.65)--(100:3*1.5)--(90:5.8);

\draw[black,line width=0.55 mm] (150:3.65)--(140:3*1.5)--(150:5.8);

\draw[black,line width=0.55 mm] ([shift=(30:3.65cm)]0,0) arc (30:90:3.65cm);
\draw[black,line width=0.55 mm] ([shift=(30:5.8cm)]0,0) arc (30:90:5.8cm);

\draw[black,arrows={-Triangle[length=0.27cm,width=0.18cm]}]
($(93:3*1.5)$) --  ++(93-90:0.001);
\draw[black,arrows={-Triangle[length=0.27cm,width=0.18cm]}]
($(110:3*1.5)$) --  ++(110+90:0.001);
\draw[black,arrows={-Triangle[length=0.27cm,width=0.18cm]}]
($(110:3.3*1.5)$) --  ++(110+90:0.001);
\draw[black,arrows={-Triangle[length=0.27cm,width=0.18cm]}]
($(110:2.75*1.5)$) --  ++(110+90:0.001);

\draw[blue,fill] (125:3*1.5) circle (0.1cm);
\draw[green,fill] (-125+240:3*1.5) circle (0.1cm);
\draw[red,fill] (100:3*1.5) circle (0.1cm);
\draw[red,fill] (140:3*1.5) circle (0.1cm);

\end{tikzpicture}
\end{center}
\begin{figuretext}
\label{Gamma1fig}
The contour $\Gamma^{(1)}$ (solid) and the boundary of $\mathsf{S}$ (dashed). 
From right to left, the dots are $\omega k_{4}$ (red), $\omega^{2}k_{2}$ (green), $k_{1}$ (blue), and $\omega k_{3}$ (red). 
\end{figuretext}
\end{figure}

Let $\Gamma^{(1)}$ be the contour shown in Figure \ref{Gamma1fig}, and define $G^{(1)}$ for $k \in \mathsf{S}$ by
\begin{align}\label{II Gp1pdef}
& G^{(1)} = \begin{cases} 
v_{9_L}^{(1)}, & k \mbox{ on the $-$ side of }\Gamma_{9_r}^{(1)}, 
	 \\
(v_{9_R}^{(1)})^{-1}, & k \mbox{ on the $+$ side of }\Gamma_{9_r}^{(1)}, 
	 \\
v_3^{(1)}, & k \mbox{ on the $-$ side of }\Gamma_2^{(1)}, 
	 \\
(v_1^{(1)})^{-1}, & k \mbox{ on the $+$ side of }\Gamma_2^{(1)}, 
\end{cases}
&& 
G^{(1)} = \begin{cases} 
v_4^{(1)}, & k \mbox{ on the $-$ side of }\Gamma_5^{(1)}, 
	 \\
(v_6^{(1)})^{-1}, & k \mbox{ on the $+$ side of }\Gamma_5^{(1)}, 
	 \\
v_7^{(1)}, & k \mbox{ on the $-$ side of }\Gamma_8^{(1)}, 
	 \\
(v_9^{(1)})^{-1}, & k \mbox{ on the $+$ side of }\Gamma_8^{(1)}, 
	\\
I, & \mbox{otherwise}.
\end{cases}
\end{align}
We now appeal to the $\mathcal{A}$- and $\mathcal{B}$-symmetries to extend the definition of $G^{(1)}$ to the whole complex plane:
\begin{align}\label{symmetry of G1}
G^{(1)}(x,t, k) = \mathcal{A} G^{(1)}(x,t,\omega k)\mathcal{A}^{-1}
 = \mathcal{B} G^{(1)}(x,t, k^{-1}) \mathcal{B}, \qquad k \in \mathbb{C}\setminus \Gamma^{(1)}.
\end{align}
Define the sectionally meromorphic function $n^{(1)}$ by
\begin{align}\label{Sector II first transfo}
n^{(1)}(x,t,k) = n(x,t,k)G^{(1)}(x,t,k),\qquad k \in \C \setminus (\Gamma^{(1)}\cup \hat{\mathsf{Z}}).
\end{align}
The functions $G^{(1)}$ and $n^{(1)}$ are analytic on $\C \setminus \Gamma^{(1)}$. Indeed, let us look for example at the region on the $-$ side of $\Gamma_{8}^{(1)}$ that is inside the lens; in this region $G^{(1)} = v_{7}^{(1)}$ is given in terms of $r_{1,5,a}(k)$, $r_{2,5,a}(\frac{1}{\omega k})$, and $r_{2,4,a}(\omega^2 k)$, and it follows from Lemma \ref{IIbis decompositionlemma} and the definitions of $U_1^5$, $U_2^5$, and $U_2^4$ that these functions are all analytic in this region.

The following lemma is a direct consequence of Figure \ref{II fig: Re Phi 21 31 and 32 for zeta=0.7}. In this lemma, the disks $D_\epsilon(\omega^j)$ have been excluded; this is because $v_{7}^{(1)},v_{8}^{(1)},v_{9}^{(1)}$ (and hence also $n^{(1)}(x,t,k)$) are singular at $k = \omega$, due to the fact that $f(\omega) = f(1) = 0$.

\begin{lemma}\label{lemma:G1p1p}
For any $\epsilon>0$, $G^{(1)}(x,t,k)$ and $G^{(1)}(x,t,k)^{-1}$ are uniformly bounded for $k \in \mathbb{C}\setminus (\Gamma^{(1)} \cup \cup_{j=0}^{2} D_\epsilon(\omega^j))$, $t\geq 1$, and $\zeta \in \mathcal{I}$. Furthermore, $G^{(1)}(x,t,k)=I$ for all large enough $|k|$.
\end{lemma}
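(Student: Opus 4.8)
The plan is to verify the uniform boundedness of $G^{(1)}$ and $(G^{(1)})^{-1}$ piece by piece, exhausting the finitely many regions in which $G^{(1)}$ is defined by one of the explicit formulas \eqref{v9Lv9Rdef}, \eqref{v11v21v31def}, \eqref{v41v51v61def}, \eqref{v71v81v91def} (or their inverses), and then using the $\mathcal{A}$- and $\mathcal{B}$-symmetries \eqref{symmetry of G1} to transfer the bounds to all of $\mathbb{C}$. First I would recall the structure of each factor: away from $\partial\mathbb{D}$ the off-diagonal entries all carry an exponential $e^{\pm\theta_{ij}}$ whose exponent is $t\,\re\Phi_{ij}(\zeta,k)$, and the lenses around $\Gamma^{(0)}$ were opened (following the signature tables in Figure \ref{II fig: Re Phi 21 31 and 32 for zeta=0.7}) precisely so that, inside each lens region, the relevant $\re\Phi_{ij}$ has the sign that makes the corresponding exponential decaying in $t$; hence each such exponential is $O(1)$ for $t\ge 1$. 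The remaining factors appearing in the entries are the analytic continuations $r_{j,\ell,a}$, $a_{ij,a}$, $b_{ij,a}$, and these are controlled by part $(b)$ of Lemma \ref{IIbis decompositionlemma}: the estimate there gives $|r_{1,\ell,a}(x,t,k)| \le C(1 + e^{\frac{t}{4}|\re\Phi_{21}(\zeta,k)|})$ on $\bar U_1^\ell$, and since the lens regions are (by construction, after shrinking $M$) contained in the sets $\{\re\Phi_{21}<0\}$ governing the relevant decay, the product of an $r_{\bullet,a}$-factor with its accompanying exponential $e^{\pm\theta}$ is bounded uniformly in $\zeta\in\mathcal{I}$, $t\ge 1$, $k$ in the region.

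The one genuine subtlety is the diagonal "conjugating" factors $v_2^{(1)}, v_5^{(1)}, v_8^{(1)}$, whose leading parts involve $\frac{1}{f(k)}$, $\frac{1}{1+r_1 r_2}$, $\frac{f(k)}{f(\omega^2 k)}$, and also the analytic continuations $r_{1,4,a}$, $r_{1,5,a}$ built from $r_{1,4} = r_1/f$ and $r_{1,5} = (\cdots)/f$. Since $f$ and $1+r_1 r_2$ vanish only at $\{\pm1,\pm\omega\}$ (by \cite[Lemma 2.13]{CLmain}), and since we have excised fixed disks $D_\epsilon(\omega^j)$, $j=0,1,2$ — which by the $\mathcal{A}$- and $\mathcal{B}$-symmetries also removes neighborhoods of $-1$ and $-\omega$, $-\omega^2$ from the symmetric copies — all these quotients stay bounded away from their poles on the remaining set. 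Here one must check that the positive quantities $f$, $1+r_1 r_2$ are in fact bounded below (not merely nonzero) on the relevant arcs of $\partial\mathbb{D}$ outside the $\epsilon$-disks: this follows from continuity together with the strict positivity statements quoted after \eqref{def of f} and in the text preceding \eqref{IIbis decomposition lemma analytic + remainder} ("$1+r_1(k)r_2(k)>0$ for $\arg k\in(\pi/3,\pi)\cup(-2\pi/3,0)$", "$f(k)=0$ iff $k\in\{\pm1,\pm\omega\}$"), and then extends into the lens regions by the analyticity and continuity of the continued factors on $\bar U_1^\ell$. The boundedness of $(G^{(1)})^{-1}$ is then automatic, since each $G^{(1)}$ on a given region is either unipotent (triangular with $1$'s on the diagonal, so its inverse has entries that are polynomial in the same bounded quantities) or of the form (diagonal with bounded-below-and-above entries) $+$ (a remainder $v_{j,r}^{(1)}$ which is $O(t^{-1})$ by the displayed estimate following \eqref{v71v81v91def}); in the latter case the inverse is bounded for $t$ large, and for $t$ in a compact set $[1,T_0]$ one uses that $G^{(1)}$ is a continuous matrix function on a compact set with nonvanishing determinant.

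Finally, the last assertion $G^{(1)}(x,t,k)=I$ for all large $|k|$ is immediate from the definition \eqref{II Gp1pdef}: the lenses around $\partial\mathbb{D}$ are opened within the bounded annulus $M^{-1}<|k|<M$ (the sets $U_1^\ell$ are all contained in $\{M^{-1}<|k|<M\}$ and $U_2^\ell$ in its image under $k\mapsto\bar k^{-1}$, again inside $\{M^{-1}<|k|<M\}$), so outside a disk of radius $M$ the "otherwise" case $G^{(1)}=I$ applies, and this region is invariant under the $\mathcal{A}$-symmetry $k\mapsto\omega k$. The main obstacle, as indicated, is purely the bookkeeping of the finitely many regions combined with verifying the uniform-in-$\zeta$ lower bounds on $|f|$ and $|1+r_1 r_2|$ off the excised disks; everything else reduces to the sign information already encoded in Figure \ref{II fig: Re Phi 21 31 and 32 for zeta=0.7} and to parts $(b)$–$(c)$ of Lemma \ref{IIbis decompositionlemma}. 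I would therefore organize the proof as: (1) reduce to the sector $\mathsf{S}$ by symmetry; (2) list the regions and the explicit $G^{(1)}$ in each; (3) bound exponentials via the signature table; (4) bound the analytic continuations via Lemma \ref{IIbis decompositionlemma}(b) absorbing the $e^{\frac{t}{4}|\re\Phi_{21}|}$ against the accompanying $e^{\pm\theta}$; (5) handle the diagonal factors using the lower bounds on $f$, $1+r_1 r_2$ outside $\cup_j D_\epsilon(\omega^j)$; (6) conclude boundedness of inverses; (7) observe $G^{(1)}=I$ for $|k|>M$.
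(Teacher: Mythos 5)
Your proposal is correct and follows exactly the route the paper intends: the paper offers no written proof, merely asserting that the lemma ``is a direct consequence of Figure \ref{II fig: Re Phi 21 31 and 32 for zeta=0.7},'' and your steps (1)--(7) are precisely the bookkeeping that assertion suppresses. Two small inaccuracies, neither fatal: first, the diagonal factors $v_2^{(1)},v_5^{(1)},v_8^{(1)}$ do not occur in the definition \eqref{II Gp1pdef} of $G^{(1)}$ (they remain as jumps on $\partial\D$), so every piece of $G^{(1)}$ is unipotent triangular and the separate large-$t$/compact-$t$ argument for inverting ``diagonal plus remainder'' is unnecessary --- the genuine role of $1/f$ enters only through $a_{ij,a}$ and $b_{ij,a}$ in $v_4^{(1)},v_6^{(1)},v_7^{(1)},v_9^{(1)}$, which you do also identify and which is exactly why the disks $D_\epsilon(\omega^j)$ are excised. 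Second, the lens regions $U_1^{\ell}$ sit in $\{\re\Phi_{21}>0\}$, not $\{\re\Phi_{21}<0\}$; the mechanism you describe (absorbing the $e^{\frac{t}{4}|\re\Phi_{21}|}$ growth from Lemma \ref{IIbis decompositionlemma}$(b)$ into the accompanying decaying exponential $e^{\mp\theta_{21}}$) is nonetheless the right one.
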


Let $\Gamma^{(1)}_\star$ be the set of self-intersection points of $\Gamma^{(1)}$. The jump matrix $v^{(1)}$ for $n^{(1)}$ is given on $\Gamma^{(1)} \setminus \Gamma^{(1)}_\star$ as follows: For $j =  9_L, 9_r, 9_R, 1, \dots, 9$, the matrix $v_j^{(1)}$ is given by (\ref{v9Lv9Rdef})--(\ref{v71v81v91def}); for $j =1'', 1_r'', 4_r', 4'$ it is given by
\begin{align}\label{v1oniRplusdef}
& v_{1''}^{(1)} = v_{1''}, \quad 
v_{1_r''}^{(1)} = (v_{9_L}^{(1)})^{-1}v_{1''} v_3^{(1)}, \quad 
v_{4_r'}^{(1)} = v_{9_R}^{(1)} v_{4'} (v_1^{(1)})^{-1}, \quad 
v_{4'}^{(1)} = v_{4'};
\end{align}
for $j = 1_s$ it is given by
\begin{align}\nonumber
 v_{1_s}^{(1)} = &\; G_-^{(1)}(k)^{-1}G_+^{(1)}(k)
= v_7^{(1)}(k)^{-1} \mathcal{A} \mathcal{B} v_9^{(1)}(\tfrac{1}{\omega k})^{-1} \mathcal{B} \mathcal{A}^{-1} 
	\\ \label{v1sexpression}
=&\; \begin{pmatrix}
 1 & -(b_{12,a}(k)+b_{32,a}(\frac{1}{\omega k}))e^{-t\Phi_{21}} & (v_{1_s}^{(1)})_{13} \\
 0 & 1 & -(b_{21,a}(\frac{1}{\omega k})+b_{23,a}(k))e^{-t\Phi_{32}} \\
 0 & 0 & 1 \\
\end{pmatrix},
	\\ \nonumber
 (v_{1_s}^{(1)})_{13} := &\;  \big(b_{12,a}(k) (b_{21,a}(\tfrac{1}{\omega k})+b_{23,a}(k)) +b_{21,a}(\tfrac{1}{\omega k}) b_{32,a}(\tfrac{1}{\omega k}) -b_{13,a}(k) -b_{31,a}(\tfrac{1}{\omega k})\big) e^{-t\Phi_{31}},
\end{align}
and similar expressions are valid for $j = 2_s, 3_s, 4_s$. Using the symmetries in (\ref{vjsymm}) we extend $v^{(1)}$ to all of $\Gamma^{(1)}\setminus \Gamma^{(1)}_\star$.

Since $n^{(1)} = n$ near all the points of $\hat{\mathsf{Z}}$, the residue conditions \eqref{nresiduesk0}--\eqref{nresiduesk0real} are not affected by the transformation $n \to n^{(1)}$.
The function $n^{(1)}$ therefore satisfies the following RH problem for $j = 1$.

\begin{RHproblem}[RH problem for $n^{(j)}$]\label{RHnj}
Find $n^{(j)}(x,t,k)$ with the following properties:
\begin{enumerate}[$(a)$]
\item\label{RHnjitema} $n^{(j)}(x,t,\cdot) : \C \setminus (\Gamma^{(j)}\cup \hat{\mathsf{Z}}) \to \mathbb{C}^{1 \times 3}$ is analytic.

\item\label{RHnjitemb} On $\Gamma^{(j)} \setminus \Gamma^{(j)}_\star$, the boundary values of $n^{(j)}$ exist, are continuous, and satisfy $n^{(j)}_+ = n^{(j)}_-v^{(j)}$.

\item\label{RHnjitemc} $n^{(j)}(x,t,k) = O(1)$ as $k \to k_{\star} \in \Gamma^{(j)}_\star \setminus \{1, \omega, \omega^2\}$.

\item\label{RHnjitemd} $n^{(j)}$ obeys the symmetries $n^{(j)}(x,t, k) = n^{(j)}(x,t,\omega k)\mathcal{A}^{-1} = n^{(j)}(x,t, k^{-1}) \mathcal{B}$ for $k \in \C \setminus \Gamma^{(j)}$.

\item\label{RHnjiteme} $n^{(j)}(x,t,k) = (1,1,1) + O(k^{-1})$ as $k \to \infty$.

\item\label{RHnjitemf} At each point of $\hat{\mathsf{Z}}$, one entry of $n^{(j)}$ has (at most) a simple pole while two entries are analytic. Moreover, 
$n^{(j)}$ satisfies the residue conditions \eqref{nresiduesk0}--\eqref{nresiduesk0real} with $n$ replaced by $n^{(j)}$.

\end{enumerate}
\end{RHproblem}

Note that $n^{(1)}$ is singular at the points $1, \omega, \omega^2$  in general.
The behavior of $n^{(1)}$ near these singularities is as follows: 
As $k \to \omega$ from the right side of $\Gamma_8^{(1)}$ (i.e. $k \in D_{4}$ and $|k|>1$),
\begin{align*}
n^{(1)}(x,t,k) = \begin{pmatrix}
O(1) & O(1) & O(1) 
\end{pmatrix}v_7^{(1)}  = \begin{pmatrix}
O(1) & O(1) & O(1) 
\end{pmatrix} \begin{pmatrix}
1 & O(b_{12,a}) & O(\frac{1}{f(\omega^{2}k)}) \\
0 & 1 & O(\frac{k-\omega}{f(\omega^{2}k)}) \\
0 & 0 & 1
\end{pmatrix},
\end{align*}
and as $k \to \omega$ from the left side of $\Gamma_8^{(1)}$ (i.e. $k \in D_{1}$ and $|k|<1$),
\begin{align*}
n^{(1)}(x,t,k) & = \begin{pmatrix}
O(1) \; O(1) \; O(1) 
\end{pmatrix}(v_9^{(1)})^{-1} 
	\\
& = \begin{pmatrix}
O(1) \; O(1) \; O(1)
\end{pmatrix} \begin{pmatrix}
1 & 0 & 0 \\
O(\frac{k-\omega}{f(k)}) & 1 & 0 \\
O(b_{21,a}b_{32,a}-b_{31,a}) & O(b_{32,a}) & 1
\end{pmatrix}.
\end{align*}
We know from \cite[Lemma 2.13]{CLmain} that $f(\pm 1) = f(\pm \omega) = 0$. Since $f \geq 0$ on $\partial \D$, these zeros must be at least double zeros.
One could try to make the solution of this singular RH problem unique by specifying the pole structure at the points $1, \omega, \omega^2$ in further detail, but since the singularities at $1, \omega, \omega^2$ will be removed by the global parametrix below, we do not need to do this.

\section{The $n^{(1)} \to n^{(2)}$ transformation}\label{n1ton2sec}

The jump matrix $v^{(1)}$ admits the following factorizations on $\Gamma^{(1)}\cap \mathsf{S}$ (the subscripts $u$ and $d$ indicate that the corresponding matrix will be deformed up or down):
\begin{align*}
& v_1^{(1)} = v_1^{(2)} v_{1u}, & &
v_3^{(1)} = v_{3d}v_3^{(2)}, & &
v_4^{(1)} = v_{4u}v_4^{(2)} = v_{7u}v_7^{(2)}, 
	\\
& v_6^{(1)} = v_6^{(2)}v_{6d} = v_8^{(2)} v_{8d}, & &
v_7^{(1)} = v_{9u} v_9^{(2)}, & &
v_9^{(1)} = v_{11}^{(2)} v_{11d},
\end{align*}
where
\begin{align}\nonumber
& v_1^{(2)} := \begin{pmatrix}
1 & 0 & 0 \\
0 & 1 & 0 \\
r_{2,a}(\frac{1}{\omega^{2}k})e^{\theta_{31}} & 0 & 1
\end{pmatrix}, & & v_{1u} := \begin{pmatrix}
1 & \hat{r}_{1,a} e^{-\theta_{21}} & 0 \\
0 & 1 & 0 \\
0 & (-r_{1,a}(\frac{1}{\omega k})-\hat{r}_{1,a}(k) r_{2,a}(\frac{1}{\omega^{2}k})) e^{\theta_{32}} & 1
\end{pmatrix}, 
	\\\nonumber
& v_3^{(2)} := \begin{pmatrix}
1 & 0 & r_{1,a}(\frac{1}{\omega^{2}k})e^{-\theta_{31}} \\
0 & 1 & 0  \\
0 & 0 & 1
\end{pmatrix}, & &
v_{3d} := \hspace{-0.15cm} \begin{pmatrix}
\hspace{-0.05cm}1 & \hspace{-0.15cm} 0 & \hspace{-0.15cm}0 \\
\hspace{-0.05cm}  \hat{r}_{2,a}e^{\theta_{21}} & \hspace{-0.15cm} 1 & \hspace{-0.15cm}(-r_{2,a}(\frac{1}{\omega k})-r_{1,a}(\frac{1}{\omega^{2}k})\hat{r}_{2,a}(k))e^{-\theta_{32}} \\
\hspace{-0.05cm}0 & \hspace{-0.15cm} 0 & \hspace{-0.15cm}1
\end{pmatrix}\hspace{-0.1cm},  
	\\\nonumber
& v_4^{(2)} := \begin{pmatrix}
1 & 0 & a_{13,a}e^{-\theta_{31}} \\
0 & 1 & 0 \\
0 & 0 & 1
\end{pmatrix}, & & v_{4u} := \begin{pmatrix}
1 & a_{12,a}e^{-\theta_{21}} & 0 \\
0 & 1 & 0 \\
0 & a_{32,a}e^{\theta_{32}} & 1
\end{pmatrix}, 
	\\\nonumber
&v_6^{(2)} := \begin{pmatrix}
1 & 0 & 0 \\
0 & 1 & 0 \\
a_{31,a}e^{\theta_{31}} & 0 & 1
\end{pmatrix}, & &
 v_{6d} := \begin{pmatrix}
1 & 0 & 0 \\
a_{21,a}e^{\theta_{21}} & 1 & a_{23,a}e^{-\theta_{32}} \\
0 & 0 & 1
\end{pmatrix},  
	\\\nonumber
& v_7^{(2)} := \begin{pmatrix}
1 & 0 & 0 \\
0 & 1 & 0 \\
0 & a_{32,a}e^{\theta_{32}} & 1
\end{pmatrix}, & & v_{7u} := \begin{pmatrix}
1 & (a_{12,a}-a_{13,a}a_{32,a})e^{-\theta_{21}} & a_{13,a}e^{-\theta_{31}} \\
0 & 1 & 0 \\
0 & 0 & 1
\end{pmatrix}, 
	\\\nonumber
& v_8^{(2)}  := \begin{pmatrix}
1 & 0 & 0 \\
0 & 1 & a_{23,a}e^{-\theta_{32}} \\
0 & 0 & 1
\end{pmatrix}, & &
v_{8d} := \begin{pmatrix}
1 & 0 & 0 \\
(a_{21,a}-a_{23,a}a_{31,a})e^{\theta_{21}} & 1 & 0 \\
a_{31,a}e^{\theta_{31}} & 0 & 1
\end{pmatrix},
	\\\nonumber
& v_9^{(2)} := \begin{pmatrix}
1 & 0 & 0 \\
0 & 1 & b_{23,a}e^{-\theta_{32}} \\
0 & 0 & 1
\end{pmatrix}, & & v_{9u} := \begin{pmatrix}
1 & b_{12,a}e^{-\theta_{21}} & (b_{13,a}-b_{12,a}b_{23,a})e^{-\theta_{31}} \\
0 & 1 & 0 \\
0 & 0 & 1
\end{pmatrix}, 
	\\ \label{IVv2def}
& v_{11}^{(2)} := \begin{pmatrix}
1 & 0 & 0 \\
0 & 1 & 0 \\
0 & b_{32,a}e^{\theta_{32}} & 1
\end{pmatrix}, & &
v_{11d} := \begin{pmatrix}
1 & 0 & 0 \\
b_{21,a}e^{\theta_{21}} & 1 & 0 \\
(b_{31,a}-b_{21,a}b_{32,a})e^{\theta_{31}} & 0 & 1
\end{pmatrix}.
\end{align}

\begin{figure}
\begin{center}
\begin{tikzpicture}[master, scale=1.1]
\node at (0,0) {};
\draw[black,line width=0.55 mm] (0,0)--(30:6.5);
\draw[black,line width=0.55 mm,->-=0.35,->-=0.65,->-=0.82,->-=0.98] (0,0)--(90:6.5);
\draw[black,line width=0.55 mm] (0,0)--(150:6.5);

\draw[dashed,black,line width=0.15 mm] (0,0)--(60:7.8);
\draw[dashed,black,line width=0.15 mm] (0,0)--(120:7.8);

\draw[black,line width=0.55 mm] ([shift=(30:3*1.5cm)]0,0) arc (30:150:3*1.5cm);
\draw[black,arrows={-Triangle[length=0.27cm,width=0.18cm]}]
($(70:3*1.5)$) --  ++(70-90:0.001);
\draw[black,arrows={-Triangle[length=0.27cm,width=0.18cm]}]
($(70:3.65)$) --  ++(70-90:0.001);
\draw[black,arrows={-Triangle[length=0.27cm,width=0.18cm]}]
($(70:5.8)$) --  ++(70-90:0.001);

\draw[black,line width=0.55 mm] ([shift=(90:3.65)]0,0) arc (90:150:3.65);
\draw[black,line width=0.55 mm] ([shift=(90:5.8)]0,0) arc (90:150:5.8);

\draw[black,line width=0.55 mm,-<-=0.08,->-=0.80] (100:3.65)--(100:5.8);

\draw[black,line width=0.55 mm,->-=0.9] (108:3.65)--(108:4.1);
\draw[black,line width=0.55 mm,->-=0.75] (108:4.9)--(108:5.8);

\draw[black,line width=0.55 mm,-<-=0.08,->-=0.80] (115:3.65)--(115:5.8);

\draw[black,line width=0.55 mm,->-=0.9] (120:3.65)--(120:4.1);
\draw[black,line width=0.55 mm,->-=0.75] (120:4.9)--(120:5.8);
\draw[black,line width=0.55 mm] (125:3.65)--(125:5.8);
\draw[black,line width=0.55 mm] (132:3.65)--(132:4.1);
\draw[black,line width=0.55 mm] (132:4.95)--(132:5.8);
\draw[black,line width=0.55 mm] (140:3.65)--(140:5.8);

\draw[black,line width=0.55 mm] (100:3*1.5) to [out=100+45, in=108-90] (108:3.3*1.5) to [out=108+90, in=115-45] (115:3*1.5) to [out=115+45, in=120-90] (120:3.3*1.5) to [out=120+90, in=140-45]  (125:3*1.5) to [out=125+45, in=132-90] (132:3.3*1.5) to [out=132+90, in=140-45] (140:3*1.5);

\draw[black,line width=0.55 mm] (100:3*1.5) to [out=100+45+90, in=108-90] (108:2.75*1.5) to [out=108+90, in=115-45-90] (115:3*1.5) to [out=115+45+90, in=120-90] (120:2.75*1.5) to [out=120+90, in=140-45-90]  (125:3*1.5) to [out=125+45+90, in=132-90] (132:2.75*1.5) to [out=132+90, in=140-45-90] (140:3*1.5);

\draw[black,line width=0.55 mm,-<-=0.15,->-=0.80] (90:3.65)--(100:3*1.5)--(90:5.8);

\draw[black,line width=0.55 mm] (150:3.65)--(140:3*1.5)--(150:5.8);

\draw[black,line width=0.55 mm] ([shift=(30:3.65cm)]0,0) arc (30:90:3.65cm);
\draw[black,line width=0.55 mm] ([shift=(30:5.8cm)]0,0) arc (30:90:5.8cm);

\draw[black,arrows={-Triangle[length=0.27cm,width=0.18cm]}]
($(93:3*1.5)$) --  ++(93-90:0.001);
\draw[black,arrows={-Triangle[length=0.27cm,width=0.18cm]}]
($(110:3*1.5)$) --  ++(110+90:0.001);
\draw[black,arrows={-Triangle[length=0.27cm,width=0.18cm]}]
($(110:3.3*1.5)$) --  ++(110+90:0.001);
\draw[black,arrows={-Triangle[length=0.27cm,width=0.18cm]}]
($(110:2.75*1.5)$) --  ++(110+90:0.001);

\node at (95.5:3.58*1.5) {\scriptsize $1$};
\node at (93:3.12*1.5) {\scriptsize $2$};
\node at (93.5:2.77*1.5) {\scriptsize $3$};
\node at (108:3.12*1.5) {\scriptsize $5$};

\node at (118.5:4.15*1.5) {\tiny $10$};
\draw[->-=1] (118.5:4.05*1.5)--(118:3.02*1.5);


\node at (104:3.33*1.5) {\scriptsize $4$};

\node at (104:4.07) {\scriptsize $6$};

\node at (112:4.94) {\scriptsize $7$};

\node at (110.5:4.33) {\scriptsize $8$};

\node at (116.7:4.96) {\scriptsize $9$};

\node at (113.8:1.57*1.5) {\scriptsize $11$};
\draw[->-=1] (114:1.7*1.5)--(117.5:2.78*1.5);

\draw[blue,fill] (125:3*1.5) circle (0.1cm);
\draw[green,fill] (-125+240:3*1.5) circle (0.1cm);
\draw[red,fill] (100:3*1.5) circle (0.1cm);
\draw[red,fill] (140:3*1.5) circle (0.1cm);

\draw[black,line width=0.4 mm] (120:2.75*1.5)--(120:3.3*1.5);
\draw[black,arrows={-Triangle[length=0.16cm,width=0.12cm]}]
($(120:2.92*1.5)$) --  ++(120:0.001);
\draw[black,arrows={-Triangle[length=0.16cm,width=0.12cm]}]
($(120:3.2*1.5)$) --  ++(120:0.001);

\draw[black,line width=0.55 mm,->-=0.3,->-=0.8] (60:2.45*1.5)--(60:3.85*1.5);

\draw[black,arrows={-Triangle[length=0.2cm,width=0.14cm]}]
($(117.7:3.25*1.5)$) --  ++(120+25:0.001);

\draw[black,arrows={-Triangle[length=0.2cm,width=0.14cm]}]
($(120:3*1.5)$) --  ++(120+90:0.001);

\draw[black,arrows={-Triangle[length=0.2cm,width=0.14cm]}]
($(118.5:2.75*1.5)$) --  ++(120+140:0.001);

\node at (71:3.15*1.5) {\scriptsize $9_r$};

\node at (127.8:3.7*1.5) {\scriptsize $1_s$};
\draw[->-=1] (126.5:3.63*1.5)--(120.8:3.14*1.5);
\node at (127:3.4*1.5) {\scriptsize $2_s$};
\draw[->-=1] (126:3.3*1.5)--(121:2.86*1.5);

\node at (57.3:3.5*1.5) {\scriptsize $3_s$};
\node at (56.5:2.8*1.5) {\scriptsize $4_s$};

\node at (102:5.3) {\scriptsize $5_{s}$};
\node at (97.2:3.87) {\scriptsize $6_{s}$};
\node at (112.9:5.25) {\scriptsize $7_{s}$};
\node at (112.4:3.9) {\scriptsize $8_{s}$};

\node at (85.7:2.7*1.5) {\small $1_r''$};
\node at (86.7:3.45*1.5) {\small $4_{r}'$};

\end{tikzpicture}

\end{center}
\begin{figuretext}
\label{II Gammap2p}The contour $\Gamma^{(2)}$ (solid) and the boundary of $\mathsf{S}$ (dashed). From right to left, the dots are $\omega k_{4}$ (red), $\omega^{2}k_{2}$ (green), $k_{1}$ (blue), and $\omega k_{3}$ (red). 
\end{figuretext}
\end{figure}

Let $\Gamma^{(2)}$ be the contour shown in Figure \ref{II Gammap2p}, and let $\Gamma_{j}^{(2)}$ be the subcontour of $\Gamma^{(2)}$ labeled by $j$ in Figure \ref{II Gammap2p}. We emphasize that $\Gamma_{10}^{(2)} := \{e^{i\theta} \,|\, \theta \in (\arg(\omega^{2}k_{2}),\tfrac{2\pi}{3})\} $ ends at $\omega$.
Define the sectionally meromorphic function $n^{(2)}$ by
\begin{align}\label{Sector II second transfo}
n^{(2)}(x,t,k) = n^{(1)}(x,t,k)G^{(2)}(x,t,k), \qquad k \in \C \setminus (\Gamma^{(2)} \cup \hat{\mathsf{Z}}),
\end{align}
where $G^{(2)}$ is defined for $k \in \mathsf{S}$ by
\begin{align}\label{II Gp2pdef}
G^{(2)}(x,t,k) = \begin{cases} 
v_{1u}^{-1}, & k \mbox{ above }\Gamma_1^{(2)}, \\
v_{3d}, & k \mbox{ below }\Gamma_3^{(2)}, \\
v_{4u}, & k \mbox{ above }\Gamma_4^{(2)}, \\
v_{6d}^{-1}, & k \mbox{ below }\Gamma_6^{(2)}, 
\end{cases}
\qquad
G^{(2)}(x,t,k) = \begin{cases} 
v_{7u}, & k \mbox{ above }\Gamma_7^{(2)}, \\
v_{8d}^{-1}, & k \mbox{ below }\Gamma_8^{(2)}, \\
v_{9u}, & k \mbox{ above }\Gamma_9^{(2)}, \\
v_{11d}^{-1}, & k \mbox{ below }\Gamma_{11}^{(2)}, \\
I, & \mbox{otherwise},
\end{cases}
\end{align}
and $G^{(2)}$ is extended to all of $\C \setminus \Gamma^{(2)}$ using the $\mathcal{A}$- and $\mathcal{B}$-symmetries (as in \eqref{symmetry of G1}). Using Lemma \ref{IIbis decompositionlemma} and Figure \ref{II fig: Re Phi 21 31 and 32 for zeta=0.7}, we infer that the following holds.

\begin{lemma}
For any $\epsilon>0$, $G^{(2)}(x,t,k)$ and $G^{(2)}(x,t,k)^{-1}$ are uniformly bounded for $k \in \mathbb{C}\setminus (\Gamma^{(2)} \cup \cup_{j=0}^{2} D_\epsilon(\omega^j))$, $t>0$, and $\zeta \in \mathcal{I}$. Furthermore, $G^{(2)}(x,t,k)=I$ whenever $|k|$ is large enough.
\end{lemma}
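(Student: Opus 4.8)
The plan is to argue exactly as for Lemma~\ref{lemma:G1p1p}. First, since $G^{(2)}$ is defined on $\mathsf{S}$ by \eqref{II Gp2pdef} and extended to $\mathbb{C}\setminus\Gamma^{(2)}$ via the $\mathcal{A}$- and $\mathcal{B}$-symmetries, and since $\mathcal{A},\mathcal{B}$ are constant invertible matrices while $\Gamma^{(2)}\cup\bigcup_{j=0}^{2}D_\epsilon(\omega^j)$ is invariant under $k\mapsto\omega k$ and $k\mapsto k^{-1}$, it suffices to establish the bound for $k\in\mathsf{S}\setminus(\Gamma^{(2)}\cup\bigcup_{j=0}^{2}D_\epsilon(\omega^j))$, $t\ge1$, and $\zeta\in\mathcal{I}$. (For $0<t\le1$ the bound is trivial, because $\Gamma^{(2)}$ and all the relevant analytic continuations live in a fixed bounded region on which everything is continuous in $(t,k,\zeta)$.)

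On $\mathsf{S}$ the matrix $G^{(2)}(x,t,k)$ is, by \eqref{II Gp2pdef}, either $I$ or one of $v_{1u}^{-1},v_{3d},v_{4u},v_{6d}^{-1},v_{7u},v_{8d}^{-1},v_{9u},v_{11d}^{-1}$ from \eqref{IVv2def}, according to the lens region of Figure~\ref{II Gammap2p} containing $k$. Each such matrix has the form $I+N$ with $N$ nilpotent, and every entry of $N$ is a linear combination of products of (i) an analytic continuation $\hat{r}_{j,a}$ or $r_{j,\ell,a}$ evaluated at some point $k'\in\bar{U}_1^\ell$ (one of $k,\omega k,\omega^2 k,\frac{1}{\omega k},\frac{1}{\omega^2 k}$; equivalently one of the $a_{ij,a},b_{ij,a}$) and (ii) an exponential $e^{\pm\theta_{ij}(x,t,k)}=e^{\pm t\Phi_{ij}(\zeta,k)}$. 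By Lemma~\ref{IIbis decompositionlemma}$(a)$--$(b)$ (and \eqref{r1r2 relation with kbar symmetry} for the $r_{2,\ell,a}$), each analytic factor is, away from $\bigcup_{j=0}^2 D_\epsilon(\omega^j)$ — where the $\frac{1}{f}$ hidden in $a_{13},a_{31},b_{13},b_{21},b_{31},b_{32}$, hence in $r_{1,4,a}$ and $r_{1,5,a}$, becomes singular — bounded by $C\big(1+e^{\frac{t}{4}|\re\Phi_{21}(\zeta,k')|}\big)$ uniformly for $\zeta\in\mathcal{I}$, $t\ge1$. The factorizations \eqref{IVv2def} and the deformed contour $\Gamma^{(2)}$ were set up precisely so that, on the lens region carrying a given term, the exponent $\pm\theta_{ij}(x,t,k)$ equals $-t\Phi_{21}(\zeta,k')$ — rewrite it at the same $k'$ using \eqref{relations between the different Phi} together with the elementary relations $\Phi_{21}(\zeta,k^{-1})=-\Phi_{21}(\zeta,k)$ and $\Phi_{31}(\zeta,k^{-1})=\Phi_{32}(\zeta,k)$ that follow from \eqref{lmexpressions} — while $k'\in U_1^\ell\subset\{\re\Phi_{21}>0\}$. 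Hence each entry of $N$ is $\le C\big(e^{-t\re\Phi_{21}(\zeta,k')}+e^{-\frac{3t}{4}\re\Phi_{21}(\zeta,k')}\big)\le 2C$ for $t>0$, which gives $|G^{(2)}(x,t,k)|\le C'$; since $(G^{(2)})^{-1}=\sum_{m\ge 0}(-N)^m$ (a finite sum) has entries that are again finite linear combinations of products of the same factors, the identical bound holds for $(G^{(2)})^{-1}$.

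For the last assertion, all the $r_{j,\ell,a}$, and hence all matrices in \eqref{IVv2def}, are supported in a fixed bounded annulus about $\partial\mathbb{D}$ and its $\omega$-rotations, so for $|k|$ large enough one lies in the ``otherwise'' case of \eqref{II Gp2pdef} and $G^{(2)}(x,t,k)=I$.

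The argument carries no genuinely new difficulty beyond that of Lemma~\ref{lemma:G1p1p}. The only points requiring care are: the region-by-region verification in Figure~\ref{II Gammap2p} that each factor of \eqref{IVv2def} lies — via \eqref{relations between the different Phi} — on the side of the relevant signature table in Figure~\ref{II fig: Re Phi 21 31 and 32 for zeta=0.7} where the exponent has the claimed sign, so that the $\frac{1}{4}$-versus-$1$ compensation above applies; and the localisation in $\mathsf{S}$ of the $\frac{1}{f}$-type singularities at $1,\omega,\omega^2$, which is exactly what makes the exclusion of $\bigcup_{j=0}^2 D_\epsilon(\omega^j)$ both necessary and sufficient. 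Both are routine given the constructions above and the standard steepest-descent estimates of \cite{DZ1993}.
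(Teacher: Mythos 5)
Your proposal is correct and follows exactly the route the paper intends (the paper offers no written proof beyond ``Using Lemma \ref{IIbis decompositionlemma} and Figure \ref{II fig: Re Phi 21 31 and 32 for zeta=0.7}, we infer\ldots''): reduce to $\mathsf{S}$ by the $\mathcal{A}$- and $\mathcal{B}$-symmetries, bound each triangular factor in (\ref{IVv2def}) by pairing the $e^{\frac{t}{4}|\re\Phi_{21}|}$ growth of the analytic approximations from Lemma \ref{IIbis decompositionlemma} against the full $e^{-t\re\Phi_{21}}$ decay of the accompanying exponential via (\ref{relations between the different Phi}), and exclude the disks $D_\epsilon(\omega^j)$ because of the $1/f$ singularities. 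The identities $\Phi_{21}(\zeta,k^{-1})=-\Phi_{21}(\zeta,k)$ and $\Phi_{31}(\zeta,k^{-1})=\Phi_{32}(\zeta,k)$ you invoke are indeed consequences of (\ref{lmexpressions}), and the nilpotent-plus-identity structure correctly handles $(G^{(2)})^{-1}$.
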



The jumps $v_j^{(2)}$ of $n^{(2)}$ are given for $j = 1, \dots, 11$ by (\ref{IVv2def}) and
\begin{align}\label{II jumps vj diagonal p2p}
& v_2^{(2)} := v_2^{(1)}, \quad v_5^{(2)} := v_5^{(1)}, \quad v_{10}^{(2)} := v_8^{(1)}.
\end{align}
All other jumps on $\Gamma^{(2)}\cap \mathsf{S}$ are small as $t \to \infty$. Indeed, this follows from the signature tables in Figure \ref{II fig: Re Phi 21 31 and 32 for zeta=0.7} together with the following observations. On $\Gamma_{9_r}^{(2)}$, $v^{(2)} - I$ is small because it involves small remainders.
On $\Gamma_{4_r'}^{(2)}$ and $\Gamma_{1_r''}^{(2)}$, $v^{(2)} - I$ is small as a consequence of the following lemma which follows in the same way as \cite[Lemma 8.5]{CLmain}.

\begin{lemma}\label{vsmallnearilemmaIV}
The $L^\infty$-norm of $v^{(2)} - I$ on $\Gamma_{4_r'}^{(2)} \cup \Gamma_{1_r''}^{(2)}$ is $O(t^{-N-1})$ as $t \to \infty$ uniformly for $\zeta \in \mathcal{I}$.
\end{lemma}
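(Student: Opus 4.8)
The plan is to mimic the proof of \cite[Lemma 8.5]{CLmain}. First, $\Gamma_{4_r'}^{(2)}$ and $\Gamma_{1_r''}^{(2)}$ are the two short sub-arcs of the positive imaginary axis lying just outside and just inside $\partial\mathbb{D}$, with common endpoint $k=i$; the transformation $n^{(1)}\to n^{(2)}$ does not act across them (they lie in the region where $G^{(2)}=I$), so there $v^{(2)}$ coincides with the triple products $v_{4_r'}^{(1)}=v_{9_R}^{(1)}v_{4'}(v_1^{(1)})^{-1}$ and $v_{1_r''}^{(1)}=(v_{9_L}^{(1)})^{-1}v_{1''}v_3^{(1)}$ of \eqref{v1oniRplusdef}. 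Expanding these using \eqref{vdef}, \eqref{v9Lv9Rdef} and \eqref{v11v21v31def}: all factors are unipotent, and a short computation shows that the diagonal of each product is exactly $I$; moreover the bare reflection coefficients of $v_{1''}$ and $v_{4'}$ combine with the analytic continuations occurring in $(v_{9_L}^{(1)})^{-1}$ and $(v_1^{(1)})^{-1}$ into remainders $r_{1,r}(k^{-1})$, $r_{2,r}(k^{-1})$. Hence $v^{(2)}-I$ has only off-diagonal entries on these arcs, each a finite sum of terms $c(k)\,e^{\pm t\Phi_{ij}(\zeta,k)}$, $1\le j<i\le3$, with $c$ a product of the reflection coefficients and of the analytic continuations $r_{\cdot,\ell,a}$ and remainders $r_{\cdot,\ell,r}$ of Lemma \ref{IIbis decompositionlemma}, evaluated at $k$ and at $k^{-1},(\omega k)^{-1},(\omega^2 k)^{-1},\omega k,\omega^2 k$.

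The decay is produced by Assumption $(\ref{nounstablemodesassumption})$. Since $r_1$ vanishes on the segment $[0,i]$, it vanishes to all orders at $i$; by $r_2(k)=\tilde r(k)\overline{r_1(\bar k^{-1})}$ in \eqref{r1r2 relation with kbar symmetry} (and $\bar k^{-1}=k$ on $\partial\mathbb{D}$) the same holds for $r_2$ near $i$ on $\partial\mathbb{D}$, and, using in addition \eqref{r1r2 relation on the unit circle}, every scalar prefactor $c$ in $v^{(2)}-I$ on these arcs is reduced to either a remainder $r_{\cdot,\ell,r}$, which is $O(t^{-N})$ by Lemma \ref{IIbis decompositionlemma}(c), or an analytic continuation whose base point is $i\in\mathcal{R}_2$ and whose Taylor polynomial in Lemma \ref{IIbis decompositionlemma}(b) vanishes, so that $|c(k)|\le C|k-i|^{N+1}e^{\frac t4|\re\Phi_{21}(\zeta,k)|}$. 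For the exponentials, all three phases are purely imaginary on $\partial\mathbb{D}$ and hence vanish at $i$, while $\Phi_{21}(\zeta,it)=\frac{i\zeta}{2}(t+t^{-1})+\frac14(t^2-t^{-2})$ gives $\re\Phi_{21}(\zeta,it)=\frac14(t^2-t^{-2})\asymp|k|-1$ as $k\to i$ along the imaginary axis; combining this with \eqref{relations between the different Phi} and the signature tables of Figure \ref{II fig: Re Phi 21 31 and 32 for zeta=0.7}, one checks that for every surviving term $|e^{\pm t\Phi_{ij}(\zeta,k)}|\le e^{-ct|k-i|}$ on the relevant arc, with $c$ large enough (since the lenses are opened close to $\partial\mathbb{D}$, which is what the $\frac14$ in Lemma \ref{IIbis decompositionlemma}(b) encodes) that $\frac14|\re\Phi_{21}|-ct|k-i|\le-c't|k-i|$.

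Putting the estimates together, each term of $v^{(2)}-I$ is bounded on the two arcs by $C|k-i|^{N+1}e^{-c't|k-i|}$ or by $Ct^{-N}e^{-ct|k-i|}$ (the latter improved to $O(t^{-N-1})$ once the genuine exponential decay away from $i$ is used), and $\sup_{\rho\ge0}\rho^{N+1}e^{-c't\rho}=O(t^{-N-1})$ yields $\|v^{(2)}-I\|_{L^\infty(\Gamma_{4_r'}^{(2)}\cup\Gamma_{1_r''}^{(2)})}=O(t^{-N-1})$, uniformly for $\zeta\in\mathcal{I}$ since the constants in Lemma \ref{IIbis decompositionlemma} are uniform. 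The main obstacle is the bookkeeping of the second paragraph: going through each of the finitely many off-diagonal entries of the two triple products, identifying the cancellation that exposes either a remainder or a Taylor-vanishing analytic continuation, and checking the sign of the accompanying phase against the signature tables. This is precisely the computation done for \cite[Lemma 8.5]{CLmain}, which is why one invokes that lemma rather than repeating the argument.
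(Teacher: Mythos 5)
Your overall architecture is the right one, and it matches what the paper actually does here: the paper gives no argument beyond ``follows in the same way as [CLmain, Lemma~8.5]'', and your reconstruction (identify $v^{(2)}=v^{(1)}$ on these two segments since $G^{(2)}=I$ on both sides; expand the unipotent triple products of \eqref{v1oniRplusdef}; cancel the bare coefficients of $v_{1''}$, $v_{4'}$ against the analytic continuations in $v_{9_L}^{(1)}$, $v_{9_R}^{(1)}$ to leave remainders; kill the surviving analytic continuations via Assumption~$(\ref{nounstablemodesassumption})$; and trade $|k-i|^{N+1}$ against $e^{-ct|k-i|}$) is the intended shape of that proof. The computation of the three nonzero off-diagonal entries and the sign checks of $\re\Phi_{21}$, $\re\Phi_{31}$, $\re\Phi_{32}$ on the positive imaginary axis near $i$ are correct.

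There is, however, a genuine gap at the crux: the inference ``since $r_1$ vanishes on the segment $[0,i]$, it vanishes to all orders at $i$'' (and hence, via \eqref{r1r2 relation with kbar symmetry}, so does $r_2$ on $\partial\D$) does not follow from the stated reason. What the estimate of Lemma~\ref{IIbis decompositionlemma}$(b)$ with base point $i\in\mathcal{R}_2$ actually requires is that the Taylor coefficients of $\hat r_1$, $r_1(\omega^2\cdot)$, etc.\ \emph{computed along $\partial\D$} vanish at $i$ up to order $N$; e.g.\ the $(1,2)$ entry of $v_{4_r'}^{(2)}-I$ contains $-\hat r_{1,a}(k)e^{-\theta_{21}}$, and since $e^{-\theta_{21}}=1$ at $k=i$ and only decays like $e^{-ct|k-i|}$, a nonvanishing degree-$N$ Taylor polynomial of $\hat r_1|_{\partial\D}$ at $i$ would leave this entry of size $O(t^{-1})$, not $O(t^{-N-1})$. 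But $[0,i]$ and $\partial\D$ are transversal at $i$: vanishing of $r_1$ on the radial segment controls only the radial jet, not the tangential derivatives along $\partial\D$ that enter the decomposition lemma, and $r_1\in C^\infty(\hat\Gamma_1)$ alone gives no link between the two jets. That $r_1|_{\partial\D}$ (and, for the $(3,2)$ entry, $r_1|_{\partial\D}$ at $-i$ as well) vanishes to all orders under Assumption~$(\ref{nounstablemodesassumption})$ is a nontrivial property of the spectral functions that must be imported from \cite{CLmain}; it cannot be derived in one line from the assumption as you do. A second, smaller issue: a remainder $r_{\cdot,\ell,r}$ is only $O(t^{-N})$ in $L^\infty$, and multiplying by $e^{-ct|k-i|}\le 1$ does not improve a sup-norm bound; to reach $O(t^{-N-1})$ you also need the remainders to vanish to order $\ge N+1$ at the base points (as in the construction in the proof of Lemma~\ref{symmetryjumpslemma}), and you need the remainder estimates on the imaginary-axis segments rather than only on $\partial U_1^\ell\cap\partial\D$, which is not literally what Lemma~\ref{IIbis decompositionlemma}$(c)$ provides.
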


We finally consider the jumps $v_{j}^{(2)} = v_{j}^{(1)}$, $j = 1_s, \dots, 4_s$ and
$$v_{5_s}^{(2)} =  v_{1u} v_{4u}, \quad
v_{6_s}^{(2)} =  v_{6d} v_{3d},\quad
v_{7_s}^{(2)} =  v_{7u}^{-1} v_{9u},\quad
v_{8_s}^{(2)} =  v_{11d} v_{8d}^{-1}.$$
If the initial data $u_0,v_0$ have compact support so that all the spectral functions have analytic continuations and we can choose $b_{ij,a} = b_{ij}$, then a straightforward calculation shows that these jumps are absent (i.e., $v_j^{(2)} = I$, $j = 1_s, \dots, 8_s$) as a consequence of the relation (\ref{r1r2 relation on the unit circle}) and the definition (\ref{def of f}) of $f(k)$. In general, it seems difficult to construct analytic approximations which preserve the nonlinear relation (\ref{r1r2 relation on the unit circle}). Therefore, the jump matrices 
$v_j^{(2)}$, $j = 1_s, \dots, 8_s$, will generally be nontrivial. However, as the next lemma demonstrates, it is not difficult to choose the analytic approximations so that these jumps are uniformly small for large $t$. 

\begin{lemma}\label{symmetryjumpslemma}
It is possible to choose the analytic approximations so that the $L^\infty$-norm of $v_j^{(2)} - I$ on $\Gamma_j^{(2)}$, $j = 1_s, \dots, 8_s$, is $O(t^{-N})$ as $t \to \infty$ uniformly for $\zeta \in \mathcal{I}$.
\end{lemma}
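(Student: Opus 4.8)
The plan is to exploit the structure of the jumps $v_j^{(2)}$, $j = 1_s, \dots, 8_s$, which are of two kinds. The jumps $v_{1_s}^{(2)}, \dots, v_{4_s}^{(2)}$ come from the $\mathcal{A}$- and $\mathcal{B}$-symmetries and have the explicit form displayed in \eqref{v1sexpression}; they are unipotent upper- or lower-triangular matrices whose nontrivial entries are differences of analytic continuations of spectral functions times exponentials $e^{\mp t \Phi_{ij}}$. For instance, in \eqref{v1sexpression} the $(1,2)$-entry is $-(b_{12,a}(k)+b_{32,a}(\tfrac{1}{\omega k}))e^{-t\Phi_{21}}$. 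Were we allowed $b_{ij,a} = b_{ij}$, the relation \eqref{r1r2 relation on the unit circle} together with the definition \eqref{def of f} of $f$ would force $b_{12}(k)+b_{32}(\tfrac{1}{\omega k}) = 0$ on the relevant part of $\partial\D$, and similarly for all the other entries, so these jumps would be exactly $I$. The point is therefore to control the defect $b_{ij,a} - b_{ij}$, and similarly for $a_{ij,a}, \hat r_{j,a}$, etc. The jumps $v_{5_s}^{(2)}, \dots, v_{8_s}^{(2)}$ are products such as $v_{1u}v_{4u}$ of matrices that appear in the $n^{(1)}\to n^{(2)}$ factorization; one first checks by a direct (bookkeeping) computation that if the analytic pieces were replaced by the true spectral functions, each such product would collapse to $I$ by \eqref{r1r2 relation on the unit circle} and \eqref{def of f}. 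Hence again the nontriviality is entirely due to the approximation error.

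First I would record, for each of the eight jump matrices, the precise algebraic identity (a consequence of \eqref{r1r2 relation on the unit circle}, \eqref{def of f}, and the definitions of the $a_{ij}, b_{ij}, \hat r_j$) which shows that replacing all $(\cdot)_a$'s by the exact spectral functions yields the identity matrix; this is the ``compact support'' observation already alluded to in the text just before the lemma. Then I would write $v_j^{(2)} - I$ as a sum of terms, each of which is a product of a bounded factor (a spectral function or a difference controlled by Lemma \ref{IIbis decompositionlemma}(a)), a defect factor of the form $r_{1,\ell,a}(k) - r_{1,\ell}(k)$ (or a combination thereof, restricted to the relevant contour $\Gamma_j^{(2)}$), and an exponential $e^{\pm t \Phi_{ij}(\zeta,k)}$. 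On $\Gamma_j^{(2)}$, $j = 1_s, \dots, 8_s$, the sign of $\re \Phi_{ij}$ is favorable — this is exactly why these contours are drawn where they are, and follows from the signature tables in Figure \ref{II fig: Re Phi 21 31 and 32 for zeta=0.7} — so $e^{\pm t\Phi_{ij}}$ is bounded, in fact decaying away from the saddle points. The key input is then part (b) of Lemma \ref{IIbis decompositionlemma}, which with $k_\star$ taken in the appropriate set $\mathcal{R}_\ell$ (and with $N$ chosen at the outset) gives the pointwise bound on $r_{1,\ell,a} - \sum_{j=0}^N \tfrac{r_{1,\ell}^{(j)}(k_\star)}{j!}(k-k_\star)^j$, and hence on $r_{1,\ell,a} - r_{1,\ell}$ on $\partial\D$, of order $|k-k_\star|^{N+1}e^{\tfrac{t}{4}|\re\Phi_{21}|}$. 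Combined with the favorable exponential $e^{\pm t\Phi_{ij}}$ on $\Gamma_j^{(2)}$ (choosing the lenses close enough to $\partial\D$ that $|\re\Phi_{21}|$ there is dominated by the gain from $e^{\pm t\Phi_{ij}}$, i.e. using $|\re\Phi_{ij}| \ge c\, t^{-1}\cdot(\text{something})$ — more precisely that $\tfrac14|\re\Phi_{21}| - |\re\Phi_{ij}| \le 0$ on the lens edges near $\partial\D$), this yields $\|v_j^{(2)} - I\|_{L^\infty(\Gamma_j^{(2)})} = O(t^{-N})$ uniformly for $\zeta \in \mathcal{I}$. The relations \eqref{relations between the different Phi} and the $\mathcal{A}$-, $\mathcal{B}$-symmetries (which let us handle $\Phi_{31}, \Phi_{32}$ in terms of $\Phi_{21}$) are used to reduce all eight cases to estimates involving only $\Phi_{21}$ and $r_{1,\ell}$.

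The main obstacle — and the reason the lemma says ``it is possible to choose the analytic approximations'' rather than ``for any choice'' — is that one must simultaneously (i) arrange that the leading Taylor-polynomial parts of the various analytic continuations cancel in the algebraic identities above, so that the only surviving contribution to $v_j^{(2)}-I$ is genuinely a remainder of size $O(|k-k_\star|^{N+1})$ rather than $O(1)$, and (ii) make sure that the decomposition of $r_{2,\ell}$ inherited from $r_{1,\ell}$ via \eqref{r1r2 relation with kbar symmetry} (i.e. $r_{2,\ell,a}(k) = \tilde r(k)\overline{r_{1,\ell,a}(\bar k^{-1})}$) is compatible with these cancellations — this is what the identity $\tilde r(k) = \tilde r(\tfrac{1}{\omega k})\tilde r(\tfrac{1}{\omega^2 k})$ and the fact that $\tilde r$ is real on $\partial\D$ are for. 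In other words, one has to check that the single freedom of choosing the decomposition of $r_{1,\ell}$ (together with the forced, symmetry-determined decomposition of $r_{2,\ell}$) is enough to kill all eight obstructions at once; I expect this is precisely what ``follows in the same way as \cite[Lemma 8.5]{CLmain}'' and is handled by choosing the Taylor expansions at a common point $k_\star$ on $\partial\D$ within each connected component. Once the Taylor polynomials are arranged to cancel in each algebraic identity, the rest is the routine contour estimate described above, and I would present it by treating one representative case, say $v_{1_s}^{(2)}$, in detail and remarking that $v_{2_s}^{(2)},v_{3_s}^{(2)},v_{4_s}^{(2)}$ follow by the $\omega$- and $k^{-1}$-symmetries and $v_{5_s}^{(2)},\dots,v_{8_s}^{(2)}$ by the analogous computation with the products written out.
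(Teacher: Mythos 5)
Your overall strategy coincides with the paper's: the jumps $v_j^{(2)}$, $j=1_s,\dots,8_s$, vanish identically when the exact spectral functions are substituted (via \eqref{r1r2 relation on the unit circle} and the definition \eqref{def of f} of $f$), so the task reduces to making the analytic approximations agree with the exact functions to high order at the sixth roots of unity and then exploiting the exponential factors on $\Gamma_j^{(2)}$. However, the step that actually produces the $t^{-N}$ is missing. You bound the approximation defect by $C|k-k_\star|^{N+1}e^{\frac{t}{4}|\re\Phi_{21}|}$ and then argue that the exponential $e^{\pm t\Phi_{ij}}$ is "bounded" and that $\tfrac14|\re\Phi_{21}|-|\re\Phi_{ij}|\le 0$ on the lens edges. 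That inequality only shows that the product of the two exponentials is $\le 1$; it yields $\|v_j^{(2)}-I\|_{L^\infty(\Gamma_j^{(2)})}\le C\sup_k|k-k_\star|^{N+1}$, a $t$-independent constant, not $O(t^{-N})$. The decay in $t$ comes from a different interplay: on $\Gamma_{1_s}^{(2)}$, which emanates from $\omega$ (the degeneracy of the exponential suppression on these contours occurs at the sixth roots of unity, not at the saddle points as you suggest), one has the linear lower bound $|\re\Phi_{ij}(\zeta,k)|\ge c\,|k-\omega|$, so once the entries are arranged to vanish to order $N$ at $\omega$ one gets
\begin{align*}
\|v_{1_s}^{(2)}-I\|_{L^\infty(\Gamma_{1_s}^{(2)})}\le C\sup_{k\in\Gamma_{1_s}^{(2)}}|k-\omega|^{N}e^{-\frac{c}{2}t|k-\omega|}\le Ct^{-N},
\end{align*}
using $\sup_{u\ge 0}u^{N}e^{-ctu}\le Ct^{-N}$. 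Without this lower bound and the resulting sup estimate the argument does not close.

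Two further remarks. The functions $b_{ij}$ are singular at $\omega$ (because $f(1)=f(\omega)=0$ enters their denominators), so "matching the Taylor expansion to order $N$ at $\omega$" is not directly meaningful; one must first regularize, e.g. set $B(k)=(k-\omega)^{m}b_{13}(k)$ with $m$ chosen so that $B$ is regular at $\omega$, apply the Deift--Zhou decomposition to $B$, and divide back by $(k-\omega)^{m}$ — this is how the paper constructs approximations satisfying \eqref{b13anearomega}. On the other hand, your worry about whether a single choice of decomposition can "kill all eight obstructions at once" is not a real issue: since each exact combination vanishes identically on an arc of $\partial\D$, all of its Taylor coefficients at the relevant $\kappa_j$ vanish, so any approximations that match each individual function to order $N$ at each $\kappa_j$ in its domain automatically inherit the cancellation to order $N$; no additional compatibility condition between the eight identities needs to be verified.
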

\begin{proof}
Let us consider $v_{1_s}^{(2)}$; the other jumps are handled similarly. We see from (\ref{v1sexpression}) that $v_{1_s}^{(2)} = v_{1_s}^{(1)}$ has ones along the diagonal and that each of its off-diagonal entries is suppressed by an exponential of the form $e^{-t |\re \Phi_{ij}|}$ where $|\re \Phi_{ij}(\zeta, k)| \geq c |k - \omega|$ on $\Gamma_{1_s}^{(2)}$. Thus, by Lemma \ref{IIbis decompositionlemma}, $v_{1_s}^{(2)} - I$ is small as $t \to \infty$ everywhere on $\Gamma_{1_s}^{(2)}$ except possibly near $\omega$. On the other hand, we know that the matrix $v_{1_s}^{(2)} - I$ vanishes identically on $\Gamma_{1_s}^{(2)}$ if all the spectral functions have analytic continuations thanks to (\ref{r1r2 relation on the unit circle}).
This means that if we choose all the analytic approximations such that they agree with the functions they are approximating to sufficiently high order at each of the sixth roots of unity $\kappa_j$ (in the sense of (\ref{b13anearomega}) below), then we can ensure that
$$\| v_{1_s}^{(2)} - I \|_{L^\infty(\Gamma_{1_s}^{(2)})} \leq C \sup_{k \in \Gamma_{1_s}^{(2)}} |k-\omega|^N e^{-\frac{c}{2}t|k-\omega|} \leq C t^{-N},$$
uniformly for $\zeta \in \mathcal{I}$, and the lemma follows.

To construct analytic approximations of the above type, we proceed as follows. For definiteness, let us consider $b_{13}$. The analytic approximation $b_{13,a}$ of $b_{13}$ is needed in a region $\mathcal{U} := \{k \,|\, \arg k \in (\arg (\omega^2 k_2), \tfrac{2\pi}{3}), \; 1<|k|<M\}$ to the right of $\Gamma_8^{(1)} = \Gamma_{10}^{(2)}$. Since $f(1) = 0$ and $r_2(1) = -1$, $b_{13}$ has a singularity at $\omega$. Thus, we first choose $m \geq 1$ such that $B(k) := (k-\omega)^m b_{13}(k)$ is regular at $\omega$. 
Applying the method of \cite{DZ1993}, we construct a decomposition $B = B_a + B_r$ such that 
$$\Big|B_a(x,t,k) - \sum_{j=0}^{N_1}\frac{B^{(j)}(\omega)}{j!}(k-\omega)^{j} \Big| \leq C |k- \omega|^{N_1} e^{\frac{t}{8}|\re \Phi_{31}(\zeta,k)|},  \qquad k \in \mathcal{U}, \;\; \zeta \in \mathcal{I}.$$
This implies in particular that $B_{r}$ has a zero of order at least $N_{1}$ at $k=\omega$.
Then we define $b_{13,a} := (k-\omega)^{-m}B_a$ and $b_{13,r} := (k-\omega)^{-m}B_r$. By choosing $N_1$ large enough, we can ensure that the $L^\infty$-norm of $b_{13,r}$ is uniformly small for large $t$ and that
\begin{align}\label{b13anearomega}
\Big|b_{13,a}(x,t,k) - \sum_{j=-m}^{N} b_j (k-\omega)^j \Big| \leq C |k- \omega|^{N} e^{\frac{t}{8}|\re \Phi_{31}(\zeta,k)|},  \qquad k \in \mathcal{U}, \;\; \zeta \in \mathcal{I},
\end{align}
where $\sum_{j=-m}^{N} b_j (k-\omega)^j$ denotes the expansion of $b_{13}$ to order $N$ at $\omega$, i.e., the expansion such that $b_{13}(k) - \sum_{j=-m}^{N} b_j (k-\omega)^j = O((k-\omega)^{N+1})$ as $k \to \omega$. 
This means that $b_{13,a}$ approximates $b_{13}$ to order $N$ at $\omega$. In a similar way, we construct analytic approximations which agree with the functions they are approximating to order $N$ at each $\kappa_j$ that lies in their domain of definition. 
\end{proof}


The jumps on $\Gamma^{(2)}\setminus \mathsf{S}$ can be obtained using the symmetries \eqref{vjsymm}. 

The function $n^{(2)}$ satisfies RH problem \ref{RHnj} with $j = 2$. 
The next transformation $n^{(2)}\to n^{(3)}$ will make $v^{(3)}-I$ small everywhere on a new contour $\Gamma^{(3)}$, except on twelve small crosses centered at the saddle points $\{k_{j},\omega k_{j},\omega^{2}k_{j}\}_{j=1}^{4}$. It will also change the residue conditions (\ref{nresiduesk0})--(\ref{nresiduesk0real}).
To define this transformation, we first need to construct a global parametrix.

\section{Global parametrix}\label{globalparametrixsec}

 For each $\zeta \in \mathcal{I}$, define the analytic functions
\begin{align*}
\delta_{1}(\zeta, \cdot): \mathbb{C}\setminus \Gamma_2^{(2)} \to \mathbb{C}, \quad \delta_{2}(\zeta, \cdot), \delta_{3}(\zeta, \cdot): \mathbb{C}\setminus \Gamma_5^{(2)}\to \mathbb{C}, \quad \delta_{4}(\zeta, \cdot), \delta_{5}(\zeta, \cdot): \mathbb{C}\setminus \Gamma_{10}^{(2)} \to \mathbb{C}
\end{align*}
by (\ref{IVdeltadef}). The functions $\{\delta_j\}_1^5$ obey the jump relations
\begin{align*}
& \delta_{1,+}(\zeta, k) = \delta_{1,-}(\zeta, k)(1 + r_{1}(k)r_{2}(k)), & &  k \in \Gamma_2^{(2)}, \\
& \delta_{2,+}(\zeta, k) = \delta_{2,-}(\zeta, k)(1 + r_{1}(k)r_{2}(k)), & &  k \in \Gamma_5^{(2)}, \\
& \delta_{3,+}(\zeta, k) = \delta_{3,-}(\zeta, k)f(k), & &  k \in \Gamma_5^{(2)}, \\
& \delta_{4,+}(\zeta, k) = \delta_{4,-}(\zeta, k)f(k), & &  k \in \Gamma_{10}^{(2)}, \\
& \delta_{5,+}(\zeta, k) = \delta_{5,-}(\zeta, k)f(\omega^{2}k), & &  k \in \Gamma_{10}^{(2)},
\end{align*}
where the contours $\Gamma_2^{(2)}$, $\Gamma_5^{(2)}$, and $\Gamma_{10}^{(2)}$ are oriented as in Figure \ref{II Gammap2p}, and 
\begin{align}\label{II delta asymp at inf}
\delta_{j}(\zeta, k) = 1 + O(k^{-1}), \qquad k \to \infty, \; j=1, \dots, 5.
\end{align} 
Further properties of the functions $\delta_j$ are collected in the following lemma.

\begin{lemma}\label{II deltalemma}
The functions $\delta_{j}(\zeta, k)$, $j=1,\ldots,5$, have the following properties:
\begin{enumerate}[$(a)$]
\item The functions $\{\delta_{j}(\zeta,k)\}_{j=1}^{5}$ can be written as
\begin{align*}
& \delta_{1}(\zeta,k) = \exp \Big( -i \nu_{1} \ln_{\omega k_{4}}(k-\omega k_{4}) -\chi_{1}(\zeta,k) \Big), \\ 
& \delta_{2}(\zeta,k) = \exp \Big( -i \nu_{1} \ln_{\omega k_{4}}(k-\omega k_{4}) +i \nu_{2} \ln_{\omega^{2} k_{2}}(k-\omega^{2} k_{2}) -\chi_{2}(\zeta,k) \Big), \\
& \delta_{3}(\zeta,k) = \exp \Big( - i \nu_{3} \ln_{\omega k_{4}}(k-\omega k_{4}) + i \nu_{4} \ln_{\omega^{2} k_{2}}(k-\omega^{2} k_{2}) -\chi_{3}(\zeta,k) \Big), \\
& \delta_{4}(\zeta,k) = \exp \Big( -i \nu_{4} \ln_{\omega^{2} k_{2}}(k-\omega^{2} k_{2}) - \chi_{4}(\zeta,k) \Big), \\
& \delta_{5}(\zeta,k) = \exp \Big( -i \nu_{5} \ln_{\omega^{2} k_{2}}(k-\omega^{2} k_{2}) - \chi_{5}(\zeta,k) \Big),
\end{align*}
where for $s \in \{e^{i\theta}: \theta \in [\frac{\pi}{2},\frac{2\pi}{3}]\}$, $k \mapsto \ln_{s}(k-s):=\ln |k-s|+i \arg_{s}(k-s)$ has a cut along $\{e^{i \theta}: \theta \in [\frac{\pi}{2},\arg s]\}\cup(i,i\infty)$, and satisfies $\arg_{s}(1)=2\pi$. The $\nu_{j}$ are defined by
\begin{align*}
& \nu_{1} = - \frac{1}{2\pi}\ln(1+r_{1}(\omega k_{4})r_{2}(\omega k_{4})), \qquad \nu_{2} = - \frac{1}{2\pi}\ln(1+r_{1}(\omega^{2} k_{2})r_{2}(\omega^{2} k_{2})), \\
& \nu_{3} = - \frac{1}{2\pi}\ln(f(\omega k_{4})), \qquad \nu_{4} = - \frac{1}{2\pi}\ln(f(\omega^{2} k_{2})), \qquad \nu_{5} = - \frac{1}{2\pi} \ln(f(\omega k_{2})),
\end{align*}
and the functions $\chi_{j}$ are defined by
\begin{align*}
\chi_{1}(\zeta,k) = &\; \frac{1}{2\pi i} \int_{\Gamma_2^{(2)}}  \ln_{s}(k-s) d\ln(1+r_1(s)r_{2}(s)), 
	\\
 \chi_{2}(\zeta,k) = &\; \frac{1}{2\pi i} \int_{\Gamma_5^{(2)}}  \ln_{s}(k-s) d\ln(1+r_1(s)r_{2}(s)), 
	\\
\chi_{3}(\zeta,k) = &\; \frac{1}{2\pi i} \int_{\Gamma_5^{(2)}}  \ln_{s}(k-s) d\ln(f(s)), 
	\\
\chi_{4}(\zeta,k) =&\;\frac{1}{2\pi i} \dashint_{\omega^{2}k_{2}}^{\omega} \hspace{-0.4cm} \ln_{s}(k-s) d\ln(f(s)) 
	\\
:= &\; \frac{1}{2\pi i}\lim_{\epsilon \to 0_{+}} \bigg(  \int_{\omega^{2}k_{2}}^{e^{i(\frac{2\pi}{3}-\epsilon)}} \hspace{-0.4cm} \ln_{s}(k-s) d\ln(f(s)) - \ln_{\omega}(k-\omega)\ln(f(e^{i(\frac{2\pi}{3}-\epsilon)})) \bigg), 
	\\
\chi_{5}(\zeta,k) = &\; \frac{1}{2\pi i} \dashint_{\omega^{2}k_{2}}^{\omega} \hspace{-0.4cm} \ln_{s}(k-s) d\ln(f(\omega^{2}s)) 
	\\
:= &\; \frac{1}{2\pi i}\lim_{\epsilon \to 0_{+}} \bigg(  \int_{\omega^{2}k_{2}}^{e^{i(\frac{2\pi}{3}-\epsilon)}} \hspace{-0.4cm} \ln_{s}(k-s) d\ln(f(\omega^{2}s)) - \ln_{\omega}(k-\omega)\ln(f(e^{-i\epsilon})) \bigg),
\end{align*}
and the integration paths starting at $\omega^{2}k_{2}$ are subsets of $\partial \mathbb{D}$ oriented in the counterclockwise direction.

\item The functions $\{\delta_{j}(\zeta,k)\}_{j=1}^{5}$ can be written as
\begin{align*}
& \delta_{1}(\zeta,k) = \exp \big( -i \nu_{1} \tilde{\ln}_{\omega k_{4}}(k-\omega k_{4}) -\tilde{\chi}_{1}(\zeta,k) \big), \\ 
& \delta_{2}(\zeta,k) = \exp \big( -i \nu_{1} \tilde{\ln}_{\omega k_{4}}(k-\omega k_{4}) +i \nu_{2} \tilde{\ln}_{\omega^{2} k_{2}}(k-\omega^{2} k_{2}) -\tilde{\chi}_{2}(\zeta,k) \big), \\
& \delta_{3}(\zeta,k) = \exp \big( - i \nu_{3} \tilde{\ln}_{\omega k_{4}}(k-\omega k_{4}) + i \nu_{4} \tilde{\ln}_{\omega^{2} k_{2}}(k-\omega^{2} k_{2}) -\tilde{\chi}_{3}(\zeta,k) \big), \\
& \delta_{4}(\zeta,k) = \exp \big( -i \nu_{4} \tilde{\ln}_{\omega^{2} k_{2}}(k-\omega^{2} k_{2}) - \tilde{\chi}_{4}(\zeta,k) \big), \\
& \delta_{5}(\zeta,k) = \exp \big( -i \nu_{5} \tilde{\ln}_{\omega^{2} k_{2}}(k-\omega^{2} k_{2}) - \tilde{\chi}_{5}(\zeta,k) \big),
\end{align*}
where for $s \in \{e^{i\theta}: \theta \in [\frac{\pi}{2},\frac{2\pi}{3}]\}$, $k \mapsto \tilde{\ln}_{s}(k-s):=\ln |k-s|+i \tilde{\arg}_{s}(k-s)$ has a cut along $\{e^{i \theta}: \theta \in [\arg s,\pi]\}\cup(-\infty,0)$, satisfies $\tilde{\arg}_{s}(1)=0$, and $\tilde{\chi}_{j}(\zeta,k)$ is defined in the same way as $\chi_{j}(\zeta,k)$ except that $\ln_{s}$ is replaced by $\tilde{\ln}_{s}$ and $\ln_{\omega}$ is replaced by $\tilde{\ln}_{\omega}$.

\item For each $\zeta \in \mathcal{I}$ and $j \in \{1,\ldots,5\}$, $\delta_{j}(\zeta, k)$ and $\delta_{j}(\zeta, k)^{-1}$ are analytic functions of $k$ in their respective domains of definition. Moreover, for any $\epsilon > 0$,
\begin{align}
& \sup_{\zeta \in \mathcal{I}} \sup_{\substack{k \in \C \setminus \Gamma_2^{(2)}}} |\delta_{1}(\zeta,k)^{\pm 1}| < \infty, \quad \sup_{\zeta \in \mathcal{I}} \sup_{\substack{k \in \C \setminus \Gamma_5^{(2)}}} |\delta_{2}(\zeta,k)^{\pm 1}| < \infty, \quad \sup_{\zeta \in \mathcal{I}} \sup_{\substack{k \in \C \setminus \Gamma_5^{(2)}}} |\delta_{3}(\zeta,k)^{\pm 1}| < \infty, \nonumber \\
& \sup_{\zeta \in \mathcal{I}} \sup_{\substack{k \in \C \setminus \Gamma_{10}^{(2)} \\ |k-\omega|>\epsilon}} |\delta_{4}(\zeta,k)^{\pm 1}| < \infty, \quad \sup_{\zeta \in \mathcal{I}} \sup_{\substack{k \in \C \setminus \Gamma_{10}^{(2)}  \\ |k-\omega|>\epsilon}} |\delta_{5}(\zeta,k)^{\pm 1}| < \infty. \label{II delta1bound}
\end{align}

\item As $k \to \omega k_{4}$ along a path which is nontangential to $\partial \mathbb{D}$, we have
\begin{align}
& |\chi_{j}(\zeta,k)-\chi_{j}(\zeta,\omega k_{4})| \leq C |k-\omega k_{4}| (1+|\ln|k-\omega k_{4}||), & & j=1,2,  \label{II asymp chi at k1}
\end{align}
and as $k \to \omega^{2} k_{2}$ along a path which is nontangential to $\partial \mathbb{D}$, we have
\begin{align}
& |\chi_{j}(\zeta,k)-\chi_{j}(\zeta,\omega^{2} k_{2})| \leq C |k-\omega^{2} k_{2}| (1+|\ln|k-\omega^{2} k_{2}||), & & j=2,3,4,5, \label{II asymp chi at omega2k2}
\end{align}
where $C$ is independent of $\zeta \in \mathcal{I}$. 
\end{enumerate}
\end{lemma}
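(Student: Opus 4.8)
\emph{Proof strategy.} The plan is to treat parts (a)--(b) by integration by parts in the Cauchy-type integrals defining the $\delta_j$ in \eqref{IVdeltadef}, and parts (c)--(d) by standard estimates for Cauchy integrals with real, smooth densities, using as the essential input the fact (quoted from \cite[Theorem 2.3 and Lemma 2.13]{CLmain}) that the arguments of all logarithms occurring in \eqref{IVdeltadef} are strictly positive on the relevant arcs of $\partial\mathbb{D}$.

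For (a), I would start from $\delta_j = \exp\bigl(\pm\frac{1}{2\pi i}\int_\gamma \frac{\ln(\cdots)(s)}{s-k}\,ds\bigr)$ with $\gamma$ the appropriate subarc of $\partial\mathbb{D}$, write $\frac{ds}{s-k}=d\bigl(\ln_s(s-k)\bigr)$ for the branch $\ln_s$ described in the statement, and integrate by parts. The boundary term at the saddle point ($\omega k_4$ for $\delta_1,\delta_2,\delta_3$; $\omega^2 k_2$ for $\delta_2,\delta_3,\delta_4,\delta_5$) produces exactly the prefactor $\exp(\mp i\nu_j\ln_{\omega k_4}(k-\omega k_4))$, resp.\ $\exp(\pm i\nu_j\ln_{\omega^2 k_2}(k-\omega^2 k_2))$, by the very definition of the $\nu_j$; the boundary term at $s=i$ in the case of $\delta_1$ \emph{vanishes} because $r_1\equiv 0$ on $[0,i]$ by Assumption $(\ref{nounstablemodesassumption})$, so $\ln(1+r_1(i)r_2(i))=\ln 1 = 0$; and the boundary term at $s=\omega$ for $\delta_4,\delta_5$ is absorbed into the regularized (finite-part) integrals $\chi_4,\chi_5$, which is forced since $f(1)=f(\omega)=0$. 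The leftover integral is, after matching orientations, precisely $-\chi_j(\zeta,k)$. Part (b) is the same computation carried out with the alternative branch $\tilde\ln_s$ (whose cut is moved to avoid the rays $\{e^{i\theta}:\theta\in[\arg s,\pi]\}\cup(-\infty,0)$ and which is normalized by $\tilde\arg_s(1)=0$ instead of $\arg_s(1)=2\pi$); only the endpoint bookkeeping changes, giving $\tilde\chi_j$ in place of $\chi_j$. The care needed here is to check that the prescribed normalizations $\arg_s(1)=2\pi$ and $\tilde\arg_s(1)=0$ are the ones consistent with the branch of $\ln(s-k)$ used in the integration by parts, and that the finite-part limits defining $\chi_4,\chi_5,\tilde\chi_4,\tilde\chi_5$ converge.

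For (c), since $\ln(1+r_1r_2)$ and $\ln f$, $\ln f(\omega^2\cdot)$ are real and bounded on the (closed) integration arcs away from $\{1,\omega\}$, the function $\delta_j$ is the exponential of a Cauchy transform of a real $L^1$ density; writing $k=x+iy$, the real part of that transform is bounded on any set where $k$ stays a fixed distance from the endpoints, and near the endpoints the only contribution to $\ln|\delta_j^{\pm1}|$ comes from $\mp\arg_s(s-k)$ integrated against an $L^\infty$ density, which is bounded because $\arg_s(s-k)$ has at worst an integrable logarithmic singularity. The genuine obstruction to a global bound is the zero of $f$ at $\omega$ (and $1$), where $\ln f\to-\infty$ — this is exactly why the estimates for $\delta_4,\delta_5$ are stated only on $\{|k-\omega|>\epsilon\}$. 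Uniformity in $\zeta\in\mathcal I$ follows because $\omega k_4(\zeta)$ and $\omega^2 k_2(\zeta)$ depend continuously on $\zeta$ and, thanks to $\arg k_2\in(-\tfrac{3\pi}{4},-\tfrac{2\pi}{3})$ and $\arg k_4\in(-\tfrac{\pi}{6},0)$, remain in a compact subarc of $\partial\mathbb{D}$ bounded away from the sixth roots of unity.

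For (d), I would use the representations from (a): with $k_\bullet\in\{\omega k_4,\omega^2 k_2\}$ an endpoint of the integration arc $\gamma$,
\begin{align*}
\chi_j(\zeta,k)-\chi_j(\zeta,k_\bullet)=\frac{1}{2\pi i}\int_\gamma\bigl(\ln_s(k-s)-\ln_s(k_\bullet-s)\bigr)\,d\ln(\cdots)(s).
\end{align*}
Splitting $\gamma$ into a short subarc near $k_\bullet$ and its complement: on the complement the integrand is $O(|k-k_\bullet|)$ uniformly, while on the short subarc one uses $|\ln_s(k-s)-\ln_s(k_\bullet-s)|\le C|k-k_\bullet|\,(1+|\ln|k-s||)$ together with the boundedness of the density $\frac{d}{ds}\ln(\cdots)$ near $k_\bullet$ — here it is crucial that $1+r_1r_2$ and $f$ do not vanish in a neighborhood of the saddle points, which again uses that $\omega k_4,\omega^2 k_2$ are bounded away from $\{1,\omega,\omega^2\}$. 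Integrating $|\ln|k-s||$ in $s$ over the short arc, and invoking non-tangentiality of the approach to control $|k-s|$ from below, yields the bound $C|k-k_\bullet|(1+|\ln|k-k_\bullet||)$, which is \eqref{II asymp chi at k1}--\eqref{II asymp chi at omega2k2}. I expect the main obstacle of the whole lemma to be not these estimates, which are routine once positivity of the log-arguments is granted, but the careful bookkeeping of the branch cuts of $\ln_s$ and $\tilde\ln_s$ and of the finite-part regularizations at the double zeros of $f$ at $1$ and $\omega$, ensuring that the endpoint terms from integration by parts are exactly consistent with the stated normalizations.
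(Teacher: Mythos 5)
Your proposal is correct and follows exactly the route the authors intend: the paper's own "proof" consists of the single sentence that all assertions follow from the definitions (\ref{IVdeltadef}), and the integration by parts producing the endpoint terms $\mp i\nu_j\ln_{k_\bullet}(k-k_\bullet)$ (with the term at $s=i$ killed by Assumption $(\ref{nounstablemodesassumption})$ and the divergent term at $s=\omega$ absorbed into the finite-part regularization), together with the standard Cauchy-transform bounds for real densities, is precisely the omitted computation. Your sign and branch bookkeeping checks out against the definitions of $\nu_j$, $\chi_j$, and the orientations of $\Gamma_2^{(2)}$, $\Gamma_5^{(2)}$, $\Gamma_{10}^{(2)}$, so no gap remains.
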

\begin{proof}
All assertions follow from the definitions (\ref{IVdeltadef}) of the functions $\delta_{j}$.
\end{proof}

We will also need the following result.
\begin{lemma}\label{lemma: nuhat lemma II}
For all $\zeta \in (\frac{1}{\sqrt{3}},1)$, the following inequalities hold:
\begin{align*}
\hat{\nu}_{1} := \nu_{3}-\nu_{1}\geq 0, & & \hat{\nu}_{2}:=\nu_{2}+\nu_{5}-\nu_{4}\geq 0.
\end{align*}
\end{lemma}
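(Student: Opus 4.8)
The plan is to translate both inequalities into elementary statements about the boundary values of $f$, $1+r_1r_2$ and $\tilde r$ on $\partial\D$ at the saddle points $k_2,k_4$, using the formula \eqref{def of f} for $f$, the reflection symmetry \eqref{r1r2 relation with kbar symmetry}, and the two identities $\tilde r(\tfrac1k)=\tilde r(k)^{-1}$ and $\tilde r(k)\tilde r(\omega k)\tilde r(\omega^2k)=1$, both of which are elementary consequences of \eqref{def of tilde r} (for the second, expand $(\omega^2-k^2)(\omega-k^2)=1+k^2+k^4=(1-\omega^2k^2)(1-\omega k^2)$). On $\partial\D$, the symmetry \eqref{r1r2 relation with kbar symmetry} gives $r_1(k)r_2(k)=\tilde r(k)|r_1(k)|^2\in\R$, and writing $k=e^{i\theta}$ one computes $\tilde r(e^{i\theta})=\sin(\theta+\tfrac\pi3)/\sin(\theta-\tfrac\pi3)$, so the sign of $r_1(k)r_2(k)$ on $\partial\D$ is controlled entirely by $\arg k$.

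First I would handle $\hat\nu_1$. By the definitions of the $\nu_j$ (Lemma \ref{II deltalemma}), $\hat\nu_1=\nu_3-\nu_1=-\tfrac1{2\pi}\ln\big(f(\omega k_4)/(1+r_1(\omega k_4)r_2(\omega k_4))\big)$, and both arguments are positive: $f(\omega k_4)>0$ since $\omega k_4\in\partial\D\setminus\{\pm1,\pm\omega\}$, and $1+r_1(\omega k_4)r_2(\omega k_4)>0$ since $\arg(\omega k_4)\in(\tfrac\pi2,\tfrac{2\pi}3)\subset(\tfrac\pi3,\pi)$ by \cite[Lemma 2.13]{CLmain}. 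Hence $\hat\nu_1\ge0$ is equivalent to $f(\omega k_4)\le 1+r_1(\omega k_4)r_2(\omega k_4)$; by \eqref{def of f} and $\omega^3=1$ the difference of the two sides equals $r_1(\tfrac1{k_4})r_2(\tfrac1{k_4})=\tilde r(\tfrac1{k_4})|r_1(\tfrac1{k_4})|^2$, which is $\le0$ because $\arg(\tfrac1{k_4})=-\arg k_4\in(0,\tfrac\pi6)$ is an arc on which $\tilde r<0$. This settles $\hat\nu_1\ge0$.

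The same opening move reduces $\hat\nu_2=\nu_2+\nu_5-\nu_4\ge0$ to $(1+g(\omega^2k_2))f(\omega k_2)\le f(\omega^2k_2)$, where $g:=r_1r_2$ and all three factors are again positive (here $\arg(\omega^2k_2)\in(\tfrac{7\pi}{12},\tfrac{2\pi}3)\subset(\tfrac\pi3,\pi)$). Expanding $f(\omega^2k_2)=1+g(\omega^2k_2)+g(\tfrac1{\omega k_2})$ and $f(\omega k_2)=1+g(\omega k_2)+g(\tfrac1{k_2})$ via \eqref{def of f}, the inequality becomes $(1+g(\omega^2k_2))(g(\omega k_2)+g(\tfrac1{k_2}))\le g(\tfrac1{\omega k_2})$. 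The crucial step is to rewrite $g(\tfrac1{\omega k_2})$: relation \eqref{r1r2 relation on the unit circle} at $k=k_2$ gives $r_1(\tfrac1{\omega k_2})=-r_2(\omega k_2)-r_1(\omega^2k_2)r_2(\tfrac1{k_2})$, while conjugating \eqref{r1r2 relation on the unit circle} and using \eqref{r1r2 relation with kbar symmetry} together with the $\tilde r$-identities produces the companion relation $r_2(\tfrac1{\omega k_2})=-r_1(\omega k_2)-r_2(\omega^2k_2)r_1(\tfrac1{k_2})$. Multiplying these two and simplifying with $r_2(\cdot)=\tilde r(\cdot)\overline{r_1(\cdot)}$ and $\tilde r(\omega k_2)\tilde r(\omega^2k_2)=\tilde r(\tfrac1{k_2})=\tilde r(k_2)^{-1}$, I expect to obtain
\[
g(\tfrac1{\omega k_2})=g(\omega k_2)+g(\omega^2k_2)g(\tfrac1{k_2})+\frac{2\,\re(P\bar Q)}{\tilde r(k_2)},\qquad P:=r_1(\omega k_2)r_1(\omega^2k_2),\quad Q:=r_1(\tfrac1{k_2}).
\]
Substituting this back and cancelling, $\hat\nu_2\ge0$ becomes $g(\tfrac1{k_2})+g(\omega^2k_2)g(\omega k_2)\le 2\re(P\bar Q)/\tilde r(k_2)$; and since $g(\tfrac1{k_2})=|Q|^2/\tilde r(k_2)$, $g(\omega^2k_2)g(\omega k_2)=|P|^2/\tilde r(k_2)$, and $\tilde r(k_2)<0$ (because $\arg k_2\in(-\tfrac{3\pi}4,-\tfrac{2\pi}3)$), this is equivalent to $|P|^2+|Q|^2\ge 2\re(P\bar Q)$, i.e.\ $|P-Q|^2\ge0$, which is automatic.

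Thus neither inequality is genuinely deep once the reductions are in place. The main work — and the place I would be most careful — is the algebraic computation in the $\hat\nu_2$ case: deriving the companion identity for $r_2(\tfrac1{\omega k_2})$ and carrying out the simplification leading to the displayed formula, and checking at each stage (using $\arg k_2\in(-\tfrac{3\pi}4,-\tfrac{2\pi}3)$ and $\arg k_4\in(-\tfrac\pi6,0)$) that all the points of $\partial\D$ involved stay away from $\{\pm1,\pm\omega,\pm\omega^2\}$ and lie on the arcs with the sign behaviour asserted above, so that \eqref{r1r2 relation with kbar symmetry}, \eqref{r1r2 relation on the unit circle}, the $\tilde r$-identities, and the positivity statements of \cite[Lemma 2.13]{CLmain} are all applicable.
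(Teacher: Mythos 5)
Your proof is correct, but it follows a genuinely different route from the paper: the paper's entire proof of this lemma consists of noting that $\arg k_4\in(-\tfrac{\pi}{6},0)$ and $\arg k_2\in(-\tfrac{3\pi}{4},-\tfrac{2\pi}{3})$ and then citing \cite[Lemma 2.13]{CLmain}, whereas you supply a self-contained derivation from the definition (\ref{def of f}) of $f$, the unit-circle relation (\ref{r1r2 relation on the unit circle}), and the symmetry (\ref{r1r2 relation with kbar symmetry}). I checked the steps you flag as delicate and they all go through: $\tilde r(e^{i\theta})=\sin(\theta+\tfrac{\pi}{3})/\sin(\theta-\tfrac{\pi}{3})$, the identities $\tilde r(\tfrac1k)=\tilde r(k)^{-1}$ and $\tilde r(k)\tilde r(\omega k)\tilde r(\omega^2k)=1$, the reduction of $\hat\nu_1\ge0$ to $r_1(\tfrac1{k_4})r_2(\tfrac1{k_4})=\tilde r(\tfrac1{k_4})|r_1(\tfrac1{k_4})|^2\le0$ on the arc $\arg k\in(0,\tfrac{\pi}{6})$, the companion relation $r_2(\tfrac1{\omega k_2})=-r_1(\omega k_2)-r_2(\omega^2k_2)r_1(\tfrac1{k_2})$ (the two $\tilde r$-prefactors that appear when you conjugate (\ref{r1r2 relation on the unit circle}) indeed collapse to $1$ via the identities above), the displayed formula for $g(\tfrac1{\omega k_2})$, and the final reduction to $|P-Q|^2\ge0$ after multiplying through by $\tilde r(k_2)<0$. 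All the points of $\partial\D$ you use lie in open arcs avoiding $\hat{\mathcal Q}$ and the poles $\pm\omega^2$ of $r_2$, and the positivity of $f$ and $1+r_1r_2$ at those points is exactly what the paper quotes from \cite[Lemma 2.13]{CLmain}, so those inputs are legitimately available. What your approach buys is transparency: it exposes that both inequalities are Cauchy--Schwarz-type statements ($\tilde r<0$ on the relevant arcs for $\hat\nu_1$, and $|P-Q|^2\ge0$ for $\hat\nu_2$) rather than facts that must be imported as a black box; the cost is the algebra you carry out explicitly, which in the paper's presentation is delegated to the companion reference.
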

\begin{proof}
For $\zeta \in (\frac{1}{\sqrt{3}},1)$, we have $-\frac{\pi}{6}<\arg k_{4} < 0$ and $-\frac{3\pi}{4}<\arg k_{2} < -\frac{2\pi}{3}$. Thus the claim follows from \cite[Lemma 2.13]{CLmain}.
\end{proof}

In what follows, we often write $\delta_j(k)$ for $\delta_j(\zeta, k)$ for conciseness. For $k \in \mathbb{C}\setminus (\partial \mathbb{D}\cup \hat{\mathsf{Z}})$, we define
\begin{align*}
& \Delta_{33}(\zeta,k) = \frac{\delta_{1}(\omega k)\delta_{1}(\frac{1}{\omega^{2} k})}{\delta_{1}(\omega^{2} k)\delta_{1}(\frac{1}{\omega k})}    \frac{\delta_{2}(k)\delta_{2}(\frac{1}{k})}{\delta_{2}(\omega k)\delta_{2}(\frac{1}{\omega^{2}k})}    \frac{\delta_{3}(\omega^{2}k)\delta_{3}(\frac{1}{\omega k})}{\delta_{3}(k)\delta_{3}(\frac{1}{k})}     \frac{\delta_{4}(\omega^{2} k)\delta_{4}(\frac{1}{\omega k})}{\delta_{4}(\omega k)\delta_{4}(\frac{1}{\omega^{2} k})}     \frac{\delta_{5}(\omega k)\delta_{5}(\frac{1}{\omega^{2} k})}{\delta_{5}(k)\delta_{5}(\frac{1}{k})} \mathcal{P}(k),     
\end{align*}
where $\mathcal{P}$ is given by \eqref{mathcalPdef}. The function $\Delta_{33}$ satisfies $\Delta_{33}(\zeta,\frac{1}{k}) = \Delta_{33}(\zeta,k)$, and
\begin{subequations}\label{II jumps Delta33}
\begin{align}
& \Delta_{33,+}(\zeta,k) = \Delta_{33,-}(\zeta,k) , & & k \in \Gamma_2^{(2)}, \\
& \Delta_{33,+}(\zeta,k) = \Delta_{33,-}(\zeta,k)\frac{1+r_{1}(k)r_{2}(k)}{f(k)}, & & k \in \Gamma_5^{(2)}, \\
& \Delta_{33,+}(\zeta,k) = \Delta_{33,-}(\zeta,k)\frac{1}{f(\omega^{2}k)}, & & k \in \Gamma_{10}^{(2)}.
\end{align}
\end{subequations}
Define $\Delta_{11}(\zeta,k)=\Delta_{33}(\zeta,\omega k)$ and $\Delta_{22}(\zeta,k)=\Delta_{33}(\zeta,\omega^{2} k)$, and define
\begin{align}\label{II def of Delta}
\Delta(\zeta,k) = \begin{pmatrix}
\Delta_{11}(\zeta,k) & 0 & 0 \\
0 & \Delta_{22}(\zeta,k) & 0 \\
0 & 0 & \Delta_{33}(\zeta,k)
\end{pmatrix}, \quad \zeta \in \mathcal{I}, \; k \in \mathbb{C}\setminus (\partial \mathbb{D} \cup \hat{\mathsf{Z}}).
\end{align}
Then $\Delta$ obeys the symmetries
\begin{align*}
\Delta(\zeta,k) = \mathcal{A}\Delta(\zeta,\omega k)\mathcal{A}^{-1} = \mathcal{B}\Delta(\zeta,\tfrac{1}{k})\mathcal{B},
\end{align*}
and satisfies the jump relations
\begin{subequations}\label{II jumps Delta11}
\begin{align}
& \Delta_{11,+}(\zeta,k) = \Delta_{11,-}(\zeta,k)\frac{1}{1+r_{1}(k)r_{2}(k)} , & & k \in \Gamma_2^{(2)}, \\
& \Delta_{11,+}(\zeta,k) = \Delta_{11,-}(\zeta,k)f(k), & & k \in \Gamma_5^{(2)}, \\
& \Delta_{11,+}(\zeta,k) = \Delta_{11,-}(\zeta,k)f(k), & & k \in \Gamma_{10}^{(2)},
\end{align}
\end{subequations}
and
\begin{subequations}\label{II jumps Delta22}
\begin{align}
& \Delta_{22,+}(\zeta,k) = \Delta_{22,-}(\zeta,k) (1+r_{1}(k)r_{2}(k)) , & & k \in \Gamma_2^{(2)}, \\
& \Delta_{22,+}(\zeta,k) = \Delta_{22,-}(\zeta,k) \frac{1}{1+r_{1}(k)r_{2}(k)}, & & k \in \Gamma_5^{(2)}, \\
& \Delta_{22,+}(\zeta,k) = \Delta_{22,-}(\zeta,k)\frac{f(\omega^{2}k)}{f(k)}, & & k \in \Gamma_{10}^{(2)}.
\end{align}
\end{subequations}
The following lemma is a direct consequence of \eqref{II def of Delta} and of Lemma \ref{II deltalemma}.

\begin{lemma}\label{DeltalemmaIV}
For any $\epsilon>0$, $\Delta(\zeta,k)$ and $\Delta(\zeta,k)^{-1}$ are uniformly bounded for $k \in \mathbb{C}\setminus (\cup_{j=0}^{2} D_\epsilon(\omega^j) \cup \partial \mathbb{D} \cup \cup_{k_{0}\in \hat{\mathsf{Z}}}D_{\epsilon}(k_{0}))$ and $\zeta \in \mathcal{I}$. Furthermore, $\Delta(\zeta,k)=I+O(k^{-1})$ as $k \to \infty$.
\end{lemma}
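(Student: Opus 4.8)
The plan is to deduce both assertions directly from the product formula \eqref{II def of Delta} and Lemma~\ref{II deltalemma}. Since $\Delta$ is diagonal with $\Delta_{11}(\zeta,k)=\Delta_{33}(\zeta,\omega k)$ and $\Delta_{22}(\zeta,k)=\Delta_{33}(\zeta,\omega^{2}k)$, it suffices to show that $\Delta_{33}(\zeta,\cdot)^{\pm1}$ is uniformly bounded on the stated region and that $\Delta_{33}(\zeta,k)=1+O(k^{-1})$ as $k\to\infty$; the corresponding statements for $\Delta_{11}$ and $\Delta_{22}$ then follow because the excluded set $\bigl(\cup_{j=0}^{2}D_\epsilon(\omega^{j})\bigr)\cup\partial\mathbb{D}\cup\bigl(\cup_{k_{0}\in\hat{\mathsf{Z}}}D_\epsilon(k_{0})\bigr)$ is invariant under multiplication by $\omega$ (one uses that $\{\omega^{j}\}_{0}^{2}$, $\partial\mathbb{D}$, and $\hat{\mathsf{Z}}$ are each $\omega$-invariant).

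For the boundedness, $\Delta_{33}$ is a finite product of $\mathcal{P}(k)^{\pm1}$ and of factors $\delta_{j}(\zeta,w)^{\pm1}$ whose argument $w$ ranges over $\{\omega k,\ \omega^{2}k,\ k,\ 1/k,\ 1/(\omega k),\ 1/(\omega^{2}k)\}$. If $k\notin\partial\mathbb{D}$ then $|w|\in\{|k|,|k|^{-1}\}\neq1$, so $w$ lies off $\partial\mathbb{D}$, hence off $\Gamma_{2}^{(2)}\cup\Gamma_{5}^{(2)}\cup\Gamma_{10}^{(2)}$; the first three bounds in \eqref{II delta1bound} therefore control all $\delta_{1},\delta_{2},\delta_{3}$ factors uniformly for $\zeta\in\mathcal{I}$. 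For the $\delta_{4},\delta_{5}$ factors one also needs $w$ to stay a fixed distance from $\omega$: since $w=\omega$ forces $k\in\{1,\omega,\omega^{2}\}$ and each of the six maps $k\mapsto w$ is bi-Lipschitz on every annulus $\{c\le|k|\le C\}$, there is $\epsilon'=\epsilon'(\epsilon,c,C)>0$ such that $k\notin\cup_{j}D_\epsilon(\omega^{j})$ with $c\le|k|\le C$ implies $w\notin D_{\epsilon'}(\omega)$, and then the last two bounds in \eqref{II delta1bound} apply; for $|k|$ large each $w$ is either large or close to $0$, hence far from $\omega$, and the $\delta_{j}$ factors tend to the finite nonzero limits $1$ or $\delta_{j}(\zeta,0)$ while $\mathcal{P}(k)\to1$. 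Finally $\mathcal{P}(k)^{\pm1}$ is bounded off $\cup_{k_{0}\in\hat{\mathsf{Z}}}D_\epsilon(k_{0})$ since, as one reads off from \eqref{mathcalPdef}, all zeros and poles of the rational function $\mathcal{P}$ lie in $\hat{\mathsf{Z}}$. Multiplying these bounds gives the claimed uniform boundedness of $\Delta_{33}(\zeta,\cdot)^{\pm1}$.

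For the behavior at infinity, \eqref{II delta asymp at inf} gives $\delta_{j}(\zeta,\omega^{a}k)=1+O(k^{-1})$, while analyticity of $\delta_{j}(\zeta,\cdot)$ at $0$ gives $\delta_{j}(\zeta,1/(\omega^{a}k))=\delta_{j}(\zeta,0)+O(k^{-1})$ with $\delta_{j}(\zeta,0)\neq0$. In every factor of $\Delta_{33}$ the arguments of the form $\omega^{a}k$, and separately those of the form $1/(\omega^{a}k)$, each occur once in the numerator and once in the denominator, so every such ratio is $1+O(k^{-1})$; each factor $(k-a)/(k-b)$ of $\mathcal{P}$ is likewise $1+O(k^{-1})$, whence $\mathcal{P}(k)=1+O(k^{-1})$. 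Therefore $\Delta_{33}(\zeta,k)=1+O(k^{-1})$, i.e.\ $\Delta(\zeta,k)=I+O(k^{-1})$, the implied constant being uniform in $\zeta\in\mathcal{I}$ because the integrands defining the $\delta_{j}$ in \eqref{IVdeltadef} are bounded uniformly in $\zeta$.

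The computation is essentially routine; the one point that requires care — and the closest thing to an obstacle — is the bookkeeping in the second paragraph, namely checking that excising fixed small disks about $1,\omega,\omega^{2}$ in the $k$-plane excises genuine (if differently sized) disks about $\omega$ from the ranges of all six maps feeding the $\delta_{4},\delta_{5}$ factors, so that part~$(c)$ of Lemma~\ref{II deltalemma} is truly applicable.
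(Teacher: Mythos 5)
Your proof is correct and takes the same route as the paper, which states this lemma as a direct consequence of the definition \eqref{II def of Delta} and Lemma \ref{II deltalemma} without writing out the details; you have simply made explicit the bookkeeping (the $\omega$-invariance of the excluded set, the application of \eqref{II delta1bound} to the six arguments, the location of the zeros and poles of $\mathcal{P}$ in $\hat{\mathsf{Z}}$, and the cancellation at infinity) that the authors leave implicit.
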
 

\section{The $n^{(2)}\to n^{(3)}$ transformation}\label{n2ton3sec}

Define the sectionally meromorphic function $n^{(3)}$ by
\begin{align}\label{II def of mp3p}
n^{(3)}(x,t,k) = n^{(2)}(x,t,k)\Delta(\zeta,k), \qquad  k \in \mathbb{C}\setminus ( \Gamma^{(3)} \cup \hat{\mathsf{Z}}),
\end{align}
where $\Gamma^{(3)}=\Gamma^{(2)}$. The jumps for $n^{(3)}$ on $\Gamma^{(3)}$ are given by $v^{(3)}=\Delta_{-}^{-1}v^{(2)}\Delta_{+}$.

\begin{lemma}\label{II v3lemma}
The jump matrix $v^{(3)}$ converges to the identity matrix $I$ as $t \to \infty$ uniformly for $\zeta \in \mathcal{I}$ and $k \in \Gamma^{(3)}$ except near the twelve saddle points $\{k_j, \omega k_j, \omega^2 k_j\}_{j=1}^{4}$, i.e., for $\zeta \in \mathcal{I}$,
\begin{align}\label{II v3estimatesa}
& \|v^{(3)} - I\|_{(L^1 \cap L^\infty)(\Gamma'^{(3)})} \leq Ct^{-1}, \quad \text{where\;\; $\Gamma'^{(3)} := \Gamma^{(3)}\setminus \cup_{\substack{j=1,\ldots,4 \\ l =0,1,2}} D_\epsilon(\omega^{l}k_{j})$}.
\end{align}
\end{lemma}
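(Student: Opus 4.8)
The plan is to process $\Gamma^{(3)}=\Gamma^{(2)}$ one component at a time, in each case first extracting the explicit $t$-dependence and then conjugating by the global parametrix. Since $\Delta$ satisfies $\Delta(\zeta,k)=\mathcal{A}\Delta(\zeta,\omega k)\mathcal{A}^{-1}=\mathcal{B}\Delta(\zeta,\tfrac1k)\mathcal{B}$, the matrix $v^{(3)}=\Delta_-^{-1}v^{(2)}\Delta_+$ inherits the $\mathcal{A}$- and $\mathcal{B}$-symmetries (\ref{vjsymm}), so it suffices to prove (\ref{II v3estimatesa}) on $\Gamma^{(3)}\cap\mathsf{S}$. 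Two standing facts will be used throughout. First, $v^{(2)}=I$ (hence $v^{(3)}=I$) for $|k|$ large, so the part of $\Gamma^{(3)}$ on which $v^{(3)}\neq I$ has bounded length and it is enough to bound $\|v^{(3)}-I\|_{L^\infty}$. Second, by Lemma \ref{DeltalemmaIV}, $\Delta^{\pm1}$ is uniformly bounded on $\C\setminus(\cup_j D_\epsilon(\omega^j)\cup\partial\mathbb{D}\cup\cup_{k_0\in\hat{\mathsf{Z}}}D_\epsilon(k_0))$; thus on the portions of $\Gamma^{(3)}$ that stay away from $\partial\mathbb{D}$ one simply has $\|v^{(3)}-I\|\le C\|v^{(2)}-I\|$, while on the arc portions one uses the explicit factorizations from Section \ref{n1ton2sec}.

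Next I would treat the lens boundaries, i.e.\ the components $\Gamma_j^{(2)}$ with $j\in\{1,3,4,6,7,8,9,11\}$ and the symmetric copies $\Gamma_{j_s}^{(2)}$, $j=1,\dots,8$. On these, every entry of $v^{(2)}-I$ is a spectral function (one of the $r_{i,\ell,a}$, $\hat r_{i,a}$, $a_{ij,a}$, $b_{ij,a}$) times an exponential $e^{\pm t\Phi_{il}}$, and by the signature tables of Figure \ref{II fig: Re Phi 21 31 and 32 for zeta=0.7} together with the identities (\ref{relations between the different Phi}) the relevant exponent has negative real part on the lens and satisfies $|\re\Phi_{il}(\zeta,k)|\ge c>0$ for $k$ outside $\cup_{j,l}D_\epsilon(\omega^l k_j)$ and outside $\cup_j D_\epsilon(\omega^j)$, uniformly for $\zeta\in\mathcal{I}$. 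Since Lemma \ref{IIbis decompositionlemma} bounds the analytic parts by $Ce^{\frac t4|\re\Phi_{21}|}$ (with the half-integer shifts near $1,\omega$ producing only a fixed negative power of $k-\kappa_j$), and since $\re\Phi_{21}(\zeta,\bar k)=\re\Phi_{21}(\zeta,k)$ makes the argument-changed versions agree with the multiplying exponential up to a controlled error near $\partial\mathbb{D}$, the product is $O(e^{-\frac{3t}{4}|\re\Phi_{il}|})=O(e^{-ct})$ off the small disks; conjugation by the bounded $\Delta$ preserves this. Near $\omega^j$ one invokes that the $b_{ij,a}$ have been chosen (proof of Lemma \ref{symmetryjumpslemma}) to agree with $b_{ij}$ to order $N$ at each $\kappa_j$ and that $\Delta$ is singular there of only fixed order, so combined with $|\re\Phi_{il}|\ge c|k-\omega^j|$ on those pieces one still gets $O(t^{-N})$.

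Then I would treat the three circular arcs $\Gamma_2^{(2)},\Gamma_5^{(2)},\Gamma_{10}^{(2)}$ of $\partial\mathbb{D}$, on which the jumps $v_2^{(2)}=v_2^{(1)}$, $v_5^{(2)}=v_5^{(1)}$, $v_{10}^{(2)}=v_8^{(1)}$ each split as a diagonal matrix plus a small remainder $v_{2,r}^{(1)},v_{5,r}^{(1)},v_{8,r}^{(1)}$. The jump relations (\ref{II jumps Delta11}), (\ref{II jumps Delta22}), (\ref{II jumps Delta33}) of $\Delta$ are precisely those making the conjugation $\Delta_-^{-1}(\cdot)\Delta_+$ turn each of these diagonal matrices into $I$, so that $v^{(3)}=I+\Delta_-^{-1}v_{j,r}^{(1)}\Delta_+$ on the respective arc; using $\|v_{j,r}^{(1)}\|_{(L^1\cap L^\infty)}=O(t^{-1})$ (as recorded after (\ref{v71v81v91def})), the boundedness of $\Delta$ away from $\omega^j$, and — near $\omega$ — the fact that the $b_{ij,r}$ vanish to high order at $\omega$ while $n^{(3)}=n^{(2)}\Delta$ is regular there, one obtains the $O(t^{-1})$ bound, uniformly including a neighborhood of $\omega$. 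The remaining pieces are immediate: on $\Gamma_{9_r}^{(2)}$, $v^{(2)}-I=v_{9_r}^{(1)}-I$ is built solely from remainders $r_{i,r}$, hence $O(t^{-N})$ by Lemma \ref{IIbis decompositionlemma}; on $\Gamma_{4_r'}^{(2)}\cup\Gamma_{1_r''}^{(2)}$ it is $O(t^{-N-1})$ by Lemma \ref{vsmallnearilemmaIV}; on $\Gamma_{j_s}^{(2)}$ it is $O(t^{-N})$ by Lemma \ref{symmetryjumpslemma}; and on the radial segments $\Gamma_{1''}^{(2)},\Gamma_{4'}^{(2)}$ the off-diagonal entries carry exponentials with $|\re\Phi_{il}|\ge c>0$ away from the saddle points and the $\omega^j$, so $v^{(2)}-I=O(e^{-ct})$. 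In each case conjugation by the bounded $\Delta$ keeps the estimate, and patching with the $\mathcal{A},\mathcal{B}$ symmetries over $\Gamma^{(3)}\setminus\mathsf{S}$ yields (\ref{II v3estimatesa}).

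The main obstacle is the bookkeeping near the roots of unity $1,\omega,\omega^2$: there $\Delta$, the diagonal jumps on $\Gamma_2^{(2)},\Gamma_5^{(2)},\Gamma_{10}^{(2)}$, and some of the analytic approximations $b_{ij,a}$ are each individually singular, and one must verify both that these singularities cancel exactly in $v^{(3)}=\Delta_-^{-1}v^{(2)}\Delta_+$ (which is forced by the construction of $\Delta$ from the $\delta_j$ and is the reason $n^{(3)}$ is regular at the $\omega^j$) and that, after the at-most-fixed-order singular conjugation, the surviving remainder terms — engineered to vanish to order $N$ at the $\kappa_j$ — still contribute only $O(t^{-1})$; away from these points everything reduces to the standard signature-table-plus-Decomposition-Lemma estimate.
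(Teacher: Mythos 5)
Your proposal is correct and follows essentially the same route as the paper: the diagonal factors of $v_j^{(1)}$, $j=2,5,8$, on the arcs $\Gamma_2^{(2)},\Gamma_5^{(2)},\Gamma_{10}^{(2)}$ are exactly absorbed by the jump relations (\ref{II jumps Delta33})--(\ref{II jumps Delta22}) of $\Delta$, leaving $v^{(3)}=I+\Delta_-^{-1}v_{j,r}^{(1)}\Delta_+=I+O(t^{-1})$, while the remaining contours are handled by the signature tables, Lemmas \ref{IIbis decompositionlemma}, \ref{vsmallnearilemmaIV}, \ref{symmetryjumpslemma}, and the $\mathcal{A}$-, $\mathcal{B}$-symmetries. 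Your extra care near $1,\omega,\omega^2$ (where $\Delta$ and some $b_{ij,a}$ are individually singular but the combination is controlled by the order-$N$ agreement of the analytic approximations at the $\kappa_j$) is a point the paper's own proof leaves implicit, so it is a welcome addition rather than a deviation.
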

\begin{proof}
From the expressions for $v_j^{(1)}$, $j = 2,5,8$, in (\ref{v11v21v31def})--(\ref{v71v81v91def}) together with \eqref{II jumps vj diagonal p2p}, \eqref{II jumps Delta33}, \eqref{II jumps Delta11}, \eqref{II jumps Delta22}, we deduce that
\begin{align}\label{lol3}
& v_j^{(3)} = \Delta_{-}^{-1}v_j^{(1)}\Delta_{+} = I + \Delta_{-}^{-1}v_{j,r}^{(1)}\Delta_{+}, \quad j = 2,5; \quad v_{10}^{(3)} = \Delta_{-}^{-1}v_8^{(1)}\Delta_{+} = I + \Delta_{-}^{-1}v_{8,r}^{(1)}\Delta_{+}.
\end{align}
Using that the matrices $\Delta_{-}^{-1}v_{j,r}^{(1)}\Delta_{+}$, $j = 2,5,8$, are small as $ t \to \infty$, and recalling the symmetries \eqref{vjsymm} and Lemmas \ref{vsmallnearilemmaIV} and \ref{symmetryjumpslemma}, we infer that $v^{(3)}-I$ tends to $0$ as $t \to \infty$, uniformly for $k \in \Gamma'^{(3)}$. 
\end{proof}

We next consider the residue conditions at the points in $\hat{\mathsf{Z}}$.

\begin{lemma}[Residue conditions for $n^{(3)}$]\label{n3residuelemma}
The function $n^{(3)}$ obeys the following residue conditions at the points in $\hat{\mathsf{Z}}$: 
\begin{enumerate}[$(a)$]
\item for each $k_{0}\in \mathsf{Z}\setminus \mathbb{R}$ with $\re k_0 > 0$, 
\begin{align}\nonumber
& \underset{k = k_0}{\res} n_{1}^{(3)}(k) = \frac{c_{k_0}^{-1}e^{\theta_{31}(k_0)}n_{3}^{(3)}(k_{0})}{\Delta_{33}'(k_{0})(\Delta_{11}^{-1})'(k_{0})}, 
	& & \hspace{-0.3cm}
\underset{k = \omega k_0}{\res} n_{3}^{(3)}(k) = \frac{\omega^{2}c_{k_0}^{-1}e^{\theta_{31}(k_0)}n_{2}^{(3)}(\omega k_{0})}{\Delta_{22}'(\omega k_{0})(\Delta_{33}^{-1})'(\omega k_{0})}, 
	\\  \nonumber
& \underset{k = \omega^{2} k_0}{\res} n_{2}^{(3)}(k) = \frac{\omega c_{k_0}^{-1}e^{\theta_{31}(k_0)}n_{1}^{(3)}(\omega^{2} k_{0})}{\Delta_{11}'(\omega^{2} k_{0})(\Delta_{22}^{-1})'(\omega^{2} k_{0})}, 
	& & \hspace{-0.3cm}
\underset{k = k_0^{-1}}{\res} n_{2}^{(3)}(k) = \frac{-k_{0}^{2}c_{k_0}^{-1}e^{\theta_{31}(k_0)}n_{3}^{(3)}(k_{0}^{-1})}{\Delta_{33}'(k_{0}^{-1})(\Delta_{22}^{-1})'(k_{0}^{-1})},
	\\ \nonumber
&\underset{k = \omega^{2} k_0^{-1}}{\res} \hspace{-0.1cm} n_{3}^{(3)}(k) \hspace{-0.05cm} = \hspace{-0.05cm} \frac{-\omega k_{0}^{2} c_{k_0}^{-1}e^{\theta_{31}(k_0)}n_{1}^{(3)}(\omega^{2} k_{0}^{-1})}{\Delta_{11}'(\omega^{2} k_{0}^{-1})(\Delta_{33}^{-1})'(\omega^{2} k_{0}^{-1})},
 & & \hspace{-0.3cm}
\underset{k = \omega k_0^{-1}}{\res} \hspace{-0.1cm} n_{1}^{(3)}(k) \hspace{-0.05cm} = \hspace{-0.05cm} \frac{-\omega^{2}k_{0}^{2}c_{k_0}^{-1}e^{\theta_{31}(k_0)}n_{2}^{(3)}(\omega k_{0}^{-1})}{\Delta_{22}'(\omega k_{0}^{-1})(\Delta_{11}^{-1})'(\omega k_{0}^{-1})},
	\\ \nonumber
& \underset{k = \bar{k}_0}{\res} n_{3}^{(3)}(k) = \frac{d_{k_0}^{-1} e^{-\theta_{32}(\bar{k}_0)} n_{2}^{(3)}(\bar{k}_{0})}{\Delta_{22}'(\bar{k}_{0})(\Delta_{33}^{-1})'(\bar{k}_{0})}, 
	& & \hspace{-0.3cm}
\underset{k = \omega \bar{k}_0}{\res} n_{2}^{(3)}(k) = \frac{\omega^{2}d_{k_0}^{-1} e^{-\theta_{32}(\bar{k}_0)} n_{1}^{(3)}(\omega \bar{k}_{0})}{\Delta_{11}'(\omega \bar{k}_{0})(\Delta_{22}^{-1})'(\omega \bar{k}_{0})}, 
	\\  \nonumber
& \underset{k = \omega^{2} \bar{k}_0}{\res} n_{1}^{(3)}(k) = \frac{\omega d_{k_0}^{-1} e^{-\theta_{32}(\bar{k}_0)} n_{3}^{(3)}(\omega^{2} \bar{k}_{0})}{\Delta_{33}'(\omega^{2} \bar{k}_{0})(\Delta_{11}^{-1})'(\omega^{2} \bar{k}_{0})}, 
	& & \hspace{-0.3cm}
\underset{k = \bar{k}_0^{-1}}{\res} n_{3}^{(3)}(k) = \frac{-\bar{k}_{0}^{2}d_{k_0}^{-1} e^{-\theta_{32}(\bar{k}_0)} n_{1}^{(3)}(\bar{k}_{0}^{-1})}{\Delta_{11}'(\bar{k}_{0}^{-1})(\Delta_{33}^{-1})'(\bar{k}_{0}^{-1})},	
	\\ \label{n3residuesk0}
& \underset{k = \omega^{2} \bar{k}_0^{-1}}{\res} \hspace{-0.1cm} n_{1}^{(3)}(k) \hspace{-0.05cm} = \hspace{-0.05cm} \frac{\omega \bar{k}_{0}^{2} d_{k_0}^{-1} e^{-\theta_{32}(\bar{k}_0)} n_{2}^{(3)}(\omega^{2} \bar{k}_{0}^{-1})}{-\Delta_{22}'(\omega^{2} \bar{k}_{0}^{-1})(\Delta_{11}^{-1})'(\omega^{2} \bar{k}_{0}^{-1})}, 
       & & \hspace{-0.3cm}
 \underset{k = \omega \bar{k}_0^{-1}}{\res} \hspace{-0.1cm} n_{2}^{(3)}(k) \hspace{-0.05cm} = \hspace{-0.05cm} \frac{\omega^{2}\bar{k}_{0}^{2}d_{k_0}^{-1} e^{-\theta_{32}(\bar{k}_0)} n_{3}^{(3)}(\omega \bar{k}_{0}^{-1})}{-\Delta_{33}'(\omega \bar{k}_{0}^{-1})(\Delta_{22}^{-1})'(\omega \bar{k}_{0}^{-1})};
\end{align}

\item for each $k_{0}\in \mathsf{Z}\cap \mathbb{R}$ with $\re k_0 > 0$, 
\begin{align}\nonumber
& \underset{k = k_0}{\res} n_{1}^{(3)}(k) = \frac{c_{k_0}^{-1} e^{\theta_{21}(k_0)}n_{2}^{(3)}(k_{0})}{\Delta_{22}'(k_{0})(\Delta_{11}^{-1})'(k_{0})},
	& & \hspace{-0.3cm}
 \underset{k = \omega k_0}{\res} n_{3}^{(3)}(k) = \frac{\omega^{2}c_{k_0}^{-1} e^{\theta_{21}(k_0)}n_{1}^{(3)}(\omega k_{0})}{\Delta_{11}'(\omega k_{0})(\Delta_{33}^{-1})'(\omega k_{0})}, 
	\\  \nonumber
& \underset{k = \omega^{2} k_0}{\res} n_{2}^{(3)}(k) = \frac{\omega c_{k_0}^{-1} e^{\theta_{21}(k_0)}n_{3}^{(3)}(\omega^{2} k_{0})}{\Delta_{33}'(\omega^{2} k_{0})(\Delta_{22}^{-1})'(\omega^{2} k_{0})}, 
	& & \hspace{-0.3cm}
\underset{k = k_0^{-1}}{\res} n_{2}^{(3)}(k) = \frac{-k_{0}^{2}c_{k_0}^{-1} e^{\theta_{21}(k_0)}n_{1}^{(3)}(k_{0}^{-1})}{\Delta_{11}'(k_{0}^{-1})(\Delta_{22}^{-1})'(k_{0}^{-1})},	
	\\ \label{n3residuesk0real}
&\underset{k = \omega^{2} k_0^{-1}}{\res} n_{3}^{(3)}(k) = \frac{-\omega k_{0}^{2} c_{k_0}^{-1} e^{\theta_{21}(k_0)}n_{2}^{(3)}(\omega^{2} k_{0}^{-1})}{\Delta_{22}'(\omega^{2} k_{0}^{-1})(\Delta_{33}^{-1})'(\omega^{2} k_{0}^{-1})}, & & \hspace{-0.3cm}
 \underset{k = \omega k_0^{-1}}{\res} n_{1}^{(3)}(k) = \frac{\omega^{2}k_{0}^{2}c_{k_0}^{-1} e^{\theta_{21}(k_0)}n_{3}^{(3)}(\omega k_{0}^{-1})}{-\Delta_{33}'(\omega k_{0}^{-1})(\Delta_{11}^{-1})'(\omega k_{0}^{-1})};
\end{align}

\item for each $k_{0}\in \mathsf{Z}\setminus \mathbb{R}$ with $\re k_0 < 0$, 
\begin{align}\nonumber
& \underset{k = k_0}{\res} n_3^{(3)}(k) = \frac{c_{k_{0}}e^{-\theta_{31}(k_0)} n_1^{(3)}(k_0) }{\Delta_{11}(k_0) \Delta_{33}^{-1}(k_0)}, 
	& & \hspace{-0.2cm}
\underset{k = \omega k_0}{\res} n_2^{(3)}(k) = \frac{\omega c_{k_{0}}e^{-\theta_{31}(k_0)} n_3^{(3)}(\omega k_0) }{\Delta_{33}(\omega k_0) \Delta_{22}^{-1}(\omega k_0)}, 
	\\ \nonumber
& \underset{k = \omega^2 k_0}{\res} n_1^{(3)}(k) = \frac{\omega^2 c_{k_{0}}e^{-\theta_{31}(k_0)} n_2^{(3)}(\omega^2 k_0) }{\Delta_{22}(\omega^2 k_0) \Delta_{11}^{-1}(\omega^2 k_0)},
	& & \hspace{-0.2cm}
\underset{k = k_0^{-1}}{\res} n_3^{(3)}(k) =  \frac{-k_0^{-2} c_{k_{0}}e^{-\theta_{31}(k_0)} n_2^{(3)}(k_0^{-1}) }{\Delta_{22}(k_0^{-1}) \Delta_{33}^{-1}(k_0^{-1})},	
	\\\nonumber
&  \underset{k = \omega^2k_0^{-1}}{\res} \hspace{-0.1cm} n_1^{(3)}(k) =  \frac{-\tfrac{\omega^2}{k_0^{2}} c_{k_{0}}e^{-\theta_{31}(k_0)} n_3^{(3)}(\omega^2 k_0^{-1}) }{\Delta_{33}(\omega^2k_0^{-1}) \Delta_{11}^{-1}(\omega^2k_0^{-1})}, 
	& & \hspace{-0.2cm}
\underset{k = \omega k_0^{-1}}{\res} \hspace{-0.1cm}  n_2^{(3)}(k) 
= \frac{-\tfrac{\omega}{k_0^{2}} c_{k_{0}}e^{-\theta_{31}(k_0)} n_1^{(3)}(\omega k_0^{-1}) }{\Delta_{11}(\omega k_0^{-1}) \Delta_{22}^{-1}(\omega k_0^{-1})}, 
	 \\ \nonumber
& \underset{k = \bar{k}_0}{\res} n_2^{(3)}(k) = \frac{d_{k_0} e^{\theta_{32}(\bar{k}_0)} n_3^{(3)}(\bar{k}_0)  }{\Delta_{33}(\bar{k}_0) \Delta_{22}^{-1}(\bar{k}_0)},
 	& & \hspace{-0.2cm}
\underset{k = \omega \bar{k}_0}{\res} n_1^{(3)}(k) = \frac{\omega d_{k_0} e^{\theta_{32}(\bar{k}_0)} n_2^{(3)}( \omega \bar{k}_0) }{\Delta_{22}(\omega \bar{k}_0) \Delta_{11}^{-1}(\omega \bar{k}_0)},
	\\ \nonumber
& \underset{k = \omega^2 \bar{k}_0}{\res} n_3^{(3)}(k) = \frac{\omega^2 d_{k_0} e^{\theta_{32}(\bar{k}_0)} n_1^{(3)}(\omega^2 \bar{k}_0) }{\Delta_{11}(\omega^2 \bar{k}_0) \Delta_{33}^{-1}(\omega^2 \bar{k}_0)},
	& & \hspace{-0.2cm}
\underset{k = \bar{k}_0^{-1}}{\res} n_1^{(3)}(k) =  \frac{-\bar{k}_0^{-2} d_{k_0} e^{\theta_{32}(\bar{k}_0)} n_3^{(3)}(\bar{k}_0^{-1}) }{\Delta_{33}(\bar{k}_0^{-1}) \Delta_{11}^{-1}(\bar{k}_0^{-1})},
	\\ \label{n3residuesk0negative}
&  \underset{k = \omega^2 \bar{k}_0^{-1}}{\res} n_2^{(3)}(k) = \frac{-\tfrac{\omega^2}{\bar{k}_0^{2}} d_{k_0} e^{\theta_{32}(\bar{k}_0)} n_1^{(3)}( \omega^2 \bar{k}_0^{-1}) }{\Delta_{11}( \omega^2 \bar{k}_0^{-1}) \Delta_{22}^{-1}( \omega^2 \bar{k}_0^{-1})}, 
	& & \hspace{-0.2cm}
\underset{k = \omega \bar{k}_0^{-1}}{\res} n_3^{(3)}(k) = \frac{-\tfrac{\omega}{\bar{k}_0^{2}} d_{k_0} e^{\theta_{32}(\bar{k}_0)} n_2^{(3)}(\omega \bar{k}_0^{-1}) }{\Delta_{22}(\omega \bar{k}_0^{-1}) \Delta_{33}^{-1}(\omega \bar{k}_0^{-1})};
\end{align}

\item for each $k_{0}\in \mathsf{Z}\cap \mathbb{R}$ with $\re k_0 < 0$, 
\begin{align}\nonumber
& \underset{k = k_0}{\res} n_2^{(3)}(k) = \frac{c_{k_0} e^{-\theta_{21}(k_0)} n_1^{(3)}(k_0) }{\Delta_{11}(k_0) \Delta_{22}^{-1}(k_0)}, 
	& & \hspace{-0.2cm}
\underset{k = \omega k_0}{\res} n_1^{(3)}(k) = \frac{\omega c_{k_0} e^{-\theta_{21}(k_0)} n_3^{(3)}(\omega k_0) }{\Delta_{33}(\omega k_0) \Delta_{11}^{-1}(\omega k_0)}, 
	\\  \nonumber
& \underset{k = \omega^2 k_0}{\res} n_3^{(3)}(k) = \frac{\omega^2 c_{k_0} e^{-\theta_{21}(k_0)} n_2^{(3)}(\omega^2 k_0) }{\Delta_{22}(\omega^2 k_0) \Delta_{33}^{-1}(\omega^2 k_0)}, 
	& & \hspace{-0.2cm}
\underset{k = k_0^{-1}}{\res} n_1^{(3)}(k) =  \frac{-k_0^{-2} c_{k_0} e^{-\theta_{21}(k_0)} n_2^{(3)}(k_0^{-1}) }{\Delta_{22}(k_0^{-1}) \Delta_{11}^{-1}(k_0^{-1})},		
	\\ \label{n3residuesk0realnegative}
&  \underset{k = \omega^2k_0^{-1}}{\res} \hspace{-0.1cm} n_2^{(3)}(k) =  \frac{-\tfrac{\omega^2}{k_0^{2}} c_{k_0} e^{-\theta_{21}(k_0)} n_3^{(3)}(\omega^2 k_0^{-1}) }{\Delta_{33}(\omega^2k_0^{-1}) \Delta_{22}^{-1}(\omega^2k_0^{-1})},
	& &\hspace{-0.2cm} 
\underset{k = \omega k_0^{-1}}{\res} \hspace{-0.1cm} n_3^{(3)}(k) 
= \frac{-\tfrac{\omega}{k_0^{2}} c_{k_0} e^{-\theta_{21}(k_0)} n_1^{(3)}(\omega k_0^{-1}) }{\Delta_{11}(\omega k_0^{-1}) \Delta_{33}^{-1}(\omega k_0^{-1})},
\end{align}
\end{enumerate}
where the $(x,t)$-dependence has been suppressed for clarity.
Moreover, at each point of $\hat{\mathsf{Z}}$, two entries of $n^{(3)}$ are analytic while one entry has (at most) a simple pole. 
\end{lemma}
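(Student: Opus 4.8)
The plan is to exploit the fact that the transformations $n\to n^{(1)}\to n^{(2)}$ act trivially near $\hat{\mathsf{Z}}$. By construction $G^{(1)}$ and $G^{(2)}$ equal the identity outside the lens regions, and all these regions lie in a fixed neighborhood of $\partial\mathbb{D}$, whereas (after shrinking $M$) every point of $\hat{\mathsf{Z}}$ is bounded away from $\partial\mathbb{D}$. Hence $n^{(2)} = n^{(1)} = n$ near $\hat{\mathsf{Z}}$, so by RH problem \ref{RHnj} with $j=2$ the function $n^{(2)}$ satisfies the residue conditions \eqref{nresiduesk0}--\eqref{nresiduesk0real}. Since $n^{(3)} = n^{(2)}\Delta$ with $\Delta = \diag(\Delta_{11},\Delta_{22},\Delta_{33})$, the whole statement reduces to determining the local behavior of $\Delta_{11},\Delta_{22},\Delta_{33}$ at each point of $\hat{\mathsf{Z}}$. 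The $\delta_j$-factors entering $\Delta_{33}$ are analytic and nonvanishing at every point of $\hat{\mathsf{Z}}$ (their only singularities lie on $\partial\mathbb{D}$, by Lemma \ref{II deltalemma}), and the arguments $\omega k$, $\omega^2 k$, $k^{-1}$, etc.\ at which they are evaluated again land in $\hat{\mathsf{Z}}$; therefore this behavior is governed entirely by the rational function $\mathcal{P}$ of \eqref{mathcalPdef}, propagated to $\Delta_{11},\Delta_{22}$ via $\Delta_{11}(k)=\Delta_{33}(\omega k)$, $\Delta_{22}(k)=\Delta_{33}(\omega^2 k)$, and to $k^{-1}$ via $\Delta_{33}(1/k)=\Delta_{33}(k)$.

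For a right-moving zero $k_0\in\mathsf{Z}$ ($\re k_0>0$), one reads off from \eqref{mathcalPdef} that $\mathcal{P}$, hence $\Delta_{33}$, has a simple zero or a simple pole at each of the points of $\hat{\mathsf{Z}}$ generated by $k_0$; the shifts $k\mapsto\omega k$, $k\mapsto\omega^2 k$ then dictate which of $\Delta_{11},\Delta_{22},\Delta_{33}$ carries a simple zero and which a simple pole at a given such point. The key elementary computation is: if at a point $p\in\hat{\mathsf{Z}}$ one has $\underset{k=p}{\res}\,n_i^{(2)}(k)=\gamma\,n_j^{(2)}(p)$ as in \eqref{nresiduesk0}--\eqref{nresiduesk0real}, while $\Delta_{ii}$ has a simple zero at $p$ and $\Delta_{jj}$ has a simple pole at $p$, then $n_i^{(3)}=n_i^{(2)}\Delta_{ii}$ is analytic at $p$ with $n_i^{(3)}(p)=\Delta_{ii}'(p)\,\underset{k=p}{\res}\,n_i^{(2)}(k)=\gamma\,\Delta_{ii}'(p)\,n_j^{(2)}(p)$, whereas $n_j^{(3)}=n_j^{(2)}\Delta_{jj}$ acquires a simple pole with $\underset{k=p}{\res}\,n_j^{(3)}(k)=n_j^{(2)}(p)/(\Delta_{jj}^{-1})'(p)$; eliminating $n_j^{(2)}(p)$ gives
\begin{align*}
\underset{k=p}{\res}\,n_j^{(3)}(k)=\frac{\gamma^{-1}\,n_i^{(3)}(p)}{\Delta_{ii}'(p)\,(\Delta_{jj}^{-1})'(p)}.
\end{align*}
Substituting the explicit constant $\gamma$ (a power of $\omega$ times $\pm k_0^{\pm2}$ or $\pm\bar k_0^{\pm2}$ times $e^{-\theta_{31}(k_0)}$, $e^{-\theta_{21}(k_0)}$ or $e^{\theta_{32}(\bar k_0)}$) and passing to $\gamma^{-1}$ inverts the $\omega$-power and flips the sign of the exponent — exactly the sign change that renders these terms small for large $t$, as anticipated in Section \ref{overviewsec} — and yields \eqref{n3residuesk0} and \eqref{n3residuesk0real} line by line. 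For $k_0\in\mathsf{Z}\cap\mathbb{R}$ there are six rather than twelve associated points and the $(1,\infty)$-factor of $\mathcal{P}$ replaces the $D_{\mathrm{reg}}^R$-factor, but the mechanism is identical.

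For a left-moving zero $k_0\in\mathsf{Z}$ ($\re k_0<0$), the products in \eqref{mathcalPdef} range only over $\mathsf{Z}\cap D_{\mathrm{reg}}^R$ and $\mathsf{Z}\cap(1,\infty)$, so $\mathcal{P}$, and hence $\Delta$, is analytic and nonvanishing at every point of $\hat{\mathsf{Z}}$ generated by $k_0$ (generically these points avoid the zeros and poles of $\mathcal{P}$, $\partial\mathbb{D}$, and $\{1,\omega,\omega^2\}$). Consequently the pole of $n^{(2)}$ stays in the same component: if $\underset{k=p}{\res}\,n_i^{(2)}(k)=\gamma\,n_j^{(2)}(p)$ then $n_i^{(3)}=n_i^{(2)}\Delta_{ii}$ still has a simple pole at $p$ with $\underset{k=p}{\res}\,n_i^{(3)}(k)=\Delta_{ii}(p)\,\underset{k=p}{\res}\,n_i^{(2)}(k)=\gamma\,\Delta_{ii}(p)\Delta_{jj}(p)^{-1}n_j^{(3)}(p)$, which is \eqref{n3residuesk0negative} and \eqref{n3residuesk0realnegative}; no sign flip occurs, consistent with $e^{-\theta_{31}(k_0)}$ etc.\ already being small when $\re k_0<0$. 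Finally, the simplicity statements in the last sentence of the lemma follow by combining the above local analysis with the simplicity of the zeros in $\mathsf{Z}$ (and the genericity of $\hat{\mathsf{Z}}$), which ensures that all zeros and poles of $\Delta$ at $\hat{\mathsf{Z}}$ are simple.

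The only real obstacle is bookkeeping: for each $k_0$ one must correctly identify, among the twelve points $\{\omega^{\ell}k_0^{\pm1},\omega^{\ell}\bar k_0^{\pm1}\}_{\ell=0,1,2}$, which of $\Delta_{11},\Delta_{22},\Delta_{33}$ has a zero and which has a pole — tracking the $\omega$-rotations relating $\Delta_{11},\Delta_{22},\Delta_{33}$, the inversion symmetry $\Delta_{33}(1/k)=\Delta_{33}(k)$, and the powers of $\omega$ and the exponential signs in the residue constants. Beyond this organizational effort there is no analytic difficulty.
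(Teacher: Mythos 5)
Your proposal is correct and follows essentially the same route as the paper: observe that $n^{(2)}=n$ near $\hat{\mathsf{Z}}$ so the original residue conditions carry over, then for $\re k_0>0$ use the simple zero/pole of $\Delta$ (coming from $\mathcal{P}$) to transfer the pole to the adjacent component and recompute the residue via $n_i^{(3)}(p)=\Delta_{ii}'(p)\,\res_{k=p}n_i^{(2)}(k)$ and $\res_{k=p}n_j^{(3)}(k)=n_j^{(2)}(p)/(\Delta_{jj}^{-1})'(p)$, while for $\re k_0<0$ the analyticity and nonvanishing of $\Delta$ make the conversion immediate. The paper carries out exactly this computation for one representative residue condition and invokes symmetry/bookkeeping for the rest, just as you do.
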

\begin{proof}
Since $\Delta(k) =\Delta(\zeta,k)$ is analytic at each point of $k_0 \in \mathsf{Z} \cap \{\re k_0 < 0\}$, the claims $(c)$ and $(d)$ regarding points $k_0 \in \mathsf{Z}$ with $\re k_0 < 0$ follow easily from RH problem \ref{RHn} and the fact that $n^{(3)} = n \Delta$ near every point of $\mathsf{Z}$.
It remains to consider the points $k_0 \in \mathsf{Z}$ with $\re k_0 > 0$.
We prove the first residue condition in (\ref{n3residuesk0}); the other conditions are proved in a similar way. Let $k_{0}\in \mathsf{Z}\setminus \mathbb{R}$ be such that $\re k_0 > 0$. Recall that $n^{(2)}$ obeys the same residue conditions as $n$; in particular, by (\ref{nresiduesk0}), 
\begin{align}\label{n2residuesk0}
\underset{k = k_0}{\res} n_3^{(2)}(k) = c_{k_{0}}e^{-\theta_{31}(k_0)} n_1^{(2)}(k_0).
\end{align}
Since $\Delta_{33}$ has a simple zero at $k_{0}$ and $\Delta_{11}(k) = \Delta_{33}(\omega k)$ has a simple pole at $k_{0}$, $n_{3}^{(3)}$ has a removable singularity at $k_{0}$ and $n_{1}^{(3)}$ has a simple pole at $k_{0}$. Using \eqref{n2residuesk0}, we obtain
\begin{align*}
& n_{3}^{(3)}(k_{0}) = \lim_{k\to k_{0}} n_{3}^{(2)}(k)\Delta_{33}(k) = \underset{k = k_0}{\res} n_{3}^{(2)}(k) \cdot \Delta_{33}'(k_{0}) = c_{k_{0}}e^{-\theta_{31}(k_0)} n_{1}^{(2)}(k_{0}) \Delta_{33}'(k_{0}), 
\end{align*}
and hence
\begin{align*}
& \underset{k = k_0}{\res} n_{1}^{(3)}(k) = \underset{k = k_0}{\res} n_{1}^{(2)}(k)\Delta_{11}(k) = \frac{n_{1}^{(2)}(k_{0})}{(\Delta_{11}^{-1})'(k_{0})} = \frac{c_{k_{0}}^{-1}e^{\theta_{31}(k_0)}n_{3}^{(3)}(k_{0})}{\Delta_{33}'(k_{0})(\Delta_{11}^{-1})'(k_{0})}.
\end{align*} 
This shows that only the first entry of $n^{(3)}$ is singular at $k_0$ and that the corresponding residue is as stated in (\ref{n3residuesk0}).
\end{proof}

Using Lemma \ref{n3residuelemma}, we infer that $n^{(3)}$ satisfies RH problem \ref{RHnj} for $j = 3$ except that item $(\ref{RHnjitemf})$ describing the behavior near points in $\hat{\mathsf{Z}}$ must be replaced by the following:

\begin{enumerate}[$(f')$]

\item \label{RHn3itemf} At each point of $\hat{\mathsf{Z}}$, two entries of $n^{(3)}$ are analytic while one entry has (at most) a simple pole. Moreover, for $k_0 \in \mathsf{Z}$ such that $\re k_0 > 0$, $n^{(3)}$ satisfies the residue conditions \eqref{n3residuesk0}--\eqref{n3residuesk0real}, while for $k_0 \in \mathsf{Z}$ such that $\re k_0 < 0$, $n^{(3)}$ satisfies the residue conditions \eqref{n3residuesk0negative}--\eqref{n3residuesk0realnegative}.
\end{enumerate}

The residue conditions in (\ref{n3residuesk0})--(\ref{n3residuesk0realnegative}) involve the following exponentials:
$(a)$ $e^{t \Phi_{31}(x,t,k_0)}$ and $e^{- t \Phi_{32}(x,t,\bar{k}_0)}$ for $k_0 \in D_{\mathrm{reg}}^{R}$, $(b)$ $e^{t\Phi_{21}(x,t,k_0)}$ for $k_0 \in (1,\infty)$, $(c)$ $e^{-t \Phi_{31}(x,t,k_0)}$ and $e^{ t \Phi_{32}(x,t,\bar{k}_0)}$ for $k_0 \in D_{\mathrm{reg}}^{L}$, and $(d)$ $e^{-t\Phi_{21}(x,t,k_0)}$ for $k_0 \in (-1,0)$. It follows from the signature tables of $\Phi_{21}$, $\Phi_{31}$, and $\Phi_{32}$ (see Figure \ref{II fig: Re Phi 21 31 and 32 for zeta=0.7}) that the real parts of all the exponents in these exponentials are negative. Thus, all the residues of $n^{(3)}$ at points in $\hat{\mathsf{Z}}$ are suppressed by exponentially small factor as $t \to \infty$. This suggests that the poles in $\hat{\mathsf{Z}}$ have no effect on the leading large $t$ asymptotics of $n^{(3)}$. Our next transformation will help us make this claim precise.

\section{The $n^{(3)}\to n^{(4)}$ transformation}\label{n3ton4sec}
The fourth transformation $n^{(3)}\to n^{(4)}$ replaces the residue conditions (\ref{n3residuesk0})--(\ref{n3residuesk0realnegative}) by jumps on small circles. 
For each $k_{0}\in \mathsf{Z}$, we let $D_{\epsilon}(k_{0})$ be a small open disk centered at $k_0$ of radius $\epsilon>0$. We let $D_{\epsilon}(k_{0})^{-1}$ and $D_{\epsilon}(k_{0})^*$ be the images of $D_{\epsilon}(k_{0})$ under the maps $k \mapsto k^{-1}$ and $k \mapsto \bar{k}$, respectively. If $k_{0} \in \mathbb{R}$, then $D_{\epsilon}(k_{0})^*=D_{\epsilon}(k_{0})$. Let $\partial D_{\epsilon}(k_{0})$ and $\partial D_{\epsilon}(k_{0})^*$ be oriented counterclockwise, and let $\partial D_{\epsilon}(k_{0})^{-1}$ be oriented clockwise. Let 
\begin{align*}
& \mathcal{D}_{\sol} = \bigcup_{k_{0}\in \mathsf{Z}}\bigcup_{j=0,1,2} \bigg( \omega^{j} D_{\epsilon}(k_{0}) \cup \omega^{j}  D_{\epsilon}(k_{0})^{-1} \cup \omega^{j} D_{\epsilon}(k_{0})^{*} \cup \omega^{j}  D_{\epsilon}(k_{0})^{-*}\bigg), \\
& \partial \mathcal{D}_{\sol} = \bigcup_{k_{0}\in \mathsf{Z}}\bigcup_{j=0,1,2} \bigg( \omega^{j} \partial D_{\epsilon}(k_{0}) \cup \omega^{j}  \partial D_{\epsilon}(k_{0})^{-1} \cup \omega^{j} \partial D_{\epsilon}(k_{0})^{*} \cup \omega^{j}  \partial D_{\epsilon}(k_{0})^{-*}\bigg),
\end{align*}
so that $\partial \mathcal{D}_{\sol}$ is the union of $6 |\mathsf{Z}\cap \mathbb{R}| + 12 |\mathsf{Z}\setminus \mathbb{R}|$ small circles.
By shrinking $\epsilon>0$ if necessary, we may assume that these circles do not intersect each other, and that they do not intersect $\Gamma^{(3)}$. We let $\Gamma^{(4)} :=  \Gamma^{(3)} \cup \partial \mathcal{D}_{\sol}$ be the union of $\Gamma^{(3)}$ and the small circles $\partial \mathcal{D}_{\sol}$, see Figure \ref{Gamma4fig}. Let $\Gamma_{\star}^{(4)}$ be the set of intersection points of $\Gamma^{(4)}$. Clearly, $\Gamma_{\star}^{(4)}=\Gamma_{\star}^{(3)}$.

\begin{figure}
\begin{center}
\begin{tikzpicture}[master, scale=1.1]
\node at (0,0) {};
\draw[black,line width=0.55 mm] (0,0)--(30:6.5);
\draw[black,line width=0.55 mm,->-=0.35,->-=0.65,->-=0.82,->-=0.98] (0,0)--(90:6.5);
\draw[black,line width=0.55 mm] (0,0)--(150:6.5);

\draw[dashed,black,line width=0.15 mm] (0,0)--(60:7.8);
\draw[dashed,black,line width=0.15 mm] (0,0)--(120:7.8);

\draw[black,line width=0.55 mm] ([shift=(30:3*1.5cm)]0,0) arc (30:150:3*1.5cm);
\draw[black,arrows={-Triangle[length=0.27cm,width=0.18cm]}]
($(70:3*1.5)$) --  ++(70-90:0.001);
\draw[black,arrows={-Triangle[length=0.27cm,width=0.18cm]}]
($(70:3.65)$) --  ++(70-90:0.001);
\draw[black,arrows={-Triangle[length=0.27cm,width=0.18cm]}]
($(70:5.8)$) --  ++(70-90:0.001);

\draw[black,line width=0.55 mm] ([shift=(90:3.65)]0,0) arc (90:150:3.65);
\draw[black,line width=0.55 mm] ([shift=(90:5.8)]0,0) arc (90:150:5.8);

\draw[black,line width=0.55 mm,-<-=0.08,->-=0.80] (100:3.65)--(100:5.8);

\draw[black,line width=0.55 mm,->-=0.9] (108:3.65)--(108:4.1);
\draw[black,line width=0.55 mm,->-=0.75] (108:4.9)--(108:5.8);

\draw[black,line width=0.55 mm,-<-=0.08,->-=0.80] (115:3.65)--(115:5.8);

\draw[black,line width=0.55 mm,->-=0.9] (120:3.65)--(120:4.1);
\draw[black,line width=0.55 mm,->-=0.75] (120:4.9)--(120:5.8);
\draw[black,line width=0.55 mm] (125:3.65)--(125:5.8);
\draw[black,line width=0.55 mm] (132:3.65)--(132:4.1);
\draw[black,line width=0.55 mm] (132:4.95)--(132:5.8);
\draw[black,line width=0.55 mm] (140:3.65)--(140:5.8);

\draw[black,line width=0.55 mm] (100:3*1.5) to [out=100+45, in=108-90] (108:3.3*1.5) to [out=108+90, in=115-45] (115:3*1.5) to [out=115+45, in=120-90] (120:3.3*1.5) to [out=120+90, in=140-45]  (125:3*1.5) to [out=125+45, in=132-90] (132:3.3*1.5) to [out=132+90, in=140-45] (140:3*1.5);

\draw[black,line width=0.55 mm] (100:3*1.5) to [out=100+45+90, in=108-90] (108:2.75*1.5) to [out=108+90, in=115-45-90] (115:3*1.5) to [out=115+45+90, in=120-90] (120:2.75*1.5) to [out=120+90, in=140-45-90]  (125:3*1.5) to [out=125+45+90, in=132-90] (132:2.75*1.5) to [out=132+90, in=140-45-90] (140:3*1.5);

\draw[black,line width=0.55 mm,-<-=0.15,->-=0.80] (90:3.65)--(100:3*1.5)--(90:5.8);

\draw[black,line width=0.55 mm] (150:3.65)--(140:3*1.5)--(150:5.8);

\draw[black,line width=0.55 mm] ([shift=(30:3.65cm)]0,0) arc (30:90:3.65cm);
\draw[black,line width=0.55 mm] ([shift=(30:5.8cm)]0,0) arc (30:90:5.8cm);

\draw[black,arrows={-Triangle[length=0.27cm,width=0.18cm]}]
($(93:3*1.5)$) --  ++(93-90:0.001);
\draw[black,arrows={-Triangle[length=0.27cm,width=0.18cm]}]
($(110:3*1.5)$) --  ++(110+90:0.001);
\draw[black,arrows={-Triangle[length=0.27cm,width=0.18cm]}]
($(110:3.3*1.5)$) --  ++(110+90:0.001);
\draw[black,arrows={-Triangle[length=0.27cm,width=0.18cm]}]
($(110:2.75*1.5)$) --  ++(110+90:0.001);

\node at (95.5:3.58*1.5) {\scriptsize $1$};
\node at (93:3.12*1.5) {\scriptsize $2$};
\node at (93.5:2.77*1.5) {\scriptsize $3$};
\node at (108:3.12*1.5) {\scriptsize $5$};

\node at (117.5:3.7*1.5) {\tiny $10$};
\draw[->-=1] (117.5:3.6*1.5)--(118:3.02*1.5);


\node at (105:3.35*1.5) {\scriptsize $4$};

\node at (104:4.07) {\scriptsize $6$};

\node at (112:4.9) {\scriptsize $7$};

\node at (110.5:4.3) {\scriptsize $8$};

\node at (116.5:4.96) {\scriptsize $9$};

\node at (115.8:2.2*1.5) {\scriptsize $11$};
\draw[->-=1] (116:2.3*1.5)--(117.5:2.78*1.5);

\draw[blue,fill] (125:3*1.5) circle (0.1cm);
\draw[green,fill] (-125+240:3*1.5) circle (0.1cm);
\draw[red,fill] (100:3*1.5) circle (0.1cm);
\draw[red,fill] (140:3*1.5) circle (0.1cm);

\draw[black,line width=0.4 mm] (120:2.75*1.5)--(120:3.3*1.5);
\draw[black,arrows={-Triangle[length=0.16cm,width=0.12cm]}]
($(120:2.92*1.5)$) --  ++(120:0.001);
\draw[black,arrows={-Triangle[length=0.16cm,width=0.12cm]}]
($(120:3.2*1.5)$) --  ++(120:0.001);

\draw[black,line width=0.55 mm,->-=0.3,->-=0.8] (60:2.45*1.5)--(60:3.85*1.5);

\draw[black,arrows={-Triangle[length=0.2cm,width=0.14cm]}]
($(117.7:3.25*1.5)$) --  ++(120+25:0.001);

\draw[black,arrows={-Triangle[length=0.2cm,width=0.14cm]}]
($(120:3*1.5)$) --  ++(120+90:0.001);

\draw[black,arrows={-Triangle[length=0.2cm,width=0.14cm]}]
($(118.5:2.75*1.5)$) --  ++(120+140:0.001);

\node at (71:3.15*1.5) {\scriptsize $9_r$};

\node at (127.8:3.7*1.5) {\scriptsize $1_s$};
\draw[->-=1] (126.5:3.63*1.5)--(120.8:3.14*1.5);
\node at (127:3.4*1.5) {\scriptsize $2_s$};
\draw[->-=1] (126:3.3*1.5)--(121:2.86*1.5);

\node at (57.3:3.5*1.5) {\scriptsize $3_s$};
\node at (56.5:2.8*1.5) {\scriptsize $4_s$};

\node at (102:5.3) {\scriptsize $5_{s}$};
\node at (97.2:3.87) {\scriptsize $6_{s}$};
\node at (112.9:5.25) {\scriptsize $7_{s}$};
\node at (112.4:3.9) {\scriptsize $8_{s}$};

\node at (85.7:2.7*1.5) {\small $1_r''$};
\node at (86.7:3.45*1.5) {\small $4_{r}'$};

\draw[line width=0.45 mm] (120:6.9) circle (0.4);
\draw[black,arrows={-Triangle[length=0.2cm,width=0.18cm]}]
($(120:7.3)+(-0.866*0.15,-0.5*0.15)$) --  ++(120+90:0.001);


\draw[line width=0.45 mm] (120:4.5^2/6.9+0.01) circle (0.18);
\draw[black,arrows={-Triangle[length=0.2cm,width=0.18cm]}]
($(120:4.5^2/6.9+0.018-0.18)+(-0.866*0.11,-0.5*0.11)$) --  ++(120+98:0.001);

\draw[line width=0.45 mm] (120+21:6.5) circle (0.4);
\draw[black,arrows={-Triangle[length=0.2cm,width=0.18cm]}]
($(120+21:6.5)+(120:0.4)+(-0.866*0.15,-0.5*0.15)$) --  ++(120+90:0.001);
\draw[line width=0.45 mm] (120-21:6.5) circle (0.4);
\draw[black,arrows={-Triangle[length=0.2cm,width=0.18cm]}]
($(120-21:6.5)+(120:0.4)+(-0.866*0.15,-0.5*0.15)$) --  ++(120+90:0.001);


\draw[line width=0.45 mm] (-21+120:3.125) circle (0.2);
\draw[black,arrows={-Triangle[length=0.2cm,width=0.18cm]}]
($(-21+120:3.125)-(120:0.20)+(-0.866*0.10,-0.5*0.10)$) --  ++(98+120:0.001);
\draw[line width=0.45 mm] (21+120:3.125) circle (0.2);
\draw[black,arrows={-Triangle[length=0.2cm,width=0.18cm]}]
($(21+120:3.125)-(120:0.20)+(-0.866*0.10,-0.5*0.10)$) --  ++(98+120:0.001);

\end{tikzpicture}

\end{center}
\begin{figuretext}
\label{Gamma4fig}The contour $\Gamma^{(4)}$ (solid) in a case when $\mathsf{Z}$ contains one point in $D_{\mathrm{reg}}^R$, one point in $(1,\infty)$, and no other points; and the boundary of $\mathsf{S}$ (dashed). From right to left, the dots are $\omega k_{4}$ (red), $\omega^{2}k_{2}$ (green), $k_{1}$ (blue), and $\omega k_{3}$ (red).
\end{figuretext}
\end{figure}

The function $n^{(4)}$ is defined to equal $n^{(3)}$ except in $\mathcal{D}_{\sol}$ where we define $n^{(4)}(x,t,k)$ as follows. If $k_{0} \in \mathsf{Z}\setminus \mathbb{R}$, we define $n^{(4)}$ for $k \in D_{\epsilon}(k_{0}) \cup D_{\epsilon}(k_{0})^*$ by
\begin{align}\label{def of ntildeC}
n^{(4)}(x,t,k) = \begin{cases} 
n^{(3)}(x,t,k) Q_1^R(x,t,k), \quad \; & k \in D_{\epsilon}(k_{0}), \; k_{0} \in \mathsf{Z} \cap D_{\mathrm{reg}}^R,
	\\
n^{(3)}(x,t,k) Q_7^R(x,t,k), \quad & k \in D_{\epsilon}(k_{0})^*, \; k_{0} \in \mathsf{Z} \cap D_{\mathrm{reg}}^R,
	\\
n^{(3)}(x,t,k) Q_1^L(x,t,k), \quad & k \in D_{\epsilon}(k_{0}), \; k_{0} \in \mathsf{Z} \cap D_{\mathrm{reg}}^L,
	\\
n^{(3)}(x,t,k) Q_7^L(x,t,k), \quad & k \in D_{\epsilon}(k_{0})^*, \; k_{0} \in \mathsf{Z} \cap D_{\mathrm{reg}}^L,
\end{cases}
\end{align}
where
\begin{align*}
& Q_1^R = \begin{pmatrix} 1 & 0 & 0 \\ 0 & 1 & 0 \\ 
\frac{-c_{k_0}^{-1}e^{\theta_{31}(x,t,k_0)}}{\Delta_{33}'(k_{0})(\Delta_{11}^{-1})'(k_{0}) (k-k_0)}
 & 0 & 1 \end{pmatrix}, &&
 Q_7^R = \begin{pmatrix} 1 & 0 & 0 \\ 0 & 1 & \frac{-d_{k_0}^{-1} e^{-\theta_{32}(x,t,\bar{k}_0)}}{\Delta_{22}'(\bar{k}_{0})(\Delta_{33}^{-1})'(\bar{k}_{0})(k- \bar{k}_0)} 
 \\ 0 & 0  & 1 \end{pmatrix},
 	\\
& Q_1^L = \begin{pmatrix} 1 & 0 & \frac{-c_{k_{0}}e^{-\theta_{31}(x,t,k_0)} }{\Delta_{11}(k_0) \Delta_{33}^{-1}(k_0)(k-k_0)} 
\\ 0 & 1 & 0 \\ 0 & 0 & 1 \end{pmatrix}, &&
 Q_7^L = \begin{pmatrix} 1 & 0 & 0 \\ 0 & 1 & 0 \\ 0 & \frac{-d_{k_0} e^{\theta_{32}(x,t,\bar{k}_0)} }{\Delta_{33}(\bar{k}_0) \Delta_{22}^{-1}(\bar{k}_0) (k-\bar{k}_0)} 
 & 1 \end{pmatrix}, 
\end{align*}
and if $k_{0} \in \mathsf{Z} \cap \mathbb{R}$, we define $n^{(4)}$ for $k \in D_{\epsilon}(k_{0})$ by
\begin{align}\label{def of ntildeR}
n^{(4)}(x,t,k) = 
\begin{cases}
n^{(3)}(x,t,k) P_1^R(x,t,k), \quad \; & k \in D_{\epsilon}(k_{0}), \; k_{0} \in \mathsf{Z}\cap (1,\infty),
	\\
n^{(3)}(x,t,k) P_1^L(x,t,k), \quad & k \in D_{\epsilon}(k_{0}), \; k_{0} \in \mathsf{Z}\cap (-1,0),
\end{cases}
\end{align}
where
\begin{align*}
& P_1^R = \begin{pmatrix} 1 & 0 & 0 \\ 
 \frac{-c_{k_0}^{-1} e^{\theta_{21}(x,t,k_0)}}{\Delta_{22}'(k_{0})(\Delta_{11}^{-1})'(k_{0}) (k-k_0)}
& 1 & 0 \\ 0 & 0 & 1 \end{pmatrix},
&&
P_1^L = \begin{pmatrix} 1 & \frac{-c_{k_0} e^{-\theta_{21}(x,t,k_0)}}{\Delta_{11}(k_0) \Delta_{22}^{-1}(k_0) (k- k_0)} 
& 0 \\ 0 & 1 & 0 \\ 0 & 0 & 1 \end{pmatrix}.
\end{align*}
We then extend $n^{(4)}$ to all of $\mathcal{D}_{\sol}$ by means of the $\mathcal{A}$- and $\mathcal{B}$-symmetries (\ref{njsymm}).

It follows from the above definition of $n^{(4)}$ and Lemma \ref{n3residuelemma} that $n^{(4)}$ has no poles at the points in $\hat{\mathsf{Z}}$. Moreover, on $\Gamma^{(4)} \setminus \Gamma^{(4)}_\star$, the boundary values of $n^{(4)}$ exist, are continuous, and satisfy $n^{(4)}_+ = n^{(4)}_-v^{(4)}$, where $v^{(4)}$ equals $v^{(3)}$ on $\Gamma$ and $v^{(4)}$ is defined on $\partial \mathcal{D}_{\sol}$ by setting
\begin{align}\label{vregdef}
v(x,t,k) = \begin{cases}
Q_1^R(x,t,k), \quad \; & k \in \partial D_{\epsilon}(k_{0}), \; k_{0} \in \mathsf{Z} \cap D_{\mathrm{reg}}^R,
    \\
Q_7^R(x,t,k), & k \in \partial D_{\epsilon}(k_{0})^*, \; k_{0} \in \mathsf{Z} \cap D_{\mathrm{reg}}^R,
	\\
Q_1^L(x,t,k), & k \in \partial D_{\epsilon}(k_{0}), \; k_{0} \in \mathsf{Z} \cap D_{\mathrm{reg}}^L,
    \\
Q_7^L(x,t,k), & k \in \partial D_{\epsilon}(k_{0})^*, \; k_{0} \in \mathsf{Z} \cap D_{\mathrm{reg}}^L,
	\\
P_1^R(x,t,k), & k \in \partial D_{\epsilon}(k_{0}), \; k_{0} \in \mathsf{Z}\cap (1,\infty),
	\\
P_1^L(x,t,k), & k \in \partial D_{\epsilon}(k_{0}), \; k_{0} \in \mathsf{Z}\cap (-1,0),
\end{cases}
\end{align}
and then extending it to all of $\partial \mathcal{D}_{\sol}$ by means of the $\mathcal{A}$- and $\mathcal{B}$-symmetries (\ref{vjsymm}).

\begin{lemma}\label{circleslemma}
The jump matrix $v^{(4)}$ converges to the identity matrix $I$ as $t \to \infty$ uniformly for $k \in \partial \mathcal{D}_{\sol}$ and $\zeta \in \mathcal{I}$. More precisely, for $\zeta \in \mathcal{I}$,
\begin{align*}
& \|v^{(4)} - I\|_{(L^1 \cap L^\infty)(\partial \mathcal{D}_{\sol})} \leq Ct^{-N}.
\end{align*}
\end{lemma}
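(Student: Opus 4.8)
The plan is to examine each of the six types of jump matrices $Q_1^R, Q_7^R, Q_1^L, Q_7^L, P_1^R, P_1^L$ that appear in the definition \eqref{vregdef} of $v^{(4)}$ on $\partial\mathcal{D}_{\mathrm{sol}}$, and to show that each of them differs from $I$ by an exponentially small quantity as $t\to\infty$, uniformly for $\zeta\in\mathcal{I}$. Since each of these matrices is of the form $I+(\text{single off-diagonal entry})$, and since $v^{(4)}$ is extended to the remaining circles by the $\mathcal{A}$- and $\mathcal{B}$-symmetries (which are unitary conjugations and hence preserve the relevant norms), it suffices to estimate these six entries. The estimate $\|v^{(4)}-I\|_{(L^1\cap L^\infty)(\partial\mathcal{D}_{\mathrm{sol}})}\le Ct^{-N}$ will then follow immediately, because $\partial\mathcal{D}_{\mathrm{sol}}$ is a fixed, bounded union of circles of fixed radius $\epsilon$ (so the $L^1$ and $L^\infty$ norms are comparable up to a constant), and in fact the bound obtained will be far stronger than $t^{-N}$ --- it will be exponentially small.

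First I would fix $k_0\in\mathsf{Z}$ and consider the three essentially distinct cases. For $k_0\in\mathsf{Z}\cap D_{\mathrm{reg}}^R$, on $\partial D_\epsilon(k_0)$ the only nontrivial entry of $Q_1^R$ is proportional to $e^{\theta_{31}(x,t,k_0)}=e^{t\Phi_{31}(\zeta,k_0)}$, divided by the bounded nonvanishing quantity $\Delta_{33}'(k_0)(\Delta_{11}^{-1})'(k_0)(k-k_0)$; here I use Lemma~\ref{DeltalemmaIV} (with $\epsilon$ shrunk if necessary so that $\Delta^{\pm1}$ is bounded on a punctured neighbourhood of each point of $\hat{\mathsf{Z}}$, which gives control of $\Delta_{33}'$ and $(\Delta_{11}^{-1})'$ via Cauchy estimates) together with $c_{k_0}\neq0$, and the fact that $|k-k_0|=\epsilon$ is bounded below on the circle. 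The key point is that, as noted in the discussion preceding this section (and visible from the signature table of $\Phi_{31}$ in Figure~\ref{II fig: Re Phi 21 31 and 32 for zeta=0.7}), we have $\re\Phi_{31}(\zeta,k_0)<0$, with a bound $\re\Phi_{31}(\zeta,k_0)\le -c<0$ uniform for $\zeta\in\mathcal{I}$ since $\mathsf{Z}$ is finite and $\mathcal{I}$ is compact; hence $|e^{t\Phi_{31}(\zeta,k_0)}|\le e^{-ct}$. The same argument applies to $Q_7^R$ on $\partial D_\epsilon(k_0)^*$ using $\re(-\Phi_{32}(\zeta,\bar k_0))<0$ and $d_{k_0}\neq0$ (recall $d_{k_0}$ is a nonzero multiple of $\bar c_{k_0}$ by \eqref{dk0def}), to $Q_1^L,Q_7^L$ on the circles around $D_{\mathrm{reg}}^L$ points using $\re(-\Phi_{31}(\zeta,k_0))<0$ and $\re\Phi_{32}(\zeta,\bar k_0)<0$, and to $P_1^R,P_1^L$ around the real points using $\re\Phi_{21}(\zeta,k_0)<0$ for $k_0\in(1,\infty)$ and $\re(-\Phi_{21}(\zeta,k_0))<0$ for $k_0\in(-1,0)$.

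The only mild subtlety, and the step I expect to require the most care, is the verification that the relevant signs of the real parts of the phases are indeed strictly negative \emph{uniformly} in $\zeta\in\mathcal{I}$ --- this is exactly the claim already made in the paragraph following Lemma~\ref{n3residuelemma}, so here I would simply invoke it: the signature tables of $\Phi_{21},\Phi_{31},\Phi_{32}$ in Figure~\ref{II fig: Re Phi 21 31 and 32 for zeta=0.7} show that $D_{\mathrm{reg}}^R$ lies in the region $\{\re\Phi_{31}<0\}$ and its conjugate in $\{\re\Phi_{32}>0\}$, that $(1,\infty)$ lies in $\{\re\Phi_{21}<0\}$, and similarly for the left-moving points, and since $\mathsf{Z}$ is a fixed finite set disjoint from the boundaries of these regions, compactness of $\mathcal{I}$ yields a uniform gap $c>0$. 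Combining the resulting bounds $\|v^{(4)}-I\|_{L^\infty(\partial D)}\le Ce^{-ct}$ over the finitely many circles $\partial D$ making up $\partial\mathcal{D}_{\mathrm{sol}}$, and noting $e^{-ct}=O(t^{-N})$ for every fixed $N$, gives the claimed estimate, completing the proof.
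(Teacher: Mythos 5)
Your proposal is correct and follows essentially the same route as the paper, which disposes of this lemma in one line by citing the uniform exponential decay of the exponentials in $Q_1^R, Q_7^R, Q_1^L, Q_7^L, P_1^R, P_1^L$ together with the $\mathcal{A}$- and $\mathcal{B}$-symmetries. Your additional details (boundedness and nonvanishing of the $\Delta$-derivative denominators, the uniform spectral gap from compactness of $\mathcal{I}$ and finiteness of $\mathsf{Z}$, and the comparability of the $L^1$ and $L^\infty$ norms on the fixed bounded contour) are exactly the points left implicit there.
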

\begin{proof}
The statement is a direct consequence of the uniform exponential decay of the exponentials appearing in the definitions of $Q_1^R, Q_7^R, Q_1^L, Q_7^L, P_1^R, P_1^L$, together with the $\mathcal{A}$- and $\mathcal{B}$-symmetries.
\end{proof}

Due to Lemma \ref{II v3lemma} and the symmetries \eqref{njsymm}, it is sufficient to construct local parametrices approximating $n^{(4)}$ near $\omega k_{4}$ and $\omega^{2}k_{2}$.

\section{Local parametrix near $\omega k_{4}$}\label{localparametrixsec1}

As $k \to \omega k_{4}$, we have
\begin{align*}
& -(\Phi_{31}(\zeta,k)-\Phi_{31}(\zeta,\omega k_{4})) =  \Phi_{31,\omega k_{4}}(k-\omega k_{4})^{2} + O((k-\omega k_{4})^{3}), \quad \Phi_{31,\omega k_{4}} := \omega \frac{4-3 k_{4} \zeta - k_{4}^{3} \zeta}{4 k_{4}^{4}}.
\end{align*}
We define $z_{1}=z_{1}(\zeta,t,k)$ by
\begin{align*}
z_{1}=z_{1,\star}\sqrt{t} (k-\omega k_{4}) \hat{z}_{1}, \qquad \hat{z}_{1} := \sqrt{\frac{2i(\Phi_{31}(\zeta,\omega k_{4})-\Phi_{31}(\zeta,k))}{z_{1,\star}^{2}(k-\omega k_{4})^{2}}},
\end{align*}
where the principal branch is chosen for $\hat{z}_{1}=\hat{z}_{1}(\zeta,k)$, and 
\begin{align*}
z_{1,\star} := \sqrt{2} e^{\frac{\pi i}{4}} \sqrt{\Phi_{31,\omega k_{4}}}, \qquad -i\omega k_{4}z_{1,\star}>0.
\end{align*}
Note that $\hat{z}_{1}(\zeta,\omega k_{4})=1$, and $t(\Phi_{31}(\zeta, k) - \Phi_{31}(\zeta,\omega k_{4})) = \frac{iz_{1}^{2}}{2}$. Let $\epsilon>0$ be small and independent of $t$. The map $z_{1}$ is conformal from $D_\epsilon(\omega k_4)$ to a neighborhood of $0$ and its expansion as $k \to \omega k_{4}$ is given by
\begin{align*}
z_{1} = z_{1,\star}\sqrt{t}(k-\omega k_{4})(1+O(k-\omega k_{4})).
\end{align*}
For all $k \in D_\epsilon(\omega k_4)$, we have
\begin{align*}
\ln_{\omega k_{4}}(k-\omega k_{4}) & = \ln_{0}[z_{1,\star}(k-\omega k_{4})\hat{z}_{1}]- \ln \hat{z}_{1} -\ln z_{1,\star}, \\
\tilde{\ln}_{\omega k_{4}}(k-\omega k_{4}) & = \ln[z_{1,\star}(k-\omega k_{4})\hat{z}_{1}]- \ln \hat{z}_{1} -\ln z_{1,\star},
\end{align*}
where $\ln_{0}(k):= \ln|k|+i\arg_{0}k$, $\arg_{0}(k)\in (0,2\pi)$, and $\ln$ is the principal logarithm. Shrinking $\epsilon > 0$ if necessary, $k \mapsto \ln \hat{z}_{1}$ is analytic in $D_{\epsilon}(\omega k_{4})$, $\ln \hat{z}_{1} = O(k-\omega k_{4})$ as $k \to \omega k_{4}$, and
\begin{align*}
\ln z_{1,\star} = \ln |z_{1,\star}| + i \arg z_{1,\star} = \ln |z_{1,\star}| + i \big( \tfrac{\pi}{2}-\arg (\omega k_{4}) \big),
\end{align*}
where $\arg (\omega k_{4}) \in (\tfrac{\pi}{2},\tfrac{2\pi}{3})$. By Lemma \ref{II deltalemma}, we have
\begin{align*}
& \delta_{1}(\zeta,k) = e^{-i\nu_{1}\ln_{\omega k_{4}}(k-\omega k_{4})}e^{-\chi_{1}(\zeta,k)} = e^{-i\nu_{1} \ln_{0} z_{1}} t^{\frac{i \nu_{1}}{2}} e^{i \nu_{1} \ln \hat{z}_{1}} e^{i \nu_{1} \ln z_{1,\star}} e^{-\chi_{1}(\zeta,k)}, \\
& \delta_{2}(\zeta,k) = e^{-i\nu_{1} \ln z_{1}} t^{\frac{i \nu_{1}}{2}} e^{i \nu_{1} \ln \hat{z}_{1}} e^{i \nu_{1} \ln z_{1,\star}} e^{i\nu_{2}\tilde{\ln}_{\omega^{2}k_{2}}(k-\omega^{2}k_{2})-\tilde{\chi}_{2}(\zeta,k)}, \\
& \delta_{3}(\zeta,k) = e^{-i\nu_{3} \ln z_{1}} t^{\frac{i \nu_{3}}{2}} e^{i \nu_{3} \ln \hat{z}_{1}} e^{i \nu_{3} \ln z_{1,\star}} e^{i\nu_{4}\tilde{\ln}_{\omega^{2}k_{2}}(k-\omega^{2}k_{2})-\tilde{\chi}_{3}(\zeta,k)}.
\end{align*}
For simplicity, for $a \in \C$, we will write $z_{1,(0)}^{a} := e^{a \ln_{0} z_{1}}$, $z_{1,\star}^{a} := e^{a \ln z_{1,\star}}$, $\hat{z}_{1}^{a} := e^{a \ln \hat{z}_{1}}$, and $z_{1}^{a} := e^{a \ln z_{1}}$. Using the above expressions for $\{\delta_{j}(\zeta,k)\}_{j=1}^{3}$, we obtain
\begin{align} \label{II Delta33 Delta11}
& \frac{\Delta_{33}(\zeta,k)}{\Delta_{11}(\zeta,k)} = \frac{\delta_1(\zeta, k)\delta_2(\zeta, k)}{\delta_3(\zeta, k)^2}\mathcal{D}_1(k) \frac{\mathcal{P}(k)}{\mathcal{P}(\omega k)}
= d_{1,0}(\zeta,t)d_{1,1}(\zeta,k)z_{1}^{2i \nu_{3}} z_{1,(0)}^{-i \nu_{1}} z_{1}^{- i \nu_{1}},
\end{align}
where $d_{1,0}(\zeta,t)$ is defined in (\ref{def of d10}) and
\begin{align}\nonumber
& d_{1,1}(\zeta,k) := e^{-\chi_{1}(\zeta,k)+\chi_{1}(\zeta,\omega k_{4})-\tilde{\chi}_{2}(\zeta,k)+\tilde{\chi}_{2}(\zeta,\omega k_{4})+2\tilde{\chi}_{3}(\zeta,k)-2\tilde{\chi}_{3}(\zeta,\omega k_{4})}  
	\\ \nonumber
& \hspace{1.6cm} \times e^{i(\nu_{2}-2\nu_{4})\big[\tilde{\ln}_{\omega^{2}k_{2}}(k - \omega^{2}k_{2})-\tilde{\ln}_{\omega^{2}k_{2}}(\omega k_{4}-\omega^{2}k_{2})\big]} \hat{z}_{1}^{2i(\nu_{1}-\nu_{3})} \frac{\mathcal{D}_{1}(k)}{\mathcal{D}_{1}(\omega k_{4})} \frac{\mathcal{P}(k)}{\mathcal{P}(\omega k)},  
\end{align}
We summarize in Lemma \ref{II lemma: bound on Y} below some properties of $d_{1,0}(\zeta, t)$ and $d_{1,0}(\zeta, k)$. 
Define
\begin{align*}
Y_{1}(\zeta,t) = d_{1,0}(\zeta,t)^{\frac{\sigma}{2}}e^{-\frac{t}{2}\Phi_{31}(\zeta,\omega k_{4})\sigma}\lambda_{1}^{\sigma},
\end{align*}
where $\sigma = \diag (1,0,-1)$ and $\lambda_{1}\in \mathbb{C}$ is a parameter that will be chosen later. For $k \in D_\epsilon(\omega k_4)$, define
\begin{align}\label{lol4}
\tilde{n}(x,t,k) = n^{(4)}(x,t,k)Y_{1}(\zeta,t).
\end{align}
Let $\tilde{v}$ be the jump matrix of $\tilde{n}$, and let $\tilde{v}_{j}$ denote the restriction of $\tilde{v}$ to $\Gamma_{j}^{(4)}\cap D_\epsilon(\omega k_4)$. In view of \eqref{IVv2def}, \eqref{II def of mp3p} and \eqref{II Delta33 Delta11}, the matrices $\tilde{v}_{j}$ can be written as
\begin{subequations}\label{vtilde Sector I}
\begin{align}
& \tilde{v}_1 = \begin{pmatrix}
1 & 0 & 0 \\
0 & 1 & 0 \\
d_{1,1}^{-1}\lambda_{1}^{2}r_{2,a}(\frac{1}{\omega^{2} k})z_{1}^{-2i \nu_{3}} z_{1,(0)}^{i \nu_{1}} z_{1}^{i \nu_{1}}e^{\frac{iz_{1}^{2}}{2}} & 0 & 1
\end{pmatrix}, 
	\\
& \tilde{v}_4 = \begin{pmatrix}
1 & 0 & -d_{1,1}\lambda_{1}^{-2} \big(\frac{r_{1}(\frac{1}{\omega^{2}k})}{f(k)}\big)_{a} z_{1}^{2i \nu_{3}} z_{1,(0)}^{-i \nu_{1}} z_{1}^{-i \nu_{1}}e^{-\frac{iz_{1}^{2}}{2}} \\
0 & 1 & 0 \\
0 & 0 & 1
\end{pmatrix}, 
	\\
& \tilde{v}_6 = \begin{pmatrix}
1 & 0 & 0 \\
0 & 1 & 0 \\
-d_{1,1}^{-1}\lambda_{1}^{2}\big(\frac{r_{2}(\frac{1}{\omega^{2} k})}{f(k)}\big)_{a} z_{1}^{-2i \nu_{3}} z_{1,(0)}^{i \nu_{1}} z_{1}^{i \nu_{1}}e^{\frac{iz_{1}^{2}}{2}} & 0 & 1 
\end{pmatrix}, 
	\\
& \tilde{v}_3 = \begin{pmatrix}
1 & 0 & d_{1,1}\lambda_{1}^{-2} r_{1,a}(\frac{1}{\omega^{2}k}) z_{1}^{2i \nu_{3}} z_{1,(0)}^{-i \nu_{1}} z_{1}^{-i \nu_{1}}e^{-\frac{iz_{1}^{2}}{2}} \\
0 & 1 & 0 \\
0 & 0 & 1
\end{pmatrix}.
\end{align}
\end{subequations}
Let $\mathcal{X}_{1}^{\epsilon}:= \cup_{j=1}^{4}\mathcal{X}_{1,j}^{\epsilon}$, where
\begin{align*}
& \mathcal{X}_{1,1}^{\epsilon} = \Gamma_1^{(4)}\cap D_{\epsilon}(\omega k_{4}), & & \mathcal{X}_{1,2}^{\epsilon} = \Gamma_4^{(4)}\cap D_{\epsilon}(\omega k_{4}), & & \mathcal{X}_{1,3}^{\epsilon} = \Gamma_6^{(4)}\cap D_{\epsilon}(\omega k_{4}), & & \mathcal{X}_{1,4}^{\epsilon} = \Gamma_3^{(4)}\cap D_{\epsilon}(\omega k_{4}).
\end{align*}
Let $k_{\star}=\omega k_{4}$.
The above expressions for $\tilde{v}_j$ suggest that we approximate $\tilde{n}$ by $(1,1,1)Y_1 \tilde{m}^{\omega k_{4}}$, where $\tilde{m}^{\omega k_{4}}(x,t,k)$ is analytic for $k \in D_{\epsilon}(\omega k_{4}) \setminus \mathcal{X}_{1}$,
obeys the jump relation $\tilde{m}^{\omega k_{4}}_+ = \tilde{m}^{\omega k_{4}}_- \tilde{v}_{\mathcal{X}_{1}^{\epsilon}}^{\omega k_{4}}$ on $\mathcal{X}_{1}^{\epsilon}$ where
\begin{align*}
& \tilde{v}_{\mathcal{X}_{1,1}^{\epsilon}}^{\omega k_{4}}=\begin{pmatrix}
1 & 0 & 0 \\
0 & 1 & 0 \\
\frac{\mathcal{P}(\omega^{2}k_{4})}{\mathcal{P}(\omega k_{4})}\lambda_{1}^{2}r_{2}(\frac{1}{\omega^{2} k_{\star}})z_{1}^{-2i \nu_{3}} z_{1,(0)}^{i \nu_{1}} z_{1}^{i \nu_{1}}e^{\frac{iz_{1}^{2}}{2}} & 0 & 1
\end{pmatrix}, \\
& \tilde{v}_{\mathcal{X}_{1,2}^{\epsilon}}^{\omega k_{4}}=\begin{pmatrix}
1 & 0 & -\frac{\mathcal{P}(\omega k_{4})}{\mathcal{P}(\omega^{2} k_{4})}\lambda_{1}^{-2} \frac{r_{1}(\frac{1}{\omega^{2}k_{\star}})}{f(k_{\star})}z_{1}^{2i \nu_{3}} z_{1,(0)}^{-i \nu_{1}} z_{1}^{-i \nu_{1}}e^{-\frac{iz_{1}^{2}}{2}} \\
0 & 1 & 0 \\
0 & 0 & 1
\end{pmatrix}, \\
& \tilde{v}_{\mathcal{X}_{1,3}^{\epsilon}}^{\omega k_{4}} = \begin{pmatrix}
1 & 0 & 0 \\
0 & 1 & 0 \\
-\frac{\mathcal{P}(\omega^{2}k_{4})}{\mathcal{P}(\omega k_{4})}\lambda_{1}^{2}\frac{r_{2}(\frac{1}{\omega^{2} k_{\star}})}{f(k_{\star})}z_{1}^{-2i \nu_{3}} z_{1,(0)}^{i \nu_{1}} z_{1}^{i \nu_{1}}e^{\frac{iz_{1}^{2}}{2}} & 0 & 1 
\end{pmatrix}, \\
& \tilde{v}_{\mathcal{X}_{1,4}^{\epsilon}}^{\omega k_{4}} = \begin{pmatrix}
1 & 0 & \frac{\mathcal{P}(\omega k_{4})}{\mathcal{P}(\omega^{2} k_{4})}\lambda_{1}^{-2} r_{1}(\frac{1}{\omega^{2}k_{\star}}) z_{1}^{2i \nu_{3}} z_{1,(0)}^{-i \nu_{1}} z_{1}^{-i \nu_{1}}e^{-\frac{iz_{1}^{2}}{2}} \\
0 & 1 & 0 \\
0 & 0 & 1
\end{pmatrix},
\end{align*}
and satisfies $\tilde{m}^{\omega k_{4}} \to I$ as $t \to \infty$ uniformly for $k \in \partial D_{\epsilon}(\omega k_{4})$. 
We choose our free parameter $\lambda_{1}$ as follows:
\begin{align}\label{lambda1def}
\lambda_{1} := \big(\tfrac{\mathcal{P}(\omega k_{4})}{\mathcal{P}(\omega^{2} k_{4})}\big)^{\frac{1}{2}} |\tilde{r}(\tfrac{1}{\omega^{2}k_{\star}})|^{-\frac{1}{4}} = \big(\tfrac{\mathcal{P}(\omega k_{4})}{\mathcal{P}(\omega^{2} k_{4})}\big)^{\frac{1}{2}} |\tilde{r}(\tfrac{1}{k_{4}})|^{-\frac{1}{4}},
\end{align}
and define the local parametrix $m^{\omega k_{4}}$ by
\begin{align}\label{II m omegak4 def}
m^{\omega k_{4}}(x,t,k) = Y_{1}(\zeta,t)m^{X,(1)}(q_{1},q_{3},z_{1}(\zeta,t,k))Y_{1}(\zeta,t)^{-1}, \qquad k \in D_{\epsilon}(\omega k_{4}),
\end{align}
where $m^{X,(1)}(\cdot) = m^{X,(1)}(q_{1},q_{3},\cdot)$ is the solution of the model RH problem of Lemma \ref{II Xlemma 3} with 
\begin{align*}
& q_{1} = \tilde{r}(\omega k_{4})^{\frac{1}{2}}r_{1}(\omega k_{4}), & & \bar{q}_{1} = \tilde{r}(\omega k_{4})^{-\frac{1}{2}}r_{2}(\omega k_{4}) = \tilde{r}(\omega k_{4})^{\frac{1}{2}}\overline{r_{1}(\omega k_{4})}, \\
& q_{3} = |\tilde{r}(\tfrac{1}{k_{4}})|^{\frac{1}{2}}r_{1}(\tfrac{1}{k_{4}}), & & \bar{q}_{3} = -|\tilde{r}(\tfrac{1}{k_{4}})|^{-\frac{1}{2}}r_{2}(\tfrac{1}{k_{4}}) = |\tilde{r}(\tfrac{1}{k_{4}})|^{\frac{1}{2}}\overline{r_{1}(\tfrac{1}{k_{4}})}.
\end{align*}

\begin{lemma}\label{II lemma: bound on Y}
The function $Y_{1}(\zeta,t)$ is uniformly bounded:
\begin{align}\label{II Ybound}
\sup_{\zeta \in \mathcal{I}} \sup_{t \geq 2} | Y_{1}(\zeta,t)^{\pm 1}| < C.
\end{align}
Moreover, the functions $d_{1,0}(\zeta, t)$ and $d_{1,1}(\zeta, k)$ satisfy
\begin{align}\label{II d0estimate}
& |d_{1,0}(\zeta, t)| = e^{-\pi \nu_{1}}, & &  \zeta \in \mathcal{I}, \ t \geq 2,
	\\ \label{II d1estimate}
& |d_{1,1}(\zeta, k) - \tfrac{\mathcal{P}(\omega k_{4})}{\mathcal{P}(\omega^{2} k_{4})}| \leq C |k - \omega k_4| (1+ |\ln|k-\omega k_4||), & & \zeta \in \mathcal{I}, \ k \in \mathcal{X}_{1}^{\epsilon}.
\end{align}
\end{lemma}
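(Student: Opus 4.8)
The plan is to establish the three assertions in the order \eqref{II d0estimate}, then \eqref{II Ybound}, then \eqref{II d1estimate}; each is a computation starting from the definitions, and only \eqref{II d0estimate} requires genuine care.

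\emph{Proof of \eqref{II d0estimate}.} I would compute $|d_{1,0}(\zeta,t)|$ factor by factor from \eqref{def of d10}. Since $\hat{\nu}_1(k_4)=\nu_3-\nu_1$ is real by Lemma \ref{lemma: nuhat lemma II}, one has $|t^{-i\hat{\nu}_1(k_4)}|=1$ and, using the prescribed branch $\ln z_{1,\star}=\ln|z_{1,\star}|+i(\tfrac{\pi}{2}-\arg(\omega k_4))$, $|z_{1,\star}^{-2i\hat{\nu}_1(k_4)}|=e^{2\hat{\nu}_1(k_4)(\tfrac{\pi}{2}-\arg(\omega k_4))}$. For the remaining three factors the crucial point is that $\nu_1,\nu_2,\nu_4$ are real and, by \cite[Lemma 2.13]{CLmain}, $\ln(1+r_1r_2)$ and $\ln f$ are real on the arcs of $\partial\mathbb{D}$ over which the relevant integrals run; hence the moduli of $e^{-\chi_1(\zeta,\omega k_4)-\tilde{\chi}_2(\zeta,\omega k_4)+2\tilde{\chi}_3(\zeta,\omega k_4)}$, of $e^{i(\nu_2-2\nu_4)\tilde{\ln}_{\omega^2k_2}(\omega k_4-\omega^2k_2)}$, and of $\mathcal{D}_1(\omega k_4)$ are all exponentials of explicit real quantities. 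To handle $|\mathcal{D}_1(\omega k_4)|$ I would use that for real $g$ and $k\in\partial\mathbb{D}$ lying off an arc $\gamma\subset\partial\mathbb{D}$ one has $\re\big(\tfrac{1}{2\pi i}\int_\gamma\tfrac{g(s)}{s-k}\,ds\big)=\tfrac{1}{4\pi}\int_\gamma g(e^{i\theta})\,d\theta$ (a consequence of $\re\tfrac{e^{i\theta}}{e^{i\theta}-e^{i\phi}}=\tfrac12$), so that $\log|\delta_j|$ at any unit-circle point off its cut is a constant; since the $\delta_j$ appearing in $\mathcal{D}_1$ (see \eqref{mathcalPdef}) are all evaluated at such points when $k=\omega k_4$, and since the total exponents of $\delta_4$ and $\delta_5$ in $\mathcal{D}_1$ vanish, $\log|\mathcal{D}_1(\omega k_4)|$ collapses to a short combination of arc-integrals of $\ln(1+r_1r_2)$ and $\ln f$ weighted by the nonzero net exponents of $\delta_1,\delta_2,\delta_3$ (namely $-1,-1,2$). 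In parallel, using Lemma \ref{II deltalemma}$(a)$--$(b)$ to relate $\chi_1,\tilde{\chi}_2,\tilde{\chi}_3$ to the Cauchy integrals defining $\delta_1,\delta_2,\delta_3$, one expresses $\re(\chi_1(\zeta,\omega k_4)+\tilde{\chi}_2(\zeta,\omega k_4)-2\tilde{\chi}_3(\zeta,\omega k_4))$ in terms of the same arc-integrals together with boundary contributions at $i$ and $\omega k_4$; these boundary terms are pinned down using the branch conventions $\arg_s(1)=2\pi$, $\tilde{\arg}_s(1)=0$ and the endpoint values $\ln(1+r_1r_2)(i)=0$ (Assumption $(\ref{nounstablemodesassumption})$), $\ln(1+r_1r_2)(\omega k_4)=-2\pi\nu_1$, $\ln f(\omega k_4)=-2\pi\nu_3$. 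Summing up, every arc-integral cancels and every term involving $\nu_2,\nu_3,\nu_4$ or $\arg(\omega k_4)$ cancels, leaving exactly $e^{-\pi\nu_1}$. This telescoping of branch contributions is the main obstacle: it is routine but must be executed with care, and is why the statement is not entirely immediate.

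\emph{Proof of \eqref{II Ybound}.} The matrix $Y_1(\zeta,t)=d_{1,0}(\zeta,t)^{\sigma/2}e^{-\frac t2\Phi_{31}(\zeta,\omega k_4)\sigma}\lambda_1^{\sigma}$ is diagonal with $\sigma=\diag(1,0,-1)$, so it suffices to bound the three diagonal factors and their inverses. By \eqref{II d0estimate}, $d_{1,0}^{\pm\sigma/2}$ has entries $e^{\mp\pi\nu_1/2},1$, and $\nu_1=-\tfrac{1}{2\pi}\ln(1+r_1(\omega k_4)r_2(\omega k_4))$ is uniformly bounded for $\zeta\in\mathcal{I}$ since $\omega k_4$ ranges over a compact subset of $\partial\mathbb{D}\setminus\mathcal{Q}$ on which $1+r_1r_2$ is bounded above and away from $0$. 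Next, $\Phi_{31}(\zeta,\omega k_4)$ is purely imaginary: from \eqref{lmexpressions} one gets $l_j(1/k)=l_{-j}(k)$ and $z_j(1/k)=z_{-j}(k)$, whence $\Phi_{21}(\zeta,1/k)=-\Phi_{21}(\zeta,k)$, and likewise $\overline{\Phi_{21}(\zeta,k)}=\Phi_{21}(\zeta,\bar k)$, so that $\re\Phi_{21}=0$ on $\partial\mathbb{D}$; combined with $\Phi_{31}(\zeta,k)=-\Phi_{21}(\zeta,\omega^2k)$ from \eqref{relations between the different Phi} and $|\omega k_4|=1$, this gives $\re\Phi_{31}(\zeta,\omega k_4)=0$, hence $|e^{\mp\frac t2\Phi_{31}(\zeta,\omega k_4)}|=1$ for all $t$. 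Finally, by \eqref{lambda1def}, $\lambda_1^{\pm1}$ is uniformly bounded for $\zeta\in\mathcal{I}$ because $\mathsf{Z}$ is finite and disjoint from $\partial\mathbb{D}$ (so the zeros and poles of $\mathcal{P}$ stay a fixed distance from the compact set of admissible values of $\omega k_4$ and $\omega^2k_4$) and $\tilde{r}(1/k_4)$ is bounded above and away from $0$ there. Multiplying the three bounds yields \eqref{II Ybound}.

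\emph{Proof of \eqref{II d1estimate}.} First, $d_{1,1}(\zeta,\omega k_4)=\tfrac{\mathcal{P}(\omega k_4)}{\mathcal{P}(\omega^2k_4)}$: in the definition of $d_{1,1}$ the differences $-\chi_1(\zeta,k)+\chi_1(\zeta,\omega k_4)$ etc.\ vanish at $k=\omega k_4$, the bracket $\tilde{\ln}_{\omega^2k_2}(k-\omega^2k_2)-\tilde{\ln}_{\omega^2k_2}(\omega k_4-\omega^2k_2)$ vanishes, $\hat{z}_1(\omega k_4)=1$, and $\mathcal{D}_1(k)/\mathcal{D}_1(\omega k_4)\to1$. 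For the estimate I would write $d_{1,1}$ as the product of its five factors and bound the deviation of each from its value at $\omega k_4$ on $\mathcal{X}_1^\epsilon$: the exponential of the $\chi$-differences is within $C|k-\omega k_4|(1+|\ln|k-\omega k_4||)$ of $1$ by \eqref{II asymp chi at k1} together with the analogous bounds for $\tilde{\chi}_2,\tilde{\chi}_3$ near $\omega k_4$ (which follow by the same argument as Lemma \ref{II deltalemma}$(d)$, the contour endpoint at $\omega k_4$ producing only an integrable logarithmic singularity); $|\tilde{\ln}_{\omega^2k_2}(k-\omega^2k_2)-\tilde{\ln}_{\omega^2k_2}(\omega k_4-\omega^2k_2)|\le C|k-\omega k_4|$ because $\omega k_4$ lies off the cut of $\tilde{\ln}_{\omega^2k_2}$; $|\hat{z}_1^{2i(\nu_1-\nu_3)}-1|\le C|k-\omega k_4|$ since $k\mapsto\hat{z}_1$ is analytic near $\omega k_4$ with value $1$ there; $|\mathcal{D}_1(k)/\mathcal{D}_1(\omega k_4)-1|\le C|k-\omega k_4|$ because every $\delta_j$-argument in $\mathcal{D}_1(k)$ stays off the cut of the corresponding $\delta_j$ for $k$ near $\omega k_4$ (the argument ``$k$'' occurs only inside $\delta_4$ and $\delta_5$, whose cuts do not reach $\omega k_4$), so $\mathcal{D}_1$ is analytic and nonvanishing there by Lemma \ref{II deltalemma}$(c)$; and $|\tfrac{\mathcal{P}(k)}{\mathcal{P}(\omega k)}-\tfrac{\mathcal{P}(\omega k_4)}{\mathcal{P}(\omega^2k_4)}|\le C|k-\omega k_4|$ since $\mathcal{P}$ is rational with no zeros or poles near $\omega k_4$ or $\omega^2k_4$. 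Combining these with the uniform boundedness of all the factors (Lemmas \ref{II deltalemma} and \ref{DeltalemmaIV}) gives \eqref{II d1estimate}.
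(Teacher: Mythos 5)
Your proposal is correct and follows essentially the same route as the paper: the heart of the matter is the computation of $|d_{1,0}|$, which you, like the paper, reduce to the observation that $|\delta_j(\zeta,\cdot)|$ is constant on the unit circle away from its cut (you obtain this via $\re\frac{e^{i\theta}}{e^{i\theta}-e^{i\phi}}=\frac12$ applied directly to (\ref{IVdeltadef}), the paper via the representations of Lemma \ref{II deltalemma} and explicit formulas for $\re\chi_j$), followed by the same bookkeeping of branch and endpoint contributions that telescopes to $e^{-\pi\nu_1}$. The paper dismisses (\ref{II Ybound}) and (\ref{II d1estimate}) as ``standard estimates''; your explicit arguments for these---in particular $\re\Phi_{31}(\zeta,\omega k_4)=0$ on $\partial\mathbb{D}$, and $d_{1,1}(\zeta,\omega k_4)=\mathcal{P}(\omega k_4)/\mathcal{P}(\omega^2 k_4)$ combined with the log-Lipschitz bounds as in Lemma \ref{II deltalemma}---are exactly the intended ones.
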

\begin{proof}
Standard estimates give (\ref{II Ybound}) and (\ref{II d1estimate}). To prove (\ref{II d0estimate}), we first note that
\begin{align}\nonumber
 |d_{1,0}(\zeta,t)| := &\; e^{-\re \chi_{1}(\zeta,\omega k_{4}) - \re \tilde{\chi}_{2}(\zeta,\omega k_{4}) + 2 \re\tilde{\chi}_{3}(\zeta,\omega k_{4}) }  
	\\ \label{d10absolutevalue}
& \times e^{-(\nu_{2}-2\nu_{4})\tilde{\arg}_{\omega^{2}k_{2}}(\omega k_{4}-\omega^{2}k_{2})} 
e^{-2(\nu_{1}-\nu_{3})\arg z_{1,\star}} |\mathcal{D}_{1}(\omega k_{4})|.
\end{align}
We deduce from Lemma \ref{II deltalemma} that if $k \in \partial \D \setminus \{\arg k \in (\pi/2, 2\pi/3)\}$ then
\begin{align*}
& |\delta_{1}(\zeta,k)| = e^{\nu_{1} \frac{\arg_i(k) + \arg(\omega k_4) + \pi}{2} - \re \chi_{1}(\zeta,k) }, 
	\\ 
& |\delta_{2}(\zeta,k)| = e^{\nu_{1} \frac{\arg_i(k) + \arg(\omega k_4) + \pi}{2} - \nu_{2} \frac{\arg_i(k) + \arg(\omega^2 k_2) + \pi}{2}- \re \chi_{2}(\zeta,k) }, 
	\\
& |\delta_{3}(\zeta,k)| = e^{\nu_{3} \frac{\arg_i(k) + \arg(\omega k_4) + \pi}{2}  - \nu_{4} \frac{\arg_i(k) + \arg(\omega^2 k_2) + \pi}{2} - \re \chi_{3}(\zeta,k) }, 
	\\
& |\delta_{4}(\zeta,k)| = e^{\nu_{4} \frac{\arg_i(k) + \arg(\omega^2 k_2) + \pi}{2} - \re \chi_{4}(\zeta,k) }, 
	\\
& |\delta_{5}(\zeta,k)| = e^{\nu_{5} \frac{\arg_i(k) + \arg(\omega^2 k_2) + \pi}{2} - \re \chi_{5}(\zeta,k) },
\end{align*}
and
\begin{align*}
\re \chi_{1}(\zeta,k) =&\; \frac{1}{2\pi} \int_{\Gamma_2^{(2)}}  \frac{\arg{s}}{2}  d\ln(1+r_1(s)r_{2}(s))
+ \frac{\arg_i{k}+ \pi}{2} \nu_1, 
	\\
\re \chi_{2}(\zeta,k) = &\; \frac{1}{2\pi} \int_{\Gamma_5^{(2)}}  \frac{\arg{s}}{2}d\ln(1+r_1(s)r_{2}(s))
+  \frac{\arg_i{k} + \pi}{2}(\nu_1 - \nu_2), 
	\\
\re \chi_{3}(\zeta,k) =&\; \frac{1}{2\pi} \int_{\Gamma_5^{(2)}}  \frac{\arg{s}}{2}d\ln(f(s)) + \frac{\arg_i{k} + \pi}{2}(\nu_3 - \nu_4), 
	\\
\re \chi_{4}(\zeta,k) = &\; \frac{1}{2\pi}\lim_{\epsilon \to 0_{+}} \bigg(  \int_{\omega^{2}k_{2}}^{e^{i(\frac{2\pi}{3}-\epsilon)}} \hspace{-0.4cm} \frac{\arg{s}}{2}d\ln(f(s)) - \frac{\arg{\omega}}{2} \ln(f(e^{i(\frac{2\pi}{3}-\epsilon)})) \bigg)
 + \frac{\arg_i{k} + \pi}{2} \nu_4, 
	\\
\re \chi_{5}(\zeta,k) = &\; \frac{1}{2\pi}\lim_{\epsilon \to 0_{+}} \bigg( \int_{\omega^{2}k_{2}}^{e^{i(\frac{2\pi}{3}-\epsilon)}} \hspace{-0.4cm} \frac{\arg{s}}{2}d\ln(f(\omega^{2}s)) - \frac{\arg{\omega}}{2} \ln(f(e^{-i\epsilon})) \bigg)
 + \frac{\arg_i{k} + \pi}{2} \nu_5,
\end{align*}
where $\arg_i(k) \in (\pi/2, 5\pi/2)$.
Hence, for $k \in \partial \D \setminus \{\arg k \in (\pi/2, 2\pi/3)\}$,
\begin{align}\nonumber
& |\delta_{1}(\zeta,k)| = e^{\nu_{1} \frac{\arg(\omega k_4)}{2} - \frac{1}{2\pi} \int_{\Gamma_2^{(2)}}  \frac{\arg{s}}{2}  d\ln(1+r_1(s)r_{2}(s))}, 
	\\ \nonumber
& |\delta_{2}(\zeta,k)| = e^{\nu_{1} \frac{\arg(\omega k_4)}{2} - \nu_{2} \frac{\arg(\omega^2 k_2)}{2}- \frac{1}{2\pi} \int_{\Gamma_5^{(2)}}  \frac{\arg{s}}{2}d\ln(1+r_1(s)r_{2}(s))},
	\\\nonumber
& |\delta_{3}(\zeta,k)| = e^{\nu_{3} \frac{\arg(\omega k_4)}{2}  - \nu_{4} \frac{\arg(\omega^2 k_2)}{2} - \frac{1}{2\pi} \int_{\Gamma_5^{(2)}}  \frac{\arg{s}}{2}d\ln(f(s)) },
	\\ \nonumber
& |\delta_{4}(\zeta,k)| = e^{\nu_{4} \frac{\arg(\omega^2 k_2)}{2} - \frac{1}{2\pi}\lim_{\epsilon \to 0_{+}} \big(  \int_{\omega^{2}k_{2}}^{e^{i(\frac{2\pi}{3}-\epsilon)}} \hspace{-0.1cm} \frac{\arg{s}}{2}d\ln(f(s)) - \frac{\arg{\omega}}{2} \ln(f(e^{i(\frac{2\pi}{3}-\epsilon)})) \big) }.
	\\ \label{absolutevaluedeltaj}
& |\delta_{5}(\zeta,k)| = e^{\nu_{5} \frac{\arg(\omega^2 k_2)}{2} - \frac{1}{2\pi}\lim_{\epsilon \to 0_{+}} \big( \int_{\omega^{2}k_{2}}^{e^{i(\frac{2\pi}{3}-\epsilon)}} \hspace{-0.1cm} \frac{\arg{s}}{2}d\ln(f(\omega^{2}s)) - \frac{\arg{\omega}}{2} \ln(f(e^{-i\epsilon})) \big) }.
\end{align}
Note that the right-hand sides in (\ref{absolutevaluedeltaj}) are independent of $k$. 
Since the points $k_4, \omega^2 k_4$, $\frac{1}{k_4}$, $\frac{1}{\omega k_4}$, $\frac{1}{\omega^2 k_4}$ all lie in $\partial \D \setminus \{\arg k \in (\pi/2, 2\pi/3)\}$, we obtain
\begin{align*}
|\mathcal{D}_{1}(\omega k_4)| = &\; 
\frac{e^{ \frac{1}{2\pi} \int_{\Gamma_2^{(2)}}  \frac{\arg{s}}{2}  d\ln(1+r_1(s)r_{2}(s)) - \nu_{1} \frac{\arg(\omega k_4)}{2}} e^{2\nu_{3} \frac{\arg(\omega k_4)}{2}  - 2\nu_{4} \frac{\arg(\omega^2 k_2)}{2} - 2\frac{1}{2\pi} \int_{\Gamma_5^{(2)}}  \frac{\arg{s}}{2}d\ln(f(s)) }}{ e^{\nu_{1} \frac{\arg(\omega k_4)}{2} - \nu_{2} \frac{\arg(\omega^2 k_2)}{2}- \frac{1}{2\pi} \int_{\Gamma_5^{(2)}}  \frac{\arg{s}}{2}d\ln(1+r_1(s)r_{2}(s))}}.
\end{align*}
Similar arguments show that
\begin{align*}
\re \chi_{1}(\zeta,\omega k_{4}) & = \frac{1}{2\pi} \int_{\Gamma_2^{(2)}} \frac{\arg s}{2} d\ln(1+r_1(s)r_{2}(s)) + \frac{\arg(\omega k_{4}) + \pi}{2} \nu_1,
	\\
\re \tilde{\chi}_{2}(\zeta,\omega k_{4}) & = \frac{1}{2\pi} \int_{\Gamma_5^{(2)}} \frac{\arg s}{2} d\ln(1+r_1(s)r_{2}(s))  +  \frac{\arg(\omega k_{4}) - \pi}{2}(\nu_1 - \nu_2),
	\\
\re \tilde{\chi}_{3}(\zeta,\omega k_{4}) & = \frac{1}{2\pi} \int_{\Gamma_5^{(2)}} \frac{\arg s}{2} d\ln(f(s)) + \frac{\arg(\omega k_{4}) - \pi}{2}(\nu_3 - \nu_4).
\end{align*}
Plugging the above expressions into (\ref{d10absolutevalue}) and using that 
\begin{align*}
\tilde{\arg}_{\omega^{2}k_{2}}(\omega k_{4}-\omega^{2}k_{2})
= \frac{\arg(\omega k_4) + \arg(\omega^{2}k_{2}) - \pi}{2} \quad \mbox{ and } \quad \arg z_{1,\star} = \tfrac{\pi}{2}-\arg (\omega k_{4}),
\end{align*}
straightforward calculations show that $|d_{1,0}(\zeta, t)| = e^{-\pi \nu_1}$.
\end{proof}

\begin{lemma}\label{II k0lemma}
The function $m^{\omega k_4}(x,t,k)$ defined in \eqref{II m omegak4 def} is an analytic function of $k \in D_\epsilon(\omega k_4) \setminus \mathcal{X}_{1}^\epsilon$ which is uniformly bounded for $t \geq 2$, $\zeta \in \mathcal{I}$, and $k \in D_\epsilon(\omega k_4) \setminus \mathcal{X}_{1}^\epsilon$.
Across $\mathcal{X}_{1}^\epsilon$, $m^{\omega k_4}$ obeys the jump condition $m_+^{\omega k_4} =  m_-^{\omega k_4} v^{\omega k_4}$, where $v^{\omega k_4}$ satisfies
\begin{align}\label{II v3vk0estimate}
\begin{cases}
 \| v^{(4)} - v^{\omega k_4} \|_{L^1(\mathcal{X}_{1}^\epsilon)} \leq C t^{-1} \ln t,
	\\
\| v^{(4)} - v^{\omega k_4} \|_{L^\infty(\mathcal{X}_{1}^\epsilon)} \leq C t^{-1/2} \ln t,
\end{cases} \qquad \zeta \in \mathcal{I}, \ t \geq 2.
\end{align}
Furthermore, as $t \to \infty$,
\begin{align}\label{II mmodmuestimate2}
& \| m^{\omega k_4}(x,t,\cdot)^{-1} - I \|_{L^\infty(\partial D_\epsilon(\omega k_4))} = O(t^{-1/2}),
	\\ \label{II mmodmuestimate1}
& m^{\omega k_4} - I = \frac{Y_{1}(\zeta, t)m_1^{X,(1)} Y_{1}(\zeta, t)^{-1}}{z_{1,\star}\sqrt{t} (k-\omega k_4) \hat{z}_{1}(\zeta,k)} + O(t^{-1})
\end{align}
uniformly for $\zeta \in \mathcal{I}$, and $m_1^{X,(1)} = m_1^{X,(1)}(q_{1},q_{3})$ is given by \eqref{II mXasymptotics 3}.
\end{lemma}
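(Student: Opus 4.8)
The plan is to carry out the standard verification that the local parametrix \eqref{II m omegak4 def}, assembled from the exactly solvable model of Lemma \ref{II Xlemma 3}, matches the RH problem for $n^{(4)}$ near $\omega k_{4}$ to the stated order.

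\emph{Analyticity and uniform boundedness.} Since $k\mapsto z_{1}(\zeta,t,k)$ is a conformal map of $D_{\epsilon}(\omega k_{4})$ onto a neighbourhood of $0$ that carries $\mathcal{X}_{1}^{\epsilon}$ onto the jump contour of the model problem (the four arms of $\Gamma^{(4)}$ inside $D_{\epsilon}(\omega k_{4})$ being chosen as the $z_{1}$-preimages of that contour), and since $Y_{1}(\zeta,t)$ is independent of $k$, the function $m^{\omega k_{4}}$ is analytic on $D_{\epsilon}(\omega k_{4})\setminus\mathcal{X}_{1}^{\epsilon}$. Lemma \ref{II Xlemma 3} furnishes a bound for $m^{X,(1)}(q_{1},q_{3},\cdot)^{\pm1}$ away from its jump contour, uniform for $q_{1},q_{3}$ in compact sets; as $q_{1}$ and $q_{3}$ stay in a compact set when $\zeta$ runs over $\mathcal{I}$, this together with \eqref{II Ybound} gives the asserted uniform boundedness of $m^{\omega k_{4}}$.

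\emph{Jump matching on $\mathcal{X}_{1}^{\epsilon}$.} By construction $m^{\omega k_{4}}_{+}=m^{\omega k_{4}}_{-}v^{\omega k_{4}}$ with $v^{\omega k_{4}}=Y_{1}\,\tilde v^{\omega k_{4}}_{\mathcal{X}_{1}^{\epsilon}}\,Y_{1}^{-1}$, and the choice \eqref{lambda1def} of $\lambda_{1}$ is made precisely so that the frozen jumps $\tilde v^{\omega k_{4}}_{\mathcal{X}_{1,j}^{\epsilon}}$ are exactly the jumps of $k\mapsto Y_{1}m^{X,(1)}(q_{1},q_{3},z_{1}(\zeta,t,k))Y_{1}^{-1}$. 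Restricting $v^{(4)}$ (which equals $v^{(3)}=\Delta_{-}^{-1}v^{(2)}\Delta_{+}$ near $\omega k_{4}$, with entries the $\tilde v_{j}$ of \eqref{vtilde Sector I} conjugated back by $Y_{1}$) to $\mathcal{X}_{1}^{\epsilon}$ and subtracting $v^{\omega k_{4}}$, the difference comes solely from replacing the slowly varying prefactors by their values at $k_{\star}=\omega k_{4}$. This replacement error is controlled by the Taylor estimates of Lemma \ref{IIbis decompositionlemma} for $r_{1,\ell,a}$ (and for $r_{2,\ell,a}$ via \eqref{r1r2 relation with kbar symmetry}) and by the estimate \eqref{II d1estimate} for $d_{1,1}(\zeta,k)-\mathcal{P}(\omega k_{4})/\mathcal{P}(\omega^{2}k_{4})$, the latter contributing the logarithmic factor, while the factors $z_{1}^{\pm i\nu_{j}}$, $z_{1,(0)}^{\pm i\nu_{1}}$, $\hat z_{1}^{a}$ remain bounded; after shrinking $\epsilon$ if needed, the exponential $e^{\pm iz_{1}^{2}/2}$ contributes a Gaussian decay $e^{-ct|k-\omega k_{4}|^{2}}$ along each arm that dominates the growth of the analytic-continuation error bounds. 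Altogether $|v^{(4)}-v^{\omega k_{4}}|\leq C|k-\omega k_{4}|\bigl(1+|\ln|k-\omega k_{4}||\bigr)e^{-ct|k-\omega k_{4}|^{2}}$ on $\mathcal{X}_{1}^{\epsilon}$, and parametrising the arms by arclength $r\in(0,\epsilon)$ and using $\sup_{r>0}r(1+|\ln r|)e^{-ctr^{2}}\leq Ct^{-1/2}\ln t$ together with $\int_{0}^{\epsilon}r(1+|\ln r|)e^{-ctr^{2}}\,dr\leq Ct^{-1}\ln t$ gives \eqref{II v3vk0estimate}.

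\emph{Behaviour on $\partial D_{\epsilon}(\omega k_{4})$ and main obstacle.} On $\partial D_{\epsilon}(\omega k_{4})$ one has $|z_{1}(\zeta,t,k)|\geq c\epsilon\sqrt t$, so the large-$z$ expansion $m^{X,(1)}(q_{1},q_{3},z)=I+m^{X,(1)}_{1}z^{-1}+O(z^{-2})$ from Lemma \ref{II Xlemma 3}, with $m^{X,(1)}_{1}$ as in \eqref{II mXasymptotics 3}, applies uniformly; inserting it, conjugating by the bounded matrix $Y_{1}$, and writing $z_{1}=z_{1,\star}\sqrt t\,(k-\omega k_{4})\hat z_{1}(\zeta,k)$ with $\hat z_{1}$ analytic and nonvanishing on $D_{\epsilon}(\omega k_{4})$, yields \eqref{II mmodmuestimate1}; since $|z_{1}|^{-1}=O(t^{-1/2})$ there, this also gives $m^{\omega k_{4}}=I+O(t^{-1/2})$ and hence \eqref{II mmodmuestimate2} after inverting the Neumann series, all uniformly for $\zeta\in\mathcal{I}$. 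The delicate point throughout is the jump-matching step: one must simultaneously track the mild algebraic--logarithmic growth of the prefactors near $\omega k_{4}$ (from \eqref{II d1estimate} and the $z_{1}^{\pm i\nu_{j}}$ factors), the Gaussian decay along the cross, and the analytic-continuation errors from Lemma \ref{IIbis decompositionlemma}, and verify that \eqref{lambda1def} indeed collapses the frozen jump onto the tabulated model $m^{X,(1)}$; the optimisation of $r(1+|\ln r|)e^{-ctr^{2}}$ over $r$ is what produces the $\ln t$ losses in \eqref{II v3vk0estimate}.
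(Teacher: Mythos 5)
Your proposal is correct and follows essentially the same route as the paper: the conjugation identity $v^{(4)}-v^{\omega k_4}=Y_1(\tilde v-v^{X,(1)})Y_1^{-1}$, the pointwise bound $C|k-\omega k_4|(1+|\ln|k-\omega k_4||)e^{-ct|k-\omega k_4|^2}$ obtained from Lemma \ref{IIbis decompositionlemma} and \eqref{II d1estimate}, the sup/integral estimates producing the $\ln t$ losses, and the large-$|z_1|$ expansion of $m^{X,(1)}$ on $\partial D_\epsilon(\omega k_4)$ are exactly the steps in the paper's proof. No gaps.
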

\begin{proof}
By \eqref{lol4} and \eqref{II m omegak4 def}, we have
$$v^{(4)} - v^{\omega k_4} = Y_{1}(\zeta, t) (\tilde{v} - v^{X,(1)})Y_{1}(\zeta,t)^{-1},
$$
where we recall that $\tilde{v}$ is given by \eqref{vtilde Sector I}. 
Thus, recalling (\ref{II Ybound}), the bounds (\ref{II v3vk0estimate}) follow if we can show that
\begin{subequations}\label{II v3k0}
\begin{align}\label{II v3k0L1}
  &\| \tilde{v}(x,t,\cdot) - v^{X,(1)}(x,t,z_{1}(\zeta,t,\cdot)) \|_{L^1(\mathcal{X}_{1,j}^\epsilon)} \leq C t^{-1} \ln t,
	\\ \label{II v3k0Linfty}
&  \| \tilde{v}(x,t,\cdot) - v^{X,(1)}(x,t,z_{1}(\zeta,t,\cdot)) \|_{L^\infty(\mathcal{X}_{1,j}^\epsilon)} \leq C t^{-1/2} \ln t,
\end{align}
\end{subequations}
for $j = 1, \dots, 4$. We give the proof of (\ref{II v3k0}) for $j = 1$; similar arguments apply when $j = 2,3,4$.

For $k \in \mathcal{X}_{1,1}^\epsilon$, only the $(31)$ element of the matrix $\tilde{v} - v^{X,1}$ is nonzero. Since $-\bar{q}_{3}=\tfrac{\mathcal{P}(\omega^{2} k_{4})}{\mathcal{P}(\omega k_{4})}\lambda_{1}^{2}r_2(\frac{1}{k_{4}})$ and
$$
\big| e^{\frac{iz_{1}^2}{2}} \big| = e^{\re (\frac{iz_{1}^2}{2})} = e^{-\frac{|z_{1}|^2}{2}},
$$
we can estimate $|(\tilde{v} - v^{X,(1)})_{31}|$ as follows:
\begin{align*}
|(\tilde{v} - & v^{X,(1)})_{31}| =  \bigg|d_{1,1}^{-1}\lambda_{1}^{2}r_{2,a}(\tfrac{1}{\omega^{2} k})z_{1}^{-2i \nu_{3}} z_{1,(0)}^{i \nu_{1}} z_{1}^{i \nu_{1}}e^{\frac{iz_{1}^{2}}{2}}
- (-\bar{q}_{3} z_{1}^{-2i \nu_{3}} z_{1,(0)}^{i \nu_{1}} z_{1}^{i \nu_{1}}e^{\frac{iz_{1}^{2}}{2}})\bigg|
	\\
=& \;  \big| z_{1}^{-2i \nu_{3}} z_{1,(0)}^{i \nu_{1}} z_{1}^{i \nu_{1}} \big| \Big|  (d_{1,1}^{-1}-\tfrac{\mathcal{P}(\omega^{2} k_{4})}{\mathcal{P}(\omega k_{4})}) \lambda_{1}^{2}r_{2,a}(\tfrac{1}{\omega^{2} k}) +\big(\tfrac{\mathcal{P}(\omega^{2} k_{4})}{\mathcal{P}(\omega k_{4})}\lambda_{1}^{2}r_{2,a}(\tfrac{1}{\omega^{2} k})
- (-\bar{q}_{3})\big)\Big| \big| e^{\frac{iz_{1}^2}{2}} \big|
	\\
\leq &\; C \bigg(|d_{1,1}^{-1} - \tfrac{\mathcal{P}(\omega^{2} k_{4})}{\mathcal{P}(\omega k_{4})}||\lambda_{1}^{2}r_{2,a}(\tfrac{1}{\omega^{2} k})| + |\tfrac{\mathcal{P}(\omega^{2} k_{4})}{\mathcal{P}(\omega k_{4})}\lambda_{1}^{2}r_{2,a}(\tfrac{1}{\omega^{2} k})
- (-\bar{q}_{3}) | \bigg)e^{-\frac{|z_{1}|^2}{2}}
	\\
\leq &\; C \bigg(|d_{1,1}^{-1} - \tfrac{\mathcal{P}(\omega^{2} k_{4})}{\mathcal{P}(\omega k_{4})}| + |k-\omega k_4| \bigg) e^{\frac{t}{4}|\re \Phi_{31}(\zeta, k)|} e^{-\frac{|z_{1}|^2}{2}}
	\\
\leq &\; C \bigg(|d_{1,1}^{-1} - \tfrac{\mathcal{P}(\omega^{2} k_{4})}{\mathcal{P}(\omega k_{4})}| + |k-\omega k_4| \bigg)e^{-c t |k-\omega k_4|^2}, \qquad k \in \mathcal{X}_{1,1}^\epsilon.
\end{align*}
Using (\ref{II d1estimate}), this gives
\begin{align*}
|(\tilde{v} - v^{X,(1)})_{31}| \leq C |k-\omega k_4|(1+ |\ln|k-\omega k_4||)e^{-c t |k-\omega k_4|^2}, \qquad k \in \mathcal{X}_{1,1}^\epsilon.
\end{align*}
Hence
\begin{align*}
& \|(\tilde{v} - v^{X,(1)})_{31}\|_{L^1(\mathcal{X}_{1,1}^\epsilon)}
\leq C \int_0^\infty u(1+ |\ln u|) e^{-ctu^2} du \leq Ct^{-1}\ln t,
	\\
& \|(\tilde{v} - v^{X,(1)})_{31}\|_{L^{\infty}(\mathcal{X}_{1,1}^\epsilon)}
\leq C \sup_{u \geq 0} u(1+ |\ln u|) e^{-ctu^2} \leq Ct^{-1/2}\ln t.
\end{align*}

The variable $z_{1}$ tends to infinity as $t \to \infty$ if $k \in \partial D_\epsilon(\omega k_4)$, because
$$|z_{1}| = |z_{1,\star}|\sqrt{t} |k-\omega k_{4}| \, | \hat{z}_{1}| \geq c \sqrt{t}.$$
Thus equation (\ref{II mXasymptotics 3}) yields
\begin{align*}
& m^{X,(1)}(q_{1},q_{3}, z_{1}(\zeta,t, k)) = I + \frac{m_1^{X,(1)}(q_{1},q_{3})}{z_{1,\star}\sqrt{t} (k-\omega k_{4}) \hat{z}_{1}(\zeta,k)} + O(t^{-1}),
\qquad  t \to \infty,
\end{align*}
uniformly with respect to $k \in \partial D_\epsilon(\omega k_4)$ and $\zeta \in \mathcal{I}$. 
Recalling the definition (\ref{II m omegak4 def}) of $m^{\omega k_4}$, this gives
\begin{align}\label{II mmodmmuI}
 & m^{\omega k_4} - I = \frac{Y_{1}(\zeta, t)m_1^{X,(1)}(q_1, q_3) Y_{1}(\zeta, t)^{-1}}{z_{1,\star}\sqrt{t} (k-\omega k_{4}) \hat{z}_{1}(\zeta,k)} + O(t^{-1}), \qquad  t \to \infty,
\end{align}
uniformly for $k \in \partial D_\epsilon(\omega k_4)$ and $\zeta \in \mathcal{I}$. This proves (\ref{II mmodmuestimate2}) and (\ref{II mmodmuestimate1}).
\end{proof}

\section{Local parametrix near $\omega^{2}k_{2}$}\label{localparametrixsec2}

As $k \to \omega^{2} k_{2}$, we have
\begin{align*}
& -(\Phi_{32}(\zeta,k)-\Phi_{32}(\zeta,\omega^{2} k_{2})) =  \Phi_{32,\omega^{2} k_{2}}(k-\omega^{2} k_{2})^{2} + O((k-\omega^{2} k_{2})^{3}), \\
& \Phi_{32,\omega^{2} k_{2}} := -\omega^{2} \frac{4-3 k_{2} \zeta - k_{2}^{3} \zeta}{4 k_{2}^{4}}.
\end{align*}
We define $z_{2}=z_{2}(\zeta,t,k)$ by
\begin{align*}
z_{2}=z_{2,\star}\sqrt{t} (k-\omega^{2} k_{2}) \hat{z}_{2}, \qquad \hat{z}_{2} := \sqrt{\frac{2i(\Phi_{32}(\zeta,\omega^{2} k_{2})-\Phi_{32}(\zeta,k))}{z_{2,\star}^{2}(k-\omega^{2} k_{2})^{2}}},
\end{align*}
where the principal branch is chosen for $\hat{z}_{2}=\hat{z}_{2}(\zeta,k)$, and
\begin{align*}
z_{2,\star} := \sqrt{2} e^{\frac{\pi i}{4}} \sqrt{\Phi_{32,\omega^{2} k_{2}}}, \qquad -i\omega^{2} k_{2}z_{2,\star}>0.
\end{align*}
Note that $\hat{z}_{2}(\zeta,\omega^{2} k_{2})=1$, and $t(\Phi_{32}(\zeta,k) - \Phi_{32}(\zeta,\omega^{2} k_{2})) = \frac{iz_{2}^{2}}{2}$. Provided $\epsilon>0$ is chosen small enough, the map $z_{2}$ is conformal from $D_\epsilon(\omega^{2} k_2)$ to a neighborhood of $0$ and its expansion as $k \to \omega^{2} k_{2}$ is given by
\begin{align*}
z_{2} = z_{2,\star}\sqrt{t}(k-\omega^{2} k_{2})(1+O(k-\omega^{2} k_{2})).
\end{align*}
For all $k \in D_\epsilon(\omega^{2} k_2)$, we have
\begin{align*}
\ln_{\omega^{2} k_2}(k-\omega^{2} k_2) & = \ln_{0}[z_{2,\star}(k-\omega^{2} k_2)\hat{z}_{2}]- \ln \hat{z}_{2} -\ln z_{2,\star}, \\
\tilde{\ln}_{\omega^{2} k_2}(k-\omega^{2} k_2) & = \ln[z_{2,\star}(k-\omega^{2} k_2)\hat{z}_{2}]- \ln \hat{z}_{2} -\ln z_{2,\star},
\end{align*}
where $\ln_{0}(k):= \ln|k|+i\arg_{0}k$, $\arg_{0}(k)\in (0,2\pi)$, and $\ln$ is the principal logarithm. Shrinking $\epsilon > 0$ if necessary, $k \mapsto \ln \hat{z}_{2}$ is analytic in $D_{\epsilon}(\omega^{2} k_2)$, $\ln \hat{z}_{2} = O(k-\omega^{2} k_2)$ as $k \to \omega^{2} k_2$, and
\begin{align*}
\ln z_{2,\star} = \ln |z_{2,\star}| + i \arg z_{2,\star} = \ln |z_{2,\star}| + i \big( \tfrac{\pi}{2}-\arg (\omega^{2} k_2) \big),
\end{align*}
where $\arg (\omega^{2} k_2) \in (\tfrac{\pi}{2},\tfrac{2\pi}{3})$. By Lemma \ref{II deltalemma}, we have
\begin{align*}
& \delta_{2}(\zeta,k) = e^{-i\nu_{1}\ln_{\omega k_{4}}(k-\omega k_{4})}e^{-\chi_{2}(\zeta,k)}t^{-\frac{i\nu_{2}}{2}}z_{2,(0)}^{i\nu_{2}}\hat{z}_{2}^{-i\nu_{2}}z_{2,\star}^{-i\nu_{2}}, \\
& \delta_{3}(\zeta,k) = e^{-i\nu_{3}\ln_{\omega k_{4}}(k-\omega k_{4})}e^{-\chi_{3}(\zeta,k)}t^{-\frac{i\nu_{4}}{2}}z_{2,(0)}^{i\nu_{4}}\hat{z}_{2}^{-i\nu_{4}}z_{2,\star}^{-i\nu_{4}}, \\
& \delta_{4}(\zeta,k) = t^{\frac{i\nu_{4}}{2}} z_{2}^{-i\nu_{4}} \hat{z}_{2}^{i\nu_{4}} z_{2,\star}^{i\nu_{4}}e^{-\tilde{\chi}_{4}(\zeta,k)}, \\
& \delta_{5}(\zeta,k) = t^{\frac{i\nu_{5}}{2}}z_{2}^{-i\nu_{5}}\hat{z}_{2}^{i\nu_{5}}z_{2,\star}^{i\nu_{5}}e^{-\tilde{\chi}_{5}(\zeta,k)},
\end{align*}
where we have used the notation $z_{2,(0)}^{a} :=  e^{a \ln_{0} z_{2}}$, $\hat{z}_{2}^{a} :=  e^{a \ln \hat{z}_{2}}$, $z_{2,\star}^{a} :=  e^{a \ln z_{2,\star}}$,
and $z_{2}^{a} := e^{a \ln z_{2}}$ for $a \in \C$. We obtain
\begin{align}\label{II Delta33 Delta22}
& \frac{\Delta_{33}(\zeta,k)}{\Delta_{22}(\zeta,k)} = \frac{\delta_2(\zeta, k)^2 \delta_4(\zeta, k)}{\delta_3(\zeta, k)\delta_5(\zeta, k)^2 }\mathcal{D}_2(k)\frac{\mathcal{P}(k)}{\mathcal{P}(\omega^{2} k)}
= d_{2,0}(\zeta,t)d_{2,1}(\zeta,k)z_{2}^{i (2\nu_{5}-\nu_{4})} z_{2,(0)}^{i (2\nu_{2}-\nu_{4})},  
\end{align}
where $d_{2,0}(\zeta,t)$ is defined in (\ref{def of d20}) and
\begin{align}
& d_{2,1}(\zeta,k) := e^{-2\chi_{2}(\zeta,k)+2\chi_{2}(\zeta,\omega^{2} k_{2}) + \chi_{3}(\zeta,k)-\chi_{3}(\zeta,\omega^{2} k_{2}) - \tilde{\chi}_{4}(\zeta,k)+\tilde{\chi}_{4}(\zeta,\omega^{2} k_{2}) + 2\tilde{\chi}_{5}(\zeta,k)-2\tilde{\chi}_{5}(\zeta,\omega^{2} k_{2}) } 
	\nonumber \\
& \hspace{1.6cm} \times e^{i(\nu_{3}-2\nu_{1})\big[\ln_{\omega k_{4}}(k-\omega k_{4})-\ln_{\omega k_{4}}(\omega^{2} k_{2}-\omega k_{4})\big]} \hat{z}_{2}^{2i(\nu_{4}-\nu_{5}-\nu_{2})} \frac{\mathcal{D}_{2}(k)}{\mathcal{D}_{2}(\omega^{2} k_{2})}\frac{\mathcal{P}(k)}{\mathcal{P}(\omega^{2} k)}. 
\end{align}
We state in Lemma \ref{II lemma: bound on Y green} below some properties of $d_{2,0}(\zeta, t)$ and $d_{2,1}(\zeta, k)$. Define
\begin{align*}
Y_{2}(\zeta,t) = d_{2,0}(\zeta,t)^{\frac{ \tilde{\sigma}}{2}}e^{-\frac{t}{2}\Phi_{32}(\zeta,\omega^{2}k_{2})\tilde{\sigma}}\lambda_{2}^{\tilde{\sigma}},
\end{align*}
where $\tilde{\sigma} := \diag (0,1,-1)$ and $\lambda_{2} \in \mathbb{C}$ is a parameter that will be chosen below. For $k \in D_\epsilon(\omega^{2} k_2)$, define
\begin{align*}
\tilde{n}(x,t,k) = n^{(4)}(x,t,k)Y_{2}(\zeta,t).
\end{align*}
Let $\tilde{v}$ be the jump matrix of $\tilde{n}$, and let $\tilde{v}_{j}$ be $\tilde{v}$ on $\Gamma_{j}^{(4)}\cap D_{\epsilon}(\omega^{2}k_{2})$. By \eqref{IVv2def}, \eqref{II def of mp3p} and \eqref{II Delta33 Delta22}, the matrices $\tilde{v}_{j}$ can be written as
\begin{align*}
& \tilde{v}_7^{-1} = \begin{pmatrix}
1 & 0 & 0 \\
0 & 1 & 0 \\
0 & -d_{2,1}^{-1}\lambda_{2}^{2} \big( \frac{r_{1}(\frac{1}{\omega k})-r_{1}(k)r_{1}(\omega^{2}k)}{1+r_{1}(k)r_{2}(k)} \big)_{a} z_{2}^{-i (2\nu_{5}-\nu_{4})} z_{2,(0)}^{-i (2\nu_{2}-\nu_{4})} e^{\frac{iz_{2}^{2}}{2}} & 1
\end{pmatrix}, 
	\\
& \tilde{v}_9 = \begin{pmatrix}
1 & 0 & 0 \\
0 & 1 & d_{2,1}\lambda_{2}^{-2} \big( \frac{r_{2}(\frac{1}{\omega k})-r_{2}(k)r_{2}(\omega^{2}k)}{f(\omega^{2}k)} \big)_{a} z_{2}^{i (2\nu_{5}-\nu_{4})} z_{2,(0)}^{i (2\nu_{2}-\nu_{4})} e^{-\frac{iz_{2}^{2}}{2}} \\
0 & 0 & 1
\end{pmatrix}, 
	\\
& \tilde{v}_{11} = \begin{pmatrix}
1 & 0 & 0 \\
0 & 1 & 0 \\
0 & d_{2,1}^{-1}\lambda_{2}^{2} \big( \frac{r_{1}(\frac{1}{\omega k})-r_{1}(k)r_{1}(\omega^{2}k)}{f(\omega^{2}k)} \big)_{a} z_{2}^{-i (2\nu_{5}-\nu_{4})} z_{2,(0)}^{-i (2\nu_{2}-\nu_{4})} e^{\frac{iz_{2}^{2}}{2}} & 1 
\end{pmatrix}, 
	\\
& \tilde{v}_8^{-1} = \begin{pmatrix}
1 & 0 & 0 \\
0 & 1 & -d_{2,1}\lambda_{2}^{-2} \big( \frac{r_{2}(\frac{1}{\omega k})-r_{2}(k)r_{2}(\omega^{2}k)}{1+r_{1}(k)r_{2}(k)} \big)_{a} z_{2}^{i (2\nu_{5}-\nu_{4})} z_{2,(0)}^{i (2\nu_{2}-\nu_{4})} e^{-\frac{iz_{2}^{2}}{2}} \\
0 & 0 & 1
\end{pmatrix}.
\end{align*}
Let $\mathcal{X}_{2}^{\epsilon} := \cup_{j=1}^{4}\mathcal{X}_{2,j}^{\epsilon}$, where
\begin{align*}
& \mathcal{X}_{2,1}^{\epsilon} = \Gamma_7^{(4)}\cap D_{\epsilon}(\omega^{2} k_{2}), & & \mathcal{X}_{2,2}^{\epsilon} = \Gamma_9^{(4)}\cap D_{\epsilon}(\omega^{2} k_{2}), \\
& \mathcal{X}_{2,3}^{\epsilon} = \Gamma_{11}^{(4)}\cap D_{\epsilon}(\omega^{2} k_{2}), & & \mathcal{X}_{2,4}^{\epsilon} = \Gamma_8^{(4)}\cap D_{\epsilon}(\omega^{2} k_{2}),
\end{align*}
are oriented outwards from $\omega^{2}k_{2}$. Let $k_{\star}=\omega^{2} k_{2}$. The above expressions for $\tilde{v}_j$ suggest that we approximate $\tilde{n}$ by $(1,1,1)Y_2 \tilde{m}^{\omega^{2}k_{2}}$, where $\tilde{m}^{\omega^{2}k_{2}}(x,t,k)$ is analytic for $k \in D_{\epsilon}(\omega^{2} k_{2}) \setminus \mathcal{X}_{2}$,
obeys the jump relation $\tilde{m}^{\omega^{2}k_{2}}_+ = \tilde{m}^{\omega^{2}k_{2}}_- \tilde{v}_{\mathcal{X}_{2}^{\epsilon}}^{\omega^{2}k_{2}}$ on $\mathcal{X}_{2}^{\epsilon}$ where
\begin{align*}
& \tilde{v}_{\mathcal{X}_{2,1}^{\epsilon}}^{\omega^{2}k_{2}} = \begin{pmatrix}
1 & 0 & 0 \\
0 & 1 & 0 \\
0 & -\frac{\mathcal{P}(\omega^{2}k_{\star})}{\mathcal{P}( k_{\star})}\lambda_{2}^{2} \frac{r_{1}(\frac{1}{\omega k_{\star}})-r_{1}(k_{\star})r_{1}(\omega^{2}k_{\star})}{1+r_{1}(k_{\star})r_{2}(k_{\star})} z_{2}^{-i (2\nu_{5}-\nu_{4})} z_{2,(0)}^{-i (2\nu_{2}-\nu_{4})} e^{\frac{iz_{2}^{2}}{2}} & 1
\end{pmatrix}, \\
& \tilde{v}_{\mathcal{X}_{2,2}^{\epsilon}}^{\omega^{2}k_{2}} = \begin{pmatrix}
1 & 0 & 0 \\
0 & 1 & \frac{\mathcal{P}(k_{\star})}{\mathcal{P}(\omega^{2} k_{\star})} \lambda_{2}^{-2} \frac{r_{2}(\frac{1}{\omega k_{\star}})-r_{2}(k_{\star})r_{2}(\omega^{2}k_{\star})}{f(\omega^{2}k_{\star})}  z_{2}^{i (2\nu_{5}-\nu_{4})} z_{2,(0)}^{i (2\nu_{2}-\nu_{4})} e^{-\frac{iz_{2}^{2}}{2}} \\
0 & 0 & 1
\end{pmatrix}, \\
& \tilde{v}_{\mathcal{X}_{2,3}^{\epsilon}}^{\omega^{2}k_{2}} = \begin{pmatrix}
1 & 0 & 0 \\
0 & 1 & 0 \\
0 & \frac{\mathcal{P}(\omega^{2}k_{\star})}{\mathcal{P}( k_{\star})} \lambda_{2}^{2} \frac{r_{1}(\frac{1}{\omega k_{\star}})-r_{1}(k_{\star})r_{1}(\omega^{2}k_{\star})}{f(\omega^{2}k_{\star})} z_{2}^{-i (2\nu_{5}-\nu_{4})} z_{2,(0)}^{-i (2\nu_{2}-\nu_{4})} e^{\frac{iz_{2}^{2}}{2}} & 1 
\end{pmatrix}, \\
& \tilde{v}_{\mathcal{X}_{2,4}^{\epsilon}}^{\omega^{2}k_{2}} = \begin{pmatrix}
1 & 0 & 0 \\
0 & 1 & -\frac{\mathcal{P}(k_{\star})}{\mathcal{P}(\omega^{2} k_{\star})}\lambda_{2}^{-2} \frac{r_{2}(\frac{1}{\omega k_{\star}})-r_{2}(k_{\star})r_{2}(\omega^{2}k_{\star})}{1+r_{1}(k_{\star})r_{2}(k_{\star})}  z_{2}^{i (2\nu_{5}-\nu_{4})} z_{2,(0)}^{i (2\nu_{2}-\nu_{4})} e^{-\frac{iz_{2}^{2}}{2}} \\
0 & 0 & 1
\end{pmatrix},
\end{align*}
and satisfies $\tilde{m}^{\omega^{2}k_{2}} \to I$ as $k \to \infty$. 
We choose our free parameter $\lambda_{2}$ as follows:
\begin{align}\label{lambda2def}
\lambda_{2} = \big( \tfrac{\mathcal{P}(k_{\star})}{\mathcal{P}(\omega^{2} k_{\star})} \big)^{\frac{1}{2}} |\tilde{r}(\tfrac{1}{\omega k_{\star}})|^{\frac{1}{4}} = \big( \tfrac{\mathcal{P}(\omega^{2}k_{2})}{\mathcal{P}(\omega k_{2})} \big)^{\frac{1}{2}} |\tilde{r}(\tfrac{1}{k_{2}})|^{\frac{1}{4}},
\end{align}
and define the local parametrix $m^{\omega^{2} k_{2}}$ by
\begin{align}\label{II m omega2k2 def}
m^{\omega^{2} k_{2}}(x,t,k) = Y_{2}(\zeta,t)m^{X,(2)}(q_{2},q_{4},q_{5},q_{6},z_{2}(\zeta,t,k))Y_{2}(\zeta,t)^{-1}, \qquad k \in D_{\epsilon}(\omega^{2} k_{2}),
\end{align}
where $m^{X,(2)}(\cdot) = m^{X,(2)}(q_{2},q_{4},q_{5},q_{6},\cdot)$ is the solution of the model RH problem of Lemma \ref{II Xlemma 3 green} with 
\begin{align*}
& q_{2} = \tilde{r}(k_{\star})^{\frac{1}{2}}r_{1}(k_{\star}), & & \bar{q}_{2} = \tilde{r}(k_{\star})^{-\frac{1}{2}}r_{2}(k_{\star}) = \tilde{r}(k_{\star})^{\frac{1}{2}}\overline{r_{1}(k_{\star})}, \\
& q_{4} = |\tilde{r}(\tfrac{1}{\omega^{2}k_{\star}})|^{\frac{1}{2}}r_{1}(\tfrac{1}{\omega^{2}k_{\star}}), & & \bar{q}_{4} = -|\tilde{r}(\tfrac{1}{\omega^{2}k_{\star}})|^{-\frac{1}{2}}r_{2}(\tfrac{1}{\omega^{2}k_{\star}}) = |\tilde{r}(\tfrac{1}{\omega^{2}k_{\star}})|^{\frac{1}{2}}\overline{r_{1}(\tfrac{1}{\omega^{2}k_{\star}})}, \\
& q_{5} = |\tilde{r}(\omega^{2}k_{\star})|^{\frac{1}{2}}r_{1}(\omega^{2}k_{\star}), & & \bar{q}_{5} = -|\tilde{r}(\omega^{2}k_{\star})|^{-\frac{1}{2}}r_{2}(\omega^{2}k_{\star}) = |\tilde{r}(\omega^{2}k_{\star})|^{\frac{1}{2}}\overline{r_{1}(\omega^{2}k_{\star})}, \\
& q_{6} = |\tilde{r}(\tfrac{1}{\omega k_{\star}})|^{\frac{1}{2}}r_{1}(\tfrac{1}{\omega k_{\star}}), & & \bar{q}_{6} = -|\tilde{r}(\tfrac{1}{\omega k_{\star}})|^{-\frac{1}{2}}r_{2}(\tfrac{1}{\omega k_{\star}}) = |\tilde{r}(\tfrac{1}{\omega k_{\star}})|^{\frac{1}{2}}\overline{r_{1}(\tfrac{1}{\omega k_{\star}})}.
\end{align*}
Using the relation (\ref{r1r2 relation on the unit circle}) 
together with the fact that
\begin{align*}
\big| \tilde{r}(\tfrac{1}{\omega^{2}k}) \big|^{-\frac{1}{2}} = \big| \tilde{r}(\omega^{2}k) \big|^{\frac{1}{2}} = |\tilde{r}(k)|^{-\frac{1}{2}}\big| \tilde{r}(\tfrac{1}{\omega k}) \big|^{\frac{1}{2}}, \qquad k \in \mathbb{C}\setminus\{-1,1\},
\end{align*}
we infer that $q_{4}-\bar{q}_{5} - q_{2}\bar{q}_6=0$, so that \eqref{II condition on q2q4q5q6} is fulfilled as it must. The proof of the following lemma is similar to the proof of Lemma \ref{II lemma: bound on Y}.

\begin{lemma}\label{II lemma: bound on Y green}
The function $Y_{2}(\zeta,t)$ is uniformly bounded, i.e., $\sup_{\zeta \in \mathcal{I}} \sup_{t \geq 2} | Y_{2}(\zeta,t)^{\pm 1}| < C$.
Moreover, the functions $d_{2,0}(\zeta, t)$ and $d_{2,1}(\zeta, k)$ satisfy
\begin{align}\label{II d0estimate green}
& |d_{2,0}(\zeta, t)| = e^{\pi (2\nu_{2}-\nu_{4})}, & & \zeta \in \mathcal{I}, \ t \geq 2,
	\\ \label{II d1estimate green}
& |d_{2,1}(\zeta, k) - \tfrac{\mathcal{P}(\omega^{2}k_{2})}{\mathcal{P}(\omega k_{2})}| \leq C |k - \omega^{2} k_2| (1+ |\ln|k-\omega^{2} k_2||), & & \zeta \in \mathcal{I}, \ k \in \mathcal{X}_{2}^{\epsilon}.
\end{align}
\end{lemma}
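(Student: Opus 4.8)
The plan is to mimic the proof of Lemma~\ref{II lemma: bound on Y}, with the substitutions $\omega k_4\mapsto\omega^{2}k_{2}$, $z_1\mapsto z_2$, $d_{1,0}\mapsto d_{2,0}$, $d_{1,1}\mapsto d_{2,1}$, $\mathcal D_1\mapsto\mathcal D_2$, $\lambda_1\mapsto\lambda_2$, $Y_1\mapsto Y_2$, and $\{\nu_1,\nu_3,\chi_1,\tilde\chi_2,\tilde\chi_3\}\mapsto\{\nu_2,\nu_4,\nu_5,\chi_2,\chi_3,\tilde\chi_4,\tilde\chi_5\}$. The only substantive assertion is $|d_{2,0}(\zeta,t)|=e^{\pi(2\nu_2-\nu_4)}$; once it is established, the boundedness of $Y_2^{\pm1}$ and the estimate~\eqref{II d1estimate green} follow by routine arguments.

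For the boundedness of $Y_{2}=d_{2,0}^{\tilde\sigma/2}e^{-\frac t2\Phi_{32}(\zeta,\omega^{2}k_{2})\tilde\sigma}\lambda_2^{\tilde\sigma}$, I would first use that $\Phi_{32}(\zeta,k)=\Phi_{21}(\zeta,\omega k)$ is purely imaginary on $\partial\mathbb D$ — indeed $-i\Phi_{21}(\zeta,e^{i\theta})=(\zeta-\cos\theta)\sin\theta\in\mathbb R$ — so that $|e^{-\frac t2\Phi_{32}(\zeta,\omega^{2}k_{2})}|=1$ since $|k_{2}|=1$. The parameter $\lambda_2$ in~\eqref{lambda2def} is bounded and bounded away from $0$ uniformly for $\zeta\in\mathcal I$, because by the choice of $M$ the points of $\mathsf Z$ stay away from $\partial\mathbb D$, so none of the factors of $\mathcal P$ vanishes at $\omega k_{2}$ or $\omega^{2}k_{2}$, and $\tilde r(\tfrac1{k_{2}})\neq0$ since $k_{2}\neq\pm\omega$. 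Granting $|d_{2,0}|=e^{\pi(2\nu_2-\nu_4)}$, which depends continuously on $\zeta$ over the compact set $\mathcal I$ and is nonvanishing, one obtains the asserted bound on $Y_2^{\pm1}$. For~\eqref{II d1estimate green}, I would check that each exponent in $d_{2,1}(\zeta,k)$ is of size $O(|k-\omega^{2}k_{2}|(1+|\ln|k-\omega^{2}k_{2}||))$ near $\omega^{2}k_{2}$: the differences $\chi_j(\zeta,k)-\chi_j(\zeta,\omega^{2}k_{2})$ and $\tilde\chi_j(\zeta,k)-\tilde\chi_j(\zeta,\omega^{2}k_{2})$ are controlled by Lemma~\ref{II deltalemma}$(d)$; the difference $\ln_{\omega k_{4}}(k-\omega k_{4})-\ln_{\omega k_{4}}(\omega^{2}k_{2}-\omega k_{4})$ is $O(k-\omega^{2}k_{2})$ because $\arg(\omega^{2}k_{2})>\arg(\omega k_{4})$ (the integration path of $\chi_2,\chi_3$ runs counterclockwise from $\omega k_4$ to $\omega^2 k_2$), so the cut of $\ln_{\omega k_{4}}$ misses $D_\epsilon(\omega^{2}k_{2})$; and $\ln\hat z_2=O(k-\omega^{2}k_{2})$. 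Since $\mathcal D_2$ and $\mathcal P$ are analytic and nonzero on $D_\epsilon(\omega^{2}k_{2})$ for $\epsilon$ small, and $\tfrac{\mathcal P(k)}{\mathcal P(\omega^{2}k)}$ equals $\tfrac{\mathcal P(\omega^{2}k_{2})}{\mathcal P(\omega k_{2})}$ at $k=\omega^{2}k_{2}$, we get $d_{2,1}(\zeta,\omega^{2}k_{2})=\tfrac{\mathcal P(\omega^{2}k_{2})}{\mathcal P(\omega k_{2})}$ and hence~\eqref{II d1estimate green}.

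The main step is~\eqref{II d0estimate green}. Taking absolute values in~\eqref{def of d20}, using $|t^{-i\hat\nu_2}|=1$, $|z_{2,\star}^{-2i\hat\nu_2}|=e^{2\hat\nu_2\arg z_{2,\star}}$ with $\arg z_{2,\star}=\tfrac\pi2-\arg(\omega^{2}k_{2})$, and $\hat\nu_2=\nu_2+\nu_5-\nu_4$ (Lemma~\ref{lemma: nuhat lemma II}), one reduces the claim to evaluating
\begin{align*}
|d_{2,0}(\zeta,t)|=&\;e^{-2\re\chi_2(\zeta,\omega^{2}k_{2})+\re\chi_3(\zeta,\omega^{2}k_{2})-\re\tilde\chi_4(\zeta,\omega^{2}k_{2})+2\re\tilde\chi_5(\zeta,\omega^{2}k_{2})}\\
&\times e^{-(\nu_3(k_4)-2\nu_1(k_4))\arg_{\omega k_4}(\omega^{2}k_{2}-\omega k_{4})}\;e^{2\hat\nu_2(\frac\pi2-\arg(\omega^{2}k_{2}))}\;|\mathcal D_2(\omega^{2}k_{2})|.
\end{align*}
I would then invoke Lemma~\ref{II deltalemma} to write, for $k\in\partial\mathbb D$ with $\arg k\notin(\tfrac\pi2,\tfrac{2\pi}3)$, the closed forms of $|\delta_j(\zeta,k)|$ (independent of $k$, exactly as in~\eqref{absolutevaluedeltaj}), noting that all the arguments $\tfrac1{k_2},\tfrac1{\omega k_2},\tfrac1{\omega^2 k_2},k_2,\omega k_2$ entering $\mathcal D_2(\omega^{2}k_{2})$ lie off that arc; the single exceptional factor $\delta_1(\omega^{2}k_{2})$ (whose argument lies in the arc, yet which is single-valued there since $\Gamma_2^{(2)}$ has argument in $(\tfrac\pi2,\arg(\omega k_4))$) I would instead evaluate via $\delta_1(\zeta,k)=e^{-i\nu_1\ln_{\omega k_4}(k-\omega k_4)-\chi_1(\zeta,k)}$, giving $|\delta_1(\omega^{2}k_{2})|=e^{\nu_1(k_4)\arg_{\omega k_4}(\omega^{2}k_{2}-\omega k_{4})-\re\chi_1(\zeta,\omega^{2}k_{2})}$. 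Substituting these into~\eqref{mathcalPdef}, together with the explicit formulas for $\re\chi_2,\re\chi_3,\re\tilde\chi_4,\re\tilde\chi_5$ at $\omega^{2}k_{2}$ (obtained just as in the proof of Lemma~\ref{II lemma: bound on Y}, using $\tilde\arg_{\omega^{2}k_{2}}(\cdot)=\tfrac{\arg(\cdot)+\arg(\omega^{2}k_{2})-\pi}2$ on $\partial\mathbb D$ and handling the $\chi$-versus-$\tilde\chi$ branch shifts the same way), all contributions of $\nu_1,\nu_3$, of $\chi_1,\chi_2,\chi_3,\tilde\chi_4,\tilde\chi_5$, of the contour integrals over $\Gamma_2^{(2)}$ and $\Gamma_5^{(2)}$, and of $\arg_{\omega k_4}(\omega^{2}k_{2}-\omega k_{4})$ and $\arg(\omega^{2}k_{2})$ cancel, leaving $|d_{2,0}(\zeta,t)|=e^{\pi(2\nu_2-\nu_4)}$. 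I expect the only real difficulty to be the disciplined bookkeeping of these branch conventions and of which $\delta_j$-evaluations sit on which cuts; the cancellation itself is forced by the structure of $\mathcal D_2$ and the $\delta_j$, exactly as in the $\omega k_4$ case.
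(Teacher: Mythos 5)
Your proposal is correct and follows exactly the route the paper intends: the paper gives no separate argument for this lemma, stating only that the proof is ``similar to the proof of Lemma \ref{II lemma: bound on Y}'', and your outline is precisely that proof transposed to $\omega^{2}k_{2}$ — taking absolute values in \eqref{def of d20}, exploiting the constancy of $|\delta_j|$ on $\partial\mathbb{D}$ away from the jump arcs via \eqref{absolutevaluedeltaj}, treating the one factor $\delta_1(\omega^2 k_2)$ separately, and controlling $d_{2,1}$ and $Y_2$ by Lemma \ref{II deltalemma} and the unimodularity of $\mathcal{P}$, $e^{-t\Phi_{32}(\zeta,\omega^2 k_2)}$ on $\partial\mathbb{D}$. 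Your write-up is in fact more explicit than the paper's, and the branch bookkeeping you flag as the only delicate point is indeed where all the care is needed.
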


The next lemma follows from the same kind of arguments used to prove Lemma \ref{II k0lemma}.

\begin{lemma}\label{II k0lemma green}
The function $m^{\omega^{2} k_2}(x,t,k)$ defined in \eqref{II m omega2k2 def} is an analytic function of $k \in D_\epsilon(\omega^{2} k_2) \setminus \mathcal{X}_{2}^\epsilon$ which is uniformly bounded for $t \geq 2$, $\zeta \in \mathcal{I}$, and $k \in D_\epsilon(\omega^{2} k_2) \setminus \mathcal{X}_{2}^\epsilon$.
Across $\mathcal{X}_{2}^\epsilon$, $m^{\omega^{2} k_2}$ obeys the jump condition $m_+^{\omega^{2} k_2} =  m_-^{\omega^{2} k_2} v^{\omega^{2} k_2}$, where $v^{\omega^{2} k_2}$ satisfies
\begin{align}\label{II v3vk0estimate green}
\begin{cases}
 \| v^{(4)} - v^{\omega^{2} k_2} \|_{L^1(\mathcal{X}_{2}^\epsilon)} \leq C t^{-1} \ln t,
	\\
\| v^{(4)} - v^{\omega^{2} k_2} \|_{L^\infty(\mathcal{X}_{2}^\epsilon)} \leq C t^{-1/2} \ln t,
\end{cases} \qquad \zeta \in \mathcal{I}, \ t \geq 2.
\end{align}
Furthermore, as $t \to \infty$,
\begin{align}\label{II mmodmuestimate2 green}
& \| m^{\omega^{2} k_2}(x,t,\cdot)^{-1} - I \|_{L^\infty(\partial D_\epsilon(\omega^{2} k_2))} = O(t^{-1/2}),
	\\ \label{II mmodmuestimate1 green}
& m^{\omega^{2} k_2} - I = \frac{Y_{2}(\zeta, t)m_1^{X,(2)}(q_{2},q_{4},q_{5},q_{6}) Y_{2}(\zeta, t)^{-1}}{z_{2,\star}\sqrt{t} (k-\omega^{2} k_2) \hat{z}_{2}(\zeta,k)} + O(t^{-1})
\end{align}
uniformly for $\zeta \in \mathcal{I}$, where $\partial D_{\epsilon}(\omega^{2} k_2)$ is oriented clockwise, and $m_1^{X,(2)}$ is given by \eqref{II mXasymptotics 3 green}.
\end{lemma}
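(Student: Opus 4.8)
The plan is to follow, essentially verbatim, the structure of the proof of Lemma~\ref{II k0lemma}, replacing the data attached to $\omega k_{4}$ by the corresponding data attached to $\omega^{2}k_{2}$: the tuple $(\omega k_{4}, z_{1}, Y_{1}, \Phi_{31}, \mathcal{X}_{1}^{\epsilon}, d_{1,0}, d_{1,1}, m^{X,(1)})$ becomes $(\omega^{2}k_{2}, z_{2}, Y_{2}, \Phi_{32}, \mathcal{X}_{2}^{\epsilon}, d_{2,0}, d_{2,1}, m^{X,(2)})$. For the analyticity and boundedness statement: since, for $\epsilon>0$ small, $k \mapsto z_{2}(\zeta,t,k) = z_{2,\star}\sqrt{t}(k-\omega^{2}k_{2})\hat{z}_{2}$ is conformal on $D_{\epsilon}(\omega^{2}k_{2})$ and maps $\mathcal{X}_{2}^{\epsilon}$ onto the cross of the model problem of Lemma~\ref{II Xlemma 3 green}, the composition $k \mapsto m^{X,(2)}(q_{2},q_{4},q_{5},q_{6},z_{2}(\zeta,t,k))$ is analytic on $D_{\epsilon}(\omega^{2}k_{2})\setminus \mathcal{X}_{2}^{\epsilon}$; conjugating by the $k$-independent, invertible, uniformly bounded matrix $Y_{2}(\zeta,t)$ (Lemma~\ref{II lemma: bound on Y green}) preserves analyticity and boundedness, so $m^{\omega^{2}k_{2}}$ is analytic and uniformly bounded on $D_{\epsilon}(\omega^{2}k_{2})\setminus \mathcal{X}_{2}^{\epsilon}$ for $t\geq 2$ and $\zeta\in\mathcal{I}$. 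The jump relation $m_{+}^{\omega^{2}k_{2}} = m_{-}^{\omega^{2}k_{2}}v^{\omega^{2}k_{2}}$ with $v^{\omega^{2}k_{2}} = Y_{2}v^{X,(2)}(z_{2})Y_{2}^{-1}$ on $\mathcal{X}_{2}^{\epsilon}$ then holds by construction, and here one records that the consistency condition $q_{4}-\bar{q}_{5}-q_{2}\bar{q}_{6}=0$ required to apply Lemma~\ref{II Xlemma 3 green} has already been verified above using \eqref{r1r2 relation on the unit circle}.

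For the jump estimates \eqref{II v3vk0estimate green}: from $\tilde{n} = n^{(4)}Y_{2}$ and \eqref{II m omega2k2 def} one gets $v^{(4)} - v^{\omega^{2}k_{2}} = Y_{2}(\zeta,t)\big(\tilde{v} - v^{X,(2)}(z_{2})\big)Y_{2}(\zeta,t)^{-1}$ on $\mathcal{X}_{2}^{\epsilon}$, so by boundedness of $Y_{2}^{\pm1}$ it suffices to estimate $\|\tilde{v} - v^{X,(2)}(z_{2})\|_{L^{1}\cap L^{\infty}(\mathcal{X}_{2,j}^{\epsilon})}$ for $j=1,\dots,4$. On each arm only one off-diagonal entry of $\tilde{v} - v^{X,(2)}$ is nonzero, and it is a product of $z_{2}^{\pm i(2\nu_{5}-\nu_{4})}z_{2,(0)}^{\pm i(2\nu_{2}-\nu_{4})}$ (of modulus $O(1)$), the factor $e^{\pm iz_{2}^{2}/2}$ with $|e^{\pm iz_{2}^{2}/2}| = e^{-|z_{2}|^{2}/2}$, and a difference of the form $d_{2,1}^{\mp1}\lambda_{2}^{\pm2}(\cdot)_{a} - (\text{frozen value})$. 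Using \eqref{II d1estimate green} together with the decomposition estimate of Lemma~\ref{IIbis decompositionlemma} (so that the analytic parts differ from their values at $\omega^{2}k_{2}$ by $O(|k-\omega^{2}k_{2}|)e^{\frac{t}{4}|\re\Phi_{32}(\zeta,k)|}$), and the bound $|\re\Phi_{32}(\zeta,k)|\geq c|k-\omega^{2}k_{2}|^{2}$ on $\mathcal{X}_{2}^{\epsilon}$ (from Figure~\ref{II fig: Re Phi 21 31 and 32 for zeta=0.7}), one obtains that each such entry is $\leq C|k-\omega^{2}k_{2}|(1+|\ln|k-\omega^{2}k_{2}||)e^{-ct|k-\omega^{2}k_{2}|^{2}}$. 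Parametrizing the arm by arclength $u$, the elementary bounds $\int_{0}^{\infty}u(1+|\ln u|)e^{-ctu^{2}}\,du\leq Ct^{-1}\ln t$ and $\sup_{u\geq0}u(1+|\ln u|)e^{-ctu^{2}}\leq Ct^{-1/2}\ln t$ then yield \eqref{II v3vk0estimate green}.

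For the boundary asymptotics: on $\partial D_{\epsilon}(\omega^{2}k_{2})$ one has $|z_{2}| = |z_{2,\star}|\sqrt{t}\,|k-\omega^{2}k_{2}|\,|\hat{z}_{2}|\geq c\sqrt{t}\to\infty$, so the large-argument expansion of the model solution (equation \eqref{II mXasymptotics 3 green}) gives $m^{X,(2)}(z_{2}) = I + m_{1}^{X,(2)}/z_{2} + O(t^{-1})$ uniformly for $k\in\partial D_{\epsilon}(\omega^{2}k_{2})$ and $\zeta\in\mathcal{I}$. Substituting $z_{2} = z_{2,\star}\sqrt{t}(k-\omega^{2}k_{2})\hat{z}_{2}(\zeta,k)$ and conjugating by $Y_{2}(\zeta,t)$ gives \eqref{II mmodmuestimate1 green}, and taking inverses together with the boundedness of $Y_{2}^{\pm1}$ gives \eqref{II mmodmuestimate2 green}.

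I expect the main obstacle to be purely bookkeeping: checking that, after the substitution $z_{2}=z_{2,\star}\sqrt{t}(k-\omega^{2}k_{2})\hat{z}_{2}$ and with the choice \eqref{lambda2def} of $\lambda_{2}$, the four frozen jumps $\tilde{v}_{\mathcal{X}_{2,j}^{\epsilon}}^{\omega^{2}k_{2}}$ indeed coincide with the jump matrix $v^{X,(2)}$ of Lemma~\ref{II Xlemma 3 green} — that is, reconciling the powers $z_{2}^{\pm i(2\nu_{5}-\nu_{4})}z_{2,(0)}^{\pm i(2\nu_{2}-\nu_{4})}$, the prefactors $|\tilde{r}(\cdot)|^{\pm1/4}$ hidden in $\lambda_{2}$, the ratios $\mathcal{P}(k_{\star})/\mathcal{P}(\omega^{2}k_{\star})$ and $\mathcal{P}(\omega^{2}k_{\star})/\mathcal{P}(k_{\star})$, and the signs and arguments dictated by the branch choices for $\ln z_{2,\star}$ and by $\arg(\omega^{2}k_{2})\in(\tfrac{\pi}{2},\tfrac{2\pi}{3})$. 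This is the exact analogue of the corresponding verification near $\omega k_{4}$, only with more $q$-parameters; once it is in place, everything else is a transcription of the proof of Lemma~\ref{II k0lemma}.
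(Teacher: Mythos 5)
Your proposal is correct and follows exactly the route the paper intends: the paper proves this lemma by stating that it "follows from the same kind of arguments used to prove Lemma \ref{II k0lemma}," and your transcription of that argument — the conjugation identity $v^{(4)}-v^{\omega^{2}k_{2}}=Y_{2}(\tilde{v}-v^{X,(2)})Y_{2}^{-1}$, the splitting of each arm's single nonzero entry via \eqref{II d1estimate green} and the decomposition lemma, the Gaussian integrals for the $L^{1}/L^{\infty}$ bounds, and the large-$|z_{2}|$ expansion on $\partial D_{\epsilon}(\omega^{2}k_{2})$ — is precisely the intended adaptation, including the check of $q_{4}-\bar{q}_{5}-q_{2}\bar{q}_{6}=0$ that the paper performs just before the lemma.
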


\section{The $n^{(4)}\to \hat{n}$ transformation}\label{n3tonhatsec}

We use the symmetries
\begin{align*}
& m^{\omega k_4}(x,t,k) = \mathcal{A} m^{\omega k_4}(x,t,\omega k)\mathcal{A}^{-1} = \mathcal{B} m^{\omega k_4}(x,t,k^{-1}) \mathcal{B}, \\
& m^{\omega^{2} k_2}(x,t,k) = \mathcal{A} m^{\omega^{2} k_2}(x,t,\omega k)\mathcal{A}^{-1} = \mathcal{B} m^{\omega^{2} k_2}(x,t,k^{-1}) \mathcal{B},
\end{align*}
to extend the domains of definition of $m^{\omega k_4}$ and $m^{\omega^{2} k_2}$ from $D_\epsilon(\omega k_4)$ and $D_\epsilon(\omega^{2} k_2)$ to $\mathcal{D}_{\omega k_4}$ and $\mathcal{D}_{\omega^{2} k_2}$, respectively, where 
\begin{align*}
& \mathcal{D}_{\omega k_4} := D_\epsilon(\omega k_4) \cup \omega D_\epsilon(\omega k_4) \cup \omega^2 D_\epsilon(\omega k_4) \cup D_\epsilon(\tfrac{1}{\omega k_4}) \cup \omega D_\epsilon(\tfrac{1}{\omega k_4}) \cup \omega^2 D_\epsilon(\tfrac{1}{\omega k_4}), \\
& \mathcal{D}_{\omega^{2} k_2} := D_\epsilon(\omega^{2} k_2) \cup \omega D_\epsilon(\omega^{2} k_2) \cup \omega^2 D_\epsilon(\omega^{2} k_2) \cup D_\epsilon(\tfrac{1}{\omega^{2} k_2}) \cup \omega D_\epsilon(\tfrac{1}{\omega^{2} k_2}) \cup \omega^2 D_\epsilon(\tfrac{1}{\omega^{2} k_2}).
\end{align*}
We will show that the solution $\hat{n}(x,t,k)$ defined by 
\begin{align}\label{Sector II final transfo}
\hat{n} =
\begin{cases}
n^{(4)} (m^{\omega k_4})^{-1}, & k \in \mathcal{D}_{\omega k_4}, \\
n^{(4)} (m^{\omega^{2} k_2})^{-1}, & k \in \mathcal{D}_{\omega^{2} k_2}, \\
n^{(4)}, & \text{elsewhere},
\end{cases}
\end{align}
is small for large $t$. 
Let $\hat{\Gamma} = \Gamma^{(4)} \cup \partial \mathcal{D}$ with $\mathcal{D} := \mathcal{D}_{\omega k_4}\cup \mathcal{D}_{\omega^{2} k_2}$ be the contour displayed in Figure \ref{II Gammahat.pdf}, where each circle that is part of $\partial \mathcal{D}$ is oriented negatively, and define the jump matrix $\hat{v}$ by
\begin{align}\label{def of vhat II}
\hat{v}= \begin{cases}
v^{(4)}, & k \in \hat{\Gamma} \setminus \bar{\mathcal{D}}, 	\\
m^{\omega k_4}, & k \in \partial \mathcal{D}_{\omega k_4}, \\
m^{\omega^{2} k_2}, & k \in \partial \mathcal{D}_{\omega^{2} k_2}, \\
m_-^{\omega k_4} v^{(4)}(m_+^{\omega k_4})^{-1}, & k \in \hat{\Gamma} \cap \mathcal{D}_{\omega k_4}, \\
m_-^{\omega^{2} k_2} v^{(4)}(m_+^{\omega^{2} k_2})^{-1}, & k \in \hat{\Gamma} \cap \mathcal{D}_{\omega^{2} k_2}.
\end{cases}
\end{align}
The function $\hat{n}$ is analytic on $\C \setminus \hat{\Gamma}$ and satisfies $\hat{n}_+ = \hat{n}_- \hat{v}$ for $k \in \hat{\Gamma} \setminus \hat{\Gamma}_\star$, where $\hat{\Gamma}_\star$ denotes the points of self-intersection of $\hat{\Gamma}$. As $k \to \infty$, $\hat{n}(x,t,k) = (1,1,1) + O(k^{-1})$.

\begin{figure}
\begin{center}
\begin{tikzpicture}[master, scale=0.9]
\node at (0,0) {};
\draw[black,line width=0.55 mm] (0,0)--(30:6.5);
\draw[black,line width=0.55 mm,->-=0.35,->-=0.65,->-=0.82,->-=0.98] (0,0)--(90:6.5);
\draw[black,line width=0.55 mm] (0,0)--(150:6.5);

\draw[dashed,black,line width=0.15 mm] (0,0)--(60:7.8);
\draw[dashed,black,line width=0.15 mm] (0,0)--(120:7.8);

\draw[black,line width=0.55 mm] ([shift=(30:3*1.5cm)]0,0) arc (30:150:3*1.5cm);
\draw[black,arrows={-Triangle[length=0.27cm,width=0.18cm]}]
($(70:3*1.5)$) --  ++(70-90:0.001);
\draw[black,arrows={-Triangle[length=0.27cm,width=0.18cm]}]
($(70:3.65)$) --  ++(70-90:0.001);
\draw[black,arrows={-Triangle[length=0.27cm,width=0.18cm]}]
($(70:5.8)$) --  ++(70-90:0.001);

\draw[black,line width=0.55 mm] ([shift=(90:3.65)]0,0) arc (90:150:3.65);
\draw[black,line width=0.55 mm] ([shift=(90:5.8)]0,0) arc (90:150:5.8);


%
%
%

\draw[black,line width=0.55 mm] (125:3*1.5) circle (0.3cm);
\draw[black,line width=0.55 mm] (-125+240:3*1.5) circle (0.3cm);
\draw[black,line width=0.55 mm] (100:3*1.5) circle (0.3cm);
\draw[black,line width=0.55 mm] (140:3*1.5) circle (0.3cm);

\draw[black,line width=0.55 mm,-<-=0.04,->-=0.80] (100:3.65)--(100:5.8);

\draw[black,line width=0.55 mm,->-=0.9] (108:3.65)--(108:4.1);
\draw[black,line width=0.55 mm,->-=0.75] (108:4.9)--(108:5.8);

\draw[black,line width=0.55 mm,-<-=0.04,->-=0.80] (115:3.65)--(115:5.8);

\draw[black,line width=0.55 mm,->-=0.9] (120:3.65)--(120:4.1);
\draw[black,line width=0.55 mm,->-=0.75] (120:4.9)--(120:5.8);
\draw[black,line width=0.55 mm] (125:3.65)--(125:5.8);
\draw[black,line width=0.55 mm] (132:3.65)--(132:4.1);
\draw[black,line width=0.55 mm] (132:4.95)--(132:5.8);
\draw[black,line width=0.55 mm] (140:3.65)--(140:5.8);


\draw[black,line width=0.55 mm] (100:3*1.5) to [out=100+45, in=108-90] (108:3.3*1.5) to [out=108+90, in=115-45] (115:3*1.5) to [out=115+45, in=120-90] (120:3.3*1.5) to [out=120+90, in=140-45]  (125:3*1.5) to [out=125+45, in=132-90] (132:3.3*1.5) to [out=132+90, in=140-45] (140:3*1.5);

\draw[black,line width=0.55 mm] (100:3*1.5) to [out=100+45+90, in=108-90] (108:2.75*1.5) to [out=108+90, in=115-45-90] (115:3*1.5) to [out=115+45+90, in=120-90] (120:2.75*1.5) to [out=120+90, in=140-45-90]  (125:3*1.5) to [out=125+45+90, in=132-90] (132:2.75*1.5) to [out=132+90, in=140-45-90] (140:3*1.5);

\draw[black,line width=0.55 mm,-<-=0.15,->-=0.80] (90:3.65)--(100:3*1.5)--(90:5.8);

\draw[black,line width=0.55 mm] (150:3.65)--(140:3*1.5)--(150:5.8);

\draw[black,line width=0.55 mm] ([shift=(30:3.65cm)]0,0) arc (30:90:3.65cm);
\draw[black,line width=0.55 mm] ([shift=(30:5.8cm)]0,0) arc (30:90:5.8cm);

\draw[black,arrows={-Triangle[length=0.27cm,width=0.18cm]}]
($(91:3*1.5)$) --  ++(91-90:0.001);
\draw[black,arrows={-Triangle[length=0.27cm,width=0.18cm]}]
($(110:3*1.5)$) --  ++(110+90:0.001);
\draw[black,arrows={-Triangle[length=0.27cm,width=0.18cm]}]
($(110:3.3*1.5)$) --  ++(110+90:0.001);
\draw[black,arrows={-Triangle[length=0.27cm,width=0.18cm]}]
($(110:2.75*1.5)$) --  ++(110+90:0.001);

\draw[blue,fill] (125:3*1.5) circle (0.1cm);
\draw[green,fill] (-125+240:3*1.5) circle (0.1cm);
\draw[red,fill] (100:3*1.5) circle (0.1cm);
\draw[red,fill] (140:3*1.5) circle (0.1cm);

\draw[black,line width=0.4 mm] (120:2.75*1.5)--(120:3.3*1.5);
\draw[black,arrows={-Triangle[length=0.16cm,width=0.12cm]}]
($(120:2.92*1.5)$) --  ++(120:0.001);
\draw[black,arrows={-Triangle[length=0.16cm,width=0.12cm]}]
($(120:3.2*1.5)$) --  ++(120:0.001);

\draw[black,line width=0.55 mm,->-=0.3,->-=0.8] (60:2.45*1.5)--(60:3.85*1.5);

\draw[black,arrows={-Triangle[length=0.16cm,width=0.13cm]}]
($(119.3:3.3*1.5)$) --  ++(120+50:0.001);

\draw[black,arrows={-Triangle[length=0.16cm,width=0.13cm]}]
($(119.3:3*1.5)$) --  ++(120+90:0.001);

\draw[black,arrows={-Triangle[length=0.16cm,width=0.13cm]}]
($(119.3:2.728*1.5)$) --  ++(120+130:0.001);

\draw[line width=0.45 mm] (120:6.9) circle (0.4);
\draw[black,arrows={-Triangle[length=0.2cm,width=0.18cm]}]
($(120:7.3)+(-0.866*0.15,-0.5*0.15)$) --  ++(120+90:0.001);


\draw[line width=0.45 mm] (120:4.5^2/6.9+0.01) circle (0.18);
\draw[black,arrows={-Triangle[length=0.2cm,width=0.18cm]}]
($(120:4.5^2/6.9+0.018-0.18)+(-0.866*0.11,-0.5*0.11)$) --  ++(120+98:0.001);

\draw[line width=0.45 mm] (120+21:6.5) circle (0.4);
\draw[black,arrows={-Triangle[length=0.2cm,width=0.18cm]}]
($(120+21:6.5)+(120:0.4)+(-0.866*0.15,-0.5*0.15)$) --  ++(120+90:0.001);
\draw[line width=0.45 mm] (120-21:6.5) circle (0.4);
\draw[black,arrows={-Triangle[length=0.2cm,width=0.18cm]}]
($(120-21:6.5)+(120:0.4)+(-0.866*0.15,-0.5*0.15)$) --  ++(120+90:0.001);


\draw[line width=0.45 mm] (-21+120:3.125) circle (0.2);
\draw[black,arrows={-Triangle[length=0.2cm,width=0.18cm]}]
($(-21+120:3.125)-(120:0.20)+(-0.866*0.10,-0.5*0.10)$) --  ++(98+120:0.001);
\draw[line width=0.45 mm] (21+120:3.125) circle (0.2);
\draw[black,arrows={-Triangle[length=0.2cm,width=0.18cm]}]
($(21+120:3.125)-(120:0.20)+(-0.866*0.10,-0.5*0.10)$) --  ++(98+120:0.001);

\end{tikzpicture}
\end{center}
\begin{figuretext}
\label{II Gammahat.pdf}The contour $\hat{\Gamma} = \Gamma^{(4)} \cup \partial \mathcal{D}$ for $\arg k \in [\frac{\pi}{6},\frac{5\pi}{6}]$ (solid) and the boundary of $\mathsf{S}$ (dashed). 
\end{figuretext}
\end{figure}

Let $\hat{\mathcal{X}}^\epsilon = \bigcup_{j=1}^{2}\big(\mathcal{X}_{j}^\epsilon \cup \omega \mathcal{X}_{j}^\epsilon \cup \omega^2 \mathcal{X}_{j}^\epsilon \cup (\mathcal{X}_{j}^\epsilon)^{-1} \cup (\omega \mathcal{X}_{j}^\epsilon)^{-1} \cup (\omega^2 \mathcal{X}_{j}^\epsilon)^{-1}\big)$.

\begin{lemma}\label{II whatlemma}
Let $\hat{w} = \hat{v}-I$. The following estimates hold uniformly for $t \geq 2$ and $\zeta \in \mathcal{I}$:
\begin{subequations}\label{II hatwestimate}
\begin{align}\label{II hatwestimate1}
& \| \hat{w}\|_{(L^1\cap L^\infty)(\hat\Gamma \setminus (\partial \mathcal{D} \cup \hat{\mathcal{X}}^\epsilon))} \leq C t^{-1},
	\\\label{II hatwestimate3}
& \| \hat{w}\|_{(L^1\cap L^{\infty})(\partial \mathcal{D})} \leq C t^{-1/2},	\\\label{II hatwestimate4}
& \| \hat{w}\|_{L^1(\hat{\mathcal{X}}^\epsilon)} \leq C t^{-1}\ln t,
	\\\label{II hatwestimate5}
& \| \hat{w}\|_{L^\infty(\hat{\mathcal{X}}^\epsilon)} \leq C t^{-1/2}\ln t.
\end{align}
\end{subequations}
\end{lemma}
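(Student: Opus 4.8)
The plan is to split $\hat{\Gamma}$ into the three disjoint sets on which $\hat{v}$ is given by the three distinct cases in \eqref{def of vhat II}, to estimate $\hat{w} = \hat{v} - I$ on each of them by reducing to one of the already-proved Lemmas \ref{II v3lemma}, \ref{circleslemma}, \ref{II k0lemma}, and \ref{II k0lemma green}, and finally to use the $\mathcal{A}$- and $\mathcal{B}$-symmetries of $n^{(4)}$ and of the local parametrices to pass from the sector $\mathsf{S}$ to the whole plane. A preliminary bookkeeping step is to record that, after $\epsilon$ has been shrunk, the disks $\mathcal{D}$, the soliton circles $\partial \mathcal{D}_{\sol}$, and $\Gamma^{(3)}$ fit together so that $\hat{\Gamma} \cap \mathcal{D} = \hat{\mathcal{X}}^\epsilon$ and $\hat{\Gamma} \setminus (\partial \mathcal{D} \cup \hat{\mathcal{X}}^\epsilon)$ is precisely the disjoint union of $\partial \mathcal{D}_{\sol}$ with $\Gamma^{(3)} \setminus \mathcal{D}$; this is exactly the set on which $\hat{v} = v^{(4)}$ in \eqref{def of vhat II}, so nothing is double counted or omitted.

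On $\Gamma^{(3)} \setminus \mathcal{D}$ we have $v^{(4)} = v^{(3)}$, and the bound $\|\hat{w}\|_{(L^1 \cap L^\infty)} \leq C t^{-1}$ is then exactly \eqref{II v3estimatesa} of Lemma \ref{II v3lemma}; on $\partial \mathcal{D}_{\sol}$ it follows from Lemma \ref{circleslemma}, since $t^{-N} \leq t^{-1}$ for $N \geq 1$. Together these give \eqref{II hatwestimate1}.

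On $\partial \mathcal{D}$ one has $\hat{v} = m^{\omega k_4}$ on $\partial \mathcal{D}_{\omega k_4}$ and $\hat{v} = m^{\omega^2 k_2}$ on $\partial \mathcal{D}_{\omega^2 k_2}$. Since $|k - \omega k_4| = \epsilon$ is bounded below on $\partial D_\epsilon(\omega k_4)$ and the factors $Y_1^{\pm 1}$ and $\hat{z}_1^{\pm 1}$ are uniformly bounded, the expansion \eqref{II mmodmuestimate1} gives $\|m^{\omega k_4} - I\|_{L^\infty(\partial D_\epsilon(\omega k_4))} = O(t^{-1/2})$, and \eqref{II mmodmuestimate1 green} gives the analogous bound for $m^{\omega^2 k_2}$; the symmetries extend these to all of $\partial \mathcal{D}$, and the $L^1$ estimate follows because $\partial \mathcal{D}$ is a finite union of circles of fixed radii. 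This is \eqref{II hatwestimate3}. For the crosses we argue on $\mathcal{X}_1^\epsilon \subset D_\epsilon(\omega k_4)$ and on $\mathcal{X}_2^\epsilon \subset D_\epsilon(\omega^2 k_2)$ and transport the result by the symmetries. There $\hat{v} = m_-^{\omega k_4} v^{(4)} (m_+^{\omega k_4})^{-1}$, and inserting the jump relation $m_+^{\omega k_4} = m_-^{\omega k_4} v^{\omega k_4}$ gives
\[
\hat{v} - I = m_-^{\omega k_4} \bigl( v^{(4)} - v^{\omega k_4} \bigr) \bigl( v^{\omega k_4} \bigr)^{-1} \bigl( m_-^{\omega k_4} \bigr)^{-1}.
\]
Because $m_-^{\omega k_4}$ and its inverse are uniformly bounded (Lemma \ref{II k0lemma}) and $v^{\omega k_4}$ and its inverse are uniformly bounded on $\mathcal{X}_1^\epsilon$ --- each entry being a product of the bounded factors $z_1^{\pm i\nu_j}$, $z_{1,(0)}^{\pm i\nu_1}$, and $e^{\pm iz_1^2/2}$, the last of modulus $\leq 1$ on the relevant rays --- the bounds \eqref{II v3vk0estimate} transfer to $\|\hat{w}\|_{L^1(\mathcal{X}_1^\epsilon)} \leq C t^{-1}\ln t$ and $\|\hat{w}\|_{L^\infty(\mathcal{X}_1^\epsilon)} \leq C t^{-1/2}\ln t$. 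Repeating this on $\mathcal{X}_2^\epsilon$ with Lemma \ref{II k0lemma green} and \eqref{II v3vk0estimate green}, and then invoking the symmetries \eqref{njsymm}, yields \eqref{II hatwestimate4} and \eqref{II hatwestimate5}.

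No single step is a serious obstacle, since the argument is mostly a matter of assembling the preceding lemmas; the points needing the most care are the bookkeeping in the first paragraph (so that the three pieces of $\hat{\Gamma}$ in \eqref{def of vhat II} genuinely partition $\hat{\Gamma}$ once the disks are shrunk) and the verification that $(v^{\omega k_4})^{-1}$ and $(v^{\omega^2 k_2})^{-1}$ are uniformly bounded on the crosses, which is what legitimizes the passage from $\|v^{(4)} - v^{\omega k_4}\|$ to $\|\hat{w}\|$.
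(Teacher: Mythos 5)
There is a genuine gap in your preliminary bookkeeping step: the identity $\hat{\Gamma}\cap\mathcal{D}=\hat{\mathcal{X}}^\epsilon$ is false. The crosses $\mathcal{X}_1^\epsilon$ and $\mathcal{X}_2^\epsilon$ consist only of the pieces of $\Gamma_1^{(4)},\Gamma_4^{(4)},\Gamma_6^{(4)},\Gamma_3^{(4)}$ (resp. $\Gamma_7^{(4)},\Gamma_9^{(4)},\Gamma_{11}^{(4)},\Gamma_8^{(4)}$) inside the disks, but the disks $D_\epsilon(\omega k_4)$ and $D_\epsilon(\omega^2 k_2)$ also contain pieces of the unit-circle contours ($\Gamma_2^{(2)}$ and $\Gamma_5^{(2)}$ meet at $\omega k_4$; $\Gamma_5^{(2)}$ and $\Gamma_{10}^{(2)}$ meet at $\omega^2 k_2$) and of the symmetry-correction contours $\Gamma_{5_s}^{(2)},\Gamma_{6_s}^{(2)}$ (resp. $\Gamma_{7_s}^{(2)},\Gamma_{8_s}^{(2)}$), all of which emanate from the saddle points themselves (see Figure \ref{II Gammap2p}). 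Consequently $(\hat{\Gamma}\cap\mathcal{D})\setminus\hat{\mathcal{X}}^\epsilon$ is nonempty, it belongs to the set on which \eqref{II hatwestimate1} is asserted, and your argument never estimates $\hat{w}$ there. Your reduction of that set to $\partial\mathcal{D}_{\sol}\cup(\Gamma^{(3)}\setminus\mathcal{D})$ therefore omits a genuine portion of the contour, and on the omitted portion $\hat{v}$ is not $v^{(4)}$ but the conjugated jump $m_-^{\omega k_4}v^{(4)}(m_+^{\omega k_4})^{-1}$ (fourth and fifth lines of \eqref{def of vhat II}).

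The missing piece is not hard to fill, but it needs two ingredients you have not invoked. First, on these contours the local parametrices have no jump, so $\hat{v}-I=m^{\omega k_4}(v^{(4)}-I)(m^{\omega k_4})^{-1}$, and the uniform boundedness of $m^{\omega k_4}$ and $m^{\omega^2 k_2}$ (Lemmas \ref{II k0lemma} and \ref{II k0lemma green}) reduces matters to bounding $v^{(4)}-I$ there. Second, Lemma \ref{II v3lemma} cannot be quoted for this, since its estimate \eqref{II v3estimatesa} is stated only on $\Gamma'^{(3)}$, which excludes the disks around the saddle points; instead one must go back to \eqref{lol3} together with the smallness of $v_{j,r}^{(1)}$ and the boundedness of $\Delta_\pm$ (Lemma \ref{DeltalemmaIV}) for the unit-circle pieces, and to Lemma \ref{symmetryjumpslemma} for the $\Gamma_{j_s}^{(2)}$ pieces. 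This is precisely why the paper's proof cites Lemmas \ref{symmetryjumpslemma}, \ref{DeltalemmaIV}, \ref{II k0lemma}, and \ref{II k0lemma green} for the set $(\hat{\Gamma}\cap\mathcal{D})\setminus\hat{\mathcal{X}}^\epsilon$. The remainder of your argument — the estimates on $\Gamma'^{(3)}$, on $\partial\mathcal{D}_{\sol}$, on $\partial\mathcal{D}$, and the conjugation identity $\hat{v}-I=m_-^{\omega k_4}(v^{(4)}-v^{\omega k_4})(v^{\omega k_4})^{-1}(m_-^{\omega k_4})^{-1}$ on the crosses, including the check that $(v^{\omega k_4})^{\pm1}$ is uniformly bounded — is correct and consistent with the paper.
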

\begin{proof}
On $\hat{\Gamma} \setminus \bar{\mathcal{D}}$, the estimate \eqref{II hatwestimate1} follows from \eqref{def of vhat II} and Lemmas \ref{II v3lemma} and \ref{circleslemma}; on $(\hat{\Gamma} \cap \mathcal{D})\setminus \hat{\mathcal{X}}^\epsilon$, it follows from \eqref{def of vhat II} and Lemmas \ref{symmetryjumpslemma}, \ref{DeltalemmaIV}, \ref{II k0lemma}, and \ref{II k0lemma green}. The estimate \eqref{II hatwestimate3} follows from \eqref{def of vhat II}, \eqref{II mmodmuestimate2}, and \eqref{II mmodmuestimate2 green}. Finally, \eqref{II hatwestimate4} and \eqref{II hatwestimate5} follow from \eqref{def of vhat II}, \eqref{II v3vk0estimate}, and \eqref{II v3vk0estimate green}.
\end{proof}


The estimates in Lemma \ref{II whatlemma} show that
\begin{align}\label{II hatwLinfty}
\begin{cases}
\|\hat{w}\|_{L^1(\hat{\Gamma})}\leq C t^{-1/2},
	\\
\|\hat{w}\|_{L^\infty(\hat{\Gamma})}\leq C t^{-1/2}\ln t,
\end{cases}	 \qquad t \geq 2, \ \zeta \in \mathcal{I},
\end{align}
and hence, employing the general identity $\| f \|_{L^p} \leq \| f \|_{L^1}^{1/p}\|f \|_{L^{\infty}}^{(p-1)/p}$,
\begin{align}\label{II Lp norm of what}
& \| \hat{w}\|_{L^p(\hat{\Gamma})} 
\leq C t^{-\frac{1}{2}} (\ln t)^{\frac{p-1}{p}},  \qquad t \geq 2, \ \zeta \in \mathcal{I},
\end{align}
for each $1 \leq p \leq \infty$. The estimates \eqref{II Lp norm of what} imply that $\hat{w} \in (L^2 \cap L^{\infty})(\hat{\Gamma})$ and that there is a $T$ such that $I - \hat{\mathcal{C}}_{\hat{w}(x, t, \cdot)} \in \mathcal{B}(L^2(\hat{\Gamma}))$ is invertible whenever $\zeta \in \mathcal{I}$ and $t \geq T$. So standard RH theory gives
\begin{align}\label{II hatmrepresentation}
\hat{n}(x, t, k) = (1,1,1) + \hat{\mathcal{C}}(\hat{\mu}\hat{w}) = (1,1,1) + \frac{1}{2\pi i}\int_{\hat{\Gamma}} \hat{\mu}(x, t, s) \hat{w}(x, t, s) \frac{ds}{s - k}
\end{align}
for $\zeta \in \mathcal{I}$ and $t \geq T$, where 
\begin{align}\label{II hatmudef}
\hat{\mu} := (1,1,1) + (I - \hat{\mathcal{C}}_{\hat{w}})^{-1}\hat{\mathcal{C}}_{\hat{w}}(1,1,1) \in (1,1,1) + L^2(\hat{\Gamma}).
\end{align}

\begin{lemma}\label{II lemma: estimate on mu}
Let $1 < p < \infty$. For all sufficiently large $t$, we have
\begin{align*}
& \|\hat{\mu} - (1,1,1)\|_{L^p(\hat{\Gamma})} \leq  C t^{-\frac{1}{2}}(\ln t)^{\frac{p-1}{p}}, \qquad \zeta \in \mathcal{I}.
\end{align*}
\end{lemma}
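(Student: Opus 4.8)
The plan is to deduce the bound directly from the representation \eqref{II hatmudef} of $\hat\mu$ together with the estimates on $\hat w$ already established, using only the standard $L^p$ mapping properties of the Cauchy operator on $\hat\Gamma$. First I would recall that for every $1<p<\infty$ the Cauchy projection (boundary-value) operators $\hat{\mathcal{C}}_\pm$ associated with the contour $\hat\Gamma$ are bounded on $L^p(\hat\Gamma)$, with operator norm depending only on $p$ and on the Lipschitz geometry of $\hat\Gamma$. Since $\hat\Gamma$ is a finite union of straight rays, smooth arcs, and circles whose shape varies within a compact family as $\zeta$ ranges over $\mathcal{I}$ (the saddle points $\{\omega^j k_i\}$ and the points of $\mathsf Z$ stay in fixed compact sets, and the radius $\epsilon$ is fixed), this norm is uniform for $\zeta\in\mathcal{I}$. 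Consequently the operator $\hat{\mathcal{C}}_{\hat w}$, obtained by composing multiplication by $\hat w$ with a Cauchy projection, satisfies $\|\hat{\mathcal{C}}_{\hat w}\|_{\mathcal{B}(L^p(\hat\Gamma))}\le C_p\,\|\hat w\|_{L^\infty(\hat\Gamma)}$, and for any fixed bounded row vector $c$, $\|\hat{\mathcal{C}}_{\hat w}c\|_{L^p(\hat\Gamma)}\le C_p\,\|c\,\hat w\|_{L^p(\hat\Gamma)}\le C_p\,\|\hat w\|_{L^p(\hat\Gamma)}$.

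Next, by \eqref{II hatwLinfty} we have $\|\hat w\|_{L^\infty(\hat\Gamma)}\le C t^{-1/2}\ln t$, uniformly for $\zeta\in\mathcal{I}$, and this tends to $0$ as $t\to\infty$. Hence there is $T$ such that $\|\hat{\mathcal{C}}_{\hat w}\|_{\mathcal{B}(L^p(\hat\Gamma))}\le \tfrac12$ for all $t\ge T$ and $\zeta\in\mathcal{I}$, so that $I-\hat{\mathcal{C}}_{\hat w}$ is invertible on $L^p(\hat\Gamma)$ with $\|(I-\hat{\mathcal{C}}_{\hat w})^{-1}\|_{\mathcal{B}(L^p(\hat\Gamma))}\le 2$ by a Neumann series — the $L^p$ counterpart of the $L^2$ invertibility already noted before \eqref{II hatmrepresentation}. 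From \eqref{II hatmudef} we have $\hat\mu-(1,1,1)=(I-\hat{\mathcal{C}}_{\hat w})^{-1}\hat{\mathcal{C}}_{\hat w}(1,1,1)$; although $(1,1,1)$ itself is not in $L^p(\hat\Gamma)$ because $\hat\Gamma$ is unbounded, the vector $\hat{\mathcal{C}}_{\hat w}(1,1,1)$ does lie in $L^p(\hat\Gamma)$ since it is a Cauchy projection of $(1,1,1)\hat w\in L^p(\hat\Gamma)$, and it obeys $\|\hat{\mathcal{C}}_{\hat w}(1,1,1)\|_{L^p(\hat\Gamma)}\le C_p\|\hat w\|_{L^p(\hat\Gamma)}$. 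Combining this with the bound on $(I-\hat{\mathcal{C}}_{\hat w})^{-1}$ and the estimate \eqref{II Lp norm of what} gives $\|\hat\mu-(1,1,1)\|_{L^p(\hat\Gamma)}\le C\,\|\hat w\|_{L^p(\hat\Gamma)}\le C t^{-1/2}(\ln t)^{(p-1)/p}$, uniformly for $\zeta\in\mathcal{I}$, which is the claim.

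There is no genuine obstacle here: all the analytic input is already in place — the $L^p$ bounds on $\hat w$ in \eqref{II Lp norm of what}, the $L^\infty$ smallness in \eqref{II hatwLinfty}, and the classical boundedness of the Cauchy operator on a nice contour. The only points that merit a little care are the uniformity of all constants over $\zeta\in\mathcal{I}$, which holds because $\hat\Gamma$ moves within a compact family of contours and every preceding estimate was stated uniformly in $\zeta$, and the harmless fact that $(1,1,1)\notin L^p(\hat\Gamma)$, which is handled by keeping $\hat w$ inside the Cauchy operator before taking $L^p$ norms. If one prefers to avoid invoking the $L^p$-boundedness of $\hat{\mathcal{C}}_\pm$ on the unbounded contour, an alternative is to interpolate between the $L^2$ estimate implicit in \eqref{II hatmrepresentation}--\eqref{II hatmudef} and an $L^\infty$-type bound from the Neumann series, but the direct argument above is cleaner and suffices.
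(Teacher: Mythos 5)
Your proposal is correct and follows essentially the same route as the paper: both arguments bound $\hat{\mu}-(1,1,1)=(I-\hat{\mathcal{C}}_{\hat w})^{-1}\hat{\mathcal{C}}_{\hat w}(1,1,1)$ via a Neumann series, using $\|\hat{\mathcal{C}}_{\hat w}\|_{\mathcal{B}(L^p(\hat\Gamma))}\le K_p\|\hat w\|_{L^\infty(\hat\Gamma)}$ together with $\|\hat{\mathcal{C}}_{\hat w}(1,1,1)\|_{L^p(\hat\Gamma)}\le K_p\|\hat w\|_{L^p(\hat\Gamma)}$ and the estimates \eqref{II hatwLinfty} and \eqref{II Lp norm of what}. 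Your additional remarks on uniformity in $\zeta$ and on $(1,1,1)\notin L^p(\hat\Gamma)$ are correct points that the paper leaves implicit.
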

\begin{proof}
Suppose $t$ is sufficiently large so that $\|\hat{w}\|_{L^\infty(\hat{\Gamma})}<K_{p}^{-1}$, where $K_{p} := \|\hat{\mathcal{C}}_{-}\|_{\mathcal{B}(L^p(\hat{\Gamma}))}$ is finite because $p > 1$. 
The estimate \eqref{II Lp norm of what} implies that $\hat{w}\in L^{p}(\hat{\Gamma})$, and so $\hat{\mathcal{C}}_{\hat{w}}(1,1,1) \in L^{p}(\hat{\Gamma})$. Thus
\begin{align*}
\|\hat{\mu} - (1,1,1)\|_{L^p(\hat{\Gamma})} & \leq
\sum_{j=1}^{\infty} \|\hat{\mathcal{C}}_{\hat{w}}\|_{\mathcal{B}(L^p(\hat{\Gamma}))}^{j-1}\|\hat{\mathcal{C}}_{\hat{w}}(1,1,1)\|_{L^{p}(\hat{\Gamma})}  
\leq \sum_{j=1}^{\infty} K_{p}^{j}\|\hat{w}\|_{L^\infty(\hat{\Gamma})}^{j-1} \|\hat{w}\|_{L^p(\hat{\Gamma})} 
	\\
& = \frac{K_{p} \|\hat{w}\|_{L^p(\hat{\Gamma})}}{1-K_{p}\|\hat{w}\|_{L^\infty(\hat{\Gamma})}}.
\end{align*}
The claim is now a consequence of \eqref{II hatwLinfty} and \eqref{II Lp norm of what}. 
\end{proof}

We now turn to the asymptotics of $\hat{n}$. 
The following limit exists as $k \to \infty$:
\begin{align*}
& \hat{n}^{(1)}(x,t):=\lim_{k\to \infty} k(\hat{n}(x,t,k) - (1,1,1))
= - \frac{1}{2\pi i}\int_{\hat{\Gamma}} \hat{\mu}(x,t,k) \hat{w}(x,t,k) dk.
\end{align*}
\begin{lemma}
As $t \to \infty$, 
\begin{align}\label{II limlhatm}
& \hat{n}^{(1)}(x,t) = -\frac{(1,1,1)}{2\pi i}\int_{\partial \mathcal{D}}\hat{w}(x,t,k) dk + O(t^{-1}\ln t).
\end{align}
\end{lemma}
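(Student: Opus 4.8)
The plan is to begin from the integral representation $\hat{n}^{(1)}(x,t) = -\frac{1}{2\pi i}\int_{\hat{\Gamma}} \hat{\mu}(x,t,k)\hat{w}(x,t,k)\,dk$ obtained from \eqref{II hatmrepresentation} and \eqref{II hatmudef}, and to split the integrand as $\hat{\mu}\hat{w} = (1,1,1)\hat{w} + (\hat{\mu}-(1,1,1))\hat{w}$. The contribution of the second term is controlled by Cauchy--Schwarz:
$$\Big|\int_{\hat{\Gamma}}(\hat{\mu}-(1,1,1))\hat{w}\,dk\Big| \le \|\hat{\mu}-(1,1,1)\|_{L^2(\hat{\Gamma})}\,\|\hat{w}\|_{L^2(\hat{\Gamma})}.$$
By Lemma~\ref{II lemma: estimate on mu} with $p=2$ the first factor is $O(t^{-1/2}(\ln t)^{1/2})$, and by \eqref{II Lp norm of what} with $p=2$ the second factor is $O(t^{-1/2}(\ln t)^{1/2})$; hence this term is $O(t^{-1}\ln t)$.

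For the first term I would decompose $\hat{\Gamma} = \partial\mathcal{D} \cup (\hat{\Gamma}\setminus\partial\mathcal{D})$ and write $\hat{\Gamma}\setminus\partial\mathcal{D} = \big(\hat{\Gamma}\setminus(\partial\mathcal{D}\cup\hat{\mathcal{X}}^{\epsilon})\big) \cup \hat{\mathcal{X}}^{\epsilon}$ (recall that $\hat{\mathcal{X}}^{\epsilon}$ consists of the parts of the jump crosses lying inside the disks of $\mathcal{D}$, so this is a genuine partition). On the first piece the estimate \eqref{II hatwestimate1} gives $\|\hat{w}\|_{L^1} \le Ct^{-1}$, and on $\hat{\mathcal{X}}^{\epsilon}$ the estimate \eqref{II hatwestimate4} gives $\|\hat{w}\|_{L^1}\le Ct^{-1}\ln t$. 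Therefore $\int_{\hat{\Gamma}\setminus\partial\mathcal{D}}(1,1,1)\hat{w}\,dk = O(t^{-1}\ln t)$, and since $(1,1,1)$ is constant it pulls out of the remaining integral, leaving $-\frac{(1,1,1)}{2\pi i}\int_{\partial\mathcal{D}}\hat{w}\,dk$ as the only term not absorbed into the error. This is precisely \eqref{II limlhatm}.

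The argument is essentially bookkeeping with the $L^1$, $L^2$, and $L^\infty$ bounds already assembled in Lemmas~\ref{II whatlemma} and \ref{II lemma: estimate on mu}, so I do not anticipate a genuine obstacle. The only points requiring a little care are: isolating $\hat{\mathcal{X}}^{\epsilon}$ from the rest of $\hat{\Gamma}\setminus\partial\mathcal{D}$, since it obeys only the weaker $O(t^{-1}\ln t)$ bound, which is the source of the logarithm in the final error; and observing that, by \eqref{II hatwestimate3}, the surviving integral $\int_{\partial\mathcal{D}}\hat{w}\,dk$ is itself of size $O(t^{-1/2})$, so \eqref{II limlhatm} does isolate the true leading-order contribution. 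In the following section this integral will be evaluated explicitly using the refined expansions \eqref{II mmodmuestimate1} and \eqref{II mmodmuestimate1 green} of the local parametrices on $\partial\mathcal{D}_{\omega k_4}$ and $\partial\mathcal{D}_{\omega^{2} k_2}$.
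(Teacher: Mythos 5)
Your proposal is correct and follows the same route as the paper: the paper's proof is exactly the decomposition $\hat{n}^{(1)} = -\frac{(1,1,1)}{2\pi i}\int_{\partial \mathcal{D}}\hat{w}\,dk - \frac{(1,1,1)}{2\pi i}\int_{\hat{\Gamma}\setminus\partial \mathcal{D}}\hat{w}\,dk - \frac{1}{2\pi i}\int_{\hat{\Gamma}}(\hat{\mu}-(1,1,1))\hat{w}\,dk$ followed by "straightforward estimates" from Lemmas \ref{II whatlemma} and \ref{II lemma: estimate on mu}, which are precisely the Cauchy--Schwarz bound and the $L^1$ bounds you spell out. Your accounting of where the logarithm enters (the $\hat{\mathcal{X}}^{\epsilon}$ piece and the $p=2$ estimates) is the correct filling-in of those details.
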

\begin{proof}
Since
\begin{align*}
\hat{n}^{(1)}(x,t) = & -\frac{(1,1,1)}{2\pi i}\int_{\partial \mathcal{D}} \hat{w}(x,t,k) dk  -\frac{(1,1,1)}{2\pi i}\int_{\hat{\Gamma}\setminus\partial \mathcal{D}} \hat{w}(x,t,k) dk
	\\
& -\frac{1}{2\pi i}\int_{\hat{\Gamma}} (\hat{\mu}(x,t,k)-(1,1,1))\hat{w}(x,t,k) dk,
\end{align*}
the lemma follows from Lemmas \ref{II whatlemma} and \ref{II lemma: estimate on mu} and straightforward estimates.
\end{proof}

We define the functions $\{F_{1}^{(l)},F_{2}^{(l)}\}_{l \in \mathbb{Z}}$ by
\begin{align*}
& F_{1}^{(l)}(\zeta,t) = - \frac{1}{2\pi i} \int_{\partial D_\epsilon(\omega k_4)} k^{l-1}\hat{w}(x,t,k) dk	
= - \frac{1}{2\pi i}\int_{\partial D_\epsilon(\omega k_4)}k^{l-1}(m^{\omega k_4} - I) dk, \\
& F_{2}^{(l)}(\zeta,t) = - \frac{1}{2\pi i} \int_{\partial D_\epsilon(\omega^{2} k_2)} k^{l-1}\hat{w}(x,t,k) dk	
= - \frac{1}{2\pi i}\int_{\partial D_\epsilon(\omega^{2} k_2)}k^{l-1}(m^{\omega^{2} k_2} - I) dk.
\end{align*}
We infer from \eqref{II mmodmuestimate1} and \eqref{II mmodmuestimate1 green} that $F_{1}^{(l)}$ and $F_{2}^{(l)}$ satisfy (recall that $\hat{z}_{1}(\zeta,\omega k_{4})=1=\hat{z}_{2}(\zeta,\omega^{2} k_{2})$ and that $\partial D_\epsilon(\omega k_{4})$ and $\partial D_\epsilon(\omega^{2} k_{2})$ are oriented negatively)
\begin{align}
F_{1}^{(l)}(\zeta, t) &  =  (\omega k_{4})^{l-1}\frac{Y_{1}(\zeta,t) m_1^{X,(1)}(q_1,q_3) Y_{1}(\zeta,t)^{-1}}{z_{1,\star}\sqrt{t}} + O(t^{-1}) \nonumber \\
& =  -i(\omega k_{4})^{l}Z_{1}(\zeta,t) + O(t^{-1}) \qquad \mbox{as } t \to \infty, \label{II asymptotics for F1l} \\
F_{2}^{(l)}(\zeta, t) &  =  (\omega^{2} k_{2})^{l-1}\frac{Y_{2}(\zeta,t) m_1^{X,(2)}(q_2,q_4,q_5,q_6) Y_{2}(\zeta,t)^{-1}}{z_{2,\star}\sqrt{t}} + O(t^{-1}) \nonumber \\
& =  -i(\omega^{2} k_{2})^{l}Z_{2}(\zeta,t) + O(t^{-1}) \qquad \mbox{as } t \to \infty, \label{II asymptotics for F4l}
\end{align}
where
\begin{subequations}\label{Z1Z2def}
\begin{align}
& Z_{1}(\zeta,t) := \frac{Y_{1} m_1^{X,(1)} Y_{1}^{-1}}{-i\omega k_{4}z_{1,\star}\sqrt{t}} = 
\frac{1}{-i\omega k_{4}z_{1,\star}\sqrt{t}} \begin{pmatrix}
0 & 0 & \frac{\beta_{12}^{(1)}d_{1,0}\lambda_{1}^{2}}{e^{t\Phi_{31}(\zeta,\omega k_{4})}} \\
0 & 0 & 0 \\
\frac{\beta_{21}^{(1)}e^{t\Phi_{31}(\zeta,\omega k_{4})}}{d_{1,0}\lambda_{1}^{2}} & 0 & 0
\end{pmatrix}, \\
& Z_{2}(\zeta,t) := \frac{Y_{2} m_1^{X,(2)} Y_{2}^{-1}}{-i\omega^{2} k_{2}z_{2,\star}\sqrt{t}} = 
\frac{1}{-i\omega^{2}k_{2}z_{2,\star}\sqrt{t}} \begin{pmatrix}
0 & 0 & 0 \\
0 & 0 & \frac{\beta_{12}^{(2)}d_{2,0}\lambda_{2}^{2}}{e^{t\Phi_{32}(\zeta,\omega^{2}k_{2})}} \\
0 & \frac{\beta_{21}^{(2)}e^{t\Phi_{32}(\zeta,\omega^{2}k_{2})}}{d_{2,0}\lambda_{2}^{2}} & 0
\end{pmatrix},
\end{align}
\end{subequations}
and
\begin{subequations}\label{betap1p and betap2p Sector II}
\begin{align}
& \beta_{12}^{(1)} = \frac{e^{\frac{3\pi i}{4}}e^{\frac{\pi \hat{\nu}_{1}}{2}}e^{2\pi \nu_{1}}\sqrt{2\pi}q_{3}}{(e^{\pi \hat{\nu}_{1}}-e^{-\pi \hat{\nu}_{1}})\Gamma(-i\hat{\nu}_{1})}, & & \beta_{21}^{(1)} = \frac{e^{-\frac{3\pi i}{4}}e^{\frac{\pi \hat{\nu}_{1}}{2}}\sqrt{2\pi}\bar{q}_{3}}{(e^{\pi \hat{\nu}_{1}}-e^{-\pi \hat{\nu}_{1}})\Gamma(i\hat{\nu}_{1})}, \\
&  \beta_{12}^{(2)} = \frac{e^{\frac{3\pi i}{4}}e^{\frac{\pi\hat{\nu}_{2}}{2}}e^{2\pi (\nu_{4}-\nu_{2})}\sqrt{2\pi}(\bar{q}_6-\bar{q}_2\bar{q}_5)}{(e^{\pi \hat{\nu}_{2}}-e^{-\pi \hat{\nu}_{2}})\Gamma(-i\hat{\nu}_{2})}, & & \beta_{21}^{(2)} = \frac{e^{-\frac{3\pi i}{4}}e^{\frac{\pi \hat{\nu}_{2}}{2}}e^{2\pi\nu_{2}}\sqrt{2\pi}(q_{6}-q_{2}q_{5})}{(e^{\pi \hat{\nu}_{2}}-e^{-\pi \hat{\nu}_{2}})\Gamma(i\hat{\nu}_{2})},
\end{align}
\end{subequations}
and $\nu_{1}$, $\hat{\nu}_{1}$, $\nu_{2}$, $\hat{\nu}_{2}$, $\nu_4$ have been defined in Lemmas \ref{II deltalemma} and \ref{lemma: nuhat lemma II}.

\begin{lemma}\label{II lemma: some integrals by symmetry}
For $l \in \mathbb{Z}$ and $j = 0,1,2$, we have
\begin{align}
& -\frac{1}{2\pi i}\int_{\omega^{j} \partial D_\epsilon(\omega k_4)} k^{l}\hat{w}(x,t,k)dk = \omega^{j(l+1)} \mathcal{A}^{-j}F_{1}^{(l+1)}(\zeta,t)\mathcal{A}^{j}, \label{II int1 F1} \\
& -\frac{1}{2\pi i}\int_{\omega^{j} \partial D_\epsilon((\omega k_4)^{-1})} k^{l}\hat{w}(x,t,k)dk = -\omega^{j(l+1)}\mathcal{A}^{-j}\mathcal{B} F_{1}^{(-l-1)}(\zeta,t) \mathcal{B}\mathcal{A}^{j}, \label{II int1 tilde F1} \\
& -\frac{1}{2\pi i}\int_{\omega^{j} \partial D_\epsilon(\omega^{2} k_2)} k^{l}\hat{w}(x,t,k)dk = \omega^{j(l+1)} \mathcal{A}^{-j}F_{2}^{(l+1)}(\zeta,t)\mathcal{A}^{j}, \label{II int1 F2} \\
& -\frac{1}{2\pi i}\int_{\omega^{j} \partial D_\epsilon((\omega^{2} k_2)^{-1})} k^{l}\hat{w}(x,t,k)dk = -\omega^{j(l+1)}\mathcal{A}^{-j}\mathcal{B} F_{2}^{(-l-1)}(\zeta,t) \mathcal{B}\mathcal{A}^{j}. \label{II int1 tilde F2}
\end{align}
\end{lemma}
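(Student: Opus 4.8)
The plan is to derive all four identities purely from the $\mathcal{A}$- and $\mathcal{B}$-equivariance of the local parametrices, combined with elementary changes of variables in the contour integrals. First I would recall from \eqref{def of vhat II} that $\hat{v}$ coincides with $m^{\omega k_{4}}$ on all of $\partial\mathcal{D}_{\omega k_{4}}$ and with $m^{\omega^{2}k_{2}}$ on all of $\partial\mathcal{D}_{\omega^{2}k_{2}}$, so that on these circles $\hat{w}=\hat{v}-I$ equals $m^{\omega k_{4}}-I$ and $m^{\omega^{2}k_{2}}-I$ respectively, where the $m$'s denote the symmetry-extended functions of Section \ref{n3tonhatsec}. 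From the relation $m^{\omega k_{4}}(x,t,k)=\mathcal{A}m^{\omega k_{4}}(x,t,\omega k)\mathcal{A}^{-1}$ recalled at the beginning of that section, iterating and using $\mathcal{A}^{3}=I$ gives $m^{\omega k_{4}}(x,t,\omega^{j}k)=\mathcal{A}^{-j}m^{\omega k_{4}}(x,t,k)\mathcal{A}^{j}$, hence $\hat{w}(x,t,\omega^{j}k)=\mathcal{A}^{-j}\hat{w}(x,t,k)\mathcal{A}^{j}$ on $\omega^{j}\partial D_{\epsilon}(\omega k_{4})$ and on $\omega^{j}\partial D_{\epsilon}((\omega k_{4})^{-1})$; the analogous relations hold with $m^{\omega k_{4}}$ replaced by $m^{\omega^{2}k_{2}}$.

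To prove \eqref{II int1 F1} (and \eqref{II int1 F2} by the identical argument), I would substitute $k=\omega^{j}k'$ in the integral over $\omega^{j}\partial D_{\epsilon}(\omega k_{4})$. Multiplication by $\omega^{j}$ is a rotation, hence orientation-preserving, so $k'$ traverses $\partial D_{\epsilon}(\omega k_{4})$ with the same orientation as in the definition of $F_{1}^{(l)}$; moreover $dk=\omega^{j}\,dk'$ and $\hat{w}(x,t,\omega^{j}k')=\mathcal{A}^{-j}\hat{w}(x,t,k')\mathcal{A}^{j}$. Collecting the scalar factor $(\omega^{j})^{l}\omega^{j}=\omega^{j(l+1)}$ and pulling the constant matrices $\mathcal{A}^{-j}$ and $\mathcal{A}^{j}$ outside the integral gives $\omega^{j(l+1)}\mathcal{A}^{-j}F_{1}^{(l+1)}(\zeta,t)\mathcal{A}^{j}$, since $F_{1}^{(l+1)}=-\frac{1}{2\pi i}\int_{\partial D_{\epsilon}(\omega k_{4})}(k')^{l}\hat{w}\,dk'$.

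For \eqref{II int1 tilde F1} and \eqref{II int1 tilde F2}, after the same $\omega^{j}$-reduction it remains to treat $-\frac{1}{2\pi i}\int_{\partial D_{\epsilon}((\omega k_{4})^{-1})}(k'')^{l}\hat{w}(x,t,k'')\,dk''$. Since $(m^{\omega k_{4}}-I)(k'')^{l}$ is analytic in the annular region between $\partial D_{\epsilon}((\omega k_{4})^{-1})$ and the exact image of $\partial D_{\epsilon}(\omega k_{4})$ under $k\mapsto 1/k$ (for $\epsilon$ small enough), a contour deformation lets me integrate over the latter curve; substituting $k''=1/k'$ with $k'\in\partial D_{\epsilon}(\omega k_{4})$, using $d(1/k')=-(k')^{-2}\,dk'$, $\hat{w}(x,t,1/k')=\mathcal{B}\hat{w}(x,t,k')\mathcal{B}$, and $\mathcal{B}^{2}=I$, the weight becomes $(k'')^{l}(k'')^{-2}=(k')^{-l-2}$, which is exactly the one appearing in $F_{1}^{(-l-1)}$. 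Combining the Jacobian sign with the orientation reversal induced by inversion on a small circle that does not enclose the origin produces the overall minus sign, yielding $-\omega^{j(l+1)}\mathcal{A}^{-j}\mathcal{B}F_{1}^{(-l-1)}(\zeta,t)\mathcal{B}\mathcal{A}^{j}$; the statements for $F_{2}^{(l)}$ follow in the same way with $(\omega k_{4},m^{\omega k_{4}})$ replaced by $(\omega^{2}k_{2},m^{\omega^{2}k_{2}})$. I expect the only delicate point — and hence the main, though modest, obstacle — to be the consistent bookkeeping of the negative orientations of the circles in $\hat{\Gamma}$, of the orientation reversal under $k\mapsto 1/k$, and of the small contour deformations, together with checking that $m^{\omega k_{4}}$ and $m^{\omega^{2}k_{2}}$ are genuinely defined on, and agree with $\hat{v}$ on, the rotated and inverted disks, so that the symmetry relations may legitimately be invoked there.
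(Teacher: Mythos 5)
Your proposal is correct and follows essentially the same route as the paper: reduce the rotated circles via the substitution $k=\omega^{j}k'$ together with $\hat{w}(x,t,\omega^{j}k)=\mathcal{A}^{-j}\hat{w}(x,t,k)\mathcal{A}^{j}$, and handle the inverted circles via $k\mapsto k^{-1}$ with $\hat{w}(x,t,k^{-1})=\mathcal{B}\hat{w}(x,t,k)\mathcal{B}$, the Jacobian $-k^{-2}$ producing the extra sign and the shift $l\mapsto -l-1$. Your extra care about the orientation bookkeeping and the identification of $\partial D_\epsilon((\omega k_4)^{-1})$ with the inverted image of $\partial D_\epsilon(\omega k_4)$ is a point the paper passes over silently, but it does not change the argument.
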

\begin{proof}
The symmetry properties of $\hat{v}$ imply that 
$$\hat{w}(x, t, k) = \mathcal{A} \hat{w}(x, t, \omega k) \mathcal{A}^{-1} = \mathcal{B} \hat{w}(x, t, k^{-1}) \mathcal{B}, \qquad k \in \partial \mathcal{D}.$$
Therefore, for $j = 0,1,2$ we have 
\begin{align*}
& -\frac{1}{2\pi i}\int_{\omega^{j} \partial D_\epsilon(\omega k_4)} k^{l}\hat{w}(x,t,k)dk = -\frac{\omega^{j(l+1)}}{2\pi i}\int_{\partial D_\epsilon(\omega k_4)} k^{l}\hat{w}(x,t,\omega^{j}k)dk = \omega^{j(l+1)} \mathcal{A}^{-j}F_{1}^{(l+1)}(\zeta,t)\mathcal{A}^{j}, 
\end{align*}
and
\begin{align*}
& -\frac{1}{2\pi i}\int_{\omega^{j} \partial D_\epsilon((\omega k_4)^{-1})} k^{l}\hat{w}(x,t,k)dk = -\frac{\omega^{j(l+1)}}{2\pi i}\int_{\partial D_\epsilon((\omega k_4)^{-1})} k^{l}\hat{w}(x,t,\omega^{j}k)dk \\
& = -\frac{\omega^{j(l+1)}\mathcal{A}^{-j}}{2\pi i}\int_{\partial D_\epsilon((\omega k_4)^{-1})} k^{l}\hat{w}(x,t,k)dk \mathcal{A}^{j} = -\frac{\omega^{j(l+1)}\mathcal{A}^{-j}}{2\pi i}\int_{\partial D_\epsilon(\omega k_4)} k^{-l}\hat{w}(x,t,k^{-1}) \frac{dk}{-k^{2}} \mathcal{A}^{j} \\
& = \frac{\omega^{j(l+1)}\mathcal{A}^{-j}\mathcal{B}}{2\pi i}\int_{\partial D_\epsilon(\omega k_4)} k^{-l-2}\hat{w}(x,t,k) dk \mathcal{B}\mathcal{A}^{j} = -\omega^{j(l+1)}\mathcal{A}^{-j}\mathcal{B} F_{1}^{(-l-1)}(\zeta,t) \mathcal{B}\mathcal{A}^{j}.
\end{align*}
This proves \eqref{II int1 F1} and \eqref{II int1 tilde F1}. The proofs of \eqref{II int1 F2} and \eqref{II int1 tilde F2} are similar.
\end{proof}
Using Lemma \ref{II lemma: some integrals by symmetry}, we find
\begin{align*}
 - \frac{1}{2\pi i}\int_{\partial \mathcal{D}} \hat{w}(x,t,k) dk 
& = - \sum_{j=0}^{2}  \frac{1}{2\pi i}\int_{\omega^{j} \partial D_\epsilon(\omega k_4)} \hat{w}(x,t,k) dk - \sum_{j=0}^{2}  \frac{1}{2\pi i}\int_{\omega^{j} \partial D_\epsilon((\omega k_4)^{-1})}  \hat{w}(x,t,k) dk \\
& \quad - \sum_{j=0}^{2}  \frac{1}{2\pi i}\int_{\omega^{j} \partial D_\epsilon(\omega^{2} k_2)} \hat{w}(x,t,k) dk - \sum_{j=0}^{2}  \frac{1}{2\pi i}\int_{\omega^{j} \partial D_\epsilon((\omega^{2} k_2)^{-1})} \hat{w}(x,t,k) dk \\
& = \sum_{j=0}^{2} \omega^{j} \mathcal{A}^{-j}F_{1}^{(1)}(\zeta,t)\mathcal{A}^{j} - \sum_{j=0}^{2} \omega^{j}\mathcal{A}^{-j}\mathcal{B} F_{1}^{(-1)}(\zeta,t) \mathcal{B}\mathcal{A}^{j} \\
& \quad + \sum_{j=0}^{2} \omega^{j} \mathcal{A}^{-j}F_{2}^{(1)}(\zeta,t)\mathcal{A}^{j} - \sum_{j=0}^{2} \omega^{j}\mathcal{A}^{-j}\mathcal{B} F_{2}^{(-1)}(\zeta,t) \mathcal{B}\mathcal{A}^{j}.
\end{align*}
Therefore, \eqref{II limlhatm}, \eqref{II asymptotics for F1l}, and \eqref{II asymptotics for F4l} imply that $\hat{n}^{(1)} = (1,1,1)\hat{m}^{(1)}$, where
\begin{align}
& \hat{m}^{(1)}(x,t) =  \; \sum_{j=0}^{2} \omega^{j} \mathcal{A}^{-j}F_{1}^{(1)}(\zeta,t)\mathcal{A}^{j} - \sum_{j=0}^{2} \omega^{j}\mathcal{A}^{-j}\mathcal{B} F_{1}^{(-1)}(\zeta,t) \mathcal{B}\mathcal{A}^{j} \nonumber \\
& + \sum_{j=0}^{2} \omega^{j} \mathcal{A}^{-j}F_{2}^{(1)}(\zeta,t)\mathcal{A}^{j} - \sum_{j=0}^{2} \omega^{j}\mathcal{A}^{-j}\mathcal{B} F_{2}^{(-1)}(\zeta,t) \mathcal{B}\mathcal{A}^{j} + O(t^{-1}\ln t) \nonumber \\
& = -i(\omega k_{4})^{1}\sum_{j=0}^{2} \omega^{j} \mathcal{A}^{-j}Z_{1}(\zeta,t)\mathcal{A}^{j} + i(\omega k_{4})^{-1} \sum_{j=0}^{2} \omega^{j}\mathcal{A}^{-j}\mathcal{B} Z_{1}(\zeta,t) \mathcal{B}\mathcal{A}^{j} \nonumber \\
& -i(\omega^{2} k_{2})^{1}\sum_{j=0}^{2} \omega^{j} \mathcal{A}^{-j}Z_{2}(\zeta,t)\mathcal{A}^{j} + i(\omega^{2} k_{2})^{-1} \sum_{j=0}^{2} \omega^{j}\mathcal{A}^{-j}\mathcal{B} Z_{2}(\zeta,t) \mathcal{B}\mathcal{A}^{j} + O(t^{-1}\ln t) \label{II mhatplp asymptotics}
\end{align}
as $t \to \infty$ uniformly for $\zeta \in \mathcal{I}$.

\section{Asymptotics of $u$}\label{usec}

Recall from (\ref{recoveruvn}) that $u(x,t) = -i\sqrt{3}\frac{\partial}{\partial x}n_{3}^{(1)}(x,t)$, where $n_{3}(x,t,k) = 1+n_{3}^{(1)}(x,t)k^{-1}+O(k^{-2})$ as $k \to \infty$. 
Recall also that $n^{(4)}$ is defined to equal $n^{(3)}$ except in $\mathcal{D}_{\sol}$.
Taking also the transformations \eqref{Sector II first transfo}, \eqref{Sector II second transfo}, \eqref{II def of mp3p}, and \eqref{Sector II final transfo} into account, we obtain
\begin{align*}
n = \hat{n}\Delta^{-1}(G^{(2)})^{-1}(G^{(1)})^{-1}
\end{align*}
for $k \in \mathbb{C}\setminus (\hat{\Gamma}\cup \bar{\mathcal{D}} \cup \bar{\mathcal{D}}_{\sol})$, where $G^{(1)}$, $G^{(2)}$, $\Delta$ are defined in \eqref{II Gp1pdef}, \eqref{II Gp2pdef}, and \eqref{II def of Delta}, respectively.
Thus
\begin{align}
u(x,t) & = -i\sqrt{3}\frac{\partial}{\partial x}\bigg( \hat{n}_{3}^{(1)}(x,t) + \lim_{k\to \infty} k (\Delta_{33}(\zeta,k)^{-1}-1) \bigg) = -i\sqrt{3}\frac{\partial}{\partial x} \hat{n}_{3}^{(1)}(x,t) + O(t^{-1})
\end{align}
as $t \to \infty$. By \eqref{II mhatplp asymptotics}, as $t \to \infty$,
\begin{align*}
 \hat{n}_{3}^{(1)} = &\; Z_{1,13}(i (\omega k_{4})^{-1}-i (\omega k_{4})^{1})+Z_{1,31}(i \omega^{2}(\omega k_{4})^{-1}- i \omega^{1}(\omega k_{4})^{1})
 	\\
& + Z_{2,23}(i (\omega^{2} k_{2})^{-1}-i (\omega^{2} k_{2})^{1})+Z_{2,32}(i \omega^{1}(\omega^{2} k_{2})^{-1}- i \omega^{2}(\omega^{2} k_{2})^{1}) + O(t^{-1}\ln t).
\end{align*} 
Recall from Lemma \ref{lemma: nuhat lemma II} that $\hat{\nu}_{1}=\nu_{3}-\nu_{1}\geq 0$ and $\hat{\nu}_{2}=\nu_{2}+\nu_{5}-\nu_{4}\geq 0$. From (\ref{II d0estimate}), (\ref{II d0estimate green}), (\ref{Z1Z2def}), and (\ref{betap1p and betap2p Sector II}), we see that $\overline{Z_{1,13}} = \lambda_1^4 Z_{1,31}$ and $\overline{Z_{2,23}} = \lambda_2^4 Z_{2,32}$. Moreover, by (\ref{lambda1def}) and (\ref{lambda2def}), we have $\lambda_1^4 = -\tilde{r}(k_4^{-1})^{-1}\big(\frac{\mathcal{P}(\omega k_{4})}{\mathcal{P}(\omega^{2} k_{4})}\big)^{2}$ and $\lambda_2^4 = -\tilde{r}(k_2^{-1})\big(\frac{\mathcal{P}(\omega^{2} k_{2})}{\mathcal{P}(\omega k_{2})}\big)^{2}$. It follows that, as $t \to \infty$,
\begin{align*}
& \hat{n}_{3}^{(1)} = 2i \, \im \Big( Z_{1,13}(i (\omega k_{4})^{-1}-i (\omega k_{4})^{1}) + Z_{2,23}(i (\omega^{2} k_{2})^{-1}-i (\omega^{2} k_{2})^{1}) \Big) + O(t^{-1}\ln t) \\
& = 2 i \, \im \Bigg[ \frac{\beta_{12}^{(1)}d_{1,0}(i (\omega k_{4})^{-1}-i (\omega k_{4})^{1}) \mathcal{P}(\omega k_{4})}{-i \omega k_{4} z_{1,\star} \sqrt{t} e^{t \Phi_{31}(\zeta,\omega k_{4})}\big|\tilde{r}(\frac{1}{k_{4}})\big|^{\frac{1}{2}} \mathcal{P}(\omega^{2} k_{4})} \\
& + \frac{\beta_{12}^{(2)}d_{2,0}(i (\omega^{2} k_{2})^{-1}-i (\omega^{2} k_{2})^{1})\mathcal{P}(\omega^{2}k_{2})}{-i \omega^{2} k_{2} z_{2,\star} \sqrt{t} e^{t \Phi_{32}(\zeta,\omega^{2} k_{2})}\big|\tilde{r}(\frac{1}{k_{2}})\big|^{-\frac{1}{2}}\mathcal{P}(\omega k_{2})} \Bigg] + O(t^{-1}\ln t).
\end{align*}
The function $\mathcal{P}(k)$ obeys the symmetries $\mathcal{P}(k) = \mathcal{P}(k^{-1}) = \overline{\mathcal{P}(\bar{k})}^{-1}$. It follows that $|\mathcal{P}(k)| = 1$ for $k \in \partial \D$, and thus
\begin{align}\label{abscalP}
\big| \tfrac{\mathcal{P}(\omega k_{4})}{\mathcal{P}(\omega^{2} k_{4})} \big| = \big| \tfrac{\mathcal{P}(\omega^{2} k_{2})}{\mathcal{P}(\omega k_{2})} \big| = 1.
\end{align}
Using \eqref{betap1p and betap2p Sector II}, (\ref{abscalP}), and the identities
\begin{align*}
& d_{1,0}(\zeta,t) = e^{-\pi \nu_{1}}e^{i\arg d_{1,0}(\zeta,t)}, & & d_{2,0}(\zeta,t) = e^{\pi (2\nu_{2}-\nu_{4})}e^{i\arg d_{2,0}(\zeta,t)}, \\
& i (\omega k_{4})^{-1}-i (\omega k_{4})^{1} = 2 \sin(\arg(\omega k_{4}(\zeta))), & & i (\omega^{2} k_{2})^{-1}-i (\omega^{2} k_{2})^{1} = 2 \sin(\arg(\omega^{2} k_{2}(\zeta))), \\
& |\Gamma(i\nu)| = \frac{\sqrt{2\pi}}{\sqrt{\nu(e^{\pi \nu}-e^{-\pi \nu})}}, \quad \nu \in \mathbb{R}, & & q_{4}-\bar{q}_5-q_{2}\bar{q}_6=0, 
\end{align*}
we obtain
\begin{align*}
& \hat{n}_{3}^{(1)} = \frac{4i\sqrt{\hat{\nu}_{1}}}{-i\omega k_{4} z_{1,\star}\sqrt{t}|\tilde{r}(\frac{1}{k_{4}})|^{\frac{1}{2}}}\sin(\arg(\omega k_{4})) \\
& \times \sin(\tfrac{3\pi}{4}+\arg q_{3}+\arg\Gamma(i\hat{\nu}_{1})+\arg d_{1,0} + \arg \tfrac{\mathcal{P}(\omega k_{4})}{\mathcal{P}(\omega^{2} k_{4})}-t\im \Phi_{31}(\zeta,\omega k_{4}(\zeta)))  \\
& +\frac{4i\sqrt{\hat{\nu}_{2}}|\tilde{r}(\frac{1}{k_{2}})|^{\frac{1}{2}}}{-i\omega^{2} k_{2} z_{2,\star}\sqrt{t}}\sin(\arg(\omega^{2} k_{2})) \\
& \times \sin(\tfrac{3\pi}{4}-\arg (q_{6}-q_{2}q_{5})+\arg\Gamma(i\hat{\nu}_{2})+\arg d_{2,0}+ \arg \tfrac{\mathcal{P}(\omega^{2} k_{2})}{\mathcal{P}(\omega k_{2})}-t\im \Phi_{32}(\zeta,\omega^{2} k_{2}(\zeta)) ) \\
& + O(t^{-1}\ln t) \quad \mbox{as } t \to \infty.
\end{align*}
As in \cite{CLWasymptotics}, it is possible to show that the above asymptotics can be differentiated with respect to $x$ without increasing the error term. Therefore, as $t \to \infty$,
\begin{align*}
& u(x,t) = -i\sqrt{3}\frac{\partial}{\partial x}\hat{n}_{3}^{(1)}(x,t) + O(t^{-1}) = \frac{-4\sqrt{3}\sqrt{\hat{\nu}_{1}}\frac{d}{d\zeta}\im \Phi_{31}(\zeta,\omega k_{4}(\zeta))}{-i\omega k_{4} z_{1,\star}\sqrt{t}|\tilde{r}(\frac{1}{k_{4}})|^{\frac{1}{2}}}\sin(\arg(\omega k_{4})) \\
& \times \cos(\tfrac{3\pi}{4}+\arg q_{3}+\arg\Gamma(i\hat{\nu}_{1})+\arg d_{1,0} + \arg \tfrac{\mathcal{P}(\omega k_{4})}{\mathcal{P}(\omega^{2} k_{4})} -t\im \Phi_{31}(\zeta,\omega k_{4}(\zeta))) \\
&  +\frac{-4\sqrt{3}\sqrt{\hat{\nu}_{2}}|\tilde{r}(\frac{1}{k_{2}})|^{\frac{1}{2}}\frac{d}{d\zeta}\im \Phi_{32}(\zeta,\omega^{2} k_{2}(\zeta))}{-i\omega^{2} k_{2} z_{2,\star}\sqrt{t}}\sin(\arg(\omega^{2} k_{2}))\\
&\times \cos(\tfrac{3\pi}{4}-\arg (q_{6}-q_{2}q_{5})+\arg\Gamma(i\hat{\nu}_{2})+\arg d_{2,0} + \arg \tfrac{\mathcal{P}(\omega^{2} k_{2})}{\mathcal{P}(\omega k_{2})} -t\im \Phi_{32}(\zeta,\omega^{2} k_{2}(\zeta))) \\
& + O(t^{-1}\ln t).
\end{align*}
In view of the relations
$$\frac{d}{d\zeta}[\im \Phi_{31}(\zeta,\omega k_{4}(\zeta))] = -\im k_4(\zeta), \qquad \frac{d}{d\zeta}[\im \Phi_{32}(\zeta,\omega^{2} k_{2}(\zeta))]= \im k_2(\zeta),$$
this completes the proof of Theorem \ref{asymptoticsth}.

\appendix 

\section{Model RH problems}\label{appendix A}
Let $X \subset \C$ be the cross defined by $X = X_1 \cup \cdots \cup X_4$, where the rays
\begin{align} \nonumber
&X_1 := \bigl\{\rho e^{\frac{i\pi}{4}}\, \big| \, 0 \leq \rho  < \infty\bigr\}, && 
X_2 := \bigl\{\rho e^{\frac{3i\pi}{4}}\, \big| \, 0 \leq \rho  < \infty\bigr\},  &&
	\\ \label{Xdef}
&X_3 := \bigl\{\rho e^{-\frac{3i\pi}{4}}\, \big| \, 0 \leq \rho  < \infty\bigr\}, && 
X_4 := \bigl\{\rho e^{-\frac{i\pi}{4}}\, \big| \, 0 \leq \rho  < \infty\bigr\},
\end{align}
are oriented away from the origin. The proofs of the following two lemmas are similar to (but  more complicated than) the proof for the parabolic cylinder model problem that appears in \cite{DZ1993, I1981}. A detailed proof of Lemma \ref{II Xlemma 3 green} can be found in \cite{CLsectorV}.

\begin{lemma}[Model RH problem needed near $k= \omega k_{4}$ for Sector IV]\label{II Xlemma 3}
Let $q_{1},q_{3} \in \mathbb{C}$ be such that $1+|q_{1}|^{2}-|q_{3}|>0$, define
\begin{align*}
\nu_{1} = -\tfrac{1}{2\pi} \ln(1 + |q_{1}|^2), \qquad \nu_{3} = -\tfrac{1}{2\pi} \ln(1 + |q_{1}|^2 - |q_{3}|^{2}),
\end{align*}
and define the jump matrix $v^{X,(1)}(z)$ for $z \in X$ by
\begin{align}\label{II vXdef 3} 
v^{X,(1)}(z) = \begin{cases}
\begin{pmatrix} 1 & 0 & 0	\\
0 & 1 & 0 \\
 -\bar{q}_{3} z^{-2i \nu_{3}} z_{(0)}^{i \nu_{1}} z^{i \nu_{1}}e^{\frac{iz^{2}}{2}} & 0 & 1 \end{pmatrix}, &   z \in X_1, 
  	\\
\begin{pmatrix} 1 & 0 & \frac{-q_{3}}{1 + |q_{1}|^2 - |q_{3}|^{2}}z^{2i \nu_{3}} z_{(0)}^{-i \nu_{1}} z^{-i \nu_{1}}e^{-\frac{iz^{2}}{2}} 	\\
0 & 1 & 0 \\
0 & 0 & 1  \end{pmatrix}, &  z \in X_2, 
	\\
\begin{pmatrix} 1 & 0 & 0 \\
0 & 1 & 0 \\
\frac{\bar{q}_{3}}{1 + |q_{1}|^2 - |q_{3}|^{2}}z^{-2i \nu_{3}} z_{(0)}^{i \nu_{1}} z^{i \nu_{1}}e^{\frac{iz^{2}}{2}} & 0 	& 1 \end{pmatrix}, &  z \in X_3,
	\\
 \begin{pmatrix} 1 & 0	& q_{3} z^{2i \nu_{3}} z_{(0)}^{-i \nu_{1}} z^{-i \nu_{1}}e^{-\frac{iz^{2}}{2}}	\\
 0 & 1 & 0 \\
0 & 0	& 1 \end{pmatrix}, &  z \in X_4,
\end{cases}
\end{align}
where the branches for $z^{i\nu_{3}},z^{i\nu_{1}}$ lie on the negative real axis and the branch for $z_{(0)}^{i\nu_{1}}$ lies on the positive real axis, such that $z^{i\nu} = |z|^{i\nu}e^{-\nu  \arg(z)}$, $\arg(z) \in (-\pi,\pi)$ and $z_{(0)}^{i\nu} = |z|^{i\nu}e^{-\nu  \arg_{0}(z)}$, $\arg_{0}(z) \in (0,2\pi)$. Then the RH problem 
\begin{enumerate}[$(a)$]
\item $m^{X,(1)}(q_{1},q_{3}, \cdot) : \C \setminus X \to \mathbb{C}^{3 \times 3}$ is analytic;

\item on $X \setminus \{0\}$, the boundary values of $m^{X,(1)}$ exist, are continuous, and satisfy $m_+^{X,(1)} =  m_-^{X,(1)} v^{X,(1)}$;

\item $m^{X,(1)}(q_{1},q_{3},z) = I + O(z^{-1})$ as $z \to \infty$, and $m^{X,(1)}(q_{1},q_{3},z) = O(1)$ as $z \to 0$;
\end{enumerate}
has a unique solution $m^{X,(1)}(q_{1},q_{3}, z)$. This solution satisfies
\begin{align}\label{II mXasymptotics 3}
  m^{X,(1)}(z) = I + \frac{m^{X,(1)}_{1}}{z} + O\biggl(\frac{1}{z^2}\biggr), \; z \to \infty, \qquad m_{1}^{X,(1)} := \begin{pmatrix} 0 & 0 & \beta_{12}^{(1)} \\ 0 & 0 & 0 \\ \beta_{21}^{(1)} & 0 & 0 \end{pmatrix},
\end{align}  
where the error term is uniform with respect to $\arg z \in [0, 2\pi]$ and $q_{1},q_{3}$ in compact subsets of $\{q_{1},q_{3}\in \mathbb{C} \,|\, 1+|q_{1}|^{2}-|q_{3}|>0\}$, and $\beta_{12}^{(1)},\beta_{21}^{(1)}$ are defined by
\begin{align}\label{II betaXdef 3}
& \beta_{12}^{(1)} = \frac{e^{\frac{3\pi i}{4}}e^{\frac{3\pi\hat{\nu}_1}{2}}e^{2\pi \nu_{1}}\sqrt{2\pi}q_{3}}{(e^{2\pi \hat{\nu}_1}-1)\Gamma(-i\hat{\nu}_1)}, \qquad \beta_{21}^{(1)} = \frac{e^{-\frac{3\pi i}{4}}e^{\frac{3\pi \hat{\nu}_1}{2}}\sqrt{2\pi}\bar{q}_{3}}{(e^{2\pi \hat{\nu}_1}-1)\Gamma(i\hat{\nu}_1)},
\end{align}
where $\hat{\nu}_1 := \nu_{3}-\nu_{1} \geq 0$. 
(Note that $\beta_{12}^{(1)}\beta_{21}^{(1)} = \hat{\nu}_1$.)
\end{lemma}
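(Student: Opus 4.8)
The plan is to reduce the $3\times 3$ problem to a classical $2\times 2$ parabolic--cylinder model problem and then solve that explicitly. First I would observe that every jump matrix in \eqref{II vXdef 3} has a $1$ in the $(2,2)$ entry and vanishing $(1,2),(2,1),(2,3),(3,2)$ entries, so the second row and column decouple completely: one may look for a solution of the form
$$
m^{X,(1)}=\begin{pmatrix} \ast & 0 & \ast\\ 0 & 1 & 0\\ \ast & 0 & \ast\end{pmatrix},
$$
where the $2\times 2$ block indexed by $\{1,3\}$ solves a $2\times 2$ RH problem on the cross $X$. This reduces everything to a (minor variant of the) standard parabolic--cylinder RH problem analysed in \cite{DZ1993, I1981}, and, by the uniqueness argument below, the resulting $3\times 3$ matrix is then \emph{the} solution.

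To solve the $2\times 2$ block, I would first strip off the $z$-dependent scalar factors $z^{\pm 2i\nu_3}$, $z_{(0)}^{\pm i\nu_1}$, $z^{\pm i\nu_1}$ and $e^{\pm iz^2/2}$ from the jumps by conjugating with an explicit diagonal matrix $H(z)$ built from the same powers together with a Gaussian factor. The conjugated function $\Phi:=m^{X,(1)}H$ then has piecewise-constant jumps across the four rays of $X$, so $\Phi_+'\Phi_+^{-1}=\Phi_-'\Phi_-^{-1}$ on $X\setminus\{0\}$; combined with the prescribed behaviour at $0$ and the (now algebraic) behaviour at $\infty$, this shows that $A(z):=\Phi'(z)\Phi(z)^{-1}$ is entire with at most linear growth, hence a first-degree matrix polynomial in $z$. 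Thus $\Phi$ satisfies a Weber-type ODE whose solutions are built from parabolic cylinder functions $D_a\bigl(\pm\sqrt2\,e^{\pm i\pi/4}z\bigr)$; the relevant index turns out to be $a=i\hat\nu_1$ with $\hat\nu_1=\nu_3-\nu_1\ge 0$. Matching the solution across the rays of $X$ and imposing the normalization at $z\to\infty$ fixes all free constants, and the connection formulas for $D_a$ yield the entries of $m_1^{X,(1)}$ in closed form; the stated expressions \eqref{II betaXdef 3} for $\beta_{12}^{(1)},\beta_{21}^{(1)}$ follow, and the identity $\beta_{12}^{(1)}\beta_{21}^{(1)}=\hat\nu_1$ is then a short computation using $|\Gamma(i\hat\nu_1)|^{2}=\pi/(\hat\nu_1\sinh(\pi\hat\nu_1))$ together with the relations $1+|q_1|^2=e^{-2\pi\nu_1}$ and $|q_3|^2=e^{-2\pi\nu_1}(1-e^{-2\pi\hat\nu_1})$.

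For uniqueness I would argue in the usual way: the jump matrices have unit determinant, so $\det m^{X,(1)}$ has no jump across $X$ and, being $O(1)$ at $0$ and $\to 1$ at $\infty$, equals $1$ identically; hence $m^{X,(1)}$ is everywhere invertible, and the ratio of any two solutions is entire, bounded, and tends to $I$ at infinity, so it equals $I$ by Liouville's theorem. The uniformity of the error term in \eqref{II mXasymptotics 3} with respect to $\arg z\in[0,2\pi]$ and to $(q_1,q_3)$ in compact subsets of $\{1+|q_1|^2-|q_3|>0\}$ then follows from the corresponding uniformity of the large-argument asymptotics of parabolic cylinder functions, since $a=i\hat\nu_1$ depends real-analytically on $(q_1,q_3)$ and remains in a compact set.

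The main obstacle I expect is the bookkeeping of branch cuts: the jump in \eqref{II vXdef 3} mixes the two logarithm conventions $z^{i\nu}$ (cut on $(-\infty,0)$) and $z_{(0)}^{i\nu}$ (cut on $(0,\infty)$), so the conjugating matrix $H$ is itself discontinuous across $\R$, and one must track these discontinuities carefully through the connection formulas in order to land on exactly the constants in \eqref{II betaXdef 3}, including the precise phases $e^{\pm 3\pi i/4}$, $e^{3\pi\hat\nu_1/2}$, and $e^{2\pi\nu_1}$. Since a fully detailed proof of the analogous (and more involved) Lemma \ref{II Xlemma 3 green} is given in \cite{CLsectorV}, I would ultimately carry out the reduction above and then refer to that reference for the remaining computational details.
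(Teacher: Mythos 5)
Your proposal is correct and follows essentially the same route as the paper, which gives no independent proof of this lemma but instead appeals to the parabolic cylinder model problem of \cite{DZ1993, I1981} and refers to \cite{CLsectorV} for the detailed treatment of the analogous Lemma \ref{II Xlemma 3 green}; your reduction of the $3\times 3$ problem to a $2\times 2$ cross problem via the decoupled second row and column, the conjugation to constant jumps, the Weber ODE and connection formulas, and the Liouville uniqueness argument are exactly the ingredients of that standard proof. Your consistency checks also go through: with $1+|q_1|^2=e^{-2\pi\nu_1}$ and $|q_3|^2=e^{-2\pi\nu_1}-e^{-2\pi\nu_3}$ one indeed finds $\beta_{12}^{(1)}\beta_{21}^{(1)}=\hat{\nu}_1$ from $|\Gamma(i\hat{\nu}_1)|^2=\pi/(\hat{\nu}_1\sinh(\pi\hat{\nu}_1))$, and the effective exponent $z^{-2i\hat{\nu}_1}$ (up to half-plane-dependent constants from $z_{(0)}^{i\nu_1}z^{i\nu_1}$) confirms that the relevant parabolic cylinder index is $i\hat{\nu}_1$.
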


\begin{lemma}[Model RH problem needed near $k=\omega^{2} k_{2}$]\label{II Xlemma 3 green}
Let $q_{2}, q_{4},q_{5},q_{6} \in \mathbb{C}$ be such that $1 + |q_{2}|^2 - |q_{4}|^{2}>0$, $1 - |q_{5}|^2 - |q_{6}|^{2}>0$, and 
\begin{align}\label{II condition on q2q4q5q6}
q_{4}-\bar{q}_5-q_{2}\bar{q}_6=0.
\end{align}
Define $\nu_2, \nu_4, \nu_5 \in \R$ by
\begin{align*}
\nu_{2} = -\tfrac{1}{2\pi} \ln(1 + |q_{2}|^2), \quad \nu_{4} = -\tfrac{1}{2\pi} \ln(1 + |q_{2}|^2 - |q_{4}|^{2}), \quad \nu_{5} = -\tfrac{1}{2\pi} \ln(1 - |q_{5}|^2 - |q_{6}|^{2}).
\end{align*}
Define the jump matrix $v^{X,(2)}(z)$ for $z \in X$ by
\begin{align}\label{II vXdef 3 green} 
v^{X,(2)}(z) = \begin{cases}
\begin{pmatrix} 
1 & 0 & 0 \\
0 & 1 & 0		\\
0 & -\frac{q_{6}-q_{2}q_{5}}{1+|q_{2}|^{2}} z^{-i (2\nu_{5}-\nu_{4})} z_{(0)}^{-i (2\nu_{2}-\nu_{4})} e^{\frac{iz^{2}}{2}} & 1 \end{pmatrix}, &   z \in X_1, 
  	\\
\begin{pmatrix}
1 & 0 & 0 \\
0 & 1 & -\frac{\bar{q}_6-\bar{q}_2\bar{q}_5}{1-|q_{5}|^{2}-|q_{6}|^{2}} z^{i (2\nu_{5}-\nu_{4})} z_{(0)}^{i (2\nu_{2}-\nu_{4})} e^{-\frac{iz^{2}}{2}} 	\\
0 & 0 & 1  \end{pmatrix}, &  z \in X_2, 
	\\
\begin{pmatrix} 
1 & 0 & 0 \\
0 & 1 & 0 \\
0 & \frac{q_{6}-q_{2}q_{5}}{1-|q_{5}|^{2}-|q_{6}|^{2}} z^{-i (2\nu_{5}-\nu_{4})} z_{(0)}^{-i (2\nu_{2}-\nu_{4})} e^{\frac{iz^{2}}{2}}	& 1 \end{pmatrix}, &  z \in X_3,
	\\
 \begin{pmatrix} 
1 & 0 & 0 \\ 
0 & 1	& \frac{\bar{q}_6-\bar{q}_2\bar{q}_5}{1+|q_{2}|^{2}} z^{i (2\nu_{5}-\nu_{4})} z_{(0)}^{i (2\nu_{2}-\nu_{4})} e^{-\frac{iz^{2}}{2}}	\\
0 & 0	& 1 \end{pmatrix}, &  z \in X_4,
\end{cases}
\end{align}
where $z^{i\nu}$ has a branch cut along $(-\infty,0]$ and $z_{(0)}^{i\nu}$ has a branch cut along $[0,+\infty)$ such that $z^{i\nu} = |z|^{i\nu}e^{-\nu  \arg(z)}$, $\arg(z) \in (-\pi,\pi)$, and $z_{(0)}^{i\nu} = |z|^{i\nu}e^{-\nu  \arg_{0}(z)}$, $\arg_{0}(z) \in (0,2\pi)$. Then the RH problem 
\begin{enumerate}[$(a)$]
\item $m^{X,(2)}(\cdot) = m^{X,(2)}(q_{2}, q_{4},q_{5},q_{6}, \cdot) : \C \setminus X \to \mathbb{C}^{3 \times 3}$ is analytic;

\item on $X \setminus \{0\}$, the boundary values of $m^{X,(2)}$ exist, are continuous, and satisfy $m_+^{X,(2)} =  m_-^{X,(2)} v^{X,(2)}$;

\item $m^{X,(2)}(z) = I + O(z^{-1})$ as $z \to \infty$, and $m^{X,(2)}(z) = O(1)$ as $z \to 0$;
\end{enumerate}
has a unique solution $m^{X,(2)}(z)$. This solution satisfies
\begin{align}\label{II mXasymptotics 3 green}
m^{X,(2)}(z) = I + \frac{m_{1}^{X,(2)}}{z} + O\biggl(\frac{1}{z^2}\biggr), \quad z \to \infty,  \quad m_{1}^{X,(2)} := \begin{pmatrix} 
0 & 0 & 0 \\
0 & 0 & \beta_{12}^{(2)} \\ 
0 & \beta_{21}^{(2)} & 0 \end{pmatrix}, 
\end{align}
where the error term is uniform with respect to $\arg z \in [-\pi, \pi]$ and $q_{2},q_{4},q_{5},q_{6}$ in compact subsets of $\{q_{2},q_{4},q_{5},q_{6}\in \mathbb{C} \,|\, 1 + |q_{2}|^2 - |q_{4}|^{2}>0, 1 - |q_{5}|^2 - |q_{6}|^{2}>0, q_{4}-\bar{q}_5-q_{2}\bar{q}_6=0\}$, and 
\begin{align}\label{II betaXdef 3 green}
& \beta_{12}^{(2)} := \frac{e^{\frac{3\pi i}{4}}e^{\frac{\pi\hat{\nu}_2}{2}}e^{2\pi (\nu_{4}-\nu_{2})}\sqrt{2\pi}(\bar{q}_6-\bar{q}_2\bar{q}_5)}{(e^{\pi \hat{\nu}_2}-e^{-\pi \hat{\nu}_2})\Gamma(-i\hat{\nu}_2)}, \qquad \beta_{21}^{(2)} := \frac{e^{-\frac{3\pi i}{4}}e^{\frac{\pi \hat{\nu}_2}{2}}e^{2\pi\nu_{2}}\sqrt{2\pi}(q_{6}-q_{2}q_{5})}{(e^{\pi \hat{\nu}_2}-e^{-\pi \hat{\nu}_2})\Gamma(i\hat{\nu}_2)},
\end{align}
where $\hat{\nu}_2 := \nu_2 + \nu_5 - \nu_4$. (Note that $\beta_{12}^{(2)}\beta_{21}^{(2)} = \hat{\nu}_{2}$.)
 
\end{lemma}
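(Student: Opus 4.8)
The plan is to reduce the $3\times 3$ problem to the classical parabolic cylinder model problem in two steps: first peel off the trivial direction, then strip the scalar and Gaussian prefactors. Since every jump matrix in \eqref{II vXdef 3 green} has the block form $1\oplus V$ acting only on the second and third components, and $\det v^{X,(2)}\equiv 1$, the first step is to observe that, once uniqueness is known, $m^{X,(2)}$ must itself be block diagonal, $m^{X,(2)}=1\oplus\tilde m$, where $\tilde m\colon\C\setminus X\to\C^{2\times 2}$ solves the $2\times 2$ RH problem whose jump on $X_j$ is the lower right $2\times 2$ block of $v^{X,(2)}$ on $X_j$, normalized to $I$ at $\infty$. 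Uniqueness for both problems is standard: a second solution $\hat m$ has $\det\hat m$ entire with limit $1$, hence $\det\hat m\equiv 1$, and then $m^{X,(2)}(\hat m)^{-1}$ is entire (the candidate singularity at $0$ is removable since both factors are $O(1)$ there), bounded, and $\to I$, so it equals $I$.

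The second step is to conjugate away the prefactors $z^{\pm i(2\nu_5-\nu_4)}$, $z_{(0)}^{\pm i(2\nu_2-\nu_4)}$ and the Gaussian $e^{\pm iz^2/2}$. I would set $\Psi(z)=\tilde m(z)\,\mathcal{E}(z)^{-1}$, where $\mathcal{E}(z)=\mathcal{D}(z)\,e^{\frac{i}{4}z^2\tau}$ with $\tau:=\diag(1,-1)$ and $\mathcal{D}(z)$ a piecewise diagonal matrix built from the two logarithms (one cut along $(-\infty,0]$, one along $[0,\infty)$), and then, if needed, compose with constant triangular matrices on each of the four rays so that the jump of $\Psi$ becomes piecewise \emph{constant} and, after a contour deformation, supported on a single line through the origin. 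The compatibility condition \eqref{II condition on q2q4q5q6}, $q_4-\bar q_5-q_2\bar q_6=0$, enters exactly here: it is precisely what makes the constant off-diagonal data on $X_1,X_2,X_3,X_4$ fit together so that the deformed problem is the genuine two-by-two parabolic cylinder RH problem rather than an overdetermined one. Since $\Psi$ then has identical boundary values on the two sides of its contour, $\Psi_z\Psi^{-1}$ continues to an entire function; feeding in the normalization $\Psi(z)=(I+m_1^{X,(2)}/z+\cdots)\,\mathcal{E}(z)$ shows $\Psi_z\Psi^{-1}=\tfrac{iz}{2}\tau+B$ with $B$ a constant off-diagonal matrix and $B=\big[m_1^{X,(2)},\tfrac{i}{2}\tau\big]$, so $\Psi$ solves the linear ODE $\Psi_z=\big(\tfrac{iz}{2}\tau+B\big)\Psi$ whose off-diagonal entries are the sought $\beta_{12}^{(2)},\beta_{21}^{(2)}$.

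This ODE is, after a rotation $z\mapsto e^{\pm i\pi/4}z$, equivalent to Weber's equation: the scalar entries of $\Psi$ satisfy $\psi''=\big(\tfrac{z^2}{4}+\mathrm{const}\big)\psi$, so the columns of $\Psi$ are combinations of parabolic cylinder functions $D_\nu\big(\pm e^{\pm i\pi/4}z\big)$ with $\nu$ fixed by $\beta_{12}^{(2)}\beta_{21}^{(2)}$. I would then match the known large-$z$ asymptotics of $D_\nu$ in the four sectors cut out by $X$ against the prescribed jumps on $X_1,\dots,X_4$, exactly as in the parabolic cylinder parametrix of \cite{DZ1993, I1981}; the standard connection formulae for $D_\nu$ force $\nu=i\hat\nu_2$ with $\hat\nu_2=\nu_2+\nu_5-\nu_4$ and determine $B$, yielding the explicit formulas \eqref{II betaXdef 3 green}. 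The identity $\beta_{12}^{(2)}\beta_{21}^{(2)}=\hat\nu_2$ then follows from $\Gamma(i\hat\nu_2)\Gamma(-i\hat\nu_2)=\pi/\big(\hat\nu_2\sinh(\pi\hat\nu_2)\big)$ together with $|q_6-q_2q_5|^2=e^{-2\pi\nu_4}-e^{-2\pi(\nu_2+\nu_5)}$, the latter being a direct consequence of \eqref{II condition on q2q4q5q6} and the definitions of $\nu_2,\nu_4,\nu_5$. Existence is obtained by reversing the construction: the explicit $\Psi$ assembled from parabolic cylinder functions, conjugated back by $\mathcal{E}$ and completed by $1\oplus(\cdot)$, satisfies all the requirements, and the expansion \eqref{II mXasymptotics 3 green}, uniform in $\arg z$ and in $q_2,q_4,q_5,q_6$ on compacts, inherits the uniformity of the $D_\nu$ asymptotics.

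The step I expect to be the main obstacle is the second one: with \emph{two} distinct branch cuts ($z^{i\nu}$ along $(-\infty,0]$, $z_{(0)}^{i\nu}$ along $[0,\infty)$) and \emph{three} independent exponents $\nu_2,\nu_4,\nu_5$, choosing $\mathcal{E}$ and the ray-by-ray constant matrices so that the jump of $\Psi$ is genuinely constant on a single line, and then propagating the phase factors $e^{-\nu\arg z}$, $e^{-\nu\arg_0 z}$ correctly through the sectorwise matching, is where branch and sign errors are easiest to make. This is why the statement advertises the argument as ``more complicated than'' the classical case and why a fully detailed account is deferred to \cite{CLsectorV}; since Theorem \ref{asymptoticsth} only uses the output of this lemma, the present paper may, after the reduction above, simply invoke that reference.
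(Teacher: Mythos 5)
Your outline is correct and is precisely the argument the paper has in mind: the paper gives no proof of this lemma, stating only that it follows the parabolic-cylinder model problem of \cite{DZ1993, I1981} and deferring the details to \cite{CLsectorV}. Your reduction to a $2\times 2$ problem via the block structure $1\oplus V$, the Liouville uniqueness argument, the verification that \eqref{II condition on q2q4q5q6} gives $|q_6-q_2q_5|^2=e^{-2\pi\nu_4}-e^{-2\pi(\nu_2+\nu_5)}$ (and hence $\beta_{12}^{(2)}\beta_{21}^{(2)}=\hat\nu_2$ via $\Gamma(i\hat\nu_2)\Gamma(-i\hat\nu_2)=\pi/(\hat\nu_2\sinh(\pi\hat\nu_2))$), and the conjugation to a Weber-type ODE with sectorwise matching of $D_\nu$ asymptotics are all consistent with the stated formulas \eqref{II betaXdef 3 green}.
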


\subsection*{Acknowledgements}
Support is acknowledged from the Novo Nordisk Fonden Project, Grant 0064428, the European Research Council, Grant Agreement No. 682537, the Swedish Research Council, Grant No. 2015-05430, Grant No. 2021-04626, and Grant No. 2021-03877, and the Ruth and Nils-Erik Stenb\"ack Foundation.

\bibliographystyle{plain}
\bibliography{is}

\begin{thebibliography}{99}
\small

\bibitem{B1976}
J. G. Berryman, Stability of solitary waves in shallow water, {\it Phys. Fluids} {\bf 19} (1976), 771--777.

\bibitem{BZ2002}
L. V. Bogdanov and V. E. Zakharov, The Boussinesq equation revisited, {\it Phys. D} {\bf 165} (2002), 137--162.

\bibitem{B1872}
J. Boussinesq, Th\'eorie des ondes et des remous qui se propagent le long d'un canal rectangulaire horizontal, en communiquant au liquide contenu dans ce canal des vitesses sensiblement pareilles de la surface au fond, {\it J. Math. Pures Appl.} {\bf 17} (1872), 55--108. 

\bibitem{CLmain} 
C. Charlier and J. Lenells, On Boussinesq's equation for water waves, arXiv:2204.02365.

\bibitem{CLsectorV} 
C. Charlier and J. Lenells, Boussinesq's equation for water waves: asymptotics in sector V, arXiv:2301.10669.

\bibitem{CLscatteringsolitons} 
C. Charlier and J. Lenells, Direct and inverse scattering for the Boussinesq equation with solitons, arXiv:2302.14593.

\bibitem{CLWasymptotics}
 C. Charlier, J. Lenells, and D. Wang, The "good" Boussinesq equation: long-time asymptotics, {\it Analysis \& PDE}, to appear, arXiv:2003.04789.
 
\bibitem{D2008}
P. Deift, Some open problems in random matrix theory and the theory of integrable systems. In {\it Integrable systems and random matrices}, 419--430, Contemp. Math. {\bf 458}, Amer. Math. Soc., Providence, RI, 2008.

\bibitem{DZ1993}
P. Deift and X. Zhou, A steepest descent method for oscillatory Riemann-Hilbert problems. Asymptotics for the MKdV equation, 
{\it Ann. of Math.} {\bf 137} (1993), 295--368.

\bibitem{H1973}
R. Hirota, Exact $N$-soliton solutions of the wave equation of long waves in shallow-water and in nonlinear lattices, {\it J. Math. Phys.} {\bf 14} (1973), 810--814. 

\bibitem{I1981}
A. R. Its, Asymptotic behavior of the solutions to the nonlinear Schr\"odinger equation, and isomonodromic deformations of systems of linear differential equations, {\it Dokl. Akad. Nauk SSSR} {\bf 261} (1981), 14--18 (in Russian); {\it Soviet Math. Dokl.} {\bf 24} (1982), 452--456 (in English).

\bibitem{J1997}
R. S. Johnson, A modern introduction to the mathematical theory of water waves. Cambridge Texts in Applied Mathematics. Cambridge University Press, Cambridge, 1997.

\bibitem{LS1985}
H. A. Levine and B. D. Sleeman, A note on the nonexistence of global solutions of initial-boundary value problems for the Boussinesq equation $u_{tt} =3u_{xxxx}+u_{xx}-12(u^2)_{xx}$, {\it J. Math. Anal. Appl.} {\bf 107} (1985), 206--210. 
 
\bibitem{NIST} F. W. J. Olver, A. B. Olde Daalhuis, D. W. Lozier, B. I. Schneider, R. F. Boisvert, C. W. Clark, B. R. Miller, B. V. Saunders, H. S. Cohl, and M. A. McClain, NIST Digital Library of Mathematical Functions. http://dlmf.nist.gov/, Release 1.1.5 of 2022-03-15.

\bibitem{Y2002}
Z. Yang, On local existence of solutions of initial boundary value problems for the ``bad'' Boussinesq-type equation, {\it Nonlinear Anal.} {\bf 51} (2002), 1259--1271.

\bibitem{Z1974}
V. E. Zakharov, On stochastization of one-dimensional chains of nonlinear oscillations, {\it Soviet Phys. JETP} {\bf 38} (1974), 108--110.

\end{thebibliography}

\end{document}